\renewcommand{\mathsf}[1]{\text{\normalfont\sffamily#1}}
\DeclareMathAlphabet{\eur}{U}{zeus}{m}{n}
\renewcommand{\mathcal}[1]{\eur{#1}}
\newcommand*{\saved@uline}{}
\let\saved@uline\uline
\newcommand*{\mathuline}{%
  \mathpalette{\math@uline\saved@uline}%
}
\newcommand*{\math@uline}[3]{%
  \mbox{#1{$#2#3\m@th$}}%
}
\renewcommand*{\uline}{%
  \relax  
  \ifmmode
    \expandafter\mathuline
  \else
    \expandafter\saved@uline
  \fi
}
\definecolor{cite}{HTML}{11871E}
\definecolor{url}{HTML}{698996}
\definecolor{link}{HTML}{912F1B}
\tikzstyle{arrow} = [-{Straight Barb[scale=0.8]}, line width=0.2mm]
\newenvironment{myenum}[1]{%
\begin{enumerate}[label=#1,topsep=1pt,itemsep=0pt,partopsep=1pt,parsep=1pt]%
}%
{\end{enumerate}%
}
\Crefname{prop}{Proposition}{Propositions}
\Crefname{lem}{Lemma}{Lemmas}
\Crefname{cor}{Corollary}{Corollaries}
\Crefname{thm}{Theorem}{Theorems}
\Crefname{defn}{Definition}{Definitions}
\Crefname{notation}{Notation}{Notations}
\Crefname{conj}{Conjecture}{Conjectures}
\Crefname{ass}{Assumption}{Assumptions}
\Crefname{expt}{Expectation}{Expectations}
\Crefname{rmk}{Remark}{Remarks}
\Crefname{question}{Question}{Questions}
\Crefname{expl}{Example}{Examples}
\Crefname{figure}{Figure}{Figures}
\theoremstyle{plain}
\newtheorem{prop}[subsubsection]{Proposition}
\newtheorem{lem}[subsubsection]{Lemma}
\newtheorem{cor}[subsubsection]{Corollary}
\newtheorem{thm}[subsubsection]{Theorem}
\newtheorem*{thm*}{Theorem}
\theoremstyle{definition}
\newtheorem{defn}[subsubsection]{Definition}
\theoremstyle{remark}
\newtheorem{rmk}[subsubsection]{Remark}
\newtheorem*{rmk*}{Remark}
\newtheorem{expl}[subsubsection]{Example}
\numberwithin{equation}{subsection}
\newcommand{\teq}{\addtocounter{subsubsection}{1}\tag{\thesubsubsection}}
\DeclareMathOperator{\act}{\mathsf{act}}
\newcommand{\add}{\mathsf{add}}
\newcommand{\all}{\mathsf{all}}
\DeclareMathOperator{\Alg}{\mathsf{Alg}}
\DeclareMathOperator{\BiMod}{\mathsf{BiMod}}
\DeclareMathOperator{\assgr}{\mathsf{assgr}}
\DeclareMathOperator{\can}{can}
\DeclareMathOperator{\Cat}{\mathsf{Cat}}
\newcommand{\CC}{\mathbb{C}}
\DeclareMathOperator{\Cech}{\smash{\textnormal{\textsf{\v{C}ech}}}}
\DeclareMathOperator{\Ch}{\mathsf{Ch}}
\DeclareMathOperator{\cHom}{\mathcal{H}\mathsf{om}}
\DeclareMathOperator{\Chr}{\mathsf{Chr}}
\DeclareMathOperator{\Co}{\mathsf{C}}
\DeclareMathOperator{\coCone}{\mathsf{coCone}}
\DeclareMathOperator*{\colim}{\mathsf{colim}}
\DeclareMathOperator{\ComAlg}{\mathsf{ComAlg}}
\DeclareMathOperator{\Cone}{\mathsf{Cone}}
\DeclareMathOperator{\const}{\mathsf{const}}
\newcommand{\cont}{\mathsf{cont}}
\DeclareMathOperator{\Corr}{\mathsf{Corr}}
\DeclareMathOperator{\cuHom}{\smash{\uline{\smash{\mathcal{H}\mathsf{om}}}}}
\newcommand{\defeq}{\coloneqq}
\newcommand{\DG}{\mathsf{DG}}
\DeclareMathOperator{\DGCat}{\mathsf{DGCat}}
\newcommand{\DGCatidemex}{\DGCat_{\idem,\ex}}
\newcommand{\DGCatprescont}{\DGCat_{\pres,\cont}}
\DeclareMathOperator{\DVer}{\mathsf{D}_\mathsf{Ver}}
\newcommand{\En}{\mathsf{E}}
\newcommand{\enh}{\mathsf{enh}}
\newcommand{\eqdef}{\eqqcolon}
\newcommand{\etale}{\'etale}
\newcommand{\ex}{\mathsf{ex}}
\DeclareMathOperator{\Ext}{\mathsf{Ext}}
\newcommand{\FF}{\mathbb{F}}
\newcommand{\fin}{\mathsf{fin}}
\newcommand{\Fq}{\mathbb{F}_q}
\newcommand{\Fqn}{\mathbb{F}_{q^n}}
\newcommand{\Fqbar}{\overline{\mathbb{F}}_q}
\DeclareMathOperator{\Frob}{\mathsf{Frob}}
\DeclareMathOperator{\Fun}{\mathsf{Fun}}
\newcommand{\Ga}{\mathbb{G}_a}
\newcommand{\Gm}{\mathbb{G}_m}
\newcommand{\gr}{\mathsf{gr}}
\DeclareMathOperator{\Gr}{\mathsf{Gr}}
\newcommand{\ho}{\mathop{}\!\mathsf{h}}
\DeclareMathOperator{\Ho}{\mathsf{H}}
\DeclareMathOperator{\Hom}{\mathsf{Hom}}
\newcommand{\homflypt}{\textsc{homfly-pt}}
\newcommand{\horiz}{\mathsf{horiz}}
\DeclareMathOperator{\id}{\mathsf{id}}
\DeclareMathOperator{\IC}{\mathrm{IC}}
\newcommand{\idem}{\mathsf{idem}}
\DeclareMathOperator{\Idem}{\mathsf{Idem}}
\let\Im\relax
\DeclareMathOperator{\Im}{\mathsf{Im}}
\DeclareMathOperator{\Ind}{\mathsf{Ind}}
\newcommand{\Kuenneth}{K\"unneth}
\newcommand{\mixed}{\mathsf{m}}
\DeclareMathOperator{\Mod}{\mathsf{Mod}}
\DeclareMathOperator{\mult}{\mathsf{mult}}
\newcommand{\Negut}{Neguț}
\DeclareMathOperator{\nsimeq}{\not\simeq}
\DeclareMathOperator{\oblv}{\mathsf{oblv}}
\newcommand{\opp}{\mathsf{op}}
\DeclareMathOperator{\Perv}{\mathsf{Perv}}
\newcommand{\pervt}{{}^p\!t}
\DeclareMathOperator{\pHo}{{}^{\mathit{p}}\mathsf{H}}
\let\Pr\relax
\newcommand{\pr}{\mathsf{pr}}
\newcommand{\Pr}{\mathsf{Pr}}
\newcommand{\pres}{\mathsf{pres}}
\DeclareMathOperator{\PreStk}{\mathsf{PreStk}}
\newcommand{\pt}{\mathrm{pt}}
\DeclareMathOperator{\Pur}{\mathsf{Pur}}
\newcommand{\Qlbar}{\QQbar_{\ell}}
\newcommand{\QQbar}{\overline{\mathbb{Q}}}
\DeclareMathOperator{\rank}{\mathsf{rank}}
\newcommand{\ren}{\mathsf{ren}}
\DeclareMathOperator{\res}{\mathsf{res}}
\newcommand{\SBim}{\mathsf{SBim}}
\DeclareMathOperator{\Sch}{\mathsf{Sch}}
\DeclareMathOperator{\sh}{\mathsf{sh}}
\DeclareMathOperator{\Shv}{\mathsf{Shv}}
\newcommand{\sm}{\mathsf{sm}}
\DeclareMathOperator{\Spc}{\mathsf{Spc}}
\DeclareMathOperator{\Spec}{\mathsf{Spec}}
\DeclareMathOperator{\Sptr}{\mathsf{Sptr}}
\newcommand{\stab}{\mathsf{st}}
\DeclareMathOperator{\Stk}{\mathsf{Stk}}
\DeclareMathOperator{\Sym}{\mathsf{Sym}}
\newcommand{\tH}[1]{{}^\mathit{#1}\!\Ho}
\newcommand{\theart}{\heartsuit\!_t}
\DeclareMathOperator{\Tot}{\mathsf{Tot}}
\newcommand{\UC}{\mathsf{UC}}
\newcommand{\unren}{\mathsf{unren}}
\DeclareMathOperator{\Vect}{\mathsf{Vect}}
\newcommand{\vertc}{\mathsf{vert}}
\newcommand{\weightheart}{\heartsuit\!_w}
\DeclareMathOperator{\wt}{\mathsf{wt}}
\mathchardef\mhyphensymb="2D
\newcommand{\arrdisp}{0.33ex}
\newcommand{\alignsep}{\!\!\!\!\!\!}
\newcommand{\lbar}[1]{\overline{#1}}
\newcommand{\lrangle}[1]{\langle#1\rangle}
\newcommand{\wtilde}[1]{\widetilde{#1}}
\title{Revisiting mixed geometry}
\author{Quoc P. Ho}
\address{Department of Mathematics, The Hong Kong University of Science and Technology (HKUST), Clear Water Bay, Hong Kong}
\email{phuquocvn@gmail.com}
\author{Penghui Li}
\address{YMSC, Tsinghua University, Beijing, China}
\email{lipenghui@mail.tsinghua.edu.cn}
\date{\today}
\keywords{Mixed geometry, graded lifts, weight structures, $\ell$-adic sheaves, perverse sheaves, Hecke categories.}
\subjclass[2020]{Primary 14F08, 14A30. Secondary 14M15.}
\begin{document}
\begin{abstract}
We provide a uniform construction of ``mixed versions'' or ``graded lifts'' in the sense of Beilinson--Ginzburg--Soergel which works for arbitrary Artin stacks. In particular, we obtain a general construction of graded lifts of many categories arising in geometric representation theory and categorified knot invariants. Our new theory associates to each Artin stack of finite type $\mathcal{Y}$ over $\Fqbar$ a symmetric monoidal $\DG$-category $\Shv_{\gr, c}(\mathcal{Y})$ of constructible graded sheaves on $\mathcal{Y}$ along with the six-functor formalism, a perverse $t$-structure, and a weight (or co-$t$-)structure in the sense of Bondarko and Pauksztello, compatible with the six-functor formalism, perverse $t$-structures, and Frobenius weights on the category of (mixed) $\ell$-adic sheaves.

Classically, mixed versions were only constructed in very special cases due to the non-semisimplicity of Frobenius. Our construction sidesteps this issue by semi-simplifying the Frobenius action itself. However, the category $\Shv_{\gr, c}(\mathcal{Y})$ agrees with those previously constructed when they are available. For example, for any reductive group $G$ with a fixed pair $T\subset B$ of a maximal torus and a Borel subgroup, we have an equivalence of monoidal $\DG$ weight categories $\Shv_{\gr, c}(B\backslash G/B) \simeq \Ch^b(\SBim_W)$, where $\Ch^b(\SBim_W)$ is the monoidal $\DG$-category of bounded chain complexes of Soergel bimodules and $W$ is the Weyl group of $G$.
\end{abstract}

\maketitle
\tableofcontents

\setlength\emergencystretch{\hsize}

\section{Introduction}
\subsection{Motivation}
Frobenius weights in the theory of mixed $\ell$-adic sheaves developed by Deligne and Beilinson--Bernstein--Deligne--Gabber~\cites{deligne_conjecture_1980,beilinson_faisceaux_2018} have long played an important role in geometric representation theory. This went back to the works of Beilinson, Ginzburg, and Soergel~\cites{beilinson_mixed_1986,beilinson_koszul_1996} to more recent developments by Achar, Bezrukavnikov, Lusztig, Riche, Rider, Yun, and others~\cites{achar_koszul_2013,bezrukavnikov_koszul_2013,rider_formality_2013,lusztig_endoscopy_2020}, to name a few. In these cases, Frobenius weights provide an extra grading, allowing one to realize combinatorially defined categories with a grading, such as the category of Soergel bimodules, with geometrically defined categories, such as the category of mixed sheaves on $B\backslash G/B$ (up to some modification). Such a grading is indispensable to formality results and Koszul duality phenomena in geometric representation theory. But its importance extends beyond representation theory. For example, in~\cite{khovanov_triply-graded_2007}, Khovanov used Soergel bimodules to construct a triply-graded link homology (also known as \homflypt{} homology), categorifying \homflypt{} knot polynomials. Frobenius weights thus made their way into the theory of categorified knot invariants as well via the works of, for example, Webster--Williamson, Shende--Treumann--Zaslow, Bezrukavnikov--Tolmachov, and Trinh~\cites{webster_geometric_2017,shende_legendrian_2017,bezrukavnikov_monodromic_2022,trinh_hecke_2021}.

In all of these cases, to match geometric and combinatorially defined categories, a ``mixed version'' (a.k.a. ``graded lift'') of the category of sheaves (on the geometric side) has to be constructed via some artificial ``cooking'' (in the words of Romanov--Williamson, see~\cite{romanov_langlands_2021}) which only works in \emph{very special cases}, where the space involved has a \emph{nice} stratification by affine spaces. The ``cooking'' is necessary to kill non-trivial extensions between \emph{pure objects of the same weights}. Due to Frobenius' non-semisimplicity, a fundamental problem already over a point, the mixed version is \emph{not} simply the category of mixed sheaves in the sense of~\cite{beilinson_faisceaux_2018}. Roughly speaking, all known constructions of a mixed version involve picking a collection $\Pur_c(\mathcal{Y})$ of (constructible) semi-simple complexes on a stack $\mathcal{Y}$ of interest and show that the homotopy category of chain complexes $K^b(\Pur_c(\mathcal{Y}))$ recovers a known object previously combinatorially defined.\footnote{For example, when $\mathcal{Y} = B\backslash G/B$, we obtain the homotopy category of bounded chain complexes of Soergel bimodules.} However, to relate it back to the usual sheaf theory, known techniques, which rely on \emph{Tate geometry} and which are subtle and technical,\footnote{We loosely use the term Tate geometry to refer to the situation where the space involved has a nice stratification by affine spaces and the sheaves involved satisfy some strong purity condition.} only work in very special cases.\footnote{This also applies to the more recent work of Soergel, Virk, and Wendt~\cites{soergel_perverse_2018,soergel_equivariant_2018}. See \cref{rmk:compare_motives}.} Moreover, the construction $K^b(\Pur_c(-))$ loses most of the functoriality that makes the geometric language powerful: we no longer know how to pull (resp. push) along non-smooth (resp. non-proper) morphisms. This problem is not easy to fix: preservation of semi-simplicity is part of the Tate conjecture, which is still wide open. 

By semi-simplifying the Frobenius actions at the categorical level, the current paper provides a uniform construction of mixed versions or graded lifts of the usual category of sheaves\footnote{\label{ftn:ignoring_Shv_infty}In fact, we need a slightly smaller category than the category of sheaves. This technical point will, however, be ignored in the introduction. See \cref{subsec:mixed_geometry} for more precise statements.} \emph{in complete generality}. The construction itself is surprisingly simple (one might even say that it is naive), which is rather remarkable considering that it recovers and vastly generalizes all known constructions of mixed versions based on $\ell$-adic sheaves. The main thrust of the paper, which is also most involved and subtle, thus revolves around showing that such a construction does indeed give the correct category.

\begin{rmk}\label{rmk:compare_motives}
Soergel, Wendt, and Virk, in~\cites{soergel_perverse_2018,soergel_equivariant_2018}, and more recently\footnote{after the current paper is available}, Eberhardt and Scholbach, in~\cite{eberhardt_integral_2023}, used the theory of motives and weight structures to construct graded lifts of various categories in geometric representation theory. Their methods also avoid the ad-hoc step of taking $K^b(\Pur_c(-))$. Nonetheless, one crucial advantage of our work compared to theirs is that while their methods (which uses mixed Tate motives) only work for (equivariant) spaces with a Whitney--Tate stratification, our theory works for \emph{any} Artin stacks out of the box. For example, while their theory is sufficient to deal with $B\backslash G/B$ and, with some care, the nilpotent cone, genuinely new ideas are needed (see~\cite[\S1.11]{soergel_equivariant_2018}) to deal with $\frac{G}{B}$ and $\frac{G}{G}$ (with the adjoint actions), a natural object in the theory of character sheaves. This is one of the main motivations for us in writing the current paper. Another advantage is that the general existence of the analog of the perverse $t$-structure on the triangulated categories of mixed motives depends on the standard conjectures on algebraic cycles (which are wide open) while this $t$-structure is available to us in complete generality.
\end{rmk}

\subsection{A sketch of the theory}
Our theory associates to each scheme/stack $\mathcal{Y}$ over $\Fqbar$ a triangulated (in fact, $\DG$-)category $\Shv_{\gr}(\mathcal{Y})^{\ren}$ (resp. $\Shv_{\gr, c}(\mathcal{Y})$) of \emph{graded sheaves} (resp. \emph{constructible graded sheaves}) on $\mathcal{Y}$, along with the six-functor formalism, a perverse $t$-structure, and a weight (or co-$t$-)structure in the sense of Pauksztello and Bondarko~\cites{pauksztello_compact_2008, bondarko_weight_2010}. As part of the definition of a weight structure, pure objects are semi-simple. In particular, we have a decomposition theorem similar to, but in some sense, more convenient than~\cite{beilinson_faisceaux_2018}. Moreover, the weight complex functor of~\cites{bondarko_weight_2010,sosnilo_theorem_2019,aoki_weight_2020} provides a comparison $\wt: \Shv_{\gr, c}(\mathcal{Y}) \to \Ch^b(\ho\Shv_{\gr, c}(\mathcal{Y})^{\weightheart})$, where $\Ch^b(\ho\Shv_{\gr, c}(\mathcal{Y})^{\weightheart})$ is the $\DG$-category of bounded chain complexes in the underlying homotopy category $\ho\Shv_{\gr, c}(\mathcal{Y})^{\weightheart}$ of the weight heart $\Shv_{\gr, c}(\mathcal{Y})^{\weightheart}$, which plays the role of $\Pur_c(\mathcal{Y})$.

More precisely, for any $\Fq$-form $\mathcal{Y}_n$ of $\mathcal{Y}$, $\Shv_{\gr, c}(\mathcal{Y})$ fits into the following diagram
\[
\begin{tikzcd}
	\Shv_{\mixed, c}(\mathcal{Y}_n) \ar{r}{\gr_{\mathcal{Y}_n}} & \Shv_{\gr, c}(\mathcal{Y}) \ar{r}{\oblv_\gr} \ar{d}{\wt} & \Shv_c(\mathcal{Y}) \\
	& \Ch^b(\ho\Shv_{\gr, c}(\mathcal{Y})^{\weightheart})
\end{tikzcd}
\]
where $\gr_{\mathcal{Y}_n}$ and $\oblv_\gr$ are compatible with all of the six-functor formalism, the perverse $t$-structures, and Frobenius weights. In special cases, such as the case of $B\backslash G/B$, $\Shv_{\gr, c}(\mathcal{Y})^{\weightheart}$ is classical, i.e., $\Shv_{\gr, c}(\mathcal{Y})^{\weightheart} \simeq \ho\Shv_{\gr, c}(\mathcal{Y})^{\weightheart}$,\footnote{See \cref{subsubsec:weight_heart} for a more detailed discussion.} and the weight complex functor $\wt$ is an equivalence of categories. In fact, $\Shv_{\gr, c}(\mathcal{Y})^{\weightheart} \simeq \ho\Pur_c(\mathcal{Y})$ in this case,\footnote{Since we work within the framework of $\DG$-/$\infty$-categories, there is a $\Hom$-space rather than a $\Hom$-set between any two objects in our categories. Roughly speaking, under Dold--Kan, ``negative $\Ext$'s'' are part of the $\Hom$-spaces. Moreover, the process of taking the underlying homotopy category means that we forget these negative $\Ext$'s. In this sense, for example, our $\ho\Pur_c$ corresponds to $\Pur_c$ in the literature.} see \cref{prop:criterion_weight_heart_gr_sheaves_classical}, and we obtain a functor
\[
	\Ch^b(\ho \Pur_c(\mathcal{Y})) \xrightarrow{\oblv_\gr \circ \wt^{-1}} \Shv_c(\mathcal{Y})
\]
which realizes the former as a graded lift of the latter, recovering the classical constructions of graded lifts.\footnote{See also \cref{ftn:ignoring_Shv_infty}.}

The condition for $\Shv_{\gr, c}(\mathcal{Y})^{\weightheart}$ to be classical is, by \cref{prop:criterion_weight_heart_gr_sheaves_classical},  equivalent to a certain $\Hom$-purity condition on the category of mixed sheaves. In turn, $\Hom$-purity itself can be established using, for example, Tate geometry when that is available as it has already been done. However, the graded lift $\Shv_{\gr, c}(\mathcal{Y})$ exists in general and when $\Shv_{\gr, c}(\mathcal{Y})^{\weightheart}$ is not classical, it is in fact a genuinely new category not available previously. We do this by circumventing the Frobenius non-semisimplicity problem: instead of picking semi-simple objects in the category, we systematically semi-simplify the Frobenius actions at the categorical level. Consequently, functoriality comes for free and known results in the theory of mixed $\ell$-adic sheaves have direct translations in our setting. 

\begin{rmk}
	We expect that one can obtain a similar theory by replacing mixed $\ell$-adic sheaves with mixed Hodge modules or twistor $D$-modules by tensoring up from the category of mixed Hodge modules over a point using the functor of taking associated graded with respect to the weight filtration. We hope to return to this in a future publication. 
\end{rmk}

\subsection{Applications} \label{subsec:future_work}
As mentioned above, in writing this paper, we are motivated by questions from geometric representation theory and categorified knot invariants. As a proof of concept, we will conclude the paper with a streamlined proof of the fact that $\Shv_{\gr, c}(B\backslash G/B)$ is equivalent to the $\DG$-category of bounded chain complexes of Soergel bimodules $\Ch^b(\SBim_W)$, matching the geometrically and combinatorially defined versions of Hecke categories. More generally, we expect that the theory developed in this paper will have a much wider applicability. In particular, in forthcoming papers (some of which already appeared), we will utilize the robust theory of graded sheaves developed here
\begin{myenum}{(\roman*)}
	\item (already appeared in~\cite{ho_graded_2023}) to compute the categorical trace and Drinfel'd center of the graded Hecke categories (i.e., the $\DG$-categories of bounded chain complexes of Soergel bimodules), using the techniques inspired by Ben-Zvi, Nadler, and Preygel~\cites{ben-zvi_character_2009, ben-zvi_spectral_2017}, and obtain, as a consequence, a proof of
	\begin{enumerate}[label=--]
		\item a conjecture of Gorsky--\Negut--Rasmussen in~\cite{gorsky_flag_2021} on the relation between \homflypt{} link homology and Hilbert schemes of points on $\CC^2$;
		\item a conjecture of Gorsky--Hogancamp--Wedrich in~\cite{gorsky_derived_2021} on the formality of the Hochschild homologies of the graded Hecke categories;
	\end{enumerate}
	\item (forthcoming) to construct a Koszul duality equivalence between the categories of (graded) character sheaves of Langlands dual groups, upgrading Ben-Zvi--Nadler's 2-periodized result; and
	\item (forthcoming) to upgrade Bezrukavnikov's theorem on the two geometric realizations of affine Hecke algebras to the graded setting, as conjectured in~\cite[\S11.1]{bezrukavnikov_two_2016}.
\end{myenum}
Beyond the applications and ongoing work described above, the theory developed in this paper has also appeared crucially in the formulations of the many conjectures by Ben-Zvi, Chen, Helm, Nadler, Sakellaridis, and Venkatesh on relative Langlands program as well as other refinements of the Langlands program, see, for example~\cite{ben-zvi_between_2023,ben-zvi_relative_2023}.

\subsection{Conventions}
\label{subsec:conventions}
Throughout the paper, we fix $k_1 = \Fq$ where $q$ is a fixed power of a fixed prime $p$. More generally, for any positive integer $n$, we use $k_n = \Fqn$ to denote the degree $n$ field extension of $k_1$. We also use $k=\Fqbar$ to denote the algebraic closure of $\Fq$. A scheme/stack defined over $k_n$ will be written as $X_n$. We will use $X_{n'}$ to denote its base change to $k_{n'}$ and $X$ its base change to $k$.

All schemes that appear in the paper are separated and of finite type. All stacks are Artin, of finite type, and have affine stabilizers. For any field $\FF$, we use $\Sch_{\FF}$ (resp. $\Stk_{\FF}$) to denote the category of schemes (resp. stacks) over $\FF$, satisfying the conditions above. Unless otherwise specified, $\FF \in \{k_n, k\}_{n\in \mathbb{Z}} = \{\Fqn, \Fqbar\}_{n\in \mathbb{Z}}$ in this paper. We also use $\pt_{\FF} \in \Sch_{\FF}$ to denote the final object. Following the convention written in the above paragraph, we also use $\pt_n$ to denote $\Spec k_n = \Spec \Fqn$ and $\pt = \Spec k = \Spec \Fqbar$.

Unless otherwise specified, by a category, we always mean an $\infty$-category (or more precisely, an $(\infty, 1)$-category). Given a triangulated/stable $\infty$-category $\mathcal{C}$ equipped with a $t$-structure, the heart of the $t$-structure is denoted by $\mathcal{C}^{\heartsuit}$. Similarly, the heart of a weight structure is denoted by $\mathcal{C}^{\weightheart}$. We will also use $\mathcal{C}^{\theart}$ to denote the heart of a $t$-structure on $\mathcal{C}$ to emphasize the $t$-structure, especially when a weight structure is also present. Lastly, categories and functors are derived by default.

\subsection{The main results}
\label{subsec:main_results}

\subsubsection{Abstract theory}
For each stack $\mathcal{Y}\in \Stk_k$, we construct a symmetric monoidal $\DG$-category $\Shv_{\gr, c}(\mathcal{Y})$ of \emph{constructible graded sheaves} on $\mathcal{Y}$ along with its ``large'' cousin, the category of renormalized graded sheaves $\Shv_\gr(\mathcal{Y})^\ren \simeq \Ind(\Shv_{\gr, c}(\mathcal{Y}))$, where $\Ind$ denotes the process of taking ind-completion, i.e., formally adding all filtered colimits. 

For any $k_n$-form $\mathcal{Y}_n \in \Stk_{k_n}$ of $\mathcal{Y}$, we have
\[
	\Shv_\gr(\mathcal{Y})^\ren \simeq \Shv_\mixed(\mathcal{Y}_n)^\ren \otimes_{\Shv_\mixed(\pt_n)} \Vect^\gr.
\]
Here, $\Vect^\gr \defeq \Fun(\mathbb{Z}, \Vect)$ denotes the category of graded chain complexes of vector spaces over $\Qlbar$ and $\Shv_\mixed(\mathcal{Y}_n)^\ren \defeq \Ind(\Shv_{\mixed, c}(\mathcal{Y}_n))$ where $\Shv_{\mixed, c}(\mathcal{Y}_n)$ is the (derived) category of constructible mixed $\ell$-adic sheaves over $\mathcal{Y}_n$ in the sense of \cites{beilinson_faisceaux_2018,laszlo_perverse_2009}. Moreover, the tensor product is the one defined by Lurie, where $\Shv_\mixed(\mathcal{Y}_n)^\ren$ is tensored over $\Shv_\mixed(\pt_n)$ via pulling back and the symmetric monoidal functor $\Shv_\mixed(\pt_n) \to \Vect^\gr$ is obtained by turning Frobenius weights into a grading. In particular, $\Shv_\gr(\pt) \simeq \Vect^\gr$, which is semi-simple.

In fact, we take this as the definition of $\Shv_\gr(\mathcal{Y})^\ren$, see \cref{defn:Shv_gr}, and prove that this definition is independent of the choice of a $k_n$-form, see \cref{thm:Shv_gr_is_a_sheaf_theory_on_Stk_k}. Moreover, the category $\Shv_{\gr, c}(\mathcal{Y})$ is defined to be the full subcategory of $\Shv_\gr(\mathcal{Y})^\ren$ spanned by compact objects.

\begin{rmk}
While the small category of constructible sheaves is more explicit, dealing with its large cousin is unavoidable since, for example, the push-forward functor between stacks does not preserve constructibility most of the time.
\end{rmk}

\begin{rmk}
For any stack $\mathcal{Y} \in \Stk_k$, there is also the usual category $\Shv(\mathcal{Y})$ obtained, for instance, by using descent via a smooth atlas $S \to \mathcal{Y}$ where $S \in \Sch_k$.\footnote{We ignore Frobenius action/mixedness since it is not relevant for the current discussion.} However, for many purposes, this category is not what we want: even in the simplest case $\mathcal{Y} = B\Gm$, the classifying stack of the multiplicative group $\Gm$, the constant sheaf on $\mathcal{Y}$ is not compact in $\Shv(\mathcal{Y})$. See \cref{expl:constant_sheaves_not_compact}. Note, however, that the distinction between renormalized and usual is only relevant for stacks. 

The functoriality of the renormalized sheaf theory is similar to that of the usual sheaf theory and the two are closely related. To distinguish between the two, we will include the subscript $\ren$ in the notations of the various pull and push functors when working with renormalized sheaves. Note, however, that when applied to constructible sheaves, functors on the two sides are identified in a precise sense. This is reviewed in \cref{subsubsec:ind_constructible_on_stacks} and subsequent subsubsections.
\end{rmk}

By construction, $\Shv_\gr(\mathcal{Y})^\ren$ is tensored over $\Vect^\gr$ and moreover, this structure is compatible with the usual operations of sheaves. We have the following result that compares the graded sheaf theory and the theory of usual/mixed sheaves.

\begin{thm}[\cref{thm:compatibility_functoriality_mixed_graded_non_mixed,lem:oblv_gr_conservative}]
Let $\mathcal{Y} \in \Stk_k$ and $\mathcal{Y}_n \in \Stk_{k_n}$ a $k_n$-form of $\mathcal{Y}$. Then, we have a sequence of symmetric monoidal functors
\[
	\Shv_\mixed(\mathcal{Y}_n)^\ren \xrightarrow{\gr_{\mathcal{Y}_n}} \Shv_\gr(\mathcal{Y})^\ren \xrightarrow{\oblv_\gr} \Shv(\mathcal{Y})^\ren
\]
where $\oblv_\gr$ is conservative. Moreover, these functors are compatible with various pull and push functors along $f_n: \mathcal{Y}_n \to \mathcal{Z}_n$ and its base change to $k$, $f: \mathcal{Y} \to \mathcal{Z}$.
\end{thm}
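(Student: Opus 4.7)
The plan is threefold: construct $\gr_{\mathcal{Y}_n}$ and $\oblv_\gr$ via the universal property of the Lurie tensor product, derive their compatibility with pull/push functors formally from the constructions, and then use that compatibility to reduce the conservativity of $\oblv_\gr$ to the case of a point.

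For the constructions, $\gr_{\mathcal{Y}_n}$ is the canonical symmetric monoidal inclusion of the first tensor factor into $\Shv_\gr(\mathcal{Y})^\ren \simeq \Shv_\mixed(\mathcal{Y}_n)^\ren \otimes_{\Shv_\mixed(\pt_n)} \Vect^\gr$. For $\oblv_\gr$, I would exhibit a symmetric monoidal ``forget grading'' functor $\Vect^\gr \to \Vect$, informally $(V_i)_{i\in\mathbb{Z}} \mapsto \bigoplus_i V_i$, and verify that it is a morphism of $\Shv_\mixed(\pt_n)$-modules. This amounts to checking that the composite $\Shv_\mixed(\pt_n) \to \Vect^\gr \to \Vect$ recovers the ``forget Frobenius and pull back to $k$'' functor $\Shv_\mixed(\pt_n) \to \Shv(\pt) = \Vect$: turning Frobenius weights into grading and then summing over the grading discards the Frobenius action. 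Tensoring with $\Shv_\mixed(\mathcal{Y}_n)^\ren$ over $\Shv_\mixed(\pt_n)$ then produces $\oblv_\gr : \Shv_\gr(\mathcal{Y})^\ren \to \Shv(\mathcal{Y})^\ren$.

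Compatibility with $f_*, f^*, f_!, f^!$ along $f_n : \mathcal{Y}_n \to \mathcal{Z}_n$ and its base change $f : \mathcal{Y} \to \mathcal{Z}$ is then essentially tautological. By construction, the pull/push functors on $\Shv_\gr(\mathcal{Y})^\ren$ are the base changes along $\Shv_\mixed(\pt_n) \to \Vect^\gr$ of the corresponding operations on $\Shv_\mixed(\mathcal{Y}_n)^\ren$, and those on $\Shv(\mathcal{Y})^\ren$ are likewise base changes along $\Shv_\mixed(\pt_n) \to \Vect$. Since $\oblv_\gr$ is itself the base change of the $\Shv_\mixed(\pt_n)$-linear arrow $\Vect^\gr \to \Vect$, all relevant commutativity squares follow from the functoriality of the Lurie tensor product and symmetric monoidal naturality.

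For conservativity of $\oblv_\gr$, the starting point is that the underlying $\Vect^\gr \to \Vect$ is plainly conservative and admits a $\Shv_\mixed(\pt_n)$-linear right adjoint (constant sequence $W \mapsto (W)_{i\in\mathbb{Z}}$), so $\oblv_\gr$ inherits a right adjoint. Using the compatibility just established, I would reduce in two steps: first, a smooth atlas $S \to \mathcal{Y}$ has conservative $!$-pullback on renormalized sheaves for Artin stacks with affine stabilizers, reducing to schemes; second, leveraging the compact generators arising from the smooth topology and a stratification of $S$, the problem reduces to $\mathcal{Y} = \pt$, where $\oblv_\gr$ is literally the forget-grading functor $\Vect^\gr \to \Vect$. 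The main obstacle I expect is the passage from schemes to genuine (non-scheme) stacks in the renormalized setting: naive stalk-detection on an arbitrary object of $\Shv_\gr(\mathcal{Y})^\ren$ is unavailable, and one must argue via the compact generators pulled back from the atlas, using the $!$-pullback compatibility of $\oblv_\gr$ established in the second paragraph.
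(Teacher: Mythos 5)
Your construction of $\gr_{\mathcal{Y}_n}$ and $\oblv_\gr$ and the use of the universal property of the relative tensor product match the paper's approach, but the functoriality step contains a misstatement: you assert that the operations on $\Shv(\mathcal{Y})^\ren$ are themselves base changes of those on $\Shv_\mixed(\mathcal{Y}_n)^\ren$ along $\Shv_\mixed(\pt_n)\to\Vect$. This is false, because $\Shv_\mixed(\mathcal{Y}_n)^\ren\otimes_{\Shv_\mixed(\pt_n)}\Vect$ is only a \emph{full subcategory} of $\Shv(\mathcal{Y})^\ren$ (its compact objects are built from sheaves admitting a mixed $k_n$-structure; this is exactly why the paper later introduces $\Shv_{\infty,c}(\mathcal{Y})$). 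What you actually need is that the extension-of-scalars pullback $h_{\mathcal{Y},\ren}^*:\Shv_\mixed(\mathcal{Y}_n)^\ren\to\Shv(\mathcal{Y})^\ren$ intertwines the $f_n$- and $f$-operations (a base-change statement over $k/k_n$); only then does the universal property of tensoring up give the right-hand squares, as in \cref{thm:compatibility_functoriality_mixed_graded_non_mixed}. This is repairable, but as written the ``essentially tautological'' claim rests on an incorrect identification.

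The conservativity argument has a genuine gap. Your first reduction --- that pullback along a smooth atlas is conservative on renormalized sheaves --- is contradicted by the paper itself: $\Shv(-)^\ren$ and $\Shv_\gr(-)^\ren$ fail smooth descent, and for $h:\pt\to B\Gm$ the object $\mathcal{F}=\colim(\Qlbar\xrightarrow{\beta}\Qlbar[2]\xrightarrow{\beta}\cdots)$ is nonzero in the renormalized category while $h^*_\ren\mathcal{F}\simeq 0$ (and $h^!_\ren$ differs from $h^*_\ren$ only by a shift); see \cref{expl:constant_sheaves_not_compact} and the remark following \cref{prop:Shv_gr_smooth_descent}. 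Even if you restrict to constructible objects, which is all that \cref{lem:oblv_gr_conservative} claims, your second step --- reducing along ``compact generators and a stratification of $S$'' to the point --- is not substantiated: an object of $\Shv_{\gr,c}(S)$ lives in an abstract relative tensor product, and stalkwise detection (or the $t$-exactness of $\oblv_\gr$ you would implicitly need) is established in the paper only \emph{after}, and by means of, the conservativity statement and the Hom computations. The mechanism that actually closes the argument, and which your proposal never invokes, is the enriched-Hom induction formula (\cref{prop:induction_Hom_tensor}, \cref{prop:hom_graded_sheaves}): $\cHom_{\Shv_\gr(\mathcal{Y})^\ren}(\gr(\mathcal{F}),\gr(\mathcal{G}))$ is the weight-zero summand of the mixed Hom, so after factoring $\oblv_\gr$ through the fully faithful embedding of $\Shv_\mixed(\mathcal{Y}_n)^\ren\otimes_{\Shv_\mixed(\pt_n)}\Vect$ into $\Shv(\mathcal{Y})^\ren$, conservativity on compact objects follows directly from conservativity of $\Vect^\gr\to\Vect$ via \cref{cor:F_Mc_is_conservative_when_F_is}; no atlas, stratification, or right adjoint is needed.
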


Even though $\Shv_{\gr,c}(\mathcal{Y})$ is defined in an abstract way, one can understand it quite explicitly. Namely, its objects are direct summands of finite colimits of objects in the essential image of $\gr_{\mathcal{Y}_n}$. Moreover, given $\mathcal{F}_n, \mathcal{G}_n \in \Shv_{\mixed, c}(\mathcal{Y}_n)$,
\[
	\cHom_{\Shv_\gr(\mathcal{Y})^\ren}(\gr_{\mathcal{Y}_n} (\mathcal{F}_n), \gr_{\mathcal{Y}_n} (\mathcal{G}_n)) \in \Vect
\]
is simply the Frobenius weight $0$ part of the mixed-$\cHom$ complex, see \cref{prop:hom_graded_sheaves}. This simple, but important, observation allows us to show that $\Shv_{\gr, c}(\mathcal{Y}_n)$ has a transversal weight and $t$-structures in the sense of~\cite{bondarko_weight_2012}. See \cref{thm:bondarko_main_thm} for a quick review.

\begin{thm}[\cref{thm:weight_t_structures_graded_sheaves}]
For any $\mathcal{Y} \in \Stk_k$, $\Shv_{\gr, c}(\mathcal{Y})$ is equipped with a weight and a $t$-structure such that the $t$-structure is transversal to the weight structure.
	
This $t$-structure will be referred to as the perverse $t$-structure, whose heart is the category of graded perverse sheaves $\Perv_{\gr, c}(\mathcal{Y}) \defeq \Shv_{\gr, c}(\mathcal{Y})^{\theart}$.
\end{thm}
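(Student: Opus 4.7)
The plan is to transport both structures from $\Shv_{\mixed, c}(\mathcal{Y}_n)$ for a chosen $k_n$-form $\mathcal{Y}_n$ of $\mathcal{Y}$ along the symmetric monoidal functor $\gr_{\mathcal{Y}_n}$, and then appeal to \cref{thm:Shv_gr_is_a_sheaf_theory_on_Stk_k} for independence of the form. The crucial input is the forthcoming identification (\cref{prop:hom_graded_sheaves}) of $\cHom_{\Shv_\gr(\mathcal{Y})^\ren}(\gr_{\mathcal{Y}_n}\mathcal{F}_n, \gr_{\mathcal{Y}_n}\mathcal{G}_n)$ with the Frobenius-weight-$0$ part of the mixed $\cHom$-complex. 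This is what lets the weight-orthogonality of Bondarko's weight structure on $\Shv_{\mixed, c}(\mathcal{Y}_n)$ (extended from schemes to Artin stacks by H\'ebert and Bondarko) survive semi-simplification; analogously, the perverse $t$-structure of~\cite{laszlo_perverse_2009} will descend.

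Concretely, I would define $\Shv_{\gr, c}(\mathcal{Y})_{w \leq 0}$ (respectively $_{w \geq 0}$) to be the smallest full subcategory of $\Shv_{\gr, c}(\mathcal{Y})$ containing $\gr_{\mathcal{Y}_n}(\Shv_{\mixed, c}(\mathcal{Y}_n)_{w \leq 0})$ (respectively $\gr_{\mathcal{Y}_n}(\Shv_{\mixed, c}(\mathcal{Y}_n)_{w \geq 0})$), and closed under extensions and direct summands. Weight-orthogonality is preserved under extensions and retracts in both variables via the associated long exact sequences, so it suffices to check it on generators; there it becomes exactly Bondarko's mixed orthogonality paired with the Hom formula (the relevant mixed $\Hom$ has strictly positive Frobenius weight, hence trivial weight-$0$ part). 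For the weight decomposition axiom, a triangle for any object in the image of $\gr_{\mathcal{Y}_n}$ is obtained by applying $\gr_{\mathcal{Y}_n}$ to Bondarko's decomposition. The perverse $t$-structure is constructed by the identical recipe.

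For transversality in the sense of~\cite{bondarko_weight_2012}, I would verify two items: pure weight-$0$ objects lie in the $t$-heart, and the weight filtration on any object of $\Perv_{\gr, c}(\mathcal{Y})$ splits. The first transports directly, since pure weight-$0$ mixed perverse sheaves are perverse. The second is where the semi-simplification pays off: for pure graded perverse sheaves of distinct weights $a \neq b$, the Hom formula identifies their $\Ext^1$ with the Frobenius-weight-$0$ part of a mixed $\Ext^1$ class of Frobenius weight $b - a \neq 0$, which therefore vanishes. Consequently, the weight filtration on every graded perverse sheaf splits canonically into its weight-graded pieces, yielding transversality. \emph{The main obstacle} is extending the weight-decomposition triangle from the generators in the image of $\gr_{\mathcal{Y}_n}$ to all objects of $\Shv_{\gr, c}(\mathcal{Y})$, since this image is a priori not closed under retracts; one handles it by checking that the class of objects admitting such a triangle is itself closed under extensions and retracts (using functoriality of Bondarko's decomposition) and hence exhausts the category.
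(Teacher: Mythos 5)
Your construction of the two structures by transporting generators along $\gr$ and checking orthogonality via \cref{prop:hom_graded_sheaves} is in the right spirit (this is essentially how the paper gets its $\Hom$ estimates), but your verification of transversality contains a genuine error. You claim that for pure graded perverse sheaves of distinct weights $a\neq b$ the relevant $\Ext^1$ vanishes, because it is the Frobenius-weight-$0$ part of a mixed class ``of Frobenius weight $b-a\neq 0$.'' The weight yoga does not give this: if the source is pure of weight $a$ and the target pure of weight $b$, the mixed $\cHom$-complex has weights $\geq b-a$, so $\Ho^1$ has Frobenius weights $\geq 1+b-a$, which contains weight $0$ as soon as $a\geq b+1$. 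Indeed \cref{thm:refined_Hom_Shv_gr_estimates_using_weights} shows the graded $\cHom$ is concentrated in degrees $(0,k-l]$, so $\Ext^1$ from weight $k$ to weight $l$ genuinely survives whenever $k\geq l+1$; a concrete example is $\gr$ of the nonsplit extension $0\to \Qlbar_0 \to j_!\Qlbar_{\Gm}[1] \to \Qlbar_{\mathbb{A}^1}[1]\to 0$ on $\mathbb{A}^1$, which remains nonsplit since $\oblv_\gr$ is conservative. Consequently the weight filtration on a graded perverse sheaf does \emph{not} split, and this is not what Bondarko's transversality asks for anyway: his criterion (\cref{thm:bondarko_main_thm}) requires filtrations of objects of the $t$-heart with semisimple graded pieces in $A_i$ (equivalently, ``nice'' weight truncations inside the $t$-heart), not splittings. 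The paper's proof takes exactly this route: it sets $A_i=\Perv_{\gr,c}(\mathcal{Y})^{w=i}$, proves each $A_i$ is abelian semisimple with an explicit description (\cref{thm:explicit_description_Perv_gr_c_w=nu}), proves strong semi-orthogonality — only the \emph{degree-zero} Homs between distinct weights vanish (\cref{cor:orthogonality_perv_gr_diff_weights,thm:refined_Hom_Shv_gr_estimates_using_weights}) — and proves generation (\cref{thm:Shv_gr_c_generated_pure_graded_perv_sheaves}); Bondarko's theorem then produces both structures and their transversality simultaneously. Your two proposed checks are not sufficient, and the second is false.

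A secondary gap: to extend the weight-decomposition axiom from objects in the image of $\gr_{\mathcal{Y}_n}$ to all of $\Shv_{\gr,c}(\mathcal{Y})$ you invoke ``functoriality of Bondarko's decomposition,'' but weight truncations are not functorial or canonical (the paper explicitly notes this), so closure under retracts cannot be argued the way it is for $t$-truncations. This step needs either Bondarko's theorem that a bounded weight structure extends to the idempotent completion, or the paper's device: show that the subcategory generated by the pure pieces (without Karoubification) is already idempotent complete, which is done in \cref{lem:semi_orthogonality_adjoint,lem:idempotent_complete_generation} and crucially uses the degree-zero orthogonality between distinct weights to produce semiorthogonal-decomposition adjoints. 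Your closure under extensions is fine, but the retract step as written does not go through.
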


Moreover, the weight and $t$-structures are compatible with various functors as expected. For instance, we have the following results. Many similar results are established in \cref{subsec:formal_properties_weight_Shv_gr}.

\begin{prop}[\cref{prop:t-exactness}]
Let $\mathcal{Y}_n \in \Stk_{k_n}$ and $\mathcal{Y}$ its base change to $k$. Then, the functors
\[
	\Shv_{\mixed, c}(\mathcal{Y}_n) \xrightarrow{\gr} \Shv_{\gr, c}(\mathcal{Y}) \xrightarrow{\oblv_\gr} \Shv_c(\mathcal{Y})
\]
preserves and reflects $t$-structures with respect to the perverse $t$-structures. Namely, for any $n$ and any $\mathcal{F} \in \Shv_{\gr, c}(\mathcal{Y})$, $\mathcal{F} \in \Shv_{\gr, c}(\mathcal{Y})^{t\leq n}$ (resp. $\mathcal{F} \in \Shv_{\gr, c}(\mathcal{Y})^{t\geq n}$) if and only if $\oblv_\gr(\mathcal{F}) \in \Shv_c(\mathcal{Y})^{t\leq n}$ (resp. $\oblv_\gr(\mathcal{F}) \in \Shv_c(\mathcal{Y})^{t\geq n}$). We have a similar statement for $\gr$.
\end{prop}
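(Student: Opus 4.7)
My plan is to reduce the full statement to a single key fact --- that $\oblv_\gr$ is $t$-exact for the perverse $t$-structures --- and then use its conservativity (\cref{lem:oblv_gr_conservative}) to upgrade $t$-exactness to the stronger preserve-and-reflect assertion.

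The first step is to check $t$-exactness of $\oblv_\gr$. Depending on how the perverse $t$-structure on $\Shv_{\gr,c}(\mathcal{Y})$ is set up in the preceding theorem, this may be essentially tautological --- the natural construction takes the transversal $t$-structure produced by Bondarko's machinery, and one expects its heart to be characterized as those graded sheaves whose $\oblv_\gr$-image is perverse. If this is not already baked in, I would verify it on a generating class: the essential image of $\gr_{\mathcal{Y}_n}$ generates $\Shv_{\gr,c}(\mathcal{Y})$ under finite colimits, shifts, and retracts, and the truncation subcategories are closed under these operations, so it suffices to show $\oblv_\gr \circ \gr_{\mathcal{Y}_n}$ is $t$-exact. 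But by the comparison theorem this composite agrees with the base change $\pi_{\mathcal{Y}_n}^* : \Shv_{\mixed,c}(\mathcal{Y}_n) \to \Shv_c(\mathcal{Y})$, and $\pi_{\mathcal{Y}_n}^*$ is $t$-exact by the very definition of the mixed perverse $t$-structure.

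Granted $t$-exactness, reflection is formal. If $\oblv_\gr(\mathcal{F}) \in \Shv_c(\mathcal{Y})^{t\leq n}$, I form the perverse truncation triangle $\tau^{\leq n}\mathcal{F} \to \mathcal{F} \to \tau^{>n}\mathcal{F}$ and apply $\oblv_\gr$; by $t$-exactness $\oblv_\gr(\tau^{>n}\mathcal{F}) \in \Shv_c(\mathcal{Y})^{t>n}$, while the triangle and the hypothesis force it to also lie in $\Shv_c(\mathcal{Y})^{t\leq n}$, so it vanishes. Conservativity then gives $\tau^{>n}\mathcal{F} = 0$, hence $\mathcal{F} \in \Shv_{\gr,c}(\mathcal{Y})^{t\leq n}$, and the $t\geq n$ case is symmetric. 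For the $\gr_{\mathcal{Y}_n}$ statement, I use $\oblv_\gr \circ \gr_{\mathcal{Y}_n} \simeq \pi_{\mathcal{Y}_n}^*$ again, together with the standard fact that $\pi_{\mathcal{Y}_n}^*$ preserves and reflects the perverse $t$-structure, chaining $\mathcal{F}_n \in \Shv_{\mixed,c}(\mathcal{Y}_n)^{t\leq n} \Leftrightarrow \pi_{\mathcal{Y}_n}^*\mathcal{F}_n \in \Shv_c(\mathcal{Y})^{t\leq n} \Leftrightarrow \oblv_\gr(\gr_{\mathcal{Y}_n}\mathcal{F}_n) \in \Shv_c(\mathcal{Y})^{t\leq n} \Leftrightarrow \gr_{\mathcal{Y}_n}\mathcal{F}_n \in \Shv_{\gr,c}(\mathcal{Y})^{t\leq n}$.

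The main obstacle I anticipate is the first step: pinning down how the perverse $t$-structure constructed via the preceding theorem interacts with $\oblv_\gr$. If that $t$-structure is defined intrinsically from the weight structure and a transversal heart rather than extrinsically as the preimage under $\oblv_\gr$, one must reconcile the two characterizations by checking that the transversal heart supplied by Bondarko's framework really consists of objects whose underlying $\ell$-adic complex is perverse. This is routine but genuinely requires unwinding the construction; once it is in hand, everything else is formal.
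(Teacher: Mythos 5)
Your proposal is correct in substance and runs on the same engine as the paper's proof: establish $t$-exactness and then use conservativity (\cref{lem:oblv_gr_conservative,cor:F_Mc_is_conservative_when_F_is}) to upgrade to preserve-and-reflect; your truncation-triangle argument is exactly the general fact the paper invokes. The route differs in the order and in the non-formal input. The paper proves $t$-exactness of $\gr$ first: by the existence of weight filtrations on mixed perverse sheaves (Laszlo--Olsson) it reduces to pure perverse sheaves, which land in the heart by the very definition of $A_i = \Perv_{\gr,c}(\mathcal{Y})^{w=i}$ in \cref{eq:defn_pure_gr_perv_sheaves} and the characterization of the $t$-heart in \cref{thm:bondarko_main_thm}; $\oblv_\gr$ is then ``argued similarly.'' You instead do $\oblv_\gr$ first and then deduce the $\gr$ statement from $\oblv_\gr\circ\gr_{\mathcal{Y}_n} \simeq h_{\mathcal{Y}}^*$ together with the classical fact that base-change pullback preserves and reflects perversity --- a perfectly good, arguably more economical, way to handle the $\gr$ half without invoking the weight filtration there. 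One caveat: your intermediate reduction ``the image of $\gr$ generates $\Shv_{\gr,c}(\mathcal{Y})$ under finite colimits, shifts, and retracts, and the truncation subcategories are closed under these operations, so it suffices that $\oblv_\gr\circ\gr$ be $t$-exact'' is not valid as stated --- $\Shv_{\gr,c}(\mathcal{Y})^{t\le n}$ is not closed under shifts in both directions, and generation of the whole category says nothing about how the truncated pieces are built. What actually closes this gap is precisely the unwinding you defer to your last paragraph: by \cref{thm:bondarko_main_thm}, $\Shv_{\gr,c}(\mathcal{Y})^{t\le 0}$ is the extension-closure of $\bigcup_{i,\,j\ge 0} A_i[j]$, and by \cref{eq:defn_pure_gr_perv_sheaves} (or \cref{thm:explicit_description_Perv_gr_c_w=nu}) the objects of $A_i$ are retracts of $\gr$ of pure mixed perverse sheaves, whose images under $\oblv_\gr$ are perverse because $\oblv_\gr\circ\gr$ is the perverse-$t$-exact pullback; with that spelled out, your argument is complete.
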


\begin{prop}[\cref{prop:w-exactness}]
Let $\mathcal{Y}$ and $\mathcal{Y}_n$ be as above. The functor $\gr: \Shv_{\mixed, c}(\mathcal{Y}_n) \to \Shv_{\gr, c}(\mathcal{Y})$ preserves and reflects weights, where we use Frobenius weights on $\Shv_{\mixed, c}(\mathcal{Y}_n)$ and our weight structure on $\Shv_{\gr, c}(\mathcal{Y})$. Namely, for any $k$ and $\mathcal{F} \in \Shv_{\mixed, c}(\mathcal{Y})$, then $\mathcal{F} \in \Shv_{\mixed, c}(\mathcal{Y})^{w\leq k}$ (resp. $\mathcal{F} \in \Shv_{\mixed, c}(\mathcal{Y})^{w\geq k}$) if and only if $\gr(\mathcal{F}) \in \Shv_{\gr, c}(\mathcal{Y})^{w \leq k}$ (resp. $\gr(\mathcal{F}) \in \Shv_{\gr, c}(\mathcal{Y})^{w \geq k}$).
\end{prop}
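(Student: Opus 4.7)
The plan is to leverage the $\Hom$-formula from \cref{prop:hom_graded_sheaves}, which identifies $\cHom_{\Shv_\gr(\mathcal{Y})^\ren}(\gr(\mathcal{F}), \gr(\mathcal{G}))$ with the Frobenius weight zero piece of $\cHom_{\Shv_\mixed(\mathcal{Y}_n)^\ren}(\mathcal{F}, \mathcal{G})$, together with the weight orthogonality built into the weight structure on $\Shv_{\gr, c}(\mathcal{Y})$.

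\emph{Preservation} should follow directly from the construction of the weight structure on $\Shv_{\gr, c}(\mathcal{Y})$ in \cref{thm:weight_t_structures_graded_sheaves}: this weight structure is set up so that $\gr$ sends Frobenius pure objects of weight $k$ into the weight heart at weight $k$. Combined with the stability of the weight classes under extensions and the existence of the canonical BBD Frobenius weight filtration on $\Shv_{\mixed, c}(\mathcal{Y}_n)$, this forces $\gr$ to map Frobenius weight $\leq k$ (resp. $\geq k$) into $w^{\leq k}$ (resp. $w^{\geq k}$).

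For \emph{reflection of $w^{\leq k}$}, suppose $\gr(\mathcal{F}) \in \Shv_{\gr, c}(\mathcal{Y})^{w \leq k}$. Apply $\gr$ to the canonical Frobenius weight truncation triangle
\[
	W_k \mathcal{F} \to \mathcal{F} \xrightarrow{q} \mathcal{F}/W_k\mathcal{F}
\]
in $\Shv_{\mixed, c}(\mathcal{Y}_n)$, where $\mathcal{F}/W_k\mathcal{F}$ has Frobenius weights $\geq k+1$. By preservation, $\gr(\mathcal{F}/W_k\mathcal{F}) \in w^{\geq k+1}$, so weight orthogonality yields
\[
	\Hom^0_{\Shv_\gr(\mathcal{Y})^\ren}(\gr(\mathcal{F}), \gr(\mathcal{F}/W_k\mathcal{F})) = 0,
\]
which, by the $\Hom$-formula, equals the Frobenius weight zero piece $\gr_0^W \Ext^0_{\Shv_\mixed(\mathcal{Y}_n)^\ren}(\mathcal{F}, \mathcal{F}/W_k\mathcal{F})$. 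The map $q$ itself is a morphism of mixed sheaves (in particular Frobenius-fixed, hence of Frobenius weight $0$) lying in $\Ext^0$, and is nonzero whenever $\mathcal{F}/W_k\mathcal{F} \neq 0$. Since a nonzero Frobenius-fixed class cannot lie in $W_{-1}$, such a $q$ would give a nonzero element of $\gr_0^W$, contradicting the vanishing above. Hence $\mathcal{F}/W_k\mathcal{F} = 0$, i.e., $\mathcal{F}$ has Frobenius weights $\leq k$.

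The $w^{\geq k}$ case is entirely dual, using the canonical inclusion $W_{k-1}\mathcal{F} \hookrightarrow \mathcal{F}$ in place of $q$ and applying orthogonality on the other side. The main obstacle I foresee lies in the \emph{preservation} step: one needs to verify that $\gr$ behaves well with respect to BBD-style Frobenius weight filtrations on $\Shv_{\mixed, c}(\mathcal{Y}_n)$ in the derived setting for stacks, which may require careful descent arguments through the six-functor formalism developed in the paper. Once this is established, reflection is a clean tautological consequence of the $\Hom$-formula applied to the canonical weight-truncation maps.
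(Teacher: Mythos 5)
Your preservation step is essentially the paper's own argument: one reduces, via the perverse filtration together with the weight filtration on mixed perverse sheaves (\cite[Thm. 9.2]{laszlo_perverse_2009}, with \cite[Thm. 3.5]{sun_decomposition_2012} supplying the stacky compatibility of perverse truncation with Frobenius weights), to Frobenius-pure perverse sheaves, which $\gr$ lands in $\Perv_{\gr, c}(\mathcal{Y})^{w=\nu}$ by the very definition \cref{eq:defn_pure_gr_perv_sheaves}; so the ``obstacle'' you flag in the preservation step is exactly what these citations resolve.

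The reflection step, however, has a genuine gap. First, the ``canonical Frobenius weight truncation triangle'' you invoke exists canonically only for mixed \emph{perverse} sheaves; for a general object of $\Shv_{\mixed, c}(\mathcal{Y}_n)$ weight decompositions are non-canonical, and their existence on stacks is not established in the paper. Second, and independently of canonicity, the key nonvanishing claim is not justified: by \cref{prop:hom_graded_sheaves}, the group you make vanish, $\Ho^0(\cHom_{\Shv_\gr(\mathcal{Y})^\ren}(\gr(\mathcal{F}), \gr(\mathcal{F}/W_k\mathcal{F})))$, is the naive-weight-zero part of $\Ext^0$ computed over $\mathcal{Y}$, and the class that $q$ contributes there is the pullback of $q$ to $\mathcal{Y}$, i.e.\ $\gr(q)$, not $q$ itself; $\Hom$ in $\Shv_{\mixed,c}(\mathcal{Y}_n)$ also receives a contribution from Frobenius \emph{coinvariants} of $\Ho^{-1}$ of the mixed $\cHom$, so $q$ can be nonzero while $\gr(q)=0$. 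Concretely, over $\mathcal{Y}=\pt$ take the nonzero class $\phi \in \Hom_{\Shv_\mixed(\pt_n)}(\Qlbar[-1], \Qlbar)$ (the nonsplit Frobenius self-extension of $\Qlbar$), which pulls back to zero on $\pt$: the triangle $\mathrm{fib}(\phi) \to \Qlbar[-1] \xrightarrow{\phi} \Qlbar$ is a weight truncation at $k=-1$ with nonzero third term, yet $\gr(\phi)=0$, so your orthogonality vanishing yields no contradiction. In general all the argument gives is $\gr(q)=0$, which neither forces $\mathcal{F}/W_k\mathcal{F}=0$ nor directly bounds the Frobenius weights of $\mathcal{F}$; the dual $w\geq k$ argument has the same problem. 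The repair is the paper's reduction: use $t$-exactness of $\gr$ (\cref{prop:t-exactness}) and the compatibility of perverse truncation with weights on both sides (\cite[Thm. 3.5]{sun_decomposition_2012} for mixed sheaves, \cref{prop:properties_transversal_weight_t} for the transversal structure) to reduce to perverse $\mathcal{F}$; there the weight filtration is canonical, $q$ is a surjection of perverse sheaves, and your orthogonality argument does go through because $\gr$ is $t$-exact and conservative (\cref{cor:F_Mc_is_conservative_when_F_is}) --- or one simply quotes the description of $\Perv_{\gr, c}(\mathcal{Y})^{w=\nu}$ as the paper does.
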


We will now relate our theory and previous attempts in the literature. Let $\Shv_{\infty,c}(\mathcal{Y})$ denote the smallest full triangulate subcategory of $\Shv_c(\mathcal{Y})$ containing the essential image of $\Shv_{\mixed, c}(\mathcal{Y}_m)$ under pullbacks for all $k_m$-forms of $\mathcal{Y}$ (for all $m$). We show that this category inherits the perverse $t$-structure from $\Shv_c(\mathcal{Y})$ whose heart is denoted by $\Perv_{\infty, c}(\mathcal{Y})$.

\begin{thm}[\cref{prop:gr_shv_grading_ala_BGS,prop:gr_shv_mixed_version_ala_Rider}]
$\Perv_{\gr, c}(\mathcal{Y})$ is a graded version of $\Perv_{\infty, c}(\mathcal{Y})$ in the sense of~\cite[Defn. 4.3.1.(1)]{beilinson_koszul_1996}.

$\Shv_{\gr, c}(\mathcal{Y})$ is a mixed version of $\Shv_{\infty, c}(\mathcal{Y})$ in the sense of~\cite[Defn. 4.2]{rider_formality_2013}.
\end{thm}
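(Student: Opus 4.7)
The plan is to verify the axioms of a graded/mixed version directly, unpacking the structure built up in the earlier sections. The data I take is: the category $\Shv_{\gr,c}(\mathcal{Y})$ (resp.\ $\Perv_{\gr,c}(\mathcal{Y}) = \Shv_{\gr,c}(\mathcal{Y})^{\theart}$), the Tate twist $\langle 1\rangle \defeq (-)\otimes \Qlbar\langle 1\rangle$ arising from the $\Vect^\gr$-module structure, and the functor $v \defeq \oblv_\gr$. First I would verify that the essential image of $v$ lies in $\Shv_{\infty,c}(\mathcal{Y})$: since $\Shv_{\gr,c}(\mathcal{Y})$ is generated as an idempotent-complete stable subcategory by the image of $\gr_{\mathcal{Y}_n}$ (for any chosen $k_n$-form), and since $\oblv_\gr \circ \gr_{\mathcal{Y}_n}$ is the pullback-to-$k$ functor on mixed sheaves, this is immediate from the definition of $\Shv_{\infty,c}(\mathcal{Y})$. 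Essential surjectivity onto $\Shv_{\infty,c}(\mathcal{Y})$ follows by the same generation argument; for the perverse version, $t$-exactness of $\oblv_\gr$ (\cref{prop:t-exactness}) combined with the decomposition theorem reduces the problem to lifting simple perverse sheaves, each of which is a direct summand of $\oblv_\gr(\gr_{\mathcal{Y}_n} \mathcal{F}_n)$ for a suitable pure $\mathcal{F}_n$.

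Next I would verify the two remaining axioms. The natural isomorphism $v \circ \langle 1\rangle \cong v$ is formal: $\oblv_\gr$ is obtained by tensoring along the symmetric monoidal forgetful functor $\Vect^\gr \to \Vect$, which identifies each line $\Qlbar\langle n\rangle$ with $\Qlbar$. The core step is the Hom-space identification
\[
    \bigoplus_{n \in \mathbb{Z}} \Hom_{\Shv_{\gr,c}(\mathcal{Y})}(\mathcal{F}, \mathcal{G}\langle n\rangle) \xrightarrow{\sim} \Hom_{\Shv_c(\mathcal{Y})}(v\mathcal{F}, v\mathcal{G}).
\]
For $\mathcal{F} = \gr_{\mathcal{Y}_n} \mathcal{F}_n$ and $\mathcal{G} = \gr_{\mathcal{Y}_n} \mathcal{G}_n$, \cref{prop:hom_graded_sheaves} identifies the $n$-th summand on the left with the Frobenius weight-$n$ piece of the mixed Hom complex. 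Summing over $n$ yields the underlying vector space of the mixed Hom \emph{after semi-simplifying Frobenius}, which is exactly the effect of the tensor product $(-)\otimes_{\Shv_\mixed(\pt_n)} \Vect^\gr$ in the definition of $\Shv_{\gr,c}(\mathcal{Y})$, and which in turn coincides with the ungraded Hom taken in $\Shv_c(\mathcal{Y})$ restricted to $\Shv_{\infty,c}(\mathcal{Y})$. The identity extends from this generating set of objects to all of $\Shv_{\gr,c}(\mathcal{Y})$ by compatibility of Hom with finite colimits and retracts in the compact setting.

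The main obstacle I foresee is this last Hom-space step. Its subtlety is precisely the point of the whole paper: classically, the sum of Frobenius weight pieces of a mixed Hom does not recover the ungraded Hom because Frobenius need not act semi-simply (a conjectural input related to the Tate conjecture). The construction of $\Shv_{\gr,c}(\mathcal{Y})$ as a base change along $\Shv_\mixed(\pt_n) \to \Vect^\gr$ is designed to enforce this semi-simplification at the categorical level. Concretely, I would need to trace through Lurie's definition of the tensor product of presentable $\DG$-categories to see that base change does produce the desired decomposition on Hom complexes, which is essentially the content of \cref{prop:hom_graded_sheaves}; the remaining verification for the graded/mixed version axioms is then formal.
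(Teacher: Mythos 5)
Your overall skeleton---exactness, compatibility with the twist, the $\Hom$-sum identification via \cref{prop:hom_graded_sheaves} (packaged in the paper as \cref{rmk:unmixed_Hom_direct_sum_gr_Hom}), plus lifting of simples---is the same route the paper takes, and the ``main obstacle'' you isolate is indeed exactly what those results already settle. But two of your steps have genuine problems. First, you claim $\oblv_\gr$ is essentially surjective onto $\Shv_{\infty,c}(\mathcal{Y})$ ``by the same generation argument.'' Generation does not give this: the essential image of $\oblv_\gr$ need not be closed under cones, since a map $\oblv_\gr(\mathcal{F})\to\oblv_\gr(\mathcal{G})$ decomposes as a sum of components $\mathcal{F}\to\mathcal{G}\lrangle{n}$ over several $n$, and the cone of that sum is not the image of the cone of any single graded map; in BGS-type situations not every object is gradable. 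Fortunately neither definition asks for essential surjectivity---the paper only proves the weaker base-change statement $\Shv_{\gr,c}(\mathcal{Y})\otimes_{\Vect^{\gr,c}}\Vect^{c}\simeq\Shv_{\infty,c}(\mathcal{Y})$ (\cref{prop:2nd_characterization_Shv_infty})---but the axioms do require that irreducible objects of $\Perv_{\infty,c}(\mathcal{Y})$ lie in the essential image, and there your argument is too weak as stated: exhibiting a simple $L$ as a direct summand of $\oblv_\gr(\gr_{\mathcal{Y}_n}\mathcal{F}_n)$ does not put $L$ itself in the image. The paper instead uses that, by the definition of $\Perv_{\infty,c}$ together with \cref{prop:characterization_Shv_infty} and \cref{prop:simple_perv_are_pushforward}, every such simple is literally the pullback of a simple mixed perverse sheaf over some finite form $\mathcal{Y}_m$, so $L\simeq\oblv_\gr(\gr_{\mathcal{Y}_m}\mathcal{L}_m)$ on the nose. (Your summand argument can be repaired---the relevant idempotent lives in graded degree $0$ for weight reasons and hence lifts---but that has to be said.)

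Second, you never verify the axiom that the degrading functor carries semisimple objects to semisimple objects, and this is not formal from the ingredients you list. The paper deduces it from the stronger statement that $\oblv_\gr$ preserves simplicity (\cref{prop:artinian_simple_objects}), which rests on the explicit description of pure graded perverse sheaves (\cref{thm:explicit_description_Perv_gr_c_w=nu}) and conservativity of $\oblv_\gr$, not merely on the $\Hom$-sum identity. With these two points repaired your argument coincides with the paper's: exactness is \cref{prop:t-exactness}, faithfulness and the $\Hom$-sum isomorphism are \cref{rmk:unmixed_Hom_direct_sum_gr_Hom}, twist-compatibility is immediate from the construction, and the decomposition theorem you invoke is not actually needed for the lifting step.
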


\subsubsection{Application to Hecke categories and Soergel bimodules}
We conclude the paper with an application to representation theory. Let $G$ be a reductive group with a fixed pair $T \subseteq B$ of Borel $B$ and a maximal torus. Let $W$ be the Weyl group of $G$ and consider the monoidal $\DG$-category of bounded chain complexes of Soergel bimodules $\Ch^b(\SBim_W)$. We show that this category has a natural geometric incarnation using graded sheaves.

\begin{thm}[\cref{thm:geometric_incarnation_Hecke}]
We have an equivalence of monoidal categories
\[
	\Shv_{\gr, c}(B\backslash G/B) \simeq \Ch^b(\SBim_W).
\]
\end{thm}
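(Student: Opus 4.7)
The plan is to deduce the equivalence from the general weight-complex formalism developed in the paper, combined with classical results on the Hecke category. The strategy has two main steps: identify the weight heart of $\Shv_{\gr, c}(B\backslash G/B)$ classically with $\SBim_W$, and then apply the weight complex functor to pass from the heart to bounded chain complexes.

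To identify the weight heart, I would pick a split $\Fq$-form $B_1 \subset G_1$, yielding an $\Fq$-form of $B\backslash G/B$. By \cref{prop:criterion_weight_heart_gr_sheaves_classical}, the category $\Shv_{\gr, c}(B\backslash G/B)^{\weightheart}$ coincides with $\ho\Pur_c(B\backslash G/B)$ provided the appropriate $\Hom$-purity holds for mixed sheaves on $B_1\backslash G_1/B_1$. This is the classical input going back to Beilinson--Ginzburg--Soergel: the Bruhat cells are affine, intersection cohomology sheaves of Schubert varieties are parity (pointwise pure along the Bruhat stratification), and consequently negative $\Ext$'s between pure perverse sheaves of the same weight vanish. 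Once this is established, Soergel's theorem identifies the additive monoidal category of $B$-equivariant pure semisimple complexes on $G/B$ (with grading shifts) with $\SBim_W$: convolution matches tensor product over $H^*(BT)$, indecomposable summands of pushforwards from Bott--Samelson varieties correspond to indecomposable Soergel bimodules, and the Frobenius weight, converted into a $\mathbb{Z}$-grading by our construction of $\Shv_\gr$, matches the internal $\SBim_W$-grading coming from the standard grading on $H^*(BT)$.

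Applying the weight complex functor $\wt$ from the introduction then yields the desired equivalence: since the weight heart is classical, by \cites{bondarko_weight_2010, sosnilo_theorem_2019, aoki_weight_2020}, the map $\wt: \Shv_{\gr, c}(B\backslash G/B) \to \Ch^b(\SBim_W)$ is an equivalence of $\DG$-categories. Monoidality of this equivalence follows from the fact that the weight structure on $\Shv_{\gr, c}(B\backslash G/B)$ is monoidal--convolution of pure complexes is pure, by properness and purity of Bott--Samelson maps together with the decomposition theorem--so that the weight complex functor itself can be upgraded to a monoidal functor and thus an equivalence of monoidal $\DG$-categories.

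The main obstacle is the first step: carefully extracting the classical $\Hom$-purity statement in a form that feeds into \cref{prop:criterion_weight_heart_gr_sheaves_classical}, and then matching the monoidal and grading conventions so that the resulting equivalence of hearts is one of monoidal categories, not merely of additive categories. Everything else is either classical (Soergel's Endomorphismensatz, parity of Schubert intersection cohomology) or follows formally from the abstract machinery built in the earlier sections of the paper, so the bulk of the work is in marshaling these ingredients rather than producing new ones.
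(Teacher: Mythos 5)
Your proposal is correct and matches the paper's strategy: show the weight heart is classical via \cref{prop:criterion_weight_heart_gr_sheaves_classical} together with the purity results of Bezrukavnikov--Yun, then conclude with the (monoidal, by \cref{thm:aoki_monoidal_wt_complex_functor}) weight complex functor. The only difference is that you quote Soergel's classical identification of the pure semisimple complexes with $\SBim_W$ as a black box, whereas the paper — after explicitly noting that this shortcut works — re-derives the heart equivalence directly by pushing forward along $B\backslash G/B \to BB\times BB$, identifying $\Shv_\gr(BB\times BB)^\ren$ with $R^\Rightarrow$-bimodules, checking factorization through $\SBim_W^\Rightarrow$ via the Bott--Samelson/decomposition-theorem argument, and getting fully faithfulness from~\cite[Prop.~3.1.6]{bezrukavnikov_koszul_2013}; this makes the grading/monoidal bookkeeping (your flagged obstacle) explicit via the shear functors.
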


As graded sheaves interact seamlessly with the usual theory of sheaves/mixed sheaves, the theorem above is obtained by readily using known results about $B\backslash G/B$ proved earlier in~\cites{soergel_kategorie_1990,bezrukavnikov_koszul_2013}.

\subsection{An outline of the paper}
In \cref{sec:graded_sheaves_construction_formal_properties}, we construct the categories of graded sheaves and establish their formal properties. In \cref{sec:weight_perverse_t_structures_graded_sheaves}, using the theory developed by Bondarko in~\cite{bondarko_weight_2012}, we show that the category of constructible graded sheaves on any Artin stack admits a transversal weight and $t$-structures. We show that the weight structure is compatible with Frobenius weights in the theory of mixed sheaves and that the $t$-structure is compatible with the perverse $t$-structure of~\cite{beilinson_faisceaux_2018}. Finally, in \cref{sec:Hecke_categories}, we show that the category of constructible graded sheaves on the finite Hecke stack $B\backslash G/B$ is equivalent, as a monoidal $\DG$ category, to $\Ch^b(\SBim_W)$, the category of bounded chain complexes of Soergel bimodules.

In \cref{sec:generalities_DGCats}, we review the necessary background regarding $\DG$-categories. It contains mostly known results, except possibly for \cref{subsec:induction_formula_enriched_hom}, where we prove an induction formula for the enriched $\Hom$-spaces, crucial in our study of graded sheaves.

\section{Graded sheaves: construction and formal properties}
\label{sec:graded_sheaves_construction_formal_properties}
In this section, we will construct the category of graded sheaves on schemes or stacks and prove their formal properties (see \cref{subsec:conventions} for our conventions regarding schemes and stacks). In preparation for the construction, we start, in \cref{subsec:ell-adic_sheaves,subsec:mixed_sheaves,subsec:Shv_mixed(pt_n)-module_structures}, with a brief review of the theory of $\ell$-adic sheaves and mixed sheaves on stacks in the form that is useful for our purposes. We construct the category of graded sheaves in \cref{subsec:graded_sheaves_and_functoriality} and study their functoriality in \cref{subsec:functoriality_graded_sheaves}. The streamlined construction of the category of graded sheaves means that six-functor formalism carries over to the graded setting in a straightforward manner. In \cref{subsec:hom_graded_sheaves}, we study the $\Hom$-spaces between graded sheaves. Despite being an easy application of \cref{subsec:induction_formula_enriched_hom}, the result in this subsection serves as the foundation for most of the important results concerning graded sheaves in this paper. Possibly most interestingly, we show in \cref{subsec:invariance_extensions_scalars} that in a precise sense, the category of graded sheaves is invariant under extensions of scalars. In other words, for a stack $\mathcal{Y}_n$ defined over $\pt_n = \Spec \Fqn$, the category of graded sheaves on it only depends on $\mathcal{Y}$, the base change of $\mathcal{Y}_n$ to $\Fqbar$. Finally, in \cref{subsec:graded_sheaves_and_correspondences}, we explain how the results of~\cites{liu_enhanced_2017,liu_enhanced_2012} can be used to upgrade our theory of graded sheaves to a functor out of the category of correspondences on stacks. This captures all the base change isomorphisms in a homotopy coherent way, which is useful when one wants to construct an $\infty$-monoidal category using convolutions, a pattern often seen in geometric representation theory.

\subsection{\texorpdfstring{$\ell$}{ℓ}-adic sheaves}
\label{subsec:ell-adic_sheaves}
We start with a quick review of the theory of $\ell$-adic sheaves. The materials presented here are developed in detail in~\cites{gaitsgory_atiyah-bott_2015,gaitsgory_weils_2019,hemo_constructible_2021} and~\cite[Appendix F.5]{arinkin_stack_2022}. 

\subsubsection{$\ell$-adic sheaves on schemes}
For any scheme $S \in \Sch_{\FF}$ (see \cref{subsec:conventions}), we let $\Shv(S)$ denote the ind-completion of the category $\Shv_c(S)$ of constructible $\ell$-adic ($\Qlbar$) sheaves on $S$. In other words, we have
\[
	\Shv(S) = \Ind(\Shv_c(S)) \qquad\text{and}\qquad \Shv_c(S) = \Shv(S)^c.
\]
For example, for $\pt \in \Sch_k$, where, following our conventions in \cref{subsec:conventions}, $k = \Fqbar$, $\Shv(\pt) = \Vect$ and $\Shv_c(\pt) = \Vect^c$. Here, $\Vect$ denotes the $\DG$-category of chain complexes of vector spaces over $\Qlbar$, and $\Vect^c$ the full subcategory spanned by compact objects, which are perfect complexes.

Note that since all functors in the six-functor formalism respect constructibility, the resulting functors between the large categories (i.e., after ind-completion) are obtained by ind-extensions. Thus, all of these functors are continuous and compactness-preserving.

Using the pullback functors $(-)^*$ and $(-)^!$, we obtain
\[
	\Shv^?: \Sch_{\FF}^{\opp} \to \DGCatprescont
\]
where $?$ is either $*$ or $!$. Similarly, for the \emph{small} versions, we have
\[
	\Shv_c^?: \Sch_{\FF}^{\opp} \to \DGCatidemex.
\]

\subsubsection{$\ell$-adic sheaves on stacks}
\label{eq:subsubsec_ell_adic_stacks}
Right Kan extending $\Shv^?$ along $\Sch_{\FF}^\opp \hookrightarrow \Stk_{\FF}^\opp$, we obtain
\[
	\Shv^?: \Stk_{\FF}^{\opp} \to \DGCatprescont.
\]
More concretely, for $\mathcal{Y} \in \Stk_{\FF}$,
\[
	\Shv^?(\mathcal{Y}) = \lim_{S \in \Sch_{\FF/\mathcal{Y}}^\opp} \Shv(S),
\]
where the transition functors are either $(-)^*$ or $(-)^!$. Since the property of satisfying descent is preserved by right Kan extension~\cite[Prop. 6.4.3]{gaitsgory_ind-coherent_2011}, the new sheaf theories satisfy smooth descent. In particular, if $h: Y\to \mathcal{Y}$ is a smooth atlas where $Y$ is a scheme, then
\[
	\Shv^?(\mathcal{Y}) = \Tot(\Shv^?(\Cech^\bullet(Y/\mathcal{Y}))) \teq\label{eq:descent_sheaves_stacks}
\]
where $\Cech^\bullet(Y/\mathcal{Y})$ is the (simplicial) $\Cech$ nerve of the covering $Y\to \mathcal{Y}$, and $\Tot$ is the procedure of taking totalization, i.e., limit, of a co-simplicial object.

\subsubsection{} As in~\cite[Prop. 11.4.3]{gaitsgory_ind-coherent_2011}, for any $\mathcal{Y} \in \Stk_{\FF}$, $\Shv^!(\mathcal{Y}) \simeq \Shv^*(\mathcal{Y})$. This is essentially due to the fact that the diagram used to compute the totalization \cref{eq:descent_sheaves_stacks} for the two theories are equivalent, given by a shift. Thus, we will simply write $\Shv(\mathcal{Y})$ from now on. In particular, for $f: \mathcal{Y} \to \mathcal{Z}$ a morphism in $\Stk_{\FF}$, we have functors
\[
	f^*, f^!: \Shv(\mathcal{Z}) \to \Shv(\mathcal{Y}). \teq\label{eq:Shv_!_vs_*_stacks}
\]
By construction, $f^*$ is continuous, and thus, has a right adjoint $f_*$. $f^!$ is co-continuous, inherited from same property of $(-)^!$ functors between schemes, and thus has a left adjoint $f_!$.\footnote{$f^!$ is in fact also continuous. Thus, it also has a right adjoint. However, this right adjoint is rarely used and in particular, we do not make use of this functor in this paper.}

\begin{rmk}
Let $\PreStk_{\FF} = \Fun(\Sch_{\FF}^\opp, \Spc)$ be the category of prestacks. The same considerations yield two sheaf theories $\Shv^!$ and $\Shv^*$ on prestacks, which are useful, for instance, in the studies of affine Grassmannians as well as their factorizable versions. In this generality, however, \cref{eq:Shv_!_vs_*_stacks} does not always hold. We will revisit this theory in a subsequent paper.
\end{rmk}

\subsubsection{}
By~\cite[Vol. I, Chap. 1, Prop. 2.5.7]{gaitsgory_study_2017}, we obtain a alternative description of $\Shv^!(\mathcal{Y})$ from \cref{eq:descent_sheaves_stacks}
\[
	\Shv(\mathcal{Y}) = |\Shv(\Cech^\bullet(Y/\mathcal{Y}))|,
\]
where we now use the $(-)_!$-functor to move between different schemes. Moreover, $|-|$ denotes the procedure of taking geometric realization, i.e., colimit, of a simplicial object. Since all the functors used in the colimit preserve compactness, the category $\Shv(\mathcal{Y})$ is compactly generated with a set of compact generators given by $h_!(\Shv_c(Y))$, by~\cite[Cor. 1.9.4]{drinfeld_compact_2015}. It is important that the colimit is taken inside $\DGCatprescont$.

\subsubsection{$\Ind$-constructible sheaves on stacks} \label{subsubsec:ind_constructible_on_stacks}
The constant sheaves on most stacks are not compact, see \cref{expl:constant_sheaves_not_compact} below. On the other hand, it is constructible. The \emph{renormalization} procedure described in this subsubsection enlarges the category of sheaves so that constructible sheaves become compact in this new category. The difference between the \emph{renormalized} version and the usual version is parallel to the difference between coherent complexes and perfect complexes, or more precisely, between ind-coherent sheaves and quasi-coherent sheaves since we are working in the \emph{large} category setting. A more detailed discussion can be found in~\cite[Appendix F.5]{arinkin_stack_2022}.

\begin{defn}
For $\mathcal{Y} \in \Stk_{\FF}$, we let $\Shv_c(\mathcal{Y})$ denote the full-subcategory of $\Shv(\mathcal{Y})$ consisting of \emph{constructible} objects, defined to be those whose pullbacks along $s: S \to \mathcal{Y}$ either via $s^*$ or $s^!$ are compact/constructible for any scheme $S$. 

We use $\Shv(\mathcal{Y})^{\ren} \defeq \Ind(\Shv_c(\mathcal{Y}))$ to denote the renormalized category of sheaves on $\mathcal{Y}$.
\end{defn}

\begin{rmk}
When $\mathcal{Y}$ is a scheme rather than a stack, $\Shv(\mathcal{Y})$ and $\Shv(\mathcal{Y})^\ren$ coincide by construction.
\end{rmk}

\begin{lem} \label{lem:constructiblity_stack_*_!_pullback}
Let $\mathcal{Y} \in \Stk_{\FF}$ and $h: Y \to \mathcal{Y}$ a smooth atlas. Then $\mathcal{F} \in \Shv(\mathcal{Y})$ is constructible if and only if $h^* \mathcal{F}$ or $h^! \mathcal{F}$ is constructible.
\end{lem}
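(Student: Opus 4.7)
The forward direction (``constructible $\Rightarrow$ constructible pullbacks'') is tautological: by definition of constructibility on $\mathcal{Y}$, the pullbacks $h^*\mathcal{F}$ and $h^!\mathcal{F}$ must be constructible on the scheme $Y$. So the content is the backward direction.

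The plan is first to reduce the ``$h^!$-statement'' to the ``$h^*$-statement''. Since $h \colon Y \to \mathcal{Y}$ is smooth of some locally constant relative dimension $d$, the two functors differ by a cohomological shift and a Tate twist, $h^! \simeq h^*(d)[2d]$, so $h^* \mathcal{F}$ is constructible if and only if $h^! \mathcal{F}$ is; the same applies to any smooth map of schemes. It therefore suffices to show that if $h^* \mathcal{F}$ is constructible on $Y$, then for any scheme map $s \colon S \to \mathcal{Y}$ the pullback $s^* \mathcal{F}$ is constructible on $S$ (the $s^!$-version will then also follow, since $s$ between schemes gives $s^!\mathcal{F} \simeq s^*\mathcal{F} \otimes \omega_{S/\mathcal{Y}}$ up to standard adjustments, or more directly by applying the same argument to $h^!\mathcal{F}$).

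The key step is to form the fiber product
\[
\begin{tikzcd}
	Y \times_{\mathcal{Y}} S \ar{r}{s'} \ar{d}{h'} & Y \ar{d}{h} \\
	S \ar{r}{s} & \mathcal{Y},
\end{tikzcd}
\]
in which $h'$ is a smooth surjective morphism of schemes (base change of the smooth atlas $h$). Smooth base change in the six-functor formalism gives a canonical equivalence $h'^* s^* \mathcal{F} \simeq s'^* h^* \mathcal{F}$. Since $h^* \mathcal{F}$ is by hypothesis constructible on $Y$ and $s'$ is a morphism of schemes, the right-hand side is constructible on $Y \times_\mathcal{Y} S$. Hence $h'^* s^* \mathcal{F}$ is constructible.

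The final step is to invoke smooth descent of constructibility for schemes: since $h'$ is smooth and surjective and constructibility of an $\ell$-adic sheaf on a scheme can be checked after any smooth surjective pullback, we conclude that $s^* \mathcal{F}$ is itself constructible on $S$. As $s$ was arbitrary, $\mathcal{F}$ is constructible on $\mathcal{Y}$. The main (minor) obstacle is simply ensuring that the two prerequisites being invoked---smooth base change and smooth-local nature of constructibility---are available in the $\infty$-categorical framework of $\Shv^?$ set up earlier; both are standard in the references cited in \S\ref{subsec:ell-adic_sheaves}.
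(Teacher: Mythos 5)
Your proposal is correct and follows essentially the same route as the paper: form the Cartesian square with the smooth surjective scheme map $h' \colon Y\times_{\mathcal Y} S \to S$, identify $h'^* s^*\mathcal F \simeq s'^* h^*\mathcal F$, and conclude by the smooth-local nature of constructibility on schemes (with the $h^!$ case handled by the shift/twist comparison with $h^*$, matching the paper's ``done analogously''). One cosmetic remark: the identification $h'^* s^*\mathcal F \simeq s'^* h^*\mathcal F$ is just compatibility of $*$-pullbacks with composition around the commuting square (built into the limit definition of $\Shv^*$ on stacks), not an instance of smooth base change, so no smoothness of $s'$ or $h$ is needed for that step.
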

\begin{proof}
We will prove the statement for $h^*$; the one for $h^!$ is done analogously. The only if direction is clear by definition. We will now prove the if direction.

Let $f: S\to \mathcal{Y}$ be an arbitrary morphism, where $S\in \Sch_{\FF}$. Consider the following pullback square
\[
\begin{tikzcd}
	S\times_\mathcal{Y} Y \ar{d}{h_S} \ar{r}{f'} & Y \ar{d}{h} \\
	S \ar{r}{f} & \mathcal{Y}
\end{tikzcd}
\]
where $h_S$ is a smooth surjective map between schemes. Since $h_S^* f^* \mathcal{F} \simeq f'^* h^* \mathcal{F}$ is constructible, so is $f^* \mathcal{F}$ and we are done.
\end{proof}

By definition, $\Shv_c(\mathcal{Y}) \simeq (\Shv(\mathcal{Y})^{\ren})^c$ since $\Shv_c(\mathcal{Y})$ is easily seen to be idempotent complete. Unlike the case of schemes, however, $\Shv_c(\mathcal{Y})$ is, in general, different from $\Shv(\mathcal{Y})^c$, i.e., constructibility and compactness do not necessarily coincide for a general stack $\mathcal{Y}$ (see \cref{expl:constant_sheaves_not_compact} below).

\begin{expl} \label{expl:constant_sheaves_not_compact}
Consider the constant sheaf $\QQbar_{\ell, B\Gm} \in \Shv(B\Gm)$ which is evidently constructible. It is, however, not compact in $\Shv(B\Gm)$. Indeed, to see that, it suffices to show that
\[
	\pi_* = \cHom_{\Shv(B\Gm)}(\QQbar_{\ell, B\Gm}, -): \Shv(B\Gm) \to \Shv(\pt) \simeq \Vect
\]
does not commute with filtered colimits, where $\pi: B\Gm \to \pt$ is the structure map. Consider
\[
	\cHom_{\Shv(B\Gm)}(\QQbar_{\ell, B\Gm}, \QQbar_{\ell, B\Gm}) = \pi_* \QQbar_{\ell, B\Gm} \simeq \QQbar_\ell[\beta]
\]
where $\beta$ is in cohomological degree $2$. Let
\[
	\mathcal{F} = \colim(\QQbar_{\ell, B\Gm} \xrightarrow{\beta} \QQbar_{\ell, B\Gm}[2] \xrightarrow{\beta} \cdots) \in \Shv(B\Gm).
\]

Let $h: \pt \to B\Gm$ be the canonical map. Then, by descent, $h^*$ is continuous and conservative, i.e., it does not kill any object. But we see that
\[
	h^* \mathcal{F} = \colim(\Qlbar \xrightarrow{\beta} \Qlbar[2] \xrightarrow{\beta} \cdots) \simeq 0.
\]
Hence, by conservativity of $h^*$, $\mathcal{F} = 0$, which means
\[
	\pi_* \colim(\Qlbar \xrightarrow{\beta} \Qlbar[2] \xrightarrow{\beta} \cdots) \simeq \pi_* \mathcal{F} \simeq 0.
\]

On the other hand, 
\begin{align*}
	&\alignsep\colim(\pi_* \QQbar_{\ell, B\Gm} \xrightarrow{\beta} \pi_* \QQbar_{\ell, B\Gm}[2] \xrightarrow{\beta} \cdots) 
	\simeq \Qlbar[\beta, \beta^{-1}] \\
	&\not\simeq 0 
	\simeq \pi_* \colim(\QQbar_{\ell, B\Gm} \xrightarrow{\beta} \QQbar_{\ell, B\Gm}[2] \xrightarrow{\beta} \cdots)
\end{align*}
and we are done.
\end{expl}

\subsubsection{$\ren$ and $\unren$}
The categories $\Shv(\mathcal{Y})$ and $\Shv(\mathcal{Y})^\ren$ are related by a pair of adjoint functors $\ren \dashv \unren$ which we will now turn to. For more details, see~\cite[Appendix F.5]{arinkin_stack_2022}.

First, note that compact objects in $\Shv(\mathcal{Y})$ are constructible, i.e., $\Shv(\mathcal{Y})^c \subset \Shv_c(\mathcal{Y})$. Thus, by ind-extension, we obtain a continuous fully faithful functor
\[
	\ren: \Shv(\mathcal{Y}) \to \Shv(\mathcal{Y})^\ren.
\]
Similarly, the functor $\Shv_c(\mathcal{Y}) \hookrightarrow \Shv(\mathcal{Y})$ induces a continuous functor
\[
	\unren: \Shv(\mathcal{Y})^\ren \hookrightarrow \Shv(\mathcal{Y}).
\]
These two functors form a pair of adjoints $\ren \dashv \unren$.

By definition, we have the following commutative diagrams
\[
\begin{tikzcd}
	\Shv(\mathcal{Y})^c \ar[hookrightarrow]{d} \ar[hookrightarrow]{dr} \\
	\Shv(\mathcal{Y}) \ar[hookrightarrow]{r}{\ren} & \Shv(\mathcal{Y})^\ren
\end{tikzcd} \qquad
\begin{tikzcd}
	& \Shv_c(\mathcal{Y}) \ar[hookrightarrow]{dl} \ar[hookrightarrow]{d} \\
	\Shv(\mathcal{Y}) & \Shv(\mathcal{Y})^\ren \ar{l}[swap]{\unren}
\end{tikzcd}
\]

\subsubsection{$t$-structures}
\label{subsubsec:t-structures_on_cats_of_sheaves}
The categories $\Shv(\mathcal{Y})$ and $\Shv(\mathcal{Y})^\ren$ are naturally equipped with a standard and a perverse $t$-structure, both obtained from ind-extending the corresponding $t$-structure on the full subcategory of compact objects, see~\cite[Prop. 2.13]{antieau_k-theoretic_2019} or~\cite[Lem. C.2.4.3]{lurie_spectral_2018}. Moreover, the functor $\unren$, restricted to the bounded below subcategories, induces an equivalence of categories~\cite[Appendix F.5.2]{arinkin_stack_2022}, i.e.
\[
	\unren|_{\Shv(\mathcal{Y})^{\ren, +}}: \Shv(\mathcal{Y})^{\ren, +} \xrightarrow{\simeq} \Shv(\mathcal{Y})^+.
\]

\subsubsection{Functoriality of ind-constructible sheaves on stacks}
Since pulling back along any morphism $f: \mathcal{Y} \to \mathcal{Z}$ in $\Stk_{\FF}$ preserves constructibility, we automatically get continuous functors $f^!_{\ren}$ and $f^*_{\ren}$ obtained by ind-extending the restrictions of $f^!$ and $f^*$ to the constructible full-subcategories.\footnote{Note that on stacks, $f^*$ and $f^!$ do not necessarily preserve compactness. See \cref{expl:constant_sheaves_not_compact}.} By construction, $f^*_{\ren}$ is continuous, and hence, it admits a right adjoint, denoted by $f_{*, \ren}$. Moreover, since $f^*_{\ren}$ preserves compactness by definition, $f_{*, \ren}$ is also continuous.

When $f_!$ preserves constructibility (for example, when $f$ is representable), we have a pair of adjoint functors
\[
	f_!: \Shv_c(\mathcal{Y}) \rightleftarrows \Shv_c(\mathcal{Z}): f^!.
\]
Ind-extending these, we obtain also a pair of adjoint functors
\[
	f_{!, \ren}: \Shv(\mathcal{Y})^\ren \rightleftarrows \Shv(\mathcal{Z})^\ren: f_\ren^!.
\]
Note that in this case, both $f^!_\ren$ and $f_{!, \ren}$ are continuous.

$f^*$ and $f^*_\ren$ are $t$-exact with respect to the standard $t$-structures. Hence, $f_*$ and $f_{*, \ren}$, being the right adjoints, are left $t$-exact with respect to the standard $t$-structures. In particular, $f_*$ and $f_{*, \ren}$ preserve eventual co-connectivity (the property of being bounded below). Since the $(-)^!$ functor for schemes preserve eventual co-connectivity, so is $f^!$ and $f^!_\ren$ (between stacks).

\subsubsection{Renormalized vs. usual functors}
We will now compare the various pullback/pushforward functors with their renormalized counterparts. Let $f: \mathcal{Y} \to \mathcal{Z}$ in $\Stk_{\FF}$. We have the following (a priori not necessarily commutative) diagram, where parallel arrows are adjoints
\[
\begin{tikzcd}[sep=large]
	\Shv(\mathcal{Y}) \ar[shift left=\arrdisp]{r}{\ren} \ar[shift left=\arrdisp]{d}{f_*} & \Shv(\mathcal{Y})^\ren \ar[shift left=\arrdisp]{l}{\unren} \ar[shift left=\arrdisp]{d}{f_{*, \ren}} \\
	\Shv(\mathcal{Z}) \ar[shift left=\arrdisp]{r}{\ren} \ar[shift left=\arrdisp]{u}{f^*} & \Shv(\mathcal{Z})^\ren \ar[shift left=\arrdisp]{l}{\unren} \ar[shift left=\arrdisp]{u}{f^*_\ren}
\end{tikzcd}
\]

\begin{prop}
\label{prop:interaction_*-functors_ren}
We have an equivalence of functors $f^* \circ \unren \simeq \unren \circ f^*_\ren$. Moreover, for any $\mathcal{F} \in \Shv_c(\mathcal{Y}) \subseteq \Shv(\mathcal{Y})^\ren$, we have a natural equivalence
\[
	f_*(\unren(\mathcal{F})) \simeq \unren(f_{*, \ren}(\mathcal{F})).
\]
\end{prop}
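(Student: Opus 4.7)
The plan is to dispatch the first equivalence by a universal-property argument on ind-completions, then derive the second from the first via adjunction and compact generation of $\Shv(\mathcal{Z})$.

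For the first equivalence, both $f^{*} \circ \unren$ and $\unren \circ f^{*}_{\ren}$ are continuous functors out of $\Shv(\mathcal{Y})^{\ren} = \Ind(\Shv_{c}(\mathcal{Y}))$, since $\unren$, $f^{*}$, and $f^{*}_{\ren}$ are each continuous. By the universal property of ind-completion it suffices to compare them on the subcategory $\Shv_{c}(\mathcal{Y})$ of compact generators. On a constructible sheaf $\mathcal{F}$ both compositions tautologically return $f^{*}\mathcal{F}$ regarded in $\Shv(\mathcal{Z})$: by construction $f^{*}_{\ren}$ is the ind-extension of $f^{*}|_{\Shv_{c}}$, so $f^{*}_{\ren}\mathcal{F}$ is literally the constructible sheaf $f^{*}\mathcal{F}$, and $\unren$ is the identity on constructibles under the inclusion $\Shv_{c} \hookrightarrow \Shv$; the opposite composition unwinds the same way.

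For the second equivalence, fix $\mathcal{F} \in \Shv_{c}(\mathcal{Y})$. The first equivalence, together with the adjunctions $f^{*} \dashv f_{*}$, $f^{*}_{\ren} \dashv f_{*,\ren}$, and $\ren \dashv \unren$, yields by Beck--Chevalley a natural comparison map
\[
	\unren f_{*,\ren}\mathcal{F} \longrightarrow f_{*}\unren\mathcal{F},
\]
whose $f^{*} \dashv f_{*}$-transpose is the composite $f^{*}\unren f_{*,\ren}\mathcal{F} \simeq \unren f^{*}_{\ren}f_{*,\ren}\mathcal{F} \to \unren\mathcal{F}$, the last arrow being $\unren$ applied to the counit of $f^{*}_{\ren} \dashv f_{*,\ren}$. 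I would show this map is an equivalence by testing against compact objects $\mathcal{G} \in \Shv(\mathcal{Z})^{c}$, invoking compact generation of $\Shv(\mathcal{Z})$. Applying $\ren \dashv \unren$ and then $f^{*}_{\ren} \dashv f_{*,\ren}$ computes
\[
	\Hom_{\Shv(\mathcal{Z})}(\mathcal{G},\, \unren f_{*,\ren}\mathcal{F}) \simeq \Hom_{\Shv(\mathcal{Y})^{\ren}}(f^{*}_{\ren}\ren\mathcal{G},\, \mathcal{F}),
\]
while $f^{*} \dashv f_{*}$ gives $\Hom_{\Shv(\mathcal{Z})}(\mathcal{G}, f_{*}\unren\mathcal{F}) \simeq \Hom_{\Shv(\mathcal{Y})}(f^{*}\mathcal{G}, \mathcal{F})$. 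Since $\mathcal{G}$ is compact and hence constructible, $\ren\mathcal{G}$ and $f^{*}_{\ren}\ren\mathcal{G}$ are identified with the constructible sheaves $\mathcal{G}$ and $f^{*}\mathcal{G}$, and $\Hom$ between constructible objects agrees whether computed in $\Shv$ or $\Shv^{\ren}$ (both equal $\Hom$ in $\Shv_{c}$, by full faithfulness of the inclusions). The two sides therefore match.

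The only real obstacle, though essentially bookkeeping, is verifying that this chain of natural $\Hom$-identifications genuinely recovers the Beck--Chevalley comparison map rather than some twist of it; this is a routine diagram chase that ultimately rests on the fact that $\ren$ and $\unren$ are the identity on $\Shv_{c}$ under the inclusions $\Shv_{c} \hookrightarrow \Shv$ and $\Shv_{c} = (\Shv^{\ren})^{c} \hookrightarrow \Shv^{\ren}$ --- which is precisely the content of the first equivalence.
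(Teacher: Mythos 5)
Your argument for the first equivalence is the same as the paper's: both compositions are continuous out of an ind-completion, so it suffices to compare them on constructible objects, where both are tautologically $f^*$ (note only a labeling slip: since $f\colon \mathcal{Y}\to\mathcal{Z}$, these functors are out of $\Shv(\mathcal{Z})^{\ren}$, and the comparison is on $\Shv_c(\mathcal{Z})$, not $\mathcal{Y}$). For the second statement your route is genuinely different from the paper's and is correct. The paper never writes down a comparison map directly: it defines an auxiliary functor $f_{\bullet,\ren}$ on constructibles as $f_*$ followed by the equivalence $\unren\colon \Shv(\mathcal{Z})^{\ren,+}\simeq\Shv(\mathcal{Z})^{+}$ (this uses left $t$-exactness of $f_*$, so that $f_*$ of a constructible is bounded below), ind-extends, and then proves $f_{\bullet,\ren}\simeq f_{*,\ren}$ by exhibiting it as the right adjoint of $f^*_\ren$ via a $\Hom$ computation through the bounded-below subcategories. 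You instead take the Beck--Chevalley mate of the first equivalence as the comparison map $\unren f_{*,\ren}\mathcal{F}\to f_*\unren\mathcal{F}$ and check it is an equivalence by testing against compact objects of $\Shv(\mathcal{Z})$, using that compacts are constructible, that $\ren$, $\unren$, and $f^*_{\ren}$ act as expected on constructibles, and that $\Hom$ between constructibles agrees in $\Shv$, $\Shv_c$, and $\Shv^{\ren}$; all of these inputs are available in the paper (compact generation of $\Shv(\mathcal{Z})$ via $h_!$ of an atlas, $\Shv(\mathcal{Z})^c\subset\Shv_c(\mathcal{Z})$, full faithfulness of both inclusions). What the paper's approach buys is the $t$-structure description of $f_{*,\ren}$ on constructibles (``apply $f_*$ and re-embed via the $+$-equivalence''), which it reuses later for base change and K\"unneth; what yours buys is naturality of the equivalence in $\mathcal{F}$ for free and no reliance on $t$-exactness, at the cost of needing compact generation of the unrenormalized category and the deferred check that your chain of adjunction identifications computes the mate map itself --- that check is indeed the standard unit/counit diagram chase and poses no obstacle, but for completeness it should be carried out rather than only asserted.
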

\begin{proof}
For the first statement, note that since all functors are continuous, it suffices to prove the statement when restricted to $\Shv_c(\mathcal{Z})$. But then, it follows from the definition of $f^*_\ren$.

For the second, we first define a functor
\[
	f_{\bullet, \ren}: \Shv_c(\mathcal{Y}) \to \Shv(\mathcal{Z})^+ \hookrightarrow \Shv(\mathcal{Z})^\ren,
\]
as the following composition
\[
	\Shv_c(\mathcal{Y}) \xrightarrow{f_*} \Shv(\mathcal{Z})^+ \simeq \Shv(\mathcal{Z})^{\ren, +} \hookrightarrow \Shv(\mathcal{Z})^\ren.
\]
Ind-extending, we obtain an (eponymous) continuous functor
\[
	f_{\bullet, \ren}: \Shv(\mathcal{Y})^\ren \to \Shv(\mathcal{Z})^\ren.
\]
By construction, for any $\mathcal{F} \in \Shv_c(\mathcal{Y})$, $\unren(f_{\bullet, \ren}(\mathcal{F})) \simeq f_*(\unren(\mathcal{F}))$. It thus remains to show that $f_{\bullet, \ren} \simeq f_{*, \ren}$. Or equivalently, it suffices to show that $f_{\bullet, \ren}$ is the right adjoint of $f^*_\ren$.

To do that, we produce a natural equivalence
\[
	\Hom_{\Shv(\mathcal{Z})^\ren}(\mathcal{F}, f_{\bullet, \ren}\mathcal{G}) \simeq \Hom_{\Shv(\mathcal{Y})^\ren}(f^*_\ren \mathcal{F}, \mathcal{G}), \quad \forall \mathcal{F} \in \Shv(\mathcal{Z})^\ren, \forall\mathcal{G} \in \Shv(\mathcal{Y})^\ren.
\]
In fact, since the functors involved are continuous, it suffices to assume that $\mathcal{F}$ and $\mathcal{G}$ are constructible. But now, we conclude using the following sequence of equivalences
\begin{align*}
	\Hom_{\Shv(\mathcal{Z})^\ren}(\mathcal{F}, f_{\bullet, \ren}\mathcal{G}) 
	&\simeq \Hom_{\Shv(\mathcal{Z})^+}(\mathcal{F}, f_* \mathcal{G}) \\
	&\simeq \Hom_{\Shv(\mathcal{Y})^+}(f^* \mathcal{F}, \mathcal{G}) \\
	&\simeq \Hom_{\Shv(\mathcal{Y})^{\ren, +}}(f^*_\ren \mathcal{F}, \mathcal{G}) \\
	&\simeq \Hom_{\Shv(\mathcal{Y})^\ren}(f^*_\ren \mathcal{F}, \mathcal{G})
\end{align*}
where we made use of the fact that $\unren$ induces an equivalence between the bounded below parts of the usual and renormalized categories, see \cref{subsubsec:t-structures_on_cats_of_sheaves}.
\end{proof}

\begin{rmk}
\label{rmk:interaction_*-functors_ren}
If $f^*$ preserves compactness or equivalently, if $f_*$ is continuous (for example, when $f$ is representable), then the right adjoints in the diagram above commute in general, i.e., $f_* \circ \unren \simeq \unren \circ f_{*, \ren}$. Indeed, this follows from \cref{prop:interaction_*-functors_ren}, using continuity of $f_*$.

As a result, the left adjoints also commute, i.e., $\ren\circ f^* \simeq f^*_\ren \circ \ren$. 
\end{rmk}

\subsubsection{}
Now, when $f_!$ preserves constructibility, consider the following (a priori not necessarily commutative) diagram
\[
\begin{tikzcd}[sep=large]
	\Shv(\mathcal{Y}) \ar[shift left=\arrdisp]{r}{\ren} \ar[shift right=\arrdisp]{d}[swap]{f_!} & \Shv(\mathcal{Y})^\ren \ar[shift left=\arrdisp]{l}{\unren} \ar[shift right=\arrdisp]{d}[swap]{f_{!, \ren}} \\
	\Shv(\mathcal{Z}) \ar[shift left=\arrdisp]{r}{\ren} \ar[shift right=\arrdisp]{u}[swap]{f^!} & \Shv(\mathcal{Z})^\ren \ar[shift left=\arrdisp]{l}{\unren} \ar[shift right=\arrdisp]{u}[swap]{f^!_\ren}
\end{tikzcd}
\]

\begin{lem}
\label{lem:interaction_!-functors_ren}
Consider $f$ as in \cref{prop:interaction_*-functors_ren}. Then $\unren \circ f^!_\ren \simeq f^! \circ \unren$.

Moreover, if $f_!$ preserves constructibility (so that $f_{!, \ren}$ is defined), then $f_! \circ \unren \simeq \unren \circ f_{!, \ren}$. In this case, switching to the left adjoints of the previous statement also yields an equivalence $f_{!, \ren} \circ \ren \simeq \ren \circ f_!$.
\end{lem}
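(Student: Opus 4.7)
The plan is to follow the same ind-extension-plus-continuity template already used in the proof of \cref{prop:interaction_*-functors_ren}. In both claims, I will identify the two functors on the set of compact generators $\Shv_c(\mathcal{Z}) \subseteq \Shv(\mathcal{Z})^\ren$ (resp.\ $\Shv_c(\mathcal{Y}) \subseteq \Shv(\mathcal{Y})^\ren$), and then conclude via continuity. The final assertion will be obtained purely formally by passing to left adjoints.

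For the first equivalence $\unren \circ f^!_\ren \simeq f^! \circ \unren$, both sides are continuous functors $\Shv(\mathcal{Z})^\ren \to \Shv(\mathcal{Y})$. The left-hand side is continuous by construction, since $f^!_\ren$ is defined as an ind-extension and $\unren$ is continuous. The right-hand side is continuous because $\unren$ is continuous and because $f^!$ between stacks is continuous (see the footnote in \cref{eq:subsubsec_ell_adic_stacks}). For $\mathcal{F} \in \Shv_c(\mathcal{Z})$, the fact that $f^!$ preserves constructibility on stacks gives $f^!(\mathcal{F}) \in \Shv_c(\mathcal{Y})$, and then both compositions evaluate to $f^!(\mathcal{F})$ viewed in $\Shv(\mathcal{Y})$. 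Continuity promotes this agreement on compact generators to a natural equivalence on all of $\Shv(\mathcal{Z})^\ren$.

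For $f_! \circ \unren \simeq \unren \circ f_{!, \ren}$, the argument is analogous. Here $f_!$ on large stack sheaves is continuous (as the left adjoint of $f^!$), so both sides are continuous; by the assumption that $f_!$ preserves constructibility, for $\mathcal{F} \in \Shv_c(\mathcal{Y})$ we have $f_!(\mathcal{F}) \in \Shv_c(\mathcal{Z})$, and both $f_!(\unren \mathcal{F})$ and $\unren(f_{!,\ren}\mathcal{F})$ reduce to $f_!(\mathcal{F})$ viewed in $\Shv(\mathcal{Z})$. Continuity again gives the equivalence globally. For the final clause, I apply the formal rule $(G\circ F)^L \simeq F^L \circ G^L$ to the equivalence $\unren \circ f^!_\ren \simeq f^! \circ \unren$; the relevant left adjoints are $(\unren)^L = \ren$, $(f^!)^L = f_!$, and $(f^!_\ren)^L = f_{!,\ren}$, where the last adjoint exists exactly because the constructibility hypothesis on $f_!$ gives the small adjunction $f_! \dashv f^!$ on $\Shv_c$, which then ind-extends. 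Unravelling yields $f_{!,\ren}\circ \ren \simeq \ren \circ f_!$.

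The only delicate input is continuity of $f^!$ on large sheaf categories over stacks, which is not formal from the schematic case but is recorded in the footnote of \cref{eq:subsubsec_ell_adic_stacks}; once that is in hand, everything reduces to the universal property of ind-completion and the basic yoga of adjoints. I do not expect any serious obstacle beyond keeping track of which functors come with ind-extended versus native continuity.
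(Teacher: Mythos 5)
Your proposal is correct and follows essentially the same route as the paper: identify each pair of functors on constructible objects (where $f^!_\ren$ and $f_{!,\ren}$ are by definition the restrictions of $f^!$ and $f_!$), extend by continuity via the universal property of ind-completion, and obtain the last equivalence by formally passing to left adjoints of the first one.
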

\begin{proof}
For the first statement, if $\mathcal{F} \in \Shv_c(\mathcal{Z})$, the definition of $f^!_\ren$ implies that $\unren(f^!_\ren (\mathcal{F})) \simeq f^!(\unren (\mathcal{F}))$. But since all functors involved are continuous, the same conclusion applies to all $\mathcal{F} \in \Shv(\mathcal{Z})$.

The second statement is proved similarly.
\end{proof}

\begin{rmk}
\label{rmk:renormalize_vs_normal_interchangeably}
Due to \cref{prop:interaction_*-functors_ren}, when working with constructible sheaves, there is no ambiguity between $f^*, f_*$ and their renormalized versions $f^*_\ren$, $f_{*, \ren}$, respectively. By~\cref{lem:interaction_!-functors_ren}, the same statement also applies to $f^!$ vs. $f^!_\ren$ in general, and also to $f_!$ vs. $f_{!, \ren}$ when $f_!$ preserves constructibility. Therefore, in these situations, when working with constructible sheaves, we will use the two versions interchangeably.
\end{rmk}

\subsubsection{Symmetric monoidal structures}
For each $S\in \Sch_{\FF}$, $\Shv(S)$ is equipped with a symmetric monoidal structure given by tensor products of sheaves. This tensor product preserves compactness, or equivalently (since we are working with schemes), constructibility. Moreover, for any $f: S \to T$ in $\Sch_{\FF}$, $f^*$ is symmetric monoidal. 

This structure induces symmetric monoidal structures on $\Shv(\mathcal{Y})$ and $\Shv(\mathcal{Y})^\ren$ for any $\mathcal{Y} \in \Stk_{\FF}$. Moreover, for any $f: \mathcal{Y} \to \mathcal{Z}$ in $\Stk_{\FF}$, $f^*$ and $f^*_\ren$ are symmetric monoidal. In particular, $\Shv(\mathcal{Y})$ and $\Shv(\mathcal{Y})^{\ren}$ are equipped with the structures of module categories over $\Shv(\pt_{\FF}) \simeq \Shv(\pt_{\FF})^\ren$.

\subsection{Mixed sheaves}
\label{subsec:mixed_sheaves}
Working over a finite field $k_n = \Fqn$, the theory of $\ell$-adic sheaves has a ``refinement'' using weights of Frobenius. 

\subsubsection{Mixed sheaves on schemes}
Following~\cite[\S5.1.5]{beilinson_faisceaux_2018}, for a scheme $X_n \in \Sch_{k_n}$, we let $\Shv_{\mixed, c}(X_n)$ denote the category of constructible mixed complexes, which is a full subcategory of $\Shv_c(X_n)$.\footnote{The category $\Shv_{\mixed, c}(X_n)$ is written as $D^b_\mixed(X_n)$ in~\cite{beilinson_faisceaux_2018}.} We let
\[
	\Shv_{\mixed}(X_n) \defeq \Ind(\Shv_{\mixed,c}(X_n))
\]
and obtain a fully faithful embedding
\[
	\Shv_\mixed(X_n) \hookrightarrow \Shv(X_n).
\]

\subsubsection{The case of a point}
\label{subsubsec:mixed_sheaves_on_a_pt}
The category $\Shv_\mixed(\pt_n)$ consists of complexes of continuous Frobenius modules whose eigenvalues are algebraic numbers of absolute values $(q^n)^{\nicefrac{w}{2}}$. The category $\Shv_\mixed(\pt_n)$ thus breaks up into a direct sum of categories
\[
	\Shv_\mixed(\pt_n) \simeq \bigoplus_{w \in \mathbb{Z}} \Shv_\mixed(\pt_n)_w.
\]
In particular, for $V\in \Shv_\mixed(\pt_n)$, we have a natural decomposition
\[
	V \simeq \bigoplus_{w \in \mathbb{Z}} V_w
\]
where $V_w$ has \emph{naive} weight $w$, i.e., all eigenvalues have absolute values $(q^n)^{\nicefrac{w}{2}}$. 

\begin{rmk}
We caution the reader here that $V_w$ does not have weight $w$ in the sense of~\cite{beilinson_faisceaux_2018}. For example, consider $\Qlbar \in \Shv_\mixed(\pt_n)$. Then, $\Qlbar[-2]$ has weight $2$ in the sense of~\cite{beilinson_faisceaux_2018} but \emph{naive} weight $0$ in the sense above.
\end{rmk}

\subsubsection{}
The category $\Shv_\mixed(\pt_n)$ is thus equipped with a symmetric monoidal functor, given by forgetting the Frobenius module structure (but still remembering the naive weights)
\[
	\gr: \Shv_\mixed(\pt_n) \to \Vect^{\gr} = \Fun(\mathbb{Z}, \Vect). \teq\label{eq:gr_functor_for_a_point}
\]
In the notation above, $\gr(V) = \bigoplus_{w\in \mathbb{Z}} \gr(V)_w$ where $\gr(V)_w = V_w$ and, as the notation suggests, is put in graded degree $w$.

We can think of this functor as semi-simplifying the Frobenius action. Though simple, it plays a crucial role in our construction of the category of graded sheaves. In fact, we will see that $\Shv_\gr(\pt_n) \simeq \Vect^\gr$.

\subsubsection{Mixed sheaves on stacks}
\label{subsubsec:mixed_sheaves_on_stacks}
For $\mathcal{Y}_n \in \Stk_{k_n}$, we let $\Shv_\mixed(\mathcal{Y}_n)$ denote the full subcategory of $\Shv(\mathcal{Y}_n)$ consisting of those $\mathcal{F}$ whose $*$-, or $!$-, pullback along $s_n: S_n \to \mathcal{Y}_n$ for any $S_n \in \Sch_{k_n}$ is mixed. Since mixedness is preserved under pullbacks~\cite[\S5.1.6]{beilinson_faisceaux_2018}, it suffices to check mixedness on a stack by pulling back to a smooth atlas as in \cref{lem:constructiblity_stack_*_!_pullback}. See also~\cite[Prop. 2.8 and Rmk. 2.12]{sun_l-series_2012}. 

This description also implies that the category $\Shv_\mixed(\mathcal{Y}_n)$ could also be obtained by right Kan extending the corresponding theory for schemes as in \cref{eq:subsubsec_ell_adic_stacks}, where $\Shv$ is replaced by $\Shv_\mixed$.

Similarly to \cref{subsubsec:ind_constructible_on_stacks}, we let $\Shv_{\mixed, c}(\mathcal{Y}_n)$ denote the full subcategory of $\Shv_\mixed(\mathcal{Y}_n)$ consisting of constructible objects. Moreover, we let $\Shv_\mixed(\mathcal{Y}_n)^\ren \defeq \Ind(\Shv_{\mixed, c}(\mathcal{Y}_n))$ be the associated compactly generated category. By construction, we also have a fully faithful embedding $\Shv_\mixed(\mathcal{Y}_n) \hookrightarrow \Shv_\mixed(\mathcal{Y}_n)^\ren$.

\subsubsection{Functoriality}
Tensor products and the various pullback and pushforward functors preserve mixedness. Indeed, it is proved for the case of schemes in~\cite[\S5.1.6]{beilinson_faisceaux_2018} and the case of stacks follows by a spectral sequence argument, see, for example~\cite[Thm. 2.11 and Rmk. 2.12]{sun_l-series_2012}. Thus, $\Shv_\mixed(\mathcal{Y}_n)$, $\Shv_{\mixed, c}(\mathcal{Y}_n)$, and $\Shv_{\mixed}(\mathcal{Y}_n)^\ren$ are symmetric monoidal. Moreover, for any morphism $f: \mathcal{Y}_n' \to \mathcal{Y}_n$ between objects in $\Stk_{k_n}$, the various pullback and pushforward functors coming from $f$ preserve mixedness, whenever these functors are defined. In particular, using the structure map $\pi: \mathcal{Y}_n \to \pt_n = \Spec k_n = \Spec \Fqn$, we obtain symmetric monoidal functors
\[
	\pi^*: \Shv_\mixed(\pt_n) \to \Shv_\mixed(\mathcal{Y}_n) \qquad\text{and}\qquad \pi^*_{\ren}: \Shv_\mixed(\pt_n) \to \Shv_\mixed(\mathcal{Y}_n)^\ren.
\]
These functors equip $\Shv_\mixed(\mathcal{Y}_n)$ and $\Shv_\mixed(\mathcal{Y}_n)^\ren$ with structures of objects in $\ComAlg(\Mod_{\Shv_\mixed(\pt_n)})$.

\begin{rmk}
In the rest of the paper, we will work exclusively with the mixed version. All functors on sheaves are thus understood to operate on the mixed level by default.
\end{rmk}

\subsection{\texorpdfstring{$\Shv_\mixed(\pt_n)$}{Shv_m(pt_n)}-module structures}
\label{subsec:Shv_mixed(pt_n)-module_structures}
We have seen above that $\Shv_\mixed(\mathcal{Y}_n)$ and $\Shv_\mixed(\mathcal{Y}_n)^\ren$ admit $\Shv_\mixed(\pt_n)$-module structures for any $\mathcal{Y}_n \in \Stk_{k_n}$. We will now show that all the usual functors are compatible with this structure and deduce various consequences from this fact. 

We note that a large part of the materials here has been treated more systematically in~\cites{liu_enhanced_2012,liu_enhanced_2017}, which will actually be used in \cref{subsec:graded_sheaves_and_correspondences} to obtain finer homotopy coherence structures for our sheaf theory, Using~\cite{gaitsgory_study_2017}, the discussion below is a slightly different take of~\cites{liu_enhanced_2012,liu_enhanced_2017}, included for the sake of completeness, especially for readers who are not familiar with~\cites{liu_enhanced_2012,liu_enhanced_2017} or are not interested the finer homotopy coherence aspects of correspondences. 

\subsubsection{The case of schemes}
We start with the case of schemes. The arguments used here are the same as~\cite[Vol. I, Chap. 6]{gaitsgory_study_2017}. We only indicate the main ideas here. The interested reader should consult~\cite[Vol. I, Chap. 6]{gaitsgory_study_2017} for more details.

Throughout, we will make use of the following observation. 

\begin{lem}
$\Shv_\mixed(\pt_n)$ is a compactly generated rigid symmetric monoidal category.
\end{lem}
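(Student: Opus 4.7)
The plan is to check the three conditions of compactly generated rigidity in sequence, exploiting the fact that $\pt_n$ is a (particularly simple) scheme, so that all objects are concretely Frobenius modules.

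Compact generation is immediate from the definition $\Shv_\mixed(\pt_n) \defeq \Ind(\Shv_{\mixed,c}(\pt_n))$ given in \cref{subsec:mixed_sheaves}, together with the general fact recalled in \cref{subsubsec:Ind_compactly-generated}: the $\Ind$-completion of a small stable idempotent-complete category is compactly generated, and its subcategory of compact objects is the idempotent completion of the original. Since $\Shv_{\mixed,c}(\pt_n)$ is a full stable subcategory of $\Shv_c(\pt_n)$ closed under direct summands, it is already idempotent complete, so the compact objects recover $\Shv_{\mixed,c}(\pt_n)$ on the nose.

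For the unit and the monoidal product, I would first note that the constant sheaf $\Qlbar_{\pt_n}$ is constructible and pure of weight zero, hence lies in $\Shv_{\mixed,c}(\pt_n)$ and is therefore compact. Preservation of constructibility under $\otimes$ on a scheme is standard, while preservation of mixedness is exactly the statement recalled in \cref{subsubsec:mixed_sheaves_on_stacks} (cf. \cite[\S5.1.6]{beilinson_faisceaux_2018}). Together these show that the restriction of $\otimes$ to $\Shv_{\mixed,c}(\pt_n)\times\Shv_{\mixed,c}(\pt_n)$ lands in $\Shv_{\mixed,c}(\pt_n)$, i.e., the monoidal product preserves compactness.

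The substantive step, which I expect to be the main (though still mild) obstacle, is dualizability of every compact object. Unwinding \cref{subsubsec:mixed_sheaves_on_a_pt}, an object of $\Shv_{\mixed,c}(\pt_n)$ is a bounded complex $V$ of continuous finite-dimensional $\Qlbar$-linear Frobenius modules whose eigenvalues on the weight-$w$ summand are algebraic numbers of absolute value $(q^n)^{w/2}$. I would produce the dual as the $\Qlbar$-linear dual complex $V^\vee$ equipped with the contragredient Frobenius action, for which a direct eigenvalue computation shows that the dual of the weight-$w$ summand of $V$ is a weight-$(-w)$ summand of $V^\vee$. Hence $V^\vee$ is again bounded, finite-dimensional in each degree, constructible, and mixed, so it lies in $\Shv_{\mixed,c}(\pt_n)$; the usual evaluation and coevaluation maps then witness dualizability, and passage to retracts handles arbitrary compacts. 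The only delicate point is verifying that mixedness is preserved under the contragredient Frobenius action, which reduces to the weight-sign flip just described.
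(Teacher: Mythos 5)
Your proof is correct and follows essentially the same route as the paper's: compact generation is immediate from the $\Ind$-completion definition, the tensor product preserves constructibility and mixedness, and every compact object (a perfect complex of continuous Frobenius modules with integral weights) is dualizable via the $\Qlbar$-linear dual. Your extra care with the unit, idempotent completeness, and the weight-sign flip under the contragredient action merely spells out details the paper leaves implicit.
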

\begin{proof}
$\Shv_\mixed(\pt_n)$ is compactly generated, by definition, see \cref{subsec:ell-adic_sheaves}. Compact objects are perfect $\Qlbar$ complexes with continuous $\hat{\mathbb{Z}}$-action with integral weights. The tensor product clearly preserves compactness. Finally, compact objects are all dualizable, whose duals are simply $\Qlbar$-linear duals. 
\end{proof}

\begin{lem}
\label{lem:strict_Shv_m(pt)-mod_functor_schemes}
Let $f: Y_n \to Z_n$ where $Y_n, Z_n \in \Sch_{k_n}$. Then, the functors $f^*$, $f_*$, $f^!$, and $f_!$ are strict functors of $\Shv_\mixed(\pt_n)$-modules.
\end{lem}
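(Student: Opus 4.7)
The plan is to exploit the rigidity of $\Shv_\mixed(\pt_n)$ (the preceding lemma) together with the general criterion \cref{cor:lax_implies_strict_rigid} that lax module functors over rigid bases are automatically strict, reducing the problem to merely producing some lax module structure on each of the four functors.

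First I would unwind the $\Shv_\mixed(\pt_n)$-module structure on $\Shv_\mixed(Y_n)$: it is obtained by restriction of scalars along the symmetric monoidal functor $\pi_Y^* : \Shv_\mixed(\pt_n) \to \Shv_\mixed(Y_n)$ from the canonical self-action of the symmetric monoidal category $\Shv_\mixed(Y_n)$. Concretely, for $V \in \Shv_\mixed(\pt_n)$ and $\mathcal{F} \in \Shv_\mixed(Y_n)$, the action is $V \otimes \mathcal{F} = \pi_Y^* V \otimes \mathcal{F}$. With this viewpoint, the strictness of $f^*$ is essentially tautological: $f^*$ is symmetric monoidal and $f^* \circ \pi_Z^* \simeq \pi_Y^*$, so $f^*(V \otimes \mathcal{G}) = f^*(\pi_Z^* V \otimes \mathcal{G}) \simeq \pi_Y^* V \otimes f^*\mathcal{G} = V \otimes f^*\mathcal{G}$, strictly. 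Similarly, for $f_!$ I would invoke the projection formula $f_!(\mathcal{F} \otimes f^*\mathcal{G}) \simeq f_!(\mathcal{F}) \otimes \mathcal{G}$, available as part of the classical six-functor formalism for $\ell$-adic sheaves on schemes, which directly exhibits $f_!$ as a strict $\Shv_\mixed(Z_n)$-module functor; restricting scalars along $\pi_Z^*$ then yields a strict $\Shv_\mixed(\pt_n)$-module structure on $f_!$.

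The right-adjoint functors $f_*$ and $f^!$ are then handled uniformly. Since $f^* \dashv f_*$ and $f_! \dashv f^!$ and since $f^*$, $f_!$ have just been equipped with strict (in particular left-lax) $\Shv_\mixed(\pt_n)$-module structures, \cref{lem:adjoint_laxness_mod} endows $f_*$ and $f^!$ with canonical right-lax $\Shv_\mixed(\pt_n)$-module structures. Because $\Shv_\mixed(\pt_n)$ is rigid and both $f_*$ and $f^!$ are continuous (the latter is inherited from the scheme-level fact that $(-)^!$ between schemes is continuous), \cref{cor:lax_implies_strict_rigid} (or directly \cref{lem:lax_module_functor_is_strict_rigid}) upgrades these right-lax structures to strict ones, concluding the proof.

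The genuinely nontrivial input is the projection formula for $f_!$ and, more fundamentally, the existence of these adjunctions and compatibilities at the level of $\infty$-categories rather than homotopy categories; this is where I expect the main obstacle to lie, and I would finesse it by citing the enhanced six-functor formalism of~\cites{liu_enhanced_2012, liu_enhanced_2017} (already invoked in the paper for \cref{subsec:graded_sheaves_and_correspondences}), which packages precisely this data. Everything else is a formal consequence of rigidity of $\Shv_\mixed(\pt_n)$ and the general yoga of lax module functors reviewed in \cref{subsec:module_categories,subsec:duality} — so the body of the proof should be only a few lines, amounting to pointing to the right ingredients rather than constructing anything new.
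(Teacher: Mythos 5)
Your proposal is correct, and it agrees with the paper on the easy half: $f^*$ is strict essentially by definition (it is symmetric monoidal and compatible with the structure maps to $\pt_n$), and $f_*$ is then strict by \cref{lem:adjoint_laxness_mod} plus rigidity of $\Shv_\mixed(\pt_n)$ and continuity of $f_*$, exactly as in the paper's proof. Where you diverge is the $(f_!,f^!)$ pair. The paper takes $f^!$ as primary: it chooses a Nagata factorization $f = \bar f \circ j$ with $j$ an open embedding and $\bar f$ proper, writes $f^! \simeq j^* \circ \bar f^!$, gets strictness of $\bar f^!$ as the right adjoint of the already-strict $\bar f_! = \bar f_*$ and of $j^! = j^*$ from the $*$-case, and then invokes \cite[Vol.~I, Chap.~7, Thm.~5.2.4]{gaitsgory_study_2017} to see the resulting structure is independent of the compactification; $f_!$ is then handled by \cref{cor:lax_implies_strict_rigid} as the left adjoint. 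You instead take $f_!$ as primary, equipping it with a strict $\Shv_\mixed(Z_n)$-module (hence, by restriction along $\pi_Z^*$, $\Shv_\mixed(\pt_n)$-module) structure via the projection formula, and deduce $f^!$ by adjunction, rigidity, and continuity. This is a legitimate alternative, with the one caveat you yourself flag: a triangulated-level projection formula is only an objectwise equivalence, and what you actually need is the projection formula as coherent module-functor data, so your argument genuinely leans on the enhanced formalism of~\cites{liu_enhanced_2012,liu_enhanced_2017} (which the paper indeed uses later, in \cref{subsec:graded_sheaves_and_correspondences}). The trade-off is clear: your route avoids compactifications and the independence-of-factorization argument at the price of importing the coherently enhanced projection formula; the paper's route needs only the strictness already established for the $*$-functors plus rigidity, at the price of the compactification bookkeeping and the appeal to \cite{gaitsgory_study_2017} for well-definedness. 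Both right-adjoint steps in your argument correctly use that $f_*$ and $f^!$ are continuous, which holds here because all six functors preserve constructibility on schemes.
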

\begin{proof}
By definition, the functor $f^*$ is a strict functor of $\Shv_\mixed(\pt_n)$-modules. By rigidity of $\Shv_\mixed(\pt_n)$, and by the fact that $f_*$ is continuous, we know that the right adjoint $f_*$ is also strict, see \cref{cor:lax_implies_strict_rigid}.

We turn to the pair $f_! \dashv f^!$. We will treat the case $f^!$; the claim for $f_!$ follows from \cref{cor:lax_implies_strict_rigid}. We factor $f$ as $Y_n \xrightarrow{j} \lbar{Y}_n \xrightarrow{\bar{f}} Z_n$ where $j$ is an open embedding and $\bar{f}$ is proper. Then, $f^! \simeq j^! \circ \bar{f}^! \simeq j^* \circ \bar{f}^!$. Now, $\bar{f}^!$ is a right adjoint to $\bar{f}_! = \bar{f}_*$, which is a strict functor of $\Shv_\mixed(\pt_n)$-modules. Moreover, the case of $j^! = j^*$ already follows from the above. Arguing as in~\cite[Vol. I, Chap. 5]{gaitsgory_study_2017} using~\cite[Vol. I, Chap. 7, Thm. 5.2.4]{gaitsgory_study_2017}, we see that this structure is independent of the choice of a factorization and we are done.
\end{proof}

\subsubsection{The case of stacks}
We will now move to the case of stacks.

\begin{prop}
\label{prop:strict_Shv_m(pt)-mod_functor_stacks}
Let $f: \mathcal{Y}_n \to \mathcal{Z}_n$ where $\mathcal{Y}_n, \mathcal{Z}_n \in \Stk_{k_n}$. Then, the functors $f^*$, $f_*$, $f^!$, and $f_!$ have the structures of strict functors of $\Shv_\mixed(\pt_n)$-modules. Moreover, the same statements apply to the renormalized sheaf theory.
\end{prop}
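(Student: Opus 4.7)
The strategy is to reduce to the scheme-level case already settled in \cref{lem:strict_Shv_m(pt)-mod_functor_schemes} via smooth descent, and then propagate strictness across adjunctions using rigidity of $\Shv_\mixed(\pt_n)$.

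For $f^*$ and $f^!$, I would pick a smooth atlas $Y_n \to \mathcal{Y}_n$ and a compatible smooth atlas $Z_n \to \mathcal{Z}_n$ so that $f$ induces a map of \v{C}ech nerves. By the descent formula \cref{eq:descent_sheaves_stacks} (in the mixed setting, per \cref{subsubsec:mixed_sheaves_on_stacks}), $\Shv_\mixed(\mathcal{Y}_n)$ is the totalization of the cosimplicial diagram $\Shv_\mixed(\Cech^\bullet(Y_n/\mathcal{Y}_n))$ whose transition maps are $*$- or $!$-pullbacks between schemes. By \cref{lem:strict_Shv_m(pt)-mod_functor_schemes} each such transition map is a strict $\Shv_\mixed(\pt_n)$-module functor, so the cosimplicial diagram lifts to $\Mod_{\Shv_\mixed(\pt_n)}$, and since the forgetful functor $\Mod_{\Shv_\mixed(\pt_n)} \to \DGCatprescont$ creates limits, the resulting totalization endows $\Shv_\mixed(\mathcal{Y}_n)$ with its natural $\Shv_\mixed(\pt_n)$-module structure. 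The maps between these totalizations induced by $f$ give $f^*$ and $f^!$ as strict module functors.

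For $f_*$ (the right adjoint of $f^*$) and $f_!$ (the left adjoint of $f^!$), I would invoke \cref{cor:lax_implies_strict_rigid}: since $\Shv_\mixed(\pt_n)$ is compactly generated and rigid, strictness of one member of an adjoint pair between its module categories is equivalent to strictness of the other, so both $f_*$ and $f_!$ inherit strictness automatically. The renormalized version is handled in parallel: run the same descent argument on the constructible categories $\Shv_{\mixed, c}$ (valid because scheme-level pullback and pushforward preserve mixedness and constructibility), and then ind-extend to $\Shv_\mixed(-)^\ren$ using \cref{subsubsec:module_structures_large_vs_small}. Alternatively, one can transport the structure between the two versions via the $\ren \dashv \unren$ adjunction combined with \cref{prop:interaction_*-functors_ren,lem:interaction_!-functors_ren}.

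The main obstacle is the fully homotopy coherent lift of the descent diagram to $\Mod_{\Shv_\mixed(\pt_n)}$: having each term and each face map strict is not by itself enough — one needs an entire cosimplicial object of $\Mod_{\Shv_\mixed(\pt_n)}$, together with a coherent map of such objects induced by $f$, so that the totalization genuinely computes a module-category limit. As the authors anticipate, this coherence is most efficiently supplied by the correspondence-category enhancement of \cites{liu_enhanced_2012,liu_enhanced_2017} used in \cref{subsec:graded_sheaves_and_correspondences}, which packages all base-change compatibilities into a single symmetric monoidal functor out of a category of correspondences and thereby produces the required coherence essentially for free.
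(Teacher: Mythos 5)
Your proof is correct and follows the paper's skeleton: reduce to the scheme level via \cref{lem:strict_Shv_m(pt)-mod_functor_schemes}, use a limit description of sheaves on stacks together with the fact that the forgetful functor $\Mod_{\Shv_\mixed(\pt_n)} \to \DGCatprescont$ preserves limits to get $f^*$ and $f^!$ as strict module functors, pass to $f_*$ and $f_!$ by rigidity via \cref{cor:lax_implies_strict_rigid}, and handle the renormalized theory by ind-extending from the constructible subcategories. The one place where the paper's route differs, and where it buys something, is exactly at the point you flag as the main obstacle: rather than choosing compatible atlases and producing a coherent map of \v{C}ech nerves for each $f$, the paper right Kan extends the functor $\Shv^?_\mixed: \Sch_{k_n}^{\opp} \to \Mod_{\Shv_\mixed(\pt_n)}$ along $\Sch_{k_n}^{\opp} \to \Stk_{k_n}^{\opp}$ and observes that this agrees with the original definition because the forgetful functor, being a right adjoint, commutes with limits; functoriality in $f$ is then automatic, with no choices and no map-of-nerves coherence to supply. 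What remains (in both your argument and the paper's) is the coherent lift of the \emph{scheme-level} functor to $\Mod_{\Shv_\mixed(\pt_n)}$, which the paper treats via \cref{lem:strict_Shv_m(pt)-mod_functor_schemes} and, for finer coherence, defers to the correspondence-category machinery of Liu--Zheng exactly as you suggest. One small point for the renormalized case: $f_{!,\ren}$ is only defined when $f_!$ preserves constructibility (e.g.\ $f$ representable), and the paper's proof records this caveat, obtaining $f_{!,\ren}$ from the adjunction $f_{!,\ren} \dashv f^!_\ren$ and $f_{*,\ren}$ from rigidity plus continuity; your ind-extension argument should be phrased with the same restriction.
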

\begin{proof}
Using \cref{lem:strict_Shv_m(pt)-mod_functor_schemes}, we can define $\Shv_\mixed$ on stacks by right Kan extending along $\Sch_{k_n}^{\opp} \to \Stk_{k_n}^{\opp}$ the following functor
\[
	\Shv^?_\mixed: \Sch_{k_n}^{\opp} \to \Mod_{\Shv_\mixed(\pt_n)}
\]
instead of the one in \cref{eq:subsubsec_ell_adic_stacks}; see also the discussion in \cref{subsubsec:mixed_sheaves_on_stacks}. The resulting object agrees with the category $\Shv_\mixed(\mathcal{Y}_n)$ defined above since the forgetful functor $\Mod_{\Shv_\mixed(\pt_n)} \to \DGCatprescont$, being a right adjoint, commutes with limits. In particular, for $f: \mathcal{Y}_n \to \mathcal{Z}_n$ in $\Stk_{k_n}$, $f^*$ and $f^!$ upgrade to strict functors of $\Shv_\mixed(\pt_n)$-modules. By rigidity of $\Shv_\mixed(\pt_n)$ and \cref{cor:lax_implies_strict_rigid}, we obtain the corresponding statements for $f_*$ and $f_!$ as well.

The desired conclusion also holds for the renormalized versions $f^*_\ren$ and $f^!_\ren$ since the $\Shv_\mixed(\pt_n)$-module structures on $\Shv_\mixed(-)^\ren$ is obtained by ind-extending the $\Shv_{\mixed, c}(\pt)$-module structures on $\Shv_{\mixed, c}(-)$ (see \cref{subsubsec:module_structures_large_vs_small}) and $f^*_\ren$ and $f^!_\ren$ are constructed by ind-extending the corresponding functors on the constructible part. By the rigidity of $\Shv_\mixed(\pt_n)$ and the fact that $f_{*, \ren}$ is continuous, we obtain the statement for $f_{*, \ren}$. Finally, when $f_!$ preserves constructibility (for example, when $f$ is representable), we have a pair of adjoint functors $f_{!, \ren} \dashv f^!_\ren$, which implies the same statement for $f_{!, \ren}$ as well.
\end{proof}

\subsubsection{$\Shv_\mixed(\pt_n)$-enriched $\Hom$}
By the discussion in \cref{subsubsec:enriched_Hom}, for any $\mathcal{F}, \mathcal{G} \in \Shv_\mixed(\mathcal{Y}_n)^\ren$ (resp. $\mathcal{F}, \mathcal{G} \in \Shv_\mixed(\mathcal{Y}_n)$), we can consider the internal $\Hom$ (which is usually called the \emph{sheaf} $\Hom$ in the literature), 
\[
	\cuHom_{\Shv_{\mixed}(\mathcal{Y}_n)^\ren}(\mathcal{F}, \mathcal{G}) \in \Shv_\mixed(\mathcal{Y}_n)^\ren \qquad (\text{resp. } \cuHom_{\Shv_{\mixed}(\mathcal{Y}_n)}(\mathcal{F}, \mathcal{G}) \in \Shv_\mixed(\mathcal{Y}_n))
\]
as well as the $\Shv_\mixed(\pt_n)$-enriched $\Hom$. To keep things less cluttered, we will use the following notation to denote the $\Shv_\mixed(\pt_n)$-enriched $\Hom$
\[
	\cHom_{\Shv_\mixed(\mathcal{Y}_n)^\ren}^\mixed(\mathcal{F}, \mathcal{G}) \defeq \cuHom_{\Shv_\mixed(\mathcal{Y}_n)^\ren}^{\Shv_\mixed(\pt_n)}(\mathcal{F}, \mathcal{G}) \in \Shv_\mixed(\pt_n), \teq \label{eq:mixed_Hom}
\]
and similarly for the non-renormalized, i.e., usual, version. 

In what follows, we will mostly focus on the renormalized case due to the applications we have in mind. In most cases, however, the proof for the usual version is verbatim.

\subsubsection{}
Internal $\Hom$ and $\Shv_\mixed(\pt_n)$-enriched $\Hom$ are related in the expected way by the following lemma.

\begin{lem}
\label{lem:mixed_Hom_vs_internal_Hom}
Let $\mathcal{Y}_n \in \Stk_{k_n}$ and $\mathcal{F}, \mathcal{G} \in \Shv_\mixed(\mathcal{Y}_n)^\ren$ (resp. $\mathcal{F}, \mathcal{G} \in \Shv_\mixed(\mathcal{Y}_n)$). Then
\begin{align*}
	\pi_{*, \ren} \cuHom_{\Shv_{\mixed}(\mathcal{Y}_n)^\ren}(\mathcal{F}, \mathcal{G}) &\simeq \cHom_{\Shv_\mixed(\mathcal{Y}_n)^\ren}^\mixed(\mathcal{F}, \mathcal{G}) \\
	(\text{resp.}\qquad \pi_* \cuHom_{\Shv_{\mixed}(\mathcal{Y}_n)}(\mathcal{F}, \mathcal{G}) &\simeq \cHom_{\Shv_\mixed(\mathcal{Y}_n)}^\mixed(\mathcal{F}, \mathcal{G}))
\end{align*}
Here $\pi: \mathcal{Y}_n \to \pt_n$ denotes the structure map.
\end{lem}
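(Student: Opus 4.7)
The plan is to use the Yoneda lemma in $\Shv_\mixed(\pt_n)$: it suffices to produce, functorially in $V \in \Shv_\mixed(\pt_n)$, an equivalence
\[
\Hom_{\Shv_\mixed(\pt_n)}\bigl(V,\, \pi_{*,\ren} \cuHom_{\Shv_\mixed(\mathcal{Y}_n)^\ren}(\mathcal{F},\mathcal{G})\bigr)
\;\simeq\;
\Hom_{\Shv_\mixed(\pt_n)}\bigl(V,\, \cHom^{\mixed}_{\Shv_\mixed(\mathcal{Y}_n)^\ren}(\mathcal{F},\mathcal{G})\bigr).
\]

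The chain of adjunctions I would assemble is as follows. First, apply the $(\pi^*_\ren \dashv \pi_{*,\ren})$-adjunction from \cref{prop:interaction_*-functors_ren} / \cref{prop:strict_Shv_m(pt)-mod_functor_stacks} to rewrite the LHS as $\Hom_{\Shv_\mixed(\mathcal{Y}_n)^\ren}(\pi^*_\ren V,\, \cuHom_{\Shv_\mixed(\mathcal{Y}_n)^\ren}(\mathcal{F},\mathcal{G}))$. Next, apply the defining adjunction of the internal (sheaf) Hom in the symmetric monoidal category $\Shv_\mixed(\mathcal{Y}_n)^\ren$ (the analogue of~\cref{eq:internal_Hom_universal}), turning this into $\Hom_{\Shv_\mixed(\mathcal{Y}_n)^\ren}(\pi^*_\ren V \otimes \mathcal{F},\, \mathcal{G})$. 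Now observe that the $\Shv_\mixed(\pt_n)$-module structure on $\Shv_\mixed(\mathcal{Y}_n)^\ren$ is, by construction (see the end of \cref{subsec:mixed_sheaves}), given by pulling back via $\pi^*_\ren$ and then tensoring internally; hence $\pi^*_\ren V \otimes \mathcal{F} \simeq V \otimes \mathcal{F}$ canonically (in the sense of \cref{subsubsec:unwind_module_category}). Finally, apply the defining formula~\cref{eq:defining_formula_enriched_Hom} of the $\Shv_\mixed(\pt_n)$-enriched $\Hom$ to obtain the RHS.

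Concatenating these four natural equivalences and invoking Yoneda gives the desired identification. The argument for the unrenormalized version $\cHom^{\mixed}_{\Shv_\mixed(\mathcal{Y}_n)}(\mathcal{F},\mathcal{G})$ is verbatim the same, using the $\pi^* \dashv \pi_*$ adjunction and the $\Shv_\mixed(\pt_n)$-module structure on $\Shv_\mixed(\mathcal{Y}_n)$ (which likewise factors through $\pi^*$).

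The only subtle step is the third one, where I need $\pi^*_\ren V \otimes \mathcal{F} \simeq V \otimes \mathcal{F}$ as objects of $\Shv_\mixed(\mathcal{Y}_n)^\ren$. This is literally the \emph{definition} of the $\Shv_\mixed(\pt_n)$-module structure via the symmetric monoidal functor $\pi^*_\ren : \Shv_\mixed(\pt_n) \to \Shv_\mixed(\mathcal{Y}_n)^\ren$, as recalled at the end of \cref{subsubsec:unwind_module_category}. So strictly speaking there is no real obstacle; the entire lemma is a formal manipulation of adjunctions, and the content sits in the prior verification (\cref{prop:strict_Shv_m(pt)-mod_functor_stacks}) that $\pi_{*,\ren}$ is a strict morphism of $\Shv_\mixed(\pt_n)$-modules, ensuring that all the equivalences above are natural in $V$ and in particular compatible in $\Shv_\mixed(\pt_n)$ rather than merely in $\Spc$.
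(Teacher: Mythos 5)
Your proposal is correct and follows essentially the same route as the paper's own proof: a Yoneda argument in $\Shv_\mixed(\pt_n)$ built from the $\pi^*_\ren \dashv \pi_{*,\ren}$ adjunction, the internal-$\Hom$ adjunction, the identification $\pi^*_\ren V \otimes \mathcal{F} \simeq V \otimes \mathcal{F}$ coming from the definition of the module structure via $\pi^*_\ren$, and the defining formula of the enriched $\Hom$. The only difference is cosmetic (you split into two steps what the paper writes as one), and your extra remark about naturality via \cref{prop:strict_Shv_m(pt)-mod_functor_stacks} is a fair point the paper leaves implicit.
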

\begin{proof}
We will prove the statement for the renormalized sheaf theory. The proof of the other case is the same. For any $V\in \Shv_\mixed(\pt_n)$, we have the following sequence of natural equivalences which yield the desired conclusion by the Yoneda lemma
\begin{align*}
	&\alignsep\Hom_{\Shv_\mixed(\pt_n)}(V, \pi_{*, \ren} \cuHom_{\Shv_\mixed(\mathcal{Y}_n)^\ren}(\mathcal{F}, \mathcal{G})) \\
	&\simeq \Hom_{\Shv_\mixed(\mathcal{Y}_n)^\ren}(\pi_\ren^* V \otimes \mathcal{F}, \mathcal{G}) \\
	&\simeq \Hom_{\Shv_\mixed(\mathcal{Y}_n)^\ren}(V \otimes \mathcal{F}, \mathcal{G}) \teq \label{eq:remove-^*-action} \\
	&\simeq \Hom_{\Shv_\mixed(\pt_n)}(V, \cHom_{\Shv_\mixed(\mathcal{Y}_n)^\ren}^\mixed(\mathcal{F}, \mathcal{G})).
\end{align*}
In \cref{eq:remove-^*-action}, the tensor denotes the action of $\Shv_\mixed(\pt_n)$ on $\Shv_\mixed(\mathcal{Y}_n)^\ren$, and the equivalence there is due to how the action is defined, i.e., via $\pi^*_\ren$.
\end{proof}

We also have the following expected result.

\begin{prop}
\label{prop:pullback_internal_Hom_mixed}
Let $f: \mathcal{Y}_n \to \mathcal{Z}_n$ be a representable smooth morphism in $\Stk_{k_n}$, and $\mathcal{F}, \mathcal{G} \in \Shv_\mixed(\mathcal{Z}_n)^\ren$. Then, we have a natural equivalence
\[
	f^*_{\ren} \cuHom_{\Shv_\mixed(\mathcal{Z}_n)^\ren}(\mathcal{F}, \mathcal{G}) \simeq \cuHom_{\Shv_\mixed(\mathcal{Y}_n)^\ren}(f^*_\ren \mathcal{F}, f^*_\ren \mathcal{G}).
\]
\end{prop}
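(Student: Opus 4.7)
The plan is to prove the equivalence by an adjunction/Yoneda argument. Recall that $\cuHom_{\Shv_\mixed(\mathcal{Y}_n)^\ren}(\mathcal{A}, \mathcal{B})$ is characterized by the universal property
\[
	\Hom_{\Shv_\mixed(\mathcal{Y}_n)^\ren}(\mathcal{H} \otimes \mathcal{A}, \mathcal{B}) \simeq \Hom_{\Shv_\mixed(\mathcal{Y}_n)^\ren}(\mathcal{H}, \cuHom_{\Shv_\mixed(\mathcal{Y}_n)^\ren}(\mathcal{A}, \mathcal{B})),
\]
so it suffices to produce, naturally in $\mathcal{H} \in \Shv_\mixed(\mathcal{Y}_n)^\ren$, an equivalence between $\Hom(\mathcal{H}, f^*_\ren \cuHom_{\Shv_\mixed(\mathcal{Z}_n)^\ren}(\mathcal{F}, \mathcal{G}))$ and $\Hom(\mathcal{H} \otimes f^*_\ren \mathcal{F}, f^*_\ren \mathcal{G})$.

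The first step is to construct a left adjoint to $f^*_\ren$. Since $f$ is representable, $f_{!,\ren}$ is defined and is left adjoint to $f^!_\ren$. Since $f$ is smooth (say of relative dimension $d$, which we may assume locally constant by working componentwise on $\mathcal{Y}_n$), the relative purity isomorphism in the six-functor formalism gives $f^!_\ren \simeq f^*_\ren(d)[2d]$. Composing, we obtain a functor $f_\sharp \defeq f_{!,\ren}\bigl((-)(d)[2d]\bigr)$ which is left adjoint to $f^*_\ren$.

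The second step is the smooth projection formula
\[
	f_\sharp(\mathcal{H} \otimes f^*_\ren \mathcal{F}) \simeq f_\sharp \mathcal{H} \otimes \mathcal{F}, \qquad \mathcal{H} \in \Shv_\mixed(\mathcal{Y}_n)^\ren,\ \mathcal{F} \in \Shv_\mixed(\mathcal{Z}_n)^\ren,
\]
which is a twist-shift of the standard projection formula $f_{!,\ren}(\mathcal{H} \otimes f^*_\ren \mathcal{F}) \simeq f_{!,\ren}\mathcal{H} \otimes \mathcal{F}$. Granting these two inputs, the desired equivalence follows from the chain
\begin{align*}
	\Hom(\mathcal{H}, f^*_\ren \cuHom_{\Shv_\mixed(\mathcal{Z}_n)^\ren}(\mathcal{F}, \mathcal{G}))
	&\simeq \Hom(f_\sharp \mathcal{H}, \cuHom_{\Shv_\mixed(\mathcal{Z}_n)^\ren}(\mathcal{F}, \mathcal{G})) \\
	&\simeq \Hom(f_\sharp \mathcal{H} \otimes \mathcal{F}, \mathcal{G}) \\
	&\simeq \Hom(f_\sharp(\mathcal{H} \otimes f^*_\ren \mathcal{F}), \mathcal{G}) \\
	&\simeq \Hom(\mathcal{H} \otimes f^*_\ren \mathcal{F}, f^*_\ren \mathcal{G}) \\
	&\simeq \Hom(\mathcal{H}, \cuHom_{\Shv_\mixed(\mathcal{Y}_n)^\ren}(f^*_\ren \mathcal{F}, f^*_\ren \mathcal{G})),
\end{align*}
followed by Yoneda.

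The main obstacle is verifying the projection formula in the renormalized mixed setting on stacks. Since all functors involved are continuous, by ind-extension it suffices to check it on constructible objects; and since $f$ is representable, one may further reduce along a smooth atlas of $\mathcal{Z}_n$ (using smooth descent of $\Shv_\mixed$ and the fact that $f^*_\ren$, $f_{!,\ren}$, and $\otimes$ are all strict morphisms of $\Shv_\mixed(\pt_n)$-modules by \cref{prop:strict_Shv_m(pt)-mod_functor_stacks}) to the classical case of morphisms of schemes, where both the projection formula and the smooth purity isomorphism are standard. All remaining manipulations are bookkeeping.
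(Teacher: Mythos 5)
Your proposal is correct and is essentially the paper's own argument: the shifted-twisted left adjoint $f_\sharp$ you construct is exactly the functor $\tilde{f}_{!,\ren}$ used in the paper (smoothness gives $f^!_\ren \simeq f^*_\ren$ up to twist/shift, representability gives existence of $f_{!,\ren}$), and your chain of adjunction/projection-formula equivalences followed by Yoneda is the same as the paper's. The only difference is cosmetic: the paper justifies the projection formula for $\tilde{f}_{!,\ren}$ via base change and K\"unneth checked on compact objects and ind-extended, while you reduce to constructible objects and then to schemes along a smooth atlas — both are routine verifications of the same input.
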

\begin{proof}
Since $f$ is smooth, $f^*_\ren$ coincides with $f^!_\ren$, up to a cohomological shift. Thus, $f^*_\ren$ admits a left adjoint, $\tilde{f}_{!, \ren}$ that differs from $f_{!, \ren}$ by a cohomological shift. In particular, $\tilde{f}_{!, \ren}$ satisfies the base change theorem and projection formula.\footnote{Base change theorem for $f_{!, \ren}$ follows from the usual setting (i.e., non-renormalized): one restricts to compact objects, which live in the usual setting, and then ind-extend, since all functors are continuous. As in the usual proof, the projection formula is a consequence of the base change theorem and \Kuenneth{} formula. See also~\cite[Cor. 6.2.3]{liu_enhanced_2012}} Note that representability of $f$ is used to guarantee the existence of $f_{!, \ren}$.

Let $\mathcal{T} \in \Shv_\mixed(\mathcal{Y})^\ren$ be any test object. The desired conclusion follows from the Yoneda lemma and the equivalences below
\begin{align*}
	&\alignsep\Hom_{\Shv_\mixed(\mathcal{Y}_n)^\ren}(\mathcal{T}, f^*_{\ren} \cuHom_{\Shv_\mixed(\mathcal{Z}_n)^\ren}(\mathcal{F}, \mathcal{G})) \\
	&\simeq \Hom_{\Shv_\mixed(\mathcal{Z}_n)^\ren}(\tilde{f}_{!, \ren} \mathcal{T}, \cuHom_{\Shv_\mixed(\mathcal{Z}_n)^\ren}(\mathcal{F}, \mathcal{G})) \\
	&\simeq \Hom_{\Shv_\mixed(\mathcal{Z}_n)^\ren}(\tilde{f}_{!, \ren}(\mathcal{T})\otimes \mathcal{F}, \mathcal{G}) \\
	&\simeq \Hom_{\Shv_\mixed(\mathcal{Z}_n)^\ren}(\tilde{f}_{!, \ren}(\mathcal{T} \otimes f^*_\ren \mathcal{F}), \mathcal{G}) \\
	&\simeq \Hom_{\Shv_\mixed(\mathcal{Y}_n)^\ren}(\mathcal{T} \otimes f^*_\ren \mathcal{F}, f^*_\ren \mathcal{G}) \\
	&\simeq \Hom_{\Shv_\mixed(\mathcal{Y}_n)^\ren}(\mathcal{T}, \cuHom_{\Shv_\mixed(\mathcal{Y}_n)^\ren}(f^*_\ren \mathcal{F}, f^*_\ren \mathcal{G})).
\end{align*}
\end{proof}

\begin{rmk}
\label{rmk:pullback_internal_Hom_mixed}
The same statement as in \cref{prop:pullback_internal_Hom_mixed} also holds for the usual sheaf theory, except that we do not need to require $f$ to be representable. This is because $f_!$ always exists.
\end{rmk}

\begin{cor}
Let $\mathcal{Y}_n \in \Stk_{k_n}$ and $\mathcal{F}, \mathcal{G} \in \Shv_{\mixed, c}(\mathcal{Y}_n)$. Then, both 
\[
	\cuHom_{\Shv_\mixed(\mathcal{Y}_n)}(\mathcal{F}, \mathcal{G}) \quad\text{and}\quad \cuHom_{\Shv_\mixed(\mathcal{Y}_n)^\ren}(\mathcal{F}, \mathcal{G})
\]
are also constructible.
\end{cor}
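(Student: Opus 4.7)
The plan is to reduce to the case of schemes via a smooth atlas. Pick a smooth atlas $h: Y \to \mathcal{Y}_n$ with $Y \in \Sch_{k_n}$; since $\mathcal{Y}_n$ is Artin, $h$ is automatically representable. By \cref{lem:constructiblity_stack_*_!_pullback}, constructibility of a sheaf on $\mathcal{Y}_n$ can be detected by applying $h^*$ (resp. $h^*_\ren$), so it suffices to show that $h^*_\ren\,\cuHom_{\Shv_\mixed(\mathcal{Y}_n)^\ren}(\mathcal{F}, \mathcal{G})$ and $h^*\,\cuHom_{\Shv_\mixed(\mathcal{Y}_n)}(\mathcal{F}, \mathcal{G})$ are constructible on the scheme $Y$.

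Applying \cref{prop:pullback_internal_Hom_mixed} to the representable smooth map $h$ gives
\[
	h^*_\ren\,\cuHom_{\Shv_\mixed(\mathcal{Y}_n)^\ren}(\mathcal{F}, \mathcal{G}) \simeq \cuHom_{\Shv_\mixed(Y)^\ren}(h^*_\ren \mathcal{F}, h^*_\ren \mathcal{G}),
\]
and \cref{rmk:pullback_internal_Hom_mixed} gives the analogous identification for the non-renormalized version (which does not even require representability). By definition of constructibility, $h^*_\ren \mathcal{F}$ and $h^*_\ren \mathcal{G}$ are constructible on $Y$, and similarly for $h^*$. One is thus reduced to the well-known classical fact that on a scheme the internal $\cuHom$ of two constructible mixed complexes is again constructible; this follows, for instance, from \cite{beilinson_faisceaux_2018}, noting that on schemes $\Shv_\mixed(Y)^\ren$ and $\Shv_\mixed(Y)$ agree on bounded objects and that internal $\cuHom$ preserves mixedness.

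The only potentially delicate point is the invocation of \cref{prop:pullback_internal_Hom_mixed}, which requires $h$ to be representable; this is guaranteed by the Artin hypothesis, so no genuine obstacle arises.
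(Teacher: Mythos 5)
Your treatment of the non-renormalized internal $\cuHom$ is exactly the paper's argument: detect constructibility on a smooth atlas via \cref{lem:constructiblity_stack_*_!_pullback}, commute the pullback past $\cuHom$ via \cref{rmk:pullback_internal_Hom_mixed}, and quote the scheme case. That half is fine.

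The renormalized half has a genuine gap. For an object of $\Shv_\mixed(\mathcal{Y}_n)^\ren = \Ind(\Shv_{\mixed,c}(\mathcal{Y}_n))$, ``constructible'' means lying in the compact full subcategory $\Shv_{\mixed,c}(\mathcal{Y}_n)$, and \cref{lem:constructiblity_stack_*_!_pullback} is a statement about objects of $\Shv_\mixed(\mathcal{Y}_n)$, not of the renormalized category. Its renormalized analogue --- ``$h^*_\ren\mathcal{H}$ constructible on an atlas implies $\mathcal{H}$ compact'' --- is simply false, because $h^*_\ren$ is not conservative on $\Shv(-)^\ren$: the paper's own $B\Gm$ example (\cref{expl:constant_sheaves_not_compact} and the remark following \cref{prop:Shv_gr_smooth_descent}) exhibits a nonzero, non-compact $\mathcal{F} \in \Shv(B\Gm)^\ren$ with $h^*_\ren \mathcal{F} \simeq 0$, whose pullback to the atlas is certainly constructible. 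So even after applying \cref{prop:pullback_internal_Hom_mixed} to identify $h^*_\ren\,\cuHom_{\Shv_\mixed(\mathcal{Y}_n)^\ren}(\mathcal{F},\mathcal{G})$ with a constructible complex on $Y$, you cannot conclude that the internal $\cuHom$ itself is constructible. The paper closes this gap differently: it shows that the renormalized internal $\cuHom$ coincides, as an object of $\Shv_\mixed(\mathcal{Y}_n)^\ren$, with the non-renormalized one (which is constructible by the first half), by testing against constructible objects $\mathcal{T}$, using the tensor--Hom adjunction, the fact that $\otimes$ preserves constructibility, and the agreement of mapping spaces out of constructible objects in $\Shv_{\mixed,c}(\mathcal{Y}_n)$, $\Shv_\mixed(\mathcal{Y}_n)$, and $\Shv_\mixed(\mathcal{Y}_n)^\ren$, then invoking Yoneda. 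You would need such a comparison step (or an argument via boundedness below and the equivalence induced by $\unren$ on bounded-below subcategories, \cref{subsubsec:t-structures_on_cats_of_sheaves}) before any reduction to an atlas can do work. A minor additional point: a smooth atlas of an Artin stack is representable only by algebraic spaces rather than schemes, though this is harmless since what \cref{prop:pullback_internal_Hom_mixed} really needs from representability is that $h_!$ preserve constructibility.
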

\begin{proof}
When $\mathcal{Y}_n$ is a scheme, this is well-known as it is part of the six-functor formalism. Note that for schemes, the two sheaf theories coincide.

For a general $\mathcal{Y}_n$, by \cref{lem:constructiblity_stack_*_!_pullback}, to show that $\cuHom_{\Shv_\mixed(\mathcal{Y}_n)}(\mathcal{F}, \mathcal{G})$ is constructible, it suffices to show that its pullback to a smooth atlas is constructible. But then, \cref{rmk:pullback_internal_Hom_mixed} allows us to reduce to the scheme case and the desired conclusion follows.

Since $\cuHom_{\Shv_\mixed(\mathcal{Y}_n)}(\mathcal{F}, \mathcal{G}) \in \Shv_{\mixed, c}(\mathcal{Y}_n)$, we can view it as an element of $\Shv_\mixed(\mathcal{Y}_n)^\ren$. To show that $\cuHom_{\Shv_\mixed(\mathcal{Y}_n)^\ren}(\mathcal{F}, \mathcal{G})$ is constructible, it suffices to show that we have an equivalence
\[
	\cuHom_{\Shv_\mixed(\mathcal{Y}_n)^\ren}(\mathcal{F}, \mathcal{G}) \simeq \cuHom_{\Shv_\mixed(\mathcal{Y}_n)}(\mathcal{F}, \mathcal{G})
\]
as objects in $\Shv_\mixed(\mathcal{Y}_n)^\ren$. But now, for any $\mathcal{T} \in \Shv_{\mixed, c}(\mathcal{Y}_n)$, we have
\begin{align*}
	&\alignsep\Hom_{\Shv_\mixed(\mathcal{Y}_n)^\ren}(\mathcal{T}, \cuHom_{\Shv_\mixed(\mathcal{Y}_n)}(\mathcal{F}, \mathcal{G})) \\
	&\simeq \Hom_{\Shv_{\mixed, c}(\mathcal{Y}_n)}(\mathcal{T}, \cuHom_{\Shv_\mixed(\mathcal{Y}_n)}(\mathcal{F}, \mathcal{G})) \\
	&\simeq \Hom_{\Shv_\mixed(\mathcal{Y}_n)}(\mathcal{T}, \cuHom_{\Shv_\mixed(\mathcal{Y}_n)}(\mathcal{F}, \mathcal{G})) \\
	&\simeq \Hom_{\Shv_\mixed(\mathcal{Y}_n)}(\mathcal{T}\otimes \mathcal{F}, \mathcal{G}) \\
	&\simeq \Hom_{\Shv_{\mixed, c}(\mathcal{Y}_n)}(\mathcal{T}\otimes \mathcal{F}, \mathcal{G}) \\
	&\simeq \Hom_{\Shv_\mixed(\mathcal{Y}_n)^\ren}(\mathcal{T}\otimes \mathcal{F}, \mathcal{G}) \\
	&\simeq \Hom_{\Shv_\mixed(\mathcal{Y}_n)^\ren}(\mathcal{T}, \cuHom_{\Shv_\mixed(\mathcal{Y}_n)^\ren}(\mathcal{F}, \mathcal{G})),
\end{align*}
where we have used the fact that $\otimes$ preserves constructibility in the fourth equivalence. This implies that for all $\mathcal{T} \in \Shv_\mixed(\mathcal{Y}_n)^\ren$,
\[
	\Hom_{\Shv_\mixed(\mathcal{Y}_n)^\ren}(\mathcal{T}, \cuHom_{\Shv_\mixed(\mathcal{Y}_n)}(\mathcal{F}, \mathcal{G})) \simeq \Hom_{\Shv_\mixed(\mathcal{Y}_n)^\ren}(\mathcal{T}, \cuHom_{\Shv_\mixed(\mathcal{Y}_n)^\ren}(\mathcal{F}, \mathcal{G})).
\]
We conclude the proof by invoking the Yoneda lemma.
\end{proof}

\subsection{The construction}
\label{subsec:graded_sheaves_and_functoriality}
We now construct the category of graded sheaves.

\subsubsection{The construction} We start with the definition of the category of graded sheaves.

\begin{defn}
\label{defn:Shv_gr}
For $\mathcal{Y}_n \in \Stk_{k_n}$, we let $\Shv_{\gr}(\mathcal{Y}_n) \defeq \Shv_\mixed(\mathcal{Y}_n) \otimes_{\Shv_\mixed(\pt_n)} \Vect^{\gr}$ and $\Shv_{\gr}(\mathcal{Y}_n)^{\ren} \defeq \Shv_\mixed(\mathcal{Y}_n)^{\ren} \otimes_{\Shv_\mixed(\pt_n)} \Vect^{\gr}$ be the categories of graded sheaves and, respectively, renormalized graded sheaves on $\mathcal{Y}_n$. Here, the symmetric monoidal functor $\gr: \Shv_\mixed(\pt_n) \to \Vect^\gr$ of \cref{eq:gr_functor_for_a_point} is used to form the relative tensor.

The full subcategory of graded constructible sheaves $\Shv_{\gr, c}(\mathcal{Y}_n)$ is defined to be the full subcategory spanned by the compact objects $\Shv_{\gr}(\mathcal{Y}_n)^{\ren, c}$.
\end{defn}

Direct from the construction, we have the following observation.
\begin{lem}\label{lem:mixed_to_graded}
Let $\mathcal{Y}_n \in \Stk_{k_n}$. Then, $\Shv_\gr(\mathcal{Y}_n), \Shv_\gr(\mathcal{Y}_n)^\ren \in \ComAlg(\Vect^\gr)$, i.e., they are symmetric monoidal $\DG$-categories whose tensor products are compatible with $\Vect^\gr$-actions. Moreover, we have a natural symmetric monoidal continuous and compact preserving functor $\gr_{\mathcal{Y}_n}: \Shv_\mixed(\mathcal{Y}_n)^\ren \to \Shv_\gr(\mathcal{Y}_n)^\ren$ given by $\gr_{\mathcal{Y}_n}(\mathcal{F}) = \mathcal{F} \boxtimes \Qlbar$ and similarly for the non-renormalized version.

When $\mathcal{Y}_n$ is clear from the context or when $\mathcal{Y}_n = \pt_n$, we simply write $\gr$ in place of $\gr_{\mathcal{Y}_n}$.
\end{lem}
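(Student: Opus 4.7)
The plan is to obtain both claims by invoking the general formalism of relative tensor products of (commutative algebra) module categories over a rigid base, applied to $\mathcal{A} = \Shv_\mixed(\pt_n)$, $\mathcal{M} = \Shv_\mixed(\mathcal{Y}_n)^\ren$, and $\mathcal{N} = \Vect^\gr$.

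First, I would observe that $\Vect^\gr$ acquires the structure of a commutative algebra in $\Mod_{\Shv_\mixed(\pt_n)}$ via the symmetric monoidal functor $\gr$ of \cref{eq:gr_functor_for_a_point}, while $\Shv_\mixed(\mathcal{Y}_n)^\ren$ is a commutative algebra in $\Mod_{\Shv_\mixed(\pt_n)}$ as established in \cref{subsec:Shv_mixed(pt_n)-module_structures}. Since the relative tensor product in $\DGCatprescont$ upgrades to a symmetric monoidal structure on $\Mod_{\Shv_\mixed(\pt_n)}$, the relative tensor product of two commutative algebra objects is again a commutative algebra object. This gives $\Shv_\gr(\mathcal{Y}_n)^\ren$ the structure of a symmetric monoidal $\DG$-category. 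The same argument, mutatis mutandis, applies to the non-renormalized version $\Shv_\gr(\mathcal{Y}_n)$.

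To see that this symmetric monoidal structure is compatible with the $\Vect^\gr$-action, I would note that there is a canonical symmetric monoidal functor $\Vect^\gr \to \Shv_\gr(\mathcal{Y}_n)^\ren$ sending $V \mapsto 1_{\Shv_\mixed(\mathcal{Y}_n)^\ren} \boxtimes V$ (the analog of $\gr_{\mathcal{Y}_n}$ with the roles swapped), whose existence follows from the universal property of the relative tensor product. This precisely endows $\Shv_\gr(\mathcal{Y}_n)^\ren$ with the structure of an object in $\ComAlg(\Vect^\gr)$, as desired.

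For the second claim, the functor $\gr_{\mathcal{Y}_n}$ is the canonical base-change functor associated with the morphism $\Shv_\mixed(\pt_n) \to \Vect^\gr$ of commutative algebras in $\DGCatprescont$. As such, it is automatically symmetric monoidal and continuous: it is a morphism in $\Pr^{L,\stab}$, and tensoring a symmetric monoidal functor of $\mathcal{A}$-modules yields a symmetric monoidal functor. To verify compact-preservation, I would factor $\gr_{\mathcal{Y}_n}$ as
\[
\Shv_\mixed(\mathcal{Y}_n)^\ren \xrightarrow{\,-\,\boxtimes\, \Qlbar\,} \Shv_\mixed(\mathcal{Y}_n)^\ren \otimes \Vect^\gr \longrightarrow \Shv_\mixed(\mathcal{Y}_n)^\ren \otimes_{\Shv_\mixed(\pt_n)} \Vect^\gr.
\]
The first arrow preserves compact objects because $\Qlbar \in (\Vect^\gr)^c$ (see \cref{expl:Vectgr_rigid}) and compactness in a tensor product of compactly generated stable categories is generated by exterior products of compacts (\cref{prop:compact_generation_tensors}). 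The second arrow preserves compact objects by \cref{cor:compact_generators_relative_tensors}, which applies because $\Shv_\mixed(\pt_n)$ is rigid and compactly generated and both modules involved are compactly generated.

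There is no real obstacle here beyond bookkeeping; everything is a direct application of the abstract machinery of \cref{sec:generalities_DGCats}. The only subtlety worth flagging is that rigidity of $\Shv_\mixed(\pt_n)$ is essential, both to ensure the module structures propagate through the relative tensor and to obtain compact-preservation from \cref{cor:compact_generators_relative_tensors}.
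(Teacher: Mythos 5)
Your proposal is correct and matches the paper's intent: the paper states this lemma with no proof ("direct from the construction"), relying exactly on the \cref{sec:generalities_DGCats} machinery you invoke — the relative tensor of commutative algebra objects over the rigid, compactly generated $\Shv_\mixed(\pt_n)$, with compact-preservation coming from \cref{prop:compact_generation_tensors} and \cref{cor:compact_generators_relative_tensors}. Your write-up just makes explicit the bookkeeping the paper leaves implicit (cf.\ also the pushout square in $\ComAlg(\DGCatprescont)$ recorded later in \cref{lem:gr_and_oblv_gr_for_stacks}).
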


A couple of remarks are in order.

\begin{rmk}
Due to the applications we have in mind, we will, from now on, restrict ourselves to the renormalized case. As we already saw above, this case is equipped with better functoriality compared to the non-renormalized one. Note that many of the results below can also be proved in the same way for the non-renormalized case. And of course, in the case of schemes, there is no distinction between renormalized and non-renormalized theory. Thus, for $S_n \in \Sch_{k_n}$, we can simply write $\Shv_\gr(S_n)$ to mean either one of the two theories.
\end{rmk}

\begin{rmk}
\label{rmk:warning_graded_sheaves_structure_map}
By definition, an object $\mathcal{Y}_n \in \Stk_{k_n}$ is equipped, as part of the definition, with a structure map $\mathcal{Y}_n \to \pt_n = \Spec \Fqn$. It is this map that allows us to equip $\Shv_\mixed(\mathcal{Y}_n)^\ren$ with the structure of a $\Shv_\mixed(\pt_n)$-module used in the definition of $\Shv_\gr(\mathcal{Y}_n)^\ren$. It is important to note that the resulting category of graded sheaves $\Shv_\gr(\mathcal{Y}_n)^\ren$ is sensitive to this structure. 

For example, consider $\pt_{2} = \Spec \mathbb{F}_{q^2}$. It can be viewed as an object of either $\Stk_{k_2}$ or $\Stk_{k_1}$. We thus have two categories
\[
	\Shv_\mixed(\pt_2) \otimes_{\Shv_\mixed(\pt_1)} \Vect^\gr \quad\text{and}\quad \Shv_\mixed(\pt_2) \otimes_{\Shv_\mixed(\pt_2)} \Vect^\gr \simeq \Vect^\gr.
\]
It is easy to see that these two categories are distinct. For example, using \cref{prop:hom_graded_sheaves} below, one can see that the $\Vect^\gr$-enriched $\Hom$ of the constant sheaf in two cases are different: $\Qlbar^2$ for the first case and $\Qlbar$ for the second. This should not be surprising since roughly speaking, the first category is related to $\pt_2 \times_{\pt_1} \pt \simeq \pt \sqcup \pt$.

Because of this, when using the notation $\Shv_\gr(\mathcal{Y}_n)^\ren$, unless specified otherwise, the subscript $n$ in $\mathcal{Y}_n$ is used to indicate the fact that we are viewing $\mathcal{Y}_n$ as an object in $\Stk_{k_n}$ in defining the category of graded sheaves.
\end{rmk}

\begin{rmk}
\label{rmk:omega_graded_sheaves}
Instead of the construction $\Shv_{\gr}(\mathcal{Y}_n)^\ren = \Shv_{\mixed}(\mathcal{Y}_n)^\ren \otimes_{\Shv_\mixed(\pt_n)} \Vect^\gr$, which has the effect of remembering only the weights, we could instead construct the category of $\omega$-graded sheaves $\Shv_\omega(\mathcal{Y}_n)^\ren \defeq \Shv_\mixed(\mathcal{Y}_n)^\ren \otimes_{\Shv_\mixed(\pt_n)} \Vect^\omega$. Here, $\Vect^\omega$ is the category of chain complexes indexed by the set of algebraic integers with (complex) absolute values $(q^n)^{\nicefrac{w}{2}}$ for $w\in \mathbb{Z}$, i.e., those numbers that can appear as eigenvalues of Frobenius actions on objects in $\Shv_\mixed(\pt_n)$, see also~\cite[\S5.1.5]{beilinson_faisceaux_2018}. Moreover, the symmetric monoidal functor $\Shv_\mixed(\pt_n) \to \Vect^\omega$ forgets the Frobenius action but still remembers the eigenvalues. This construction thus literally means semi-simplifying Frobenius actions. Note also that $\Shv_\mixed(\pt_n) \to \Vect^\gr$ factors through this functor.

All results in this paper can be made to work with this variant straightforwardly. Since taking the trace of Frobenius forgets non-semisimplicity behaviors, the usual Grothendieck--Lefschetz trace formula also factors through this construction. For the applications we have in mind, however, we are primarily interested in the weight grading rather than the actual eigenvalues. Moreover, dealing with gradings somewhat simplifies the notation and the exposition. We will, as a result, not pursue this line of investigation in the current paper.
\end{rmk}

\subsubsection{Grading shifts}
\label{subsubsec:shift_of_gradings}
Recall from \cref{expl:Vectgr_rigid} that an object $V \in \Vect^{\gr}$ could be written as a direct sum $V = \oplus_n V_n$ where $V_n \in \Vect$ is placed in graded degree $n$. For any integer $k$, we use $V\lrangle{k}$ to denote a grading shift of $V$, i.e., $V\lrangle{k}_n = V_{n+k}$. For $\mathcal{F} \in \Shv_{\gr}(\mathcal{Y}_n)^\ren$, we write $\mathcal{F} \lrangle{k} \defeq \mathcal{F} \otimes \Qlbar\lrangle{k}$, where $\Qlbar\lrangle{k}$ is $\Qlbar$ placed in graded degree $-k$.
	
This notation is compatible with Tate twist. Namely, if $\mathcal{F} \in \Shv_\mixed(\mathcal{Y})^\ren$, then
\[
	\gr(\mathcal{F}(k)) \simeq \gr(\mathcal{F})\lrangle{2k}
\]
where the factor $2$ is due to the fact that the sheaf $\Qlbar(-1) \in \Shv_\mixed(\pt_n)$ has weight $2$.

From \cref{cor:compact_generators_relative_tensors}, we know that $\Shv_{\gr, c}(\mathcal{Y}_n)$ is compactly generated by $\mathcal{F} \boxtimes \Qlbar\lrangle{k}$ for $\mathcal{F} \in \Shv_{\mixed, c}(\mathcal{Y})$ and $k\in \mathbb{Z}$. But this is equivalent to $\mathcal{F}(k/2) \boxtimes \Qlbar = \gr(\mathcal{F}(k/2))$. Thus, the image of $\Shv_{\mixed, c}(\mathcal{Y}_n)$ under $\gr$ compactly generates $\Shv_\gr(\mathcal{Y}_n)^\ren$.

\subsection{Functoriality}
\label{subsec:functoriality_graded_sheaves}
We will now study functoriality of graded sheaves, which follows in a straightforward manner from that of mixed sheaves.

\subsubsection{Pull and push functors}
Let $f: \mathcal{Y}_n \to \mathcal{Z}_n$ where $\mathcal{Y}_n, \mathcal{Z}_n \in \Stk_{k_n}$. Then, recall that we have functors
\[
	f^*_\ren, f^!_\ren: \Shv_\mixed(\mathcal{Z}_n)^\ren \to \Shv_\mixed(\mathcal{Y}_n)^\ren \qquad\text{and}\qquad f_{*, \ren}: \Shv_\mixed(\mathcal{Y}_n)^\ren \to \Shv_\mixed(\mathcal{Z}_n)^\ren
\]
where $f^*_\ren \dashv f_{*, \ren}$. Moreover, when $f_!$ preserves constructibility, $f^!_\ren$ admits a left adjoint $f_{!, \ren}$.

By \cref{prop:strict_Shv_m(pt)-mod_functor_stacks}, all of these functors are strict functors of $\Shv_\mixed(\pt_n)$-modules. Thus, applying $-\otimes_{\Shv_\mixed(\pt_n)} \Vect^\gr$, we obtain functors
\[
	f^*_\ren, f^!_\ren: \Shv_\gr(\mathcal{Z}_n)^\ren \to \Shv_\gr(\mathcal{Y}_n)^\ren \qquad\text{and}\qquad f_{*, \ren}: \Shv_\gr(\mathcal{Y}_n)^\ren \to \Shv_\gr(\mathcal{Z}_n)^\ren
\]
where $f^*_\ren \dashv f_{*, \ren}$. As above, when $f_!$ (for mixed sheaves) preserves constructibility, $f^!_\ren$ admits a left adjoint $f_{!, \ren}$. Note that to avoid clutter, we do not include in the notation of the various pull and push functors any extra decoration to specify that we are operating with graded sheaves.

\subsubsection{Compatibility with functoriality for mixed/non-mixed sheaves}
\label{subsubsec:functorial_compatibility}
The various pull and push functors for graded sheaves are compatible with those on mixed/non-mixed sheaves in a precise sense. 

\subsubsection{}
Let $\mathcal{Y}_n \in \Stk_{k_n}$ and $\mathcal{Y} \in \Stk_k$ its base change to $k$. Let $h_\mathcal{Y}: \mathcal{Y} \to \mathcal{Y}_n$ and $h: \pt \to \pt_n$ denote the canonical maps, and $\pi_n: \mathcal{Y}_n \to \pt_n$ and $\pi: \mathcal{Y} \to \pt$ denote the structure maps. Then, we have the following commutative diagram in $\ComAlg(\DGCatprescont)$, where in the bottom right, $\Vect \simeq \Shv(\pt)$
\[
\begin{tikzcd}
	\Shv_\mixed(\mathcal{Y}_n)^\ren \ar{rr}{h_{\mathcal{Y}, \ren}^*} && \Shv(\mathcal{Y})^\ren \\
	\Shv_\mixed(\pt_n) \ar{u}{\pi_{n, \ren}^*} \ar[bend right=15]{rr}[swap]{h^*} \ar{r}{\gr} & \Vect^\gr \ar{r}{\oblv_\gr\simeq \oplus} & \Vect \ar{u}{\pi^*_\ren}
\end{tikzcd}
\]
Here, $\oblv_\gr$ denotes the functor of taking the direct sum of all graded components.

This implies the following lemma.
\begin{lem}
\label{lem:gr_and_oblv_gr_for_stacks}
In the situation above, the functor $\gr_{\mathcal{Y}_n}: \Shv_\mixed(\mathcal{Y}_n)^\ren \to \Shv_\gr(\mathcal{Y}_n)^\ren$ of \cref{lem:mixed_to_graded} fits into the following commutative diagram in $\ComAlg(\DGCatprescont)$ where the square on the left is a pushout square in $\ComAlg(\DGCatprescont)$
\[
\begin{tikzcd}
	\Shv_\mixed(\mathcal{Y}_n)^\ren \ar[bend left=15]{rr}{h_{\mathcal{Y}, \ren}^*} \ar{r}[swap]{\gr_{\mathcal{Y}_n}} & \Shv_\gr(\mathcal{Y}_n)^\ren \ar{r}[swap]{\oblv_{\gr, \mathcal{Y}_n}} & \Shv(\mathcal{Y})^\ren \\
	\Shv_\mixed(\pt_n) \ar{u}{\pi_{n, \ren}^*} \ar[bend right=15]{rr}[swap]{h^* \simeq \oblv_{\Frob_n}} \ar{r}{\gr} & \Vect^\gr \ar{u}{\pi_{n, \ren}^*} \ar{r}{\oblv_\gr} & \Vect \ar{u}{\pi^*_\ren}
\end{tikzcd}
\]
As before, when $\mathcal{Y}_n$ is clear from the context, we will also use $\oblv_\gr$ to denote $\oblv_{\gr, \mathcal{Y}_n}$.
\end{lem}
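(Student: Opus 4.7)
The argument is essentially formal, resting on the fact that relative tensor products of commutative algebra objects compute pushouts. I would proceed as follows.

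First, I would unpack the definition: by \cref{defn:Shv_gr}, the category $\Shv_\gr(\mathcal{Y}_n)^\ren$ is the relative tensor product $\Shv_\mixed(\mathcal{Y}_n)^\ren \otimes_{\Shv_\mixed(\pt_n)} \Vect^\gr$ formed inside $\Mod_{\Shv_\mixed(\pt_n)}(\DGCatprescont)$, using $\gr: \Shv_\mixed(\pt_n) \to \Vect^\gr$ to endow $\Vect^\gr$ with its module structure and $\pi_{n,\ren}^*$ to do the same for $\Shv_\mixed(\mathcal{Y}_n)^\ren$. The key general fact (see~\cite[Thm.~3.4.4.3 and Prop.~3.2.4.7]{lurie_higher_2017}) is that for any symmetric monoidal presentable stable $\infty$-category and any commutative algebra $\mathcal{A}$, the relative tensor $\mathcal{B} \otimes_\mathcal{A} \mathcal{C}$ of two commutative $\mathcal{A}$-algebras, viewed as a commutative algebra, is the coproduct of $\mathcal{B}$ and $\mathcal{C}$ in $\ComAlg(\Mod_\mathcal{A})$, equivalently the pushout $\mathcal{B} \sqcup_\mathcal{A} \mathcal{C}$ in $\ComAlg$ of the underlying diagram. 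Applied to $\mathcal{A} = \Shv_\mixed(\pt_n)$, $\mathcal{B} = \Shv_\mixed(\mathcal{Y}_n)^\ren$, and $\mathcal{C} = \Vect^\gr$ (all of which lie in $\ComAlg(\DGCatprescont)$ by the discussion in \cref{subsec:mixed_sheaves,subsec:Shv_mixed(pt_n)-module_structures}), this identifies the left square as a pushout square in $\ComAlg(\DGCatprescont)$, with $\gr_{\mathcal{Y}_n}$ the canonical map into the pushout.

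Next, the outer rectangle — i.e.\ the pre-existing commutative diagram displayed right before the lemma — provides a cocone on the span $\Shv_\mixed(\mathcal{Y}_n)^\ren \leftarrow \Shv_\mixed(\pt_n) \rightarrow \Vect^\gr$ landing in $\Shv(\mathcal{Y})^\ren \in \ComAlg(\DGCatprescont)$. The universal property of the pushout then produces, uniquely up to contractible ambiguity, a symmetric monoidal continuous functor
\[
\oblv_{\gr,\mathcal{Y}_n}\colon \Shv_\gr(\mathcal{Y}_n)^\ren \longrightarrow \Shv(\mathcal{Y})^\ren
\]
whose precomposition with $\gr_{\mathcal{Y}_n}$ recovers $h^*_{\mathcal{Y},\ren}$ and whose precomposition with the canonical map $\Vect^\gr \to \Shv_\gr(\mathcal{Y}_n)^\ren$ recovers $\pi^*_\ren \circ \oblv_\gr$. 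This is precisely the commutativity of the right square (together with its compatibility with the outer rectangle).

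There is no real obstacle here; the only thing to verify carefully is that each of $\Shv_\mixed(\mathcal{Y}_n)^\ren$, $\Vect^\gr$, and $\Shv(\mathcal{Y})^\ren$ is really a commutative algebra object in $\DGCatprescont$ and that all structure functors ($\gr$, $\pi^*_{n,\ren}$, $h^*$, $h^*_{\mathcal{Y},\ren}$, $\pi^*_\ren$, $\oblv_\gr$) are symmetric monoidal and continuous — but all of this has been established: symmetric monoidality and continuity of pullback functors comes from \cref{subsec:mixed_sheaves} and \cref{prop:strict_Shv_m(pt)-mod_functor_stacks}, symmetric monoidality of $\gr$ from its definition in \cref{subsec:mixed_sheaves}, and the outer-rectangle commutativity from the pre-lemma diagram. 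The construction of $\oblv_{\gr,\mathcal{Y}_n}$ is then purely formal via the pushout's universal property.
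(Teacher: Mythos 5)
Your argument is correct and matches the paper's (implicit) reasoning: the paper deduces the lemma directly from the displayed diagram preceding it, with the left square a pushout because $\Shv_\gr(\mathcal{Y}_n)^\ren$ is by definition the relative tensor product, which computes the coproduct/pushout in $\ComAlg$, and $\oblv_{\gr,\mathcal{Y}_n}$ arising from the universal property applied to the outer rectangle. Your spelling out of the Lurie references and the continuity/symmetric-monoidality checks is exactly the formal content the paper leaves tacit.
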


Compatibility between $\gr_{\mathcal{Y}_n}$ and various pull and push functors is given in the following result.

\begin{prop}
\label{thm:compatibility_functoriality_mixed_graded_non_mixed}
Let $f_n: \mathcal{Y}_n \to \mathcal{Z}_n$ where $\mathcal{Y}_n, \mathcal{Z}_n\in \Stk_{k_n}$, $f: \mathcal{Y} \to \mathcal{Z}$ its base change to $k$, $h_\mathcal{Y}: \mathcal{Y} \to \mathcal{Y}_n$ and $h_\mathcal{Z}: \mathcal{Z} \to \mathcal{Z}_n$ the canonical maps. Then we have the following commutative diagrams
\[
\begin{tikzcd}[row sep=large]
	\Shv_\mixed(\mathcal{Y}_n)^\ren \ar[bend left=15]{rr}{h_{\mathcal{Y}, \ren}^*} \ar{r}[swap]{\gr_{\mathcal{Y}_n}} & \Shv_\gr(\mathcal{Y}_n)^\ren \ar{r}[swap]{\oblv_{\gr, \mathcal{Y}_n}} & \Shv(\mathcal{Y})^\ren \\
	\Shv_\mixed(\mathcal{Z}_n)^\ren \ar[bend right=15]{rr}[swap]{h_{\mathcal{Z}, \ren}^*} \ar{r}{\gr_{\mathcal{Z}_n}} \ar{u}{f_{n, \ren}^* \ (\text{resp. } f_{n, \ren}^!)} & \Shv_\gr(\mathcal{Z}_n)^\ren \ar{r}{\oblv_{\gr, \mathcal{Z}_n}} \ar{u}{f_{n, \ren}^* \ (\text{resp. } f_{n, \ren}^!)} & \Shv(\mathcal{Z})^\ren \ar{u}{f^*_\ren \ (\text{resp. } f^!_\ren)}
\end{tikzcd}
\]
and
\[
\begin{tikzcd}[row sep=large]
	\Shv_\mixed(\mathcal{Y}_n)^\ren \ar[bend left=15]{rr}{h_{\mathcal{Y}, \ren}^*} \ar{r}[swap]{\gr_{\mathcal{Y}_n}} \ar{d}[swap]{f_{n*, \ren} \ (\text{resp. } f_{n!,\ren})} & \Shv_\gr(\mathcal{Y}_n)^\ren \ar{r}[swap]{\oblv_{\gr, \mathcal{Y}_n}} \ar{d}[swap]{f_{n*, \ren} \ (\text{resp. } f_{n!,\ren})} & \Shv(\mathcal{Y})^\ren \ar{d}[swap]{f_{*, \ren} \ (\text{resp. } f_{!,\ren})} \\
	\Shv_\mixed(\mathcal{Z}_n)^\ren \ar[bend right=15]{rr}[swap]{h_{\mathcal{Z}, \ren}^*} \ar{r}{\gr_{\mathcal{Z}_n}} & \Shv_\gr(\mathcal{Z}_n)^\ren \ar{r}{\oblv_{\gr, \mathcal{Z}_n}} & \Shv(\mathcal{Z})^\ren
\end{tikzcd}
\]
Note that here, the $(-)_{!, \ren}$ case is only defined when $f_!$ preserves constructibility.
\end{prop}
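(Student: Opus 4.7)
The plan is to exploit two facts established earlier: by Proposition~\ref{prop:strict_Shv_m(pt)-mod_functor_stacks}, each of $f^*_{n,\ren}$, $f^!_{n,\ren}$, $f_{n*,\ren}$, $f_{n!,\ren}$ is a strict functor of $\Shv_\mixed(\pt_n)$-modules; and by Lemma~\ref{lem:gr_and_oblv_gr_for_stacks}, the functor $\gr_{\mathcal{Y}_n}$ is, by construction, the canonical map $\Shv_\mixed(\mathcal{Y}_n)^\ren \to \Shv_\mixed(\mathcal{Y}_n)^\ren \otimes_{\Shv_\mixed(\pt_n)} \Vect^\gr$ sending $\mathcal{F} \mapsto \mathcal{F}\boxtimes \Qlbar$. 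So every compatibility in the statement should reduce to a naturality property of the relative tensor product.

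\textbf{Step 1 (pullbacks with $\gr$).} Apply $-\otimes_{\Shv_\mixed(\pt_n)}\Vect^\gr$ to the strict module functor $f^*_{n,\ren}$ (resp.\ $f^!_{n,\ren}$). The induced functor $f^*_{n,\ren}\otimes\id_{\Vect^\gr}$ is, by Definition~\ref{defn:Shv_gr} and our construction of pull/push on graded sheaves in \S\ref{subsec:functoriality_graded_sheaves}, precisely the graded $f^*_{n,\ren}$ (resp.\ $f^!_{n,\ren}$). The required square is then the naturality square of the universal functor into the relative tensor.

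\textbf{Step 2 (pushforwards with $\gr$).} The same $-\otimes_{\Shv_\mixed(\pt_n)}\Vect^\gr$ construction produces $f_{n*,\ren}\otimes\id_{\Vect^\gr}$ with the desired square. To identify this with the graded $f_{n*,\ren}$ (which, a priori, is only known as the right adjoint of the graded $f^*_{n,\ren}$), note that the adjunction $f^*_{n,\ren}\dashv f_{n*,\ren}$ is an adjunction inside $\Mod_{\Shv_\mixed(\pt_n)}$ (strict on both sides, by Corollary~\ref{cor:lax_implies_strict_rigid} applied to $\Shv_\mixed(\pt_n)$ which is rigid). Relative tensor is 2-functorial on module categories and therefore preserves adjunctions, so $f^*_{n,\ren}\otimes\id \dashv f_{n*,\ren}\otimes\id$; uniqueness of adjoints identifies the right adjoint with the graded $f_{n*,\ren}$. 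The case of $f_{n!,\ren}$ proceeds identically using the adjunction $f_{n!,\ren}\dashv f^!_{n,\ren}$, available whenever $f_!$ preserves constructibility.

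\textbf{Step 3 ($\oblv_\gr$ squares).} By Lemma~\ref{lem:gr_and_oblv_gr_for_stacks}, $\oblv_{\gr,\mathcal{Y}_n}$ is obtained by further applying $-\otimes_{\Vect^\gr}\Vect$ along the symmetric monoidal functor $\oblv_\gr: \Vect^\gr \to \Vect$, and the composite $\oblv_{\gr,\mathcal{Y}_n}\circ\gr_{\mathcal{Y}_n}$ equals $h^*_{\mathcal{Y},\ren}$. Since $\Vect^\gr$ is rigid, Corollary~\ref{cor:lax_implies_strict_rigid} again applies, and Steps~1--2 can be repeated verbatim with $\Vect^\gr \to \Vect$ replacing $\Shv_\mixed(\pt_n)\to\Vect^\gr$. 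Concatenating the two resulting squares gives the outer arcs involving $h^*_{\mathcal{Y},\ren}$ and $h^*_{\mathcal{Z},\ren}$, which serves as a consistency check against the standard smooth base-change compatibilities.

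\textbf{Main obstacle.} The only nontrivial categorical input is the preservation of adjunctions under relative tensor product invoked in Step~2; everything else is bookkeeping. That preservation is standard for 2-functors, but it relies crucially on strictness of the module functors on both sides of the adjunction, which is why the rigidity of $\Shv_\mixed(\pt_n)$ (and of $\Vect^\gr$) must be invoked via Corollary~\ref{cor:lax_implies_strict_rigid}. Once this is granted, the proposition is essentially formal.
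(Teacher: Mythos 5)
Steps 1 and 2 are fine and essentially reproduce the paper's argument for the left-hand squares: the graded functors are by construction the images of the (strict, by \cref{prop:strict_Shv_m(pt)-mod_functor_stacks}) module functors under $-\otimes_{\Shv_\mixed(\pt_n)}\Vect^\gr$, so those squares commute by naturality of $\gr_{\mathcal{C}}$ (your adjunction discussion in Step 2 is harmless but not needed, since the graded pushforwards are \emph{defined} by tensoring in \cref{subsec:functoriality_graded_sheaves}, not merely as adjoints).

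Step 3, however, has a genuine gap. You assert that $\oblv_{\gr,\mathcal{Y}_n}$ ``is obtained by applying $-\otimes_{\Vect^\gr}\Vect$'' and that Steps 1--2 can then be repeated verbatim. But $\Shv(\mathcal{Y})^\ren$ is \emph{not} the relative tensor $\Shv_\gr(\mathcal{Y}_n)^\ren\otimes_{\Vect^\gr}\Vect\simeq\Shv_\mixed(\mathcal{Y}_n)^\ren\otimes_{\Shv_\mixed(\pt_n)}\Vect$: as in the factorization used in \cref{lem:oblv_gr_conservative}, that tensor only embeds fully faithfully into $\Shv(\mathcal{Y})^\ren$ (the essential-surjectivity argument of \cref{lem:mixed_tensor_up_essentially_surjective} uses that the extension is finite \etale{} and fails for $\pt\to\pt_n$; compare $\Shv_{\infty,c}\subsetneq\Shv_c$ in \cref{subsec:mixed_geometry}). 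Consequently, tensoring along $\Vect^\gr\to\Vect$ produces functors on this subcategory, and nothing in your argument identifies them with the honest sheaf-theoretic $f^*_\ren$, $f^!_\ren$, $f_{*,\ren}$, $f_{!,\ren}$ on $\Shv(\mathcal{Y})^\ren$. That identification is exactly the commutativity of the outer square, $h^*_{\mathcal{Z},\ren}\circ f_{n\ast,\ren}\simeq f_{\ast,\ren}\circ h^*_{\mathcal{Y},\ren}$ (and its $!$-analogues), which for pushforwards is a genuine base-change statement along $\pt\to\pt_n$ and cannot be relegated to a ``consistency check'' --- it is an input, not an output. The paper's route is: prove the outer square directly by smooth base change (compatibly with the module structures), and then deduce the right-hand squares from the outer and left-hand ones by the universal property of tensoring up along $\Shv_\mixed(\pt_n)\to\Vect^\gr\to\Vect$, i.e., both composites $\Shv_\gr(\mathcal{Y}_n)^\ren\rightrightarrows\Shv(\mathcal{Z})^\ren$ are module functors agreeing after restriction along $\gr_{\mathcal{Y}_n}$. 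Your proposal needs this base-change input (and the universal-property step) to close; as written, the right-hand squares are not established.
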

\begin{proof}
The left squares in the diagrams above commute by naturality of
\[
	\gr_\mathcal{C}: \mathcal{C} \to \mathcal{C}\otimes_{\Shv_\mixed(\pt_n)} \Vect^\gr, \qquad \mathcal{C} \in \Mod_{\Shv_\mixed(\pt_n)}.
\]
Moreover, the outer square also commutes by smooth base change and is compatible with the module structures over $\Shv_\mixed(\pt_n)$ and $\Vect$. By the factorization
\[
	\Shv_\mixed(\pt_n) \to \Vect^\gr \to \Vect,
\]
the commutativity of the squares on the right follows from the universal property of ``tensoring up.''
\end{proof}

\begin{rmk}
Similarly, for any $\mathcal{Y}_n \in \Stk_{k_n}$, we have functors
\[
	\Shv_\mixed(\mathcal{Y}_n)^\ren \xrightarrow{\gr^\omega_{\mathcal{Y}_n}} \Shv_\omega(\mathcal{Y}_n)^\ren \xrightarrow{\oblv_{\gr^\omega_{\mathcal{Y}_n}}} \Shv(\mathcal{Y})^\ren.
\]
where $\Shv_\omega(\mathcal{Y}_n)^\ren$ is described in \cref{rmk:omega_graded_sheaves}.

Beilinson communicated the following observation to us. Let $A_\mixed = \Qlbar[t] \in \ComAlg(\Shv_\mixed(\pt_n))$ where Frobenius acts by $t \mapsto t + 1$ and $A \in \ComAlg(\Shv(\pt))$ its pullback to $\Shv(\pt)$. Then, one can show that $\Shv_\omega(\pt_n) \simeq \Mod_{A_\mixed}(\Shv_\mixed(\pt_n))$, which implies, more generally, that $\Shv_\omega(\mathcal{Y}_n)^\ren \simeq \Mod_{A_\mixed}(\Shv_\mixed(\mathcal{Y}_n)^\ren)$. Under this equivalence, $\gr^\omega_{\mathcal{Y}_n}$ is identified with $\Shv_\mixed(\mathcal{Y}_n)^\ren \xrightarrow{A_\mixed \otimes -} \Mod_{A_\mixed}(\Shv_\mixed(\mathcal{Y}_n)^\ren)$ and $\oblv_{\gr^\omega_{\mathcal{Y}_n}}$ factors as follows
\[
	\Mod_{A_\mixed}(\Shv_\mixed(\mathcal{Y}_n)^\ren) \xrightarrow{\oblv_\mixed} \Mod_A(\Shv(\mathcal{Y})^\ren) \xrightarrow{- \otimes_A \Qlbar} \Shv(\mathcal{Y})^\ren,
\]
where the commutative algebra map $A \to \Qlbar$ is given by sending $t\mapsto 0$. We will not make use of this in the current paper.
\end{rmk}

\subsubsection{Base change results} Other functorial properties of graded sheaves also follow from the corresponding results for mixed sheaves in a straightforward manner.

\begin{thm}
\label{thm:base_change_graded_sheaves}
The theory of graded sheaves satisfies the projection formula and smooth and proper base change. Namely,
\begin{myenum}{(\roman*)}
	\item Projection formula: Let $f: \mathcal{Y}_n \to \mathcal{Z}_n$ be a morphism in $\Stk_{k_n}$ such that $f_{!, \ren}$ is defined (for example, when $f_!$ for mixed sheaves preserves constructibility). Then, for any $\mathcal{F} \in \Shv_\gr(\mathcal{Y}_n)^\ren, \mathcal{G} \in \Shv_\gr(\mathcal{Z}_n)^\ren$, we have a natural equivalence
	\[
		f_{!, \ren}(\mathcal{F} \otimes f^*_\ren \mathcal{G}) \simeq f_{!, \ren} \mathcal{F} \otimes \mathcal{G}.
	\]
	\item Consider the following Cartesian square in $\Stk_{k_n}$
	\[
	\begin{tikzcd}
		\mathcal{Y}'_n \ar{d}{v'} \ar{r}{h'} & \mathcal{Y}_n \ar{d}{v} \\
		\mathcal{Z}'_n \ar{r}{h} & \mathcal{Z}_n
	\end{tikzcd}
	\]
	\begin{myenum}{--}
		\item Proper base change: When $v_{!, \ren}$ and $v'_{!, \ren}$ are defined then we have a natural equivalence of functors $h^*_\ren v_{!, \ren} \simeq v'_{!, \ren} h'^*_\ren$.
		\item Smooth base change: When $h$ is smooth, we have a natural equivalence of functors $h^*_\ren v_{*, \ren} \simeq v'_{*, \ren} h'^*_\ren$.
		\item Smooth base change (variant): We have an equivalence of functors $h^!_\ren v_{*, \ren} \simeq v'_{*, \ren} h'^!_\ren$.
	\end{myenum}
\end{myenum}
\end{thm}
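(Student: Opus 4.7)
The plan is to deduce each statement from its classical counterpart for mixed sheaves by applying the base-change construction $(-) \otimes_{\Shv_\mixed(\pt_n)} \Vect^\gr$. By \cref{prop:strict_Shv_m(pt)-mod_functor_stacks}, each of $f^*_\ren$, $f_{*,\ren}$, $f^!_\ren$, $f_{!,\ren}$ on the mixed side is a strict functor of $\Shv_\mixed(\pt_n)$-modules, and by construction in \cref{subsec:functoriality_graded_sheaves} its graded analogue is precisely the image of this mixed functor under $(-) \otimes_{\Shv_\mixed(\pt_n)} \Vect^\gr$. Moreover, since this base change is functorial on $\Mod_{\Shv_\mixed(\pt_n)}$, any equivalence of natural transformations between compositions of strict module functors on the mixed side descends to an equivalence on the graded side. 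This is the mechanism we will exploit in every case.

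For the projection formula in (i), the classical projection formula on mixed sheaves is an equivalence of bifunctors $\Shv_\mixed(\mathcal{Y}_n)^\ren \otimes \Shv_\mixed(\mathcal{Z}_n)^\ren \to \Shv_\mixed(\mathcal{Z}_n)^\ren$ between $f_{!,\ren}(- \otimes f^*_\ren -)$ and $f_{!,\ren}(-) \otimes (-)$. Both sides are compositions of strict module functors, using that the symmetric monoidal structure on each $\Shv_\mixed(-)^\ren$ makes it an object of $\ComAlg(\Mod_{\Shv_\mixed(\pt_n)})$. By \cref{lem:mixed_to_graded} and \cref{subsec:functoriality_graded_sheaves}, the graded tensor product and graded $f^*_\ren$, $f_{!,\ren}$ are exactly the images of their mixed counterparts under the base change, so applying $(-) \otimes_{\Shv_\mixed(\pt_n)} \Vect^\gr$ yields the graded projection formula.

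For the base change statements in (ii), the Beck--Chevalley transformations exist on both sides by general adjunction yoga. On the mixed side they are equivalences by classical proper base change (for $h^*_\ren v_{!,\ren} \simeq v'_{!,\ren} h'^*_\ren$) and classical smooth base change (for $h^*_\ren v_{*,\ren} \simeq v'_{*,\ren} h'^*_\ren$ when $h$ is smooth); the variant $h^!_\ren v_{*,\ren} \simeq v'_{*,\ren} h'^!_\ren$ is obtained from proper base change by passing to right adjoints. These natural transformations are morphisms between strict functors of $\Shv_\mixed(\pt_n)$-modules, so applying the base-change functor produces the corresponding Beck--Chevalley maps on the graded side and preserves the property of being an equivalence.

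The main obstacle is the formal bookkeeping required to ensure that the units and counits of the adjunctions $f^*_\ren \dashv f_{*,\ren}$ and $f_{!,\ren} \dashv f^!_\ren$ on the mixed side descend correctly, so that the graded Beck--Chevalley map is genuinely the image of the mixed one. This is automatic from rigidity of $\Shv_\mixed(\pt_n)$ together with \cref{cor:lax_implies_strict_rigid}, which forces any continuous lax module functor between $\Shv_\mixed(\pt_n)$-modules to be strict on both sides of the adjunction. As a complementary sanity check, since all functors involved are continuous, any such equivalence may be tested on the compact generators $\gr_{\mathcal{Y}_n}(\mathcal{F}_n)\lrangle{k}$ of \cref{subsubsec:shift_of_gradings}, which immediately reduces the identity to be checked back to the corresponding mixed identity under $\gr$.
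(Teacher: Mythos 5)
Your transfer mechanism (tensoring strict $\Shv_\mixed(\pt_n)$-module functors and module natural transformations up along $-\otimes_{\Shv_\mixed(\pt_n)}\Vect^\gr$, with rigidity handling the lax/strict bookkeeping) is fine, and your ``sanity check'' reduction to the compact generators $\gr(\mathcal{F}_n)\lrangle{k}$ is in fact the paper's main reduction. The gap is in the input you feed it: you assert that the projection formula and the base change equivalences hold ``by classical proper/smooth base change'' for the \emph{renormalized} mixed functors $f_{!,\ren}$, $f^*_\ren$, $f_{*,\ren}$, $f^!_\ren$. These are not the classical functors, and bridging that difference is the actual content of the proof. For $f_{!,\ren}$ and $f^*_\ren$ the bridge is easy (they preserve constructibility, so one restricts to constructibles and ind-extends), but for $f_{*,\ren}$ it is not: $f_{*,\ren}$ does not preserve constructibility, so even after reducing to a constructible $\mathcal{F}$ the object $v_{*,\ren}\mathcal{F}$ leaves the range where renormalized and usual functors tautologically agree. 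The paper handles exactly this point by observing that $f_{*,\ren}$ and the renormalized pullbacks are left $t$-exact (resp.\ $t$-exact), hence preserve bounded-below objects, that $\unren$ is an equivalence on the bounded-below subcategories (\cref{subsubsec:t-structures_on_cats_of_sheaves}), and then invoking \cref{prop:interaction_*-functors_ren} and \cref{lem:interaction_!-functors_ren} to import the usual smooth base change for mixed sheaves. Your proposal never engages with the renormalized-versus-usual distinction, so both smooth base change statements are unproved as written.

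A second, smaller problem: you derive the variant $h^!_\ren v_{*,\ren}\simeq v'_{*,\ren}h'^!_\ren$ ``from proper base change by passing to right adjoints.'' Passing to right adjoints in the relevant square turns proper base change into an identity whose left-adjoint form is $v^*_\ren h_{!,\ren}\simeq h'_{!,\ren}v'^*_\ren$, i.e.\ it presupposes that $h_{!,\ren}$ and $h'_{!,\ren}$ exist (that $h_!$ preserves constructibility). The variant in the theorem carries no such hypothesis, so this adjunction argument does not cover it; one has to prove it directly, again via the constructible/bounded-below comparison with the usual $h^!v_*\simeq v'_*h'^!$.
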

\begin{proof}
Since all functors involved are continuous and all categories involved are compactly generated, it suffices to check on a set of compact objects. It thus remains to show that these statements hold for constructible mixed sheaves, viewed as objects in the category of renormalized mixed sheaves.

All operations in the projection formula and proper base change statement preserve constructibility. Thus, the usual results for mixed sheaves imply those in the renormalized categories. For the last two statements involving smooth base change, we note that even though renormalized $*$-pushforward functors do not preserve constructibility, they do preserve the property of being bounded below in the usual $t$-structure (in fact, they are left $t$-exact). Similarly, the renormalized $*$-pullback functors also preserve the property of being bounded below, since they are $t$-exact. Now, the functor $\unren$ induces an equivalence between the full subcategories of bounded below objects in the usual and renormalized categories of sheaves. \cref{prop:interaction_*-functors_ren,lem:interaction_!-functors_ren} then allow us to apply the usual smooth base change theorem for mixed sheaves and the proof concludes.
\end{proof}

\subsubsection{Smooth descent}
Like $\Shv_\mixed$, the sheaf theory $\Shv_\gr$ satisfies smooth descent.

\begin{prop}
\label{prop:Shv_gr_smooth_descent}
$\Shv_\gr$ satisfies smooth descent. I.e., let $\mathcal{Z}_n \to \mathcal{Y}_n$ be a smooth morphism in $\Stk_{k_n}$ and let $\Cech^\bullet(\mathcal{Z}_n/\mathcal{Y}_n)$ be the associated $\Cech$ nerve. Then the pullback functor (either $*$ or $!$) induces an equivalence of categories
\[
	\Shv_\gr(\mathcal{Y}_n) \simeq \Tot(\Shv_\gr(\Cech^\bullet(\mathcal{Z}_n/\mathcal{Y}_n))).
\]
Similarly, $\Shv_{\gr, c}$ also satisfies smooth descent, i.e., we have
\[
	\Shv_{\gr, c}(\mathcal{Y}_n) \simeq \Tot(\Shv_{\gr, c}(\Cech^\bullet(\mathcal{Z}_n/\mathcal{Y}_n))).
\]
\end{prop}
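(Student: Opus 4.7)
The plan is to deduce the descent for $\Shv_\gr$ from the corresponding descent for $\Shv_\mixed$ (which is built into its construction as a right Kan extension from schemes, see~\cref{subsubsec:mixed_sheaves_on_stacks}) and then transport this across the relative tensor $-\otimes_{\Shv_\mixed(\pt_n)}\Vect^\gr$. The main obstacle is that $\Tot$ is a limit while relative tensor products commute only with colimits; I would get around this by rewriting the descent totalization as a colimit along left adjoints, to which tensor products \emph{do} apply.

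First, since $\mathcal{Z}_n \to \mathcal{Y}_n$ is smooth and representable, each face map $f$ in $\Cech^\bullet(\mathcal{Z}_n/\mathcal{Y}_n)$ is also smooth and representable (as a base change). Consequently $f^*_\ren$ differs from $f^!_\ren$ only by a Tate twist and cohomological shift, so it admits a continuous, compactness-preserving left adjoint $f_{\#,\ren}$ (a shift/twist of $f_{!,\ren}$). Applying~\cite[Vol.~I, Chap.~1, Prop.~2.5.7]{gaitsgory_study_2017} to the smooth descent of $\Shv_\mixed$, one may rewrite
\[
  \Shv_\mixed(\mathcal{Y}_n)^\ren \simeq \Tot(\Shv_\mixed(\Cech^\bullet(\mathcal{Z}_n/\mathcal{Y}_n))^\ren) \simeq \bigl|\Shv_\mixed(\Cech^\bullet(\mathcal{Z}_n/\mathcal{Y}_n))^\ren\bigr|,
\]
with the colimit taken in $\Mod_{\Shv_\mixed(\pt_n)}(\DGCatprescont)$ along the $f_{\#,\ren}$; the same discussion applies in the non-renormalized setting.

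Next, I would apply $-\otimes_{\Shv_\mixed(\pt_n)}\Vect^\gr$, which is continuous and hence commutes with colimits. Using~\cref{thm:compatibility_functoriality_mixed_graded_non_mixed} to identify the resulting transition functors with the analogous graded $f_{\#,\ren}$, this produces
\[
  \Shv_\gr(\mathcal{Y}_n)^\ren \simeq \bigl|\Shv_\gr(\Cech^\bullet(\mathcal{Z}_n/\mathcal{Y}_n))^\ren\bigr|.
\]
Reversing~\cite[Vol.~I, Chap.~1, Prop.~2.5.7]{gaitsgory_study_2017} once more, this colimit equals the totalization along the corresponding right adjoints, which are precisely the $f^*_\ren$ (resp.\ $f^!_\ren$) on the graded side, yielding the desired equivalence for $\Shv_\gr$.

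Finally, for the constructible statement, I would pass to compact objects throughout: since the $f_{\#,\ren}$ preserve compactness and each $\Shv_\gr(-)^\ren$ is compactly generated, the colimit presentation descends to $\DGCatidemex$ (using the monoidal structure on small $\DG$-categories from~\cref{subsec:large_vs_small_cats}), and the same limit–colimit duality then gives $\Shv_{\gr,c}(\mathcal{Y}_n) \simeq \Tot(\Shv_{\gr,c}(\Cech^\bullet(\mathcal{Z}_n/\mathcal{Y}_n)))$. The essential technical content is really just the limit–colimit swap, which is entirely standard once one observes that the $*$-pullbacks appearing in the smooth Čech diagram have continuous, compactness-preserving left adjoints.
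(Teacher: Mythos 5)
There is a genuine gap: your argument runs through the renormalized categories, and the key intermediate claims are false there. You assert $\Shv_\mixed(\mathcal{Y}_n)^\ren \simeq \Tot(\Shv_\mixed(\Cech^\bullet(\mathcal{Z}_n/\mathcal{Y}_n))^\ren)$ (equivalently, the colimit presentation along the $f_{\#,\ren}$), and from it you conclude descent for $\Shv_\gr(-)^\ren$. But the renormalized theories do \emph{not} satisfy smooth descent --- this is exactly the content of the remark immediately following this proposition in the paper, built on \cref{expl:constant_sheaves_not_compact}. Concretely, for the cover $\pt_n \to B\Gm$ every term of the \v{C}ech nerve is a scheme, where renormalized and usual sheaves coincide; hence your totalization (equivalently, your colimit along the $f_{\#,\ren}$) computes the usual category $\Shv_\mixed(B\Gm)$, compactly generated by $h_!$ of constructible sheaves on the atlas, and not $\Shv_\mixed(B\Gm)^\ren = \Ind(\Shv_{\mixed,c}(B\Gm))$, in which for instance the constant sheaf is compact. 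The proposition is about the \emph{non}-renormalized $\Shv_\gr$ of \cref{defn:Shv_gr}, and this distinction is precisely the point of the statement. (You also quietly assume $\mathcal{Z}_n\to\mathcal{Y}_n$ is representable, which the statement does not, in order to have $f_{\#,\ren}$; in the non-renormalized theory $f_!$ always exists, so that restriction would be unnecessary there.) Since the renormalized descent claim fails, your deduction of the constructible statement by passing to compact objects collapses as well.

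Your underlying idea --- trade the totalization for a colimit so that $-\otimes_{\Shv_\mixed(\pt_n)}\Vect^\gr$, which only commutes with colimits a priori, can be applied --- can be salvaged if you stay entirely with the non-renormalized categories, where descent and the colimit presentation along the (always-defined) $f_!$ both hold and tensoring preserves the relevant adjunctions of $\Shv_\mixed(\pt_n)$-module functors. The paper's proof avoids the swap altogether: since $\Vect^\gr$ is compactly generated it is dualizable as a presentable stable category, and since $\Shv_\mixed(\pt_n)$ is rigid, $\Vect^\gr$ is dualizable as a $\Shv_\mixed(\pt_n)$-module; hence $-\otimes_{\Shv_\mixed(\pt_n)}\Vect^\gr$ commutes with limits and can be pushed through the totalization directly, with the constructible statement then following because the pullback functors preserve compactness. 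If you rework your argument, every occurrence of $(-)^\ren$ must be removed or separately justified, since the renormalized statement you actually prove is false.
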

\begin{proof}
Since $\Vect^\gr$ is compactly generated, it is also dualizable as a presentable stable $\infty$-category, by~\cite[Vol. I, Chap. 1, Prop. 7.3.2]{gaitsgory_study_2017}. Since $\Shv_\mixed(\pt_n)$ is rigid, $\Vect^\gr$ is also dualizable as an object in $\Mod_{\Shv_\mixed(\pt_n)}$, by~\cite[Vol. I, Chap. 1, Prop. 9.4.4]{gaitsgory_study_2017}. But now, the relative tensor $-\otimes_{\Shv_\mixed(\pt_n)} \Vect^\gr$ commutes with limits, by~\cite[Vol. I, Chap. 1, \S4.3.2]{gaitsgory_study_2017}. Hence, descent for mixed sheaves implies that for graded sheaves
\begin{align*}
	&\alignsep\Shv_\gr(\mathcal{Y}_n) \\
	&\simeq \Shv_\mixed(\mathcal{Y}_n) \otimes_{\Shv_\mixed(\pt_n)} \Vect^\gr \\
	&\simeq \Tot(\Shv_\mixed(\Cech^\bullet(\mathcal{Z}_n/\mathcal{Y}_n)))\otimes_{\Shv_\mixed(\pt_n)} \Vect^\gr \\
	&\simeq \Tot(\Shv_\mixed(\Cech^\bullet(\mathcal{Z}_n/\mathcal{Y}_n))\otimes_{\Shv_\mixed(\pt_n)} \Vect^\gr) \\
	&\simeq \Tot(\Shv_\gr(\Cech^\bullet(\mathcal{Z}_n/\mathcal{Y}_n))).
\end{align*}
Since pulling back preserves compactness by construction, the second statement follows from the first.
\end{proof}

\begin{rmk}
The same proof shows that $\Shv_\gr$, as a functor out of $\Stk_{k_n}^\opp$, can be obtained by right Kan extending $\Shv_\gr$ as a functor out of $\Sch_{k_n}^\opp$.
\end{rmk}

\begin{rmk}
$\Shv_\gr(-)^\ren$ does not satisfy smooth descent. In fact, $\Shv(-)^\ren$ already fails to satisfy descent. Consider the canonical morphism $h: \pt \to B\Gm$ as in \cref{expl:constant_sheaves_not_compact}. If $\Shv(-)^\ren$ satisfied smooth descent, $h^*_\ren$ would be conservative, i.e., it would not kill any object. This is, in fact, not the case.

As in \cref{expl:constant_sheaves_not_compact}, let $\pi: B\Gm \to \pt$ denote the structure morphism and 
\[
	\mathcal{F} = \colim(\Qlbar \xrightarrow{\beta} \Qlbar[2] \xrightarrow{\beta} \Qlbar[4] \xrightarrow{\beta} \cdots).
\]
Since $\pi^*_\ren$ preserves compactness, $\pi_{*, \ren}$ is continuous. \cref{prop:interaction_*-functors_ren} and the calculation in \cref{expl:constant_sheaves_not_compact} implies that
\[
	\pi_{*, \ren} \mathcal{F} \simeq \colim(\pi_{*, \ren}\Qlbar \xrightarrow{\beta} \pi_{*, \ren}\Qlbar[2] \xrightarrow{\beta} \pi_{*, \ren}\Qlbar[4] \xrightarrow{\beta} \cdots) \simeq \Qlbar[\beta, \beta^{-1}] \nsimeq 0.
\]
In particular, $\mathcal{F} \nsimeq 0$.

On the other hand, as in \cref{expl:constant_sheaves_not_compact}, $h^*_\ren(\mathcal{F}) \simeq 0$.
\end{rmk}

\subsubsection{Verdier duality}
The category of mixed sheaves is equipped with a duality functor
\[
	\DVer: \Shv_{\mixed, c}(\mathcal{Y}_n) \xrightarrow{\simeq} \Shv_{\mixed, c}(\mathcal{Y}_n)^\opp
\]
which is compatible with the linear dual on a point, which is a symmetric monoidal functor
\[
	(-)^\vee: \Shv_{\mixed, c}(\pt_n) \to \Shv_{\mixed, c}(\pt_n)^\opp.
\]
We thus get an induced functor
\[
	\Shv_{\mixed, c}(\mathcal{Y}_n) \otimes_{\Shv_{\mixed, c}(\pt_n)} \Shv_{\mixed, c}(\pt_n)^\opp \to \Shv_{\mixed, c}(\mathcal{Y}_n)^\opp,
\]
and hence, a functor
\begin{align*}
	&\alignsep\Shv_{\mixed, c}(\mathcal{Y}_n) \otimes_{\Shv_{\mixed, c}(\pt_n)} \Shv_{\mixed, c}(\pt_n)^\opp \otimes_{\Shv_{\mixed, c}(\pt_n)^\opp} \Vect^{\gr, c, \opp} \\
	&\to \Shv_{\mixed, c}(\mathcal{Y}_n)^\opp \otimes_{\Shv_{\mixed, c}(\pt_n)^\opp} \Vect^{\gr, c, \opp}
\end{align*}

Note that the RHS is equivalent to
\[
	(\Shv_{\mixed, c}(\mathcal{Y}_n) \otimes_{\Shv_{\mixed, c}(\pt_n)} \Vect^{\gr, c})^\opp \simeq \Shv_{\gr, c}(\mathcal{Y})^\opp
\]
since $(-)^\opp$ is a symmetric monoidal auto-equivalence of $\DGCatidemex$. Moreover, we have the following sequence of functors to the LHS
\begin{align*}
	&\alignsep\alignsep\Shv_{\gr, c}(\mathcal{Y}) \simeq \Shv_{\mixed, c}(\mathcal{Y}_n) \otimes_{\Shv_{\mixed, c}(\pt_n)} \Vect^{\gr, c} \\
	&\xrightarrow{\id\otimes (-)^\vee} \Shv_{\mixed, c}(\mathcal{Y}_n) \otimes_{\Shv_{\mixed, c}(\pt_n)} \Vect^{\gr, c, \opp} \\
	&\simeq \Shv_{\mixed, c}(\mathcal{Y}_n) \otimes_{\Shv_{\mixed, c}(\pt_n)} \Shv_{\mixed, c}(\pt_n)^\opp \otimes_{\Shv_{\mixed, c}(\pt_n)^\opp} \Vect^{\gr, c, \opp}
\end{align*}
We thus obtain the corresponding Verdier duality functor for graded sheaves
\[
	\DVer: \Shv_{\gr, c}(\mathcal{Y}) \xrightarrow{\simeq} \Shv_{\gr, c}(\mathcal{Y})^\opp,
\]
compatible with the duality functor $(-)^\vee$ on $\Vect^\gr$.

\subsubsection{}
Unwinding the definition, we see that for $\mathcal{F} \boxtimes V \in \Shv_{\gr, c}(\mathcal{Y})$ where $\mathcal{F} \in \Shv_{\mixed, c}(\mathcal{Y}_n)$ and $V\in \Vect^{\gr, c}$, $\DVer(\mathcal{F} \boxtimes V) \simeq \DVer(\mathcal{F}) \boxtimes V^\vee$. It is also easy to see that we get all the expected properties of the Verdier duality functor for graded sheaves from the Verdier duality functor for mixed sheaves.



\subsection{(Graded) \texorpdfstring{$\Hom$}{Hom}-spaces between graded sheaves}
\label{subsec:hom_graded_sheaves}
We will now study the $\Hom$-spaces, both $\Vect$ and $\Vect^\gr$-enriched, inside $\Shv_\gr(\mathcal{Y}_n)^\ren$. Following \cref{lem:mixed_Hom_vs_internal_Hom}, we will employ the following notation to denote the $\Vect^\gr$-enriched $\Hom$
\[
	\cHom_{\Shv_\gr(\mathcal{Y}_n)^\ren}^\gr(\mathcal{F}, \mathcal{G}) \defeq \cuHom_{\Shv_\gr(\mathcal{Y}_n)^\ren}^{\Vect^\gr}(\mathcal{F}, \mathcal{G}) \in \Vect^\gr.
\]
In particular, when $\mathcal{Y}_n = \pt_n$ and $V, W \in \Vect^\gr \simeq \Shv_\gr(\pt_n)$, we write
\[
	\cHom_{\Vect^\gr}^\gr(V, W) \defeq \cuHom_{\Vect^\gr}^{\Vect^\gr}(V, W) \in \Vect^\gr
\]
to denote the internal $\Hom$.

We start with the following counterpart of \cref{lem:mixed_Hom_vs_internal_Hom}, whose proof is identical to \cref{lem:mixed_Hom_vs_internal_Hom} and hence, will be omitted. 

\begin{lem}
Let $\mathcal{Y}_n \in \Stk_{k_n}$ and $\mathcal{F}, \mathcal{G} \in \Shv_\gr(\mathcal{Y}_n)^\ren$. Then,
\[
	\pi_{*, \ren} \cuHom_{\Shv_\gr(\mathcal{Y}_n)^\ren} (\mathcal{F}, \mathcal{G}) \simeq \cHom_{\Shv_\gr(\mathcal{Y}_n)^\ren}^\gr(\mathcal{F}, \mathcal{G}).
\]
\end{lem}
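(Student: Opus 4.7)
The plan is to replicate, almost verbatim, the argument given for \cref{lem:mixed_Hom_vs_internal_Hom}, with $\Shv_\mixed(\pt_n)$ replaced by $\Vect^\gr$ throughout. The key formal inputs are already in place: by \cref{lem:mixed_to_graded}, $\Shv_\gr(\mathcal{Y}_n)^\ren$ is a module over $\Vect^\gr$, and by \cref{prop:strict_Shv_m(pt)-mod_functor_stacks} applied after tensoring with $\Vect^\gr$ over $\Shv_\mixed(\pt_n)$ (as in \cref{subsec:functoriality_graded_sheaves}), the structure map $\pi\colon \mathcal{Y}_n \to \pt_n$ induces a strict $\Vect^\gr$-module adjunction $\pi^*_\ren \dashv \pi_{*,\ren}$, where the $\Vect^\gr$-action on $\Shv_\gr(\mathcal{Y}_n)^\ren$ is by construction given via $\pi^*_\ren$.

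Concretely, I would fix an arbitrary $V \in \Vect^\gr$ and construct a natural chain of equivalences
\begin{align*}
\Hom_{\Vect^\gr}\bigl(V,\, \pi_{*,\ren}\cuHom_{\Shv_\gr(\mathcal{Y}_n)^\ren}(\mathcal{F},\mathcal{G})\bigr)
&\simeq \Hom_{\Shv_\gr(\mathcal{Y}_n)^\ren}\bigl(\pi^*_\ren V,\, \cuHom_{\Shv_\gr(\mathcal{Y}_n)^\ren}(\mathcal{F},\mathcal{G})\bigr) \\
&\simeq \Hom_{\Shv_\gr(\mathcal{Y}_n)^\ren}\bigl(\pi^*_\ren V \otimes \mathcal{F},\, \mathcal{G}\bigr) \\
&\simeq \Hom_{\Shv_\gr(\mathcal{Y}_n)^\ren}\bigl(V \otimes \mathcal{F},\, \mathcal{G}\bigr) \\
&\simeq \Hom_{\Vect^\gr}\bigl(V,\, \cHom_{\Shv_\gr(\mathcal{Y}_n)^\ren}^\gr(\mathcal{F},\mathcal{G})\bigr),
\end{align*}
where the first equivalence uses the adjunction $\pi^*_\ren \dashv \pi_{*,\ren}$, the second is the defining $(\otimes)$-$\cuHom$ adjunction internal to the symmetric monoidal category $\Shv_\gr(\mathcal{Y}_n)^\ren$, the third uses that by definition the $\Vect^\gr$-module structure on $\Shv_\gr(\mathcal{Y}_n)^\ren$ is implemented by $\pi^*_\ren$ (so $V\otimes \mathcal{F} = \pi^*_\ren V \otimes \mathcal{F}$, cf.\ \cref{subsubsec:unwind_module_category}), and the last equivalence is the defining universal property of the $\Vect^\gr$-enriched $\Hom$ from \cref{subsubsec:enriched_Hom}, specialized to $\mathcal{A} = \Vect^\gr$ and $\mathcal{M} = \Shv_\gr(\mathcal{Y}_n)^\ren$. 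An application of the Yoneda lemma in $\Vect^\gr$ then yields the claimed equivalence.

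There is no genuine obstacle beyond bookkeeping; the entire argument is a formal manipulation of adjunctions, and its correctness reduces to checking that the $\Vect^\gr$-module structure used to define the enriched $\Hom$ and the one inherited via $\pi^*_\ren$ coincide, which is built into the construction in \cref{defn:Shv_gr}. This is why the authors can legitimately defer to the proof of \cref{lem:mixed_Hom_vs_internal_Hom}: only the labels on the symbols change.
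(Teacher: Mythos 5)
Your proposal is correct and is exactly the argument the paper intends: the paper omits the proof precisely because it is identical to that of \cref{lem:mixed_Hom_vs_internal_Hom}, i.e.\ the same Yoneda/adjunction chain with $\Shv_\mixed(\pt_n)$ replaced by $\Vect^\gr$, and your check that the $\Vect^\gr$-action is implemented by $\pi^*_\ren$ is the only point that needed verifying.
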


The next result concerns the computation of $\Hom$ between graded sheaves.

\begin{prop}
\label{prop:hom_graded_sheaves}
Let $\mathcal{Y}_n \in \Stk_{k_n}$, $(\mathcal{F}_c, V_c) \in \Shv_{\mixed, c}(\mathcal{Y}_n) \times \Vect^{\gr, c}$, and $(\mathcal{F}, V) \in \Shv_{\mixed}(\mathcal{Y}_n)^\ren \times \Vect^\gr$. Then,
\[
	\cHom_{\Shv_{\gr}(\mathcal{Y}_n)^\ren}^\gr(\mathcal{F}_c \boxtimes V_c, \mathcal{F}\boxtimes V) \simeq \cHom_{\Vect^\gr}^\gr(V_c, V)\otimes \gr(\cHom_{\Shv_\mixed(\mathcal{Y}_n)^\ren}^\mixed(\mathcal{F}_c, \mathcal{F})).
\]
Consequently,\footnote{Recall that for a $\DG$-category $\mathcal{C}$, $\cHom_\mathcal{C}$ denotes the $\Hom$-complex between two objects. See \cref{subsec:DG-cats}.}
\[
	\cHom_{\Shv_\gr(\mathcal{Y}_n)^\ren}(\gr(\mathcal{F}_c), \gr(\mathcal{F})) \simeq \gr(\cHom_{\Shv_\mixed(\mathcal{Y}_n)^\ren}^\mixed(\mathcal{F}_c, \mathcal{F}))_0 \simeq \cHom_{\Shv_\mixed(\mathcal{Y}_n)^\ren}^\mixed(\mathcal{F}_c, \mathcal{F})_0, \teq\label{eq:cHom_Shv_gr}
\]
where the subscript $0$ denotes the graded $0$ part, or equivalently, the \emph{naive} weight $0$ part.\footnote{See also \cref{subsubsec:mixed_sheaves_on_a_pt} for the notation.}
\end{prop}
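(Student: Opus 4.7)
The plan is to recognize this proposition as a direct application of the induction formula \cref{prop:induction_Hom_tensor} to the symmetric monoidal functor $\gr\colon \Shv_\mixed(\pt_n) \to \Vect^\gr$ of \cref{eq:gr_functor_for_a_point}, acting on $\mathcal{M} \defeq \Shv_\mixed(\mathcal{Y}_n)^\ren$ viewed as an object of $\Mod_{\Shv_\mixed(\pt_n)}$. Taking $\mathcal{A} = \Shv_\mixed(\pt_n)$, $\mathcal{B} = \Vect^\gr$, and $F = \gr$, I would first verify the hypotheses: $\Shv_\mixed(\pt_n)$ is compactly generated and rigid (recorded in \cref{subsec:Shv_mixed(pt_n)-module_structures}), $\Vect^\gr$ is compactly generated and rigid by \cref{expl:Vectgr_rigid}, and $\gr$ is symmetric monoidal by construction. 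Moreover, by \cref{defn:Shv_gr} and the commutativity of relative tensor products, $\mathcal{B} \otimes_\mathcal{A} \mathcal{M}$ is identified with $\Shv_\gr(\mathcal{Y}_n)^\ren$.

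Specializing \cref{prop:induction_Hom_tensor} with $b_0 = V_c$, $b = V$, $m_0 = \mathcal{F}_c$, $m = \mathcal{F}$ then yields
\[
\cuHom^{\Vect^\gr}_{\Shv_\gr(\mathcal{Y}_n)^\ren}(\mathcal{F}_c \boxtimes V_c, \mathcal{F}\boxtimes V) \simeq \cuHom_{\Vect^\gr}(V_c, V) \otimes \gr\bigl(\cuHom^{\Shv_\mixed(\pt_n)}_{\Shv_\mixed(\mathcal{Y}_n)^\ren}(\mathcal{F}_c, \mathcal{F})\bigr),
\]
which becomes the first claimed equivalence once $\cuHom^{\Vect^\gr}$ and $\cuHom^{\Shv_\mixed(\pt_n)}$ are rewritten as $\cHom^\gr$ and $\cHom^\mixed$ using the notational conventions introduced in \cref{eq:mixed_Hom} and \cref{subsec:hom_graded_sheaves}.

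To deduce the consequence \cref{eq:cHom_Shv_gr}, I would specialize the first equivalence to $V_c = V = \Qlbar$ placed in graded degree $0$, noting that in this case $\mathcal{F}_c \boxtimes \Qlbar = \gr(\mathcal{F}_c)$ and $\mathcal{F} \boxtimes \Qlbar = \gr(\mathcal{F})$ by \cref{lem:mixed_to_graded}, and $\cHom^\gr_{\Vect^\gr}(\Qlbar, \Qlbar) \simeq \Qlbar$ is the unit of $\Vect^\gr$. This collapses the formula to
\[
\cHom^\gr_{\Shv_\gr(\mathcal{Y}_n)^\ren}(\gr(\mathcal{F}_c), \gr(\mathcal{F})) \simeq \gr\bigl(\cHom^\mixed_{\Shv_\mixed(\mathcal{Y}_n)^\ren}(\mathcal{F}_c, \mathcal{F})\bigr).
\]
Passing from the $\Vect^\gr$-enriched to the $\Vect$-enriched $\Hom$ uses the general identity $\cHom_\mathcal{C}(x, y) \simeq \cHom^\gr_\mathcal{C}(x, y)_0$ valid for any $\Vect^\gr$-module category $\mathcal{C}$; this is a formal consequence of the defining adjunctions of the two enriched $\Hom$'s together with $\Hom_{\Vect^\gr}(\Qlbar, W) \simeq W_0$ for $\Qlbar$ placed in degree $0$. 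Taking the degree-$0$ component of both sides and noting that $\gr(-)_0$ coincides by construction with the naive weight-$0$ part $(-)_0$ of the underlying complex in $\Shv_\mixed(\pt_n)$ produces the remaining two equivalences of \cref{eq:cHom_Shv_gr}.

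There is no substantive obstacle: the argument is entirely formal once \cref{prop:induction_Hom_tensor} is in hand. The only step requiring a little care is the bookkeeping converting between $\Vect^\gr$-enriched and $\Vect$-enriched $\Hom$'s, which reduces to the standard adjunction relating the inclusion $\Vect \hookrightarrow \Vect^\gr$ (in degree $0$) and its right adjoint $W \mapsto W_0$.
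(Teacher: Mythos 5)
Your proposal is correct and follows essentially the same route as the paper: the first equivalence is exactly the specialization of \cref{prop:induction_Hom_tensor} to $F = \gr\colon \Shv_\mixed(\pt_n) \to \Vect^\gr$ acting on $\Shv_\mixed(\mathcal{Y}_n)^\ren$, and the consequence \cref{eq:cHom_Shv_gr} is obtained, as in the paper, by evaluating at the unit (i.e.\ writing $\cHom_{\Shv_\gr(\mathcal{Y}_n)^\ren}(\gr(\mathcal{F}_c),\gr(\mathcal{F})) \simeq \cHom_{\Vect^\gr}(\Qlbar, \cHom^\gr(\cdots))$ and extracting the graded degree $0$ part). The only cosmetic difference is that you specialize $V_c = V = \Qlbar$ first and then pass to the $\Vect$-enriched $\Hom$, whereas the paper unwinds the enriched $\Hom$ first and then invokes the first part; the content is identical.
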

\begin{proof}
The first part is an application of \cref{prop:induction_Hom_tensor}.

For the second part, from the definition of enriched $\Hom$, we conclude via the following sequence of equivalences
\begin{align*}
	&\alignsep\cHom_{\Shv_\gr(\mathcal{Y}_n)^\ren}(\gr(\mathcal{F}_c), \gr(\mathcal{F})) \\
	&\simeq \cHom_{\Vect^\gr}(\Qlbar, \cHom_{\Shv_\gr(\mathcal{Y}_n)^\ren}^\gr(\gr(\mathcal{F}_c), \gr(\mathcal{F}))) \\
	&\simeq \cHom_{\Vect^\gr}(\Qlbar, \gr\cHom_{\Shv_\mixed(\mathcal{Y}_n)^\ren}^\mixed(\mathcal{F}_c, \mathcal{F})) \\
	&\simeq \gr\cHom_{\Shv_\mixed(\mathcal{Y}_n)^\ren}^\mixed(\mathcal{F}_c, \mathcal{F})_0 \\
	&\simeq \cHom_{\Shv_\mixed(\mathcal{Y}_n)^\ren}^\mixed(\mathcal{F}_c, \mathcal{F})_0,
\end{align*}
where the second equivalence is due to the first part.
\end{proof}

\begin{cor}
\label{cor:graded_Hom_direct_summand_unmixed_Hom}
Let $\mathcal{Y}_n \in \Stk_{k_n}$, $\mathcal{F}_n, \mathcal{G}_n \in \Shv_\mixed(\mathcal{Y}_n)^\ren$ where $\mathcal{F}_n$ is compact, $\mathcal{Y}, \mathcal{F}, \mathcal{G}$ their base changes to $\pt = \Spec k$, and $h: \mathcal{Y} \to \mathcal{Y}_n$ the canonical map. Then, $\oblv_{\gr, \mathcal{Y}_n}$ (see \cref{lem:mixed_to_graded}) induces a map
\[
	\cHom_{\Shv_\gr(\mathcal{Y}_n)^\ren}(\gr(\mathcal{F}_n), \gr(\mathcal{G}_n)) \to \cHom_{\Shv(\mathcal{Y})^\ren}(\mathcal{F}, \mathcal{G})
\]
which realizes the former as a direct summand of the latter. In particular, if
\[
	\alpha \in \Ho^0(\cHom_{\Shv_\gr(\mathcal{Y}_n)^\ren}(\gr(\mathcal{F}_n), \gr(\mathcal{G}_n))),
\]
then it is $0$ if and only if its image in $\cHom_{\Shv(\mathcal{Y})^\ren}(\mathcal{F}, \mathcal{G})$ is $0$.
\end{cor}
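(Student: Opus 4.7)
The plan is to read off both sides of the map from the mixed $\Hom$-complex
\[
    \cHom^\mixed_{\Shv_\mixed(\mathcal{Y}_n)^\ren}(\mathcal{F}_n,\mathcal{G}_n) \in \Shv_\mixed(\pt_n),
\]
which decomposes canonically by naive Frobenius weight (see \cref{subsubsec:mixed_sheaves_on_a_pt}). Indeed, \cref{eq:cHom_Shv_gr} of \cref{prop:hom_graded_sheaves} already identifies
\[
    \cHom_{\Shv_\gr(\mathcal{Y}_n)^\ren}(\gr(\mathcal{F}_n),\gr(\mathcal{G}_n)) \simeq \cHom^\mixed_{\Shv_\mixed(\mathcal{Y}_n)^\ren}(\mathcal{F}_n,\mathcal{G}_n)_0,
\]
so the only missing ingredient is a corresponding formula
\[
    \cHom_{\Shv(\mathcal{Y})^\ren}(\mathcal{F},\mathcal{G}) \simeq \bigoplus_{w\in\mathbb{Z}} \cHom^\mixed_{\Shv_\mixed(\mathcal{Y}_n)^\ren}(\mathcal{F}_n,\mathcal{G}_n)_w,
\]
together with the identification of the map induced by $\oblv_{\gr,\mathcal{Y}_n}$ with the inclusion of the $w=0$ summand.

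To establish this unmixed formula, I first observe that the outer square of the commutative diagram in \cref{lem:gr_and_oblv_gr_for_stacks} is itself a pushout in $\ComAlg(\DGCatprescont)$. The left inner square is already a pushout, while the right inner square is obtained by applying $-\otimes_{\Vect^\gr}\Vect$ along the monoidal functor $\oblv_\gr$, and the tautological equivalence $\Vect^\gr\otimes_{\Vect^\gr}\Vect\simeq \Vect$ makes that square a pushout too. Composing, one obtains
\[
    \Shv(\mathcal{Y})^\ren \simeq \Shv_\mixed(\mathcal{Y}_n)^\ren \otimes_{\Shv_\mixed(\pt_n)} \Vect,
\]
with the natural functor identified as $h_\mathcal{Y}^* = \oblv_\gr\circ\gr$. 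Applying \cref{prop:induction_Hom_tensor} to $F = h^* = \oblv_{\Frob_n}\colon\Shv_\mixed(\pt_n)\to\Vect$ and $\mathcal{M}=\Shv_\mixed(\mathcal{Y}_n)^\ren$, with compact object $m_0 = \mathcal{F}_n$, yields $\cHom_{\Shv(\mathcal{Y})^\ren}(\mathcal{F},\mathcal{G}) \simeq h^*\bigl(\cHom^\mixed(\mathcal{F}_n,\mathcal{G}_n)\bigr)$; then the weight decomposition of $\Shv_\mixed(\pt_n)$, which $h^*$ preserves underlyingly (as it merely forgets the Frobenius action), produces the desired direct-sum formula.

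To identify the map induced by $\oblv_{\gr,\mathcal{Y}_n}$ with the inclusion of the weight-$0$ summand, I would invoke the factorization $h^* = \oblv_\gr\circ\gr$ of symmetric monoidal functors together with the naturality of the equivalences coming from \cref{prop:induction_Hom_tensor}. Once this is in place, the ``in particular'' statement is automatic, since a direct summand inclusion is injective on $H^0$. I expect the main delicate point to be exactly this last naturality argument: one must carefully unpack the construction of the equivalence in \cref{prop:induction_Hom_tensor} (which is produced as an adjoint to a certain action functor) and track how it interacts with the composition $\oblv_\gr\circ\gr$, in order to rule out that the induced map differs from the naive summand inclusion by some non-trivial automorphism.
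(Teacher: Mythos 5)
Your overall route is the same as the paper's: the proof there consists of exactly the two ingredients you isolate, namely \cref{eq:cHom_Shv_gr} identifying the graded $\cHom$ with the naive-weight-$0$ component of $\cHom^\mixed_{\Shv_\mixed(\mathcal{Y}_n)^\ren}(\mathcal{F}_n,\mathcal{G}_n)$, and the identification $\cHom_{\Shv(\mathcal{Y})^\ren}(\mathcal{F},\mathcal{G})\simeq \oblv_{\Frob_n}\cHom^\mixed_{\Shv_\mixed(\mathcal{Y}_n)^\ren}(\mathcal{F}_n,\mathcal{G}_n)$, so that the graded $\cHom$ is the weight-$0$ summand of the unmixed one.

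However, your justification of the second ingredient has a genuine gap: you claim the outer square of \cref{lem:gr_and_oblv_gr_for_stacks} is a pushout, i.e.\ that $\Shv(\mathcal{Y})^\ren \simeq \Shv_\mixed(\mathcal{Y}_n)^\ren \otimes_{\Shv_\mixed(\pt_n)} \Vect$. This is false in general. The canonical functor $\Shv_\mixed(\mathcal{Y}_n)^\ren \otimes_{\Shv_\mixed(\pt_n)} \Vect \to \Shv(\mathcal{Y})^\ren$ is fully faithful but not essentially surjective: its compact objects are generated by pullbacks of constructible mixed sheaves from finite forms (compare \cref{prop:2nd_characterization_Shv_infty}, where the analogous tensor product produces $\Shv_{\infty,c}(\mathcal{Y})$, in general a proper full subcategory of $\Shv_c(\mathcal{Y})$ --- it misses, e.g., local systems whose monodromy is not defined over any finite subfield). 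The essential-surjectivity mechanism of \cref{lem:mixed_tensor_up_essentially_surjective} is unavailable here precisely because $\pt\to\pt_n$ is not finite, so one no longer has $g^*_\ren g_{*,\ren}\mathcal{F}\simeq \mathcal{F}^{\oplus r}$ and the retract trick fails. The good news is that your argument only needs full faithfulness, not an equivalence: this is exactly what the paper establishes in \cref{eq:oblv_gr_conservative} (proof of \cref{lem:oblv_gr_conservative}, argued as in \cref{lem:tilde{g}^*_ren_fully_faithful}), since $\mathcal{F}$ and $\mathcal{G}$ lie in the essential image. One then computes the $\cHom$ inside the tensor-product category via \cref{prop:induction_Hom_tensor} (using compactness of $\mathcal{F}_n$), obtaining $h^*\cHom^\mixed(\mathcal{F}_n,\mathcal{G}_n)$, which is the asserted direct sum over naive weights; cf.\ \cref{rmk:unmixed_Hom_direct_sum_gr_Hom}, where the paper records precisely this formula. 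With that repair, the identification of the $\oblv_\gr$-induced map with the inclusion of the weight-$0$ summand follows from the naturality of the adjunction equivalences in \cref{prop:induction_Hom_tensor} applied to the factorization $h_\mathcal{Y}^*\simeq \oblv_\gr\circ\gr$ (the ``non-trivial automorphism'' you worry about does not arise, as all comparison maps are induced by the same canonical functors), and the ``in particular'' statement is then immediate.
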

\begin{proof}
The second statement is a direct consequence of the first. The first statement is itself a consequence of \cref{eq:cHom_Shv_gr} and the fact that
\[
	\cHom_{\Shv(\mathcal{Y})^\ren}(\mathcal{F}, \mathcal{G}) \simeq \oblv_{\Frob_n} \cHom_{\Shv_\mixed(\mathcal{Y})^\ren}^\mixed(\mathcal{F}_n, \mathcal{G}_n).
\]
\end{proof}

\subsection{Invariance under extensions of scalars}
\label{subsec:invariance_extensions_scalars}
The relative tensor $-\otimes_{\Shv_\mixed(\pt_n)} \Vect^\gr$ used in the definition of $\Shv_\gr(\mathcal{Y}_n)^\ren$ can be thought of as a categorical way to base change $\mathcal{Y}_n$ to $k$ (while still remembering some weight information). Thus, at least intuitively, we should expect that $\Shv_\gr(\mathcal{Y}_n)^\ren$ is invariant under base changing $\mathcal{Y}_n$ to $\mathcal{Y}_m \in \Stk_{k_m}$. This is the content of \cref{prop:gr_sheaves_invariant_base_change} below. By a spreading argument, this implies that $\Shv_\gr(-)^\ren$ is a sheaf theory on $\Stk_k$, see \cref{thm:Shv_gr_is_a_sheaf_theory_on_Stk_k}.

\subsubsection{}
Consider $f: \pt_m \to \pt_n$ where $m\geq n$, which induces a symmetric monoidal functor
\[
	f^*: \Shv_\mixed(\pt_n) \to \Shv_\mixed(\pt_m),
\]
compatible with the functor to $\Vect^\gr$. Namely, we have the following commutative diagram of symmetric monoidal categories
\[
\begin{tikzcd}[column sep=tiny]
	\Shv_\mixed(\pt_n) \ar{dr}[swap]{\gr_{\pt_n}} \ar{rr}{f^*} && \Shv_\mixed(\pt_m) \ar{dl}{\gr_{\pt_m}} \\
	& \Vect^\gr
\end{tikzcd}
\]
This induces a natural functor
\[
	\Shv_\gr(\pt_n) \to \Shv_\gr(\pt_m)
\]
that is an equivalence of categories. In fact, both sides are naturally identified with $\Vect^\gr$ and under this identification, the resulting functor is an identity functor.

\subsubsection{}
We will now consider the general case. Let $\mathcal{Y}_n \in \Stk_{k_n}$ and consider the following pullback square
\[
\begin{tikzcd}
	\mathcal{Y}_m \ar{d}{\pi_m} \ar{r}{g} \ar{d} & \mathcal{Y}_n \ar{d}{\pi_n} \\
	\pt_m \ar{r}{f} & \pt_n
\end{tikzcd}
\]
which induces the following commutative square of categories, where all functors are symmetric monoidal
\[
\begin{tikzcd}
	\Shv_\mixed(\mathcal{Y}_m)^\ren & \Shv_\mixed(\mathcal{Y}_n)^\ren \ar{l}[swap]{g^*_\ren} \\
	\Shv_\mixed(\pt_m) \ar{u}[swap]{\pi_{m, \ren}^*} & \ar{u}[swap]{\pi_{n, \ren}^*} \Shv_\mixed(\pt_n) \ar{l}[swap]{f^*}
\end{tikzcd}
\]

The diagram above induces a morphism in $\ComAlg(\Mod_{\Shv_\mixed(\pt_m)})$
\[
	\tilde{g}^*_\ren: \wtilde{\Shv}_\mixed(\mathcal{Y}_n)^\ren \defeq \Shv_\mixed(\mathcal{Y}_n)^\ren \otimes_{\Shv_\mixed(\pt_n)} \Shv_\mixed(\pt_m) \to \Shv_\mixed(\mathcal{Y}_m)^\ren. \teq\label{eq:g_tilde_^*_ren}
\]

\begin{lem}
\label{lem:tilde{g}^*_ren_fully_faithful}
$\tilde{g}^*_\ren$ is fully faithful.
\end{lem}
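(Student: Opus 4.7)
My plan is to reduce full-faithfulness to a comparison of $\Shv_\mixed(\pt_m)$-enriched $\Hom$-complexes between compact generators, and then carry out that comparison using \cref{prop:induction_Hom_tensor} on the source side and smooth base change on the target side.

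First, by \cref{cor:compact_generators_relative_tensors}, the source $\wtilde{\Shv}_\mixed(\mathcal{Y}_n)^\ren$ is compactly generated by the objects $\mathcal{F}_c \boxtimes V_c$ with $\mathcal{F}_c \in \Shv_{\mixed, c}(\mathcal{Y}_n)$ and $V_c \in \Shv_{\mixed, c}(\pt_m)$, and $\tilde{g}^*_\ren$ sends these to $g^*_\ren \mathcal{F}_c \otimes \pi^*_{m, \ren} V_c$, which remain compact in the target since both $g^*_\ren$ and $\pi^*_{m, \ren}$ preserve constructibility. Hence, by continuity of $\Hom$-spaces in each variable, full-faithfulness reduces to producing a natural equivalence between the two relevant $\Hom$-spaces for $\mathcal{F}_c, V_c$ compact and $\mathcal{F}, V$ arbitrary; writing $\mathcal{F}$ and $V$ as filtered colimits of compacts further reduces to the case where $\mathcal{F}$ and $V$ are also compact.

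Second, \cref{prop:induction_Hom_tensor} applied with $\mathcal{A} = \Shv_\mixed(\pt_n)$, $\mathcal{B} = \Shv_\mixed(\pt_m)$, $\mathcal{M} = \Shv_\mixed(\mathcal{Y}_n)^\ren$, and $F = f^*$ computes the source $\Shv_\mixed(\pt_m)$-enriched $\Hom$ as $\cuHom_{\Shv_\mixed(\pt_m)}(V_c, V) \otimes f^* \cHom^\mixed_{\Shv_\mixed(\mathcal{Y}_n)^\ren}(\mathcal{F}_c, \mathcal{F})$. For the target, the $\Shv_\mixed(\pt_m)$-enriched $\Hom$ is $\pi_{m, *} \cuHom_{\Shv_\mixed(\mathcal{Y}_m)^\ren}(g^* \mathcal{F}_c \otimes \pi^*_m V_c, g^* \mathcal{F} \otimes \pi^*_m V)$, which I would unwind using (i) dualizability of $\pi^*_m V_c$ and $\pi^*_m V$ in $\Shv_\mixed(\mathcal{Y}_m)^\ren$ (since $V_c, V$ are compact in the rigid $\Shv_\mixed(\pt_m)$) to extract the $V$-factors; (ii) \cref{prop:pullback_internal_Hom_mixed} to identify $\cuHom(g^*\mathcal{F}_c, g^*\mathcal{F})$ with $g^* \cuHom(\mathcal{F}_c, \mathcal{F})$, permissible because $g$ is the base change of the étale $f$ and thus representable smooth; (iii) the projection formula for $\pi_m$ applied to the dualizable argument $\pi^*_m(V_c^\vee \otimes V)$; and (iv) smooth base change $f^* \pi_{n, *} \simeq \pi_{m, *} g^*$, valid because $f$ is étale. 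The result matches the source expression.

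The main obstacle is bookkeeping rather than conceptual: several base change, projection formula, adjunction, and duality isomorphisms must be composed in the correct order, and one must check at each step that the objects against which duality or projection is invoked are actually dualizable—this is ultimately guaranteed by the rigidity of $\Shv_\mixed(\pt_m)$ together with the compactness of $V_c$ and $V$. One must also verify that each identification is compatible with the $\Shv_\mixed(\pt_m)$-module structures, so that the resulting equivalence of enriched $\Hom$-complexes descends to an equivalence of $\Hom$-spaces upon applying $\Hom_{\Shv_\mixed(\pt_m)}(\Qlbar, -)$.
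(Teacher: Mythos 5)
Your proposal is correct and follows essentially the same route as the paper's proof: reduce to compact generators, compute the source's $\Shv_\mixed(\pt_m)$-enriched $\Hom$ via \cref{prop:induction_Hom_tensor}, and identify it with the target's via \cref{lem:mixed_Hom_vs_internal_Hom}, smooth base change along the \'etale $f$, and \cref{prop:pullback_internal_Hom_mixed}. The only (harmless) deviation is in how the $V$-factors are pulled out on the target side: you use dualizability of compacts in the rigid $\Shv_\mixed(\pt_m)$ together with the projection formula for $\pi_{m,*,\ren}$, whereas the paper invokes strictness of the $\Shv_\mixed(\pt_m)$-module functor $\cuHom_{\Shv_\mixed(\mathcal{Y}_m)^\ren}(g^*_\ren\mathcal{F}, -)$ via \cref{cor:lax_implies_strict_rigid}.
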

\begin{proof}
It suffices to show that for any $\mathcal{F}, \mathcal{G} \in \Shv_{\mixed, c}(\mathcal{Y}_n)$ and $V, W \in \Shv_{\mixed, c}(\pt_m)$, $\tilde{g}^*_\ren$ induces an equivalence
\[
	\cuHom_{\wtilde{\Shv}_{\mixed}(\mathcal{Y}_n)^\ren}^{\Shv_\mixed(\pt_m)}(\mathcal{F}\boxtimes V, \mathcal{G}\boxtimes W) \simeq \cuHom_{\Shv_\mixed(\mathcal{Y}_m)^\ren}^{\Shv_\mixed(\pt_m)}(V\otimes g^*_\ren \mathcal{F}, W \otimes g^*_\ren \mathcal{G})
\]
where the tensors on the right come from the $\Shv_\mixed(\pt_m)$-module structure of $\Shv_\mixed(\mathcal{Y}_m)^\ren$.

We have
\begin{align*}
	&\alignsep\cuHom_{\wtilde{\Shv}_{\mixed}(\mathcal{Y}_n)^\ren}^{\Shv_\mixed(\pt_m)}(\mathcal{F}\boxtimes V, \mathcal{G}\boxtimes W) \\
	&\simeq \cuHom_{\Shv_\mixed(\pt_m)}(V, W) \otimes f^*\cuHom_{\Shv_\mixed(\mathcal{Y}_n)^\ren}^{\Shv_\mixed(\pt_n)}(\mathcal{F}, \mathcal{G}) \tag{\cref{prop:induction_Hom_tensor}} \\
	&\simeq \cuHom_{\Shv_\mixed(\pt_m)}(V, W) \otimes f^* \pi_{n*, \ren} \cuHom_{\Shv_\mixed(\mathcal{Y}_n)^\ren}(\mathcal{F}, \mathcal{G}) \tag{\cref{lem:mixed_Hom_vs_internal_Hom}} \\
	&\simeq \cuHom_{\Shv_\mixed(\pt_m)}(V, W) \otimes \pi_{m*,\ren} g^*_\ren \cuHom_{\Shv_\mixed(\mathcal{Y}_n)^\ren}(\mathcal{F}, \mathcal{G}) \tag{smooth base change} \\
	&\simeq \cuHom_{\Shv_\mixed(\pt_m)}(V, W) \otimes \pi_{m*,\ren} \cuHom_{\Shv_\mixed(\mathcal{Y}_m)^\ren}(g^*_\ren \mathcal{F}, g^*_\ren \mathcal{G}) \tag{\cref{prop:pullback_internal_Hom_mixed}} \\
	&\simeq \cuHom_{\Shv_\mixed(\pt_m)}(V, W) \otimes \cuHom_{\Shv_\mixed(\mathcal{Y}_m)^\ren}^{\Shv_\mixed(\pt_m)}(g^*_\ren \mathcal{F}, g^*_\ren \mathcal{G}) \tag{\cref{lem:mixed_Hom_vs_internal_Hom}} \\
	&\simeq \cuHom_{\Shv_\mixed(\mathcal{Y}_m)^\ren}^{\Shv_\mixed(\pt_m)}(V\otimes g^*_\ren \mathcal{F}, W \otimes g^*_\ren \mathcal{G}). \tag{\cref{prop:induction_Hom_tensor}}
\end{align*}
\end{proof}

\begin{lem}
\label{lem:mixed_tensor_up_essentially_surjective}
$\tilde{g}^*_\ren$ is essentially surjective.
\end{lem}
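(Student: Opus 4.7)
Since $\tilde{g}^*_\ren$ is fully faithful by the preceding lemma and is manifestly continuous — being the functor between compactly generated presentable stable categories induced by the universal property of the relative tensor product — its essential image is closed under colimits (lift any diagram to the source using full faithfulness and apply continuity) and under retracts (by idempotent completeness of the source). Since $\Shv_\mixed(\mathcal{Y}_m)^\ren$ is generated under colimits by its compact objects $\Shv_{\mixed, c}(\mathcal{Y}_m)$, it therefore suffices to show that every $\mathcal{G} \in \Shv_{\mixed, c}(\mathcal{Y}_m)$ lies in the essential image of $\tilde{g}^*_\ren$.

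The key geometric input is that $g: \mathcal{Y}_m \to \mathcal{Y}_n$ is finite étale, obtained by base-changing the Galois cover $\pt_m \to \pt_n$ along $\pi_n$. Since $\pt_m \times_{\pt_n} \pt_m \simeq \bigsqcup_\sigma \pt_m$, with $\sigma$ ranging over the Galois group of $k_m/k_n$, we obtain $\mathcal{Y}_m \times_{\mathcal{Y}_n} \mathcal{Y}_m \simeq \bigsqcup_\sigma \mathcal{Y}_m$. Proper base change applied to the resulting Cartesian square then yields a canonical decomposition
\[
	g^* g_* \mathcal{G} \simeq \bigoplus_\sigma \sigma^* \mathcal{G},
\]
in which $\mathcal{G}$ appears as the summand indexed by $\sigma = \id$. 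Moreover, as $g$ is finite étale, $g_*$ preserves both constructibility and mixedness, so $g_* \mathcal{G} \in \Shv_{\mixed, c}(\mathcal{Y}_n)$; and $g^*_\ren$ agrees with $g^*$ on the constructible subcategory by \cref{rmk:renormalize_vs_normal_interchangeably}.

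Combining these observations, $\mathcal{G}$ is a direct summand of $g^*_\ren(g_* \mathcal{G}) \simeq \tilde{g}^*_\ren(g_* \mathcal{G} \boxtimes \Qlbar)$, and hence lies in the essential image, completing the proof. The only point demanding care is the decomposition of $g^* g_* \mathcal{G}$ displayed above, which is a direct application of proper base change for mixed sheaves along the explicit Cartesian square — straightforward given the functoriality assembled in \cref{subsec:mixed_sheaves,subsec:Shv_mixed(pt_n)-module_structures}.
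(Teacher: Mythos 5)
Your proof is correct and takes essentially the same route as the paper: both exhibit every object as a retract (hence a colimit) of $g^*_\ren g_{*,\ren}(-) \simeq \tilde{g}^*_\ren(\can(-))$ using the finite \etale{} Galois cover $g$, together with closure of the essential image under colimits supplied by full faithfulness and continuity of $\tilde{g}^*_\ren$. If anything, your decomposition $g^* g_* \mathcal{G} \simeq \bigoplus_\sigma \sigma^* \mathcal{G}$, with $\mathcal{G}$ appearing as the identity summand, is the more precise form of the paper's ``$\mathcal{F}^{\oplus r}$'' (the Galois twists need not be isomorphic to $\mathcal{F}$, but only the retract property is used), and your preliminary reduction to compact objects is an inessential variation.
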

\begin{proof}
The functor $\tilde{g}^*_\ren$ fits into the following diagram of adjoints
\[
\begin{tikzcd}
	\Shv_\mixed(\mathcal{Y}_n)^\ren \ar[shift left=\arrdisp]{r}{\can} \ar[bend left=18]{rr}{g^*_\ren} & \wtilde{\Shv}_\mixed(\mathcal{Y}_n)^\ren \ar[shift left=\arrdisp]{l}{\can^R} \ar[shift left=\arrdisp, hookrightarrow]{r}{\tilde{g}^*_\ren} & \Shv_\mixed(\mathcal{Y}_m)^\ren \ar[shift left=\arrdisp]{l}{\tilde{g}_{*, \ren}} \ar[bend left=18]{ll}{g_{*, \ren}}
\end{tikzcd} \teq\label{eq:tilde{g}^*_vs_g^*}
\]
Here, $\can$ is used to denote the canonical functor. All the functors above admit right adjoints since they are all continuous; in fact, the right adjoints are also continuous since the left adjoints preserve compactness. We have $g^*_\ren = \tilde{g}^*_\ren \circ \can$ by construction, and hence, passing to right adjoints, we also have $g_{*, \ren} = \can^R \circ \tilde{g}_{*, \ren}$.

To show that $\tilde{g}^*_\ren$ is essentially surjective, it suffices to show that any object $\mathcal{F} \in \Shv_\mixed(\mathcal{Y}_m)^\ren$ can be obtained as a colimit of objects in (the essential image of) $\wtilde{\Shv}_\mixed(\mathcal{Y}_n)^\ren$. 

Now, note that on the one hand, by base change,
\[
	g^*_\ren g_{*, \ren} \mathcal{F} \simeq \mathcal{F}^{\oplus r},
\]
for some $r \in \mathbb{Z}$. In particular, $\mathcal{F}$ is a retract of $g^*_\ren g_{*, \ren} \mathcal{F}$, and hence,
\[
	\mathcal{F} \simeq \colim(g^*_\ren g_{*, \ren} \mathcal{F} \xrightarrow{e} g^*_\ren g_{*, \ren} \mathcal{F} \xrightarrow{e} g^*_\ren g_{*, \ren} \mathcal{F} \xrightarrow{e} \cdots)
\]
where $e$ is an idempotent which projects to one $\mathcal{F}$ factor.

On the other hand, \cref{eq:tilde{g}^*_vs_g^*} implies that
\[
	g^*_\ren g_{*, \ren} \mathcal{F} \simeq \tilde{g}^*_\ren \can \can^R \tilde{g}_{*, \ren} \mathcal{F} = \tilde{g}^*_\ren \mathcal{G}
\]
where $\mathcal{G} = \can \can^R \tilde{g}_{*, \ren} \mathcal{F} \in \wtilde{\Shv}_\mixed(\mathcal{Y}_n)^\ren$. Hence, 
\[
	\mathcal{F} \simeq \colim(\tilde{g}^*_\ren \mathcal{G} \xrightarrow{e} \tilde{g}^*_\ren \mathcal{G}\xrightarrow{e} \tilde{g}^*_\ren \mathcal{G}\xrightarrow{e} \cdots),
\]
and the proof concludes.
\end{proof}

As a consequence of the two lemmas above, we obtain the following statement.
\begin{prop}
\label{prop:tilde{g}^*_ren_iso_mixed}
Let $\mathcal{Y}_n \in \Stk_{k_n}$ and $\mathcal{Y}_m$ its base change to $k_m$. Let $g: \mathcal{Y}_m \to \mathcal{Y}_n$ be the canonical map and $\tilde{g}^*_\ren$ be defined as in \cref{eq:g_tilde_^*_ren}. Then, $\tilde{g}^*_\ren$ is an equivalence of objects in $\ComAlg(\Mod_{\Shv_\mixed(\pt_m)})$
\[
	\tilde{g}^*_\ren: \Shv_\mixed(\mathcal{Y}_n)^\ren \otimes_{\Shv_\mixed(\pt_n)} \Shv_\mixed(\pt_m) \xrightarrow{\simeq} \Shv_\mixed(\mathcal{Y}_m)^\ren,
\]
i.e., it is an equivalence of symmetric monoidal categories, compatible with the $\Shv_\mixed(\pt_m)$-module structures on both sides.
\end{prop}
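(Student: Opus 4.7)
The plan is essentially immediate from the two lemmas immediately preceding the statement. Indeed, \cref{lem:tilde{g}^*_ren_fully_faithful} establishes that $\tilde{g}^*_\ren$ is fully faithful, and \cref{lem:mixed_tensor_up_essentially_surjective} establishes that it is essentially surjective. Combining these two facts gives that $\tilde{g}^*_\ren$ is an equivalence of the underlying categories in $\DGCatprescont$.

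To upgrade this to an equivalence of objects in $\ComAlg(\Mod_{\Shv_\mixed(\pt_m)})$, I would invoke the fact that the forgetful functor
\[
    \ComAlg(\Mod_{\Shv_\mixed(\pt_m)}) \to \DGCatprescont
\]
is conservative; this is a general feature of forgetful functors from categories of algebras/modules, which preserve and detect equivalences. Since $\tilde{g}^*_\ren$ was constructed via the universal property of the relative tensor product (see \cref{eq:g_tilde_^*_ren}) as a morphism in $\ComAlg(\Mod_{\Shv_\mixed(\pt_m)})$, and its underlying functor is an equivalence, it must already be an equivalence in $\ComAlg(\Mod_{\Shv_\mixed(\pt_m)})$ itself.

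There is really no main obstacle here, as all the work has already been done in \cref{lem:tilde{g}^*_ren_fully_faithful,lem:mixed_tensor_up_essentially_surjective}. The former used the induction formula for enriched $\Hom$-spaces (\cref{prop:induction_Hom_tensor}) together with smooth base change and the compatibility of internal $\Hom$ with smooth pullback (\cref{prop:pullback_internal_Hom_mixed}), while the latter exploited the retract decomposition $g^*_\ren g_{*, \ren} \mathcal{F} \simeq \mathcal{F}^{\oplus r}$ coming from base change along the finite étale cover $\pt_m \to \pt_n$, together with the factorization $g^*_\ren = \tilde{g}^*_\ren \circ \can$. The only remaining remark is thus the bookkeeping of the symmetric monoidal and module structures, which is handled by the conservativity argument above.
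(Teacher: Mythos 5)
Your proposal matches the paper exactly: the paper derives this proposition immediately from \cref{lem:tilde{g}^*_ren_fully_faithful,lem:mixed_tensor_up_essentially_surjective} with no further argument, and your extra remark on conservativity of the forgetful functor from $\ComAlg(\Mod_{\Shv_\mixed(\pt_m)})$ is a correct (if implicit in the paper) justification for the structured upgrade.
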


\subsubsection{}
From $\tilde{g}^*_\ren$, we obtain the following sequence of equivalences in $\ComAlg(\Mod_{\Vect^\gr})$
\begin{align*}
	\Shv_\gr(\mathcal{Y}_n)^\ren 
	&\simeq \Shv_\mixed(\mathcal{Y}_n)^\ren \otimes_{\Shv_\mixed(\pt_n)} \Vect^\gr \\
	&\simeq \Shv_\mixed(\mathcal{Y}_n)^\ren \otimes_{\Shv_\mixed(\pt_n)} \Shv_\mixed(\pt_m) \otimes_{\Shv_\mixed(\pt_m)} \Vect^\gr \\
	&\simeq \wtilde{\Shv}_\mixed(\mathcal{Y}_n)^\ren \otimes_{\Shv_\mixed(\pt_m)} \Vect^\gr \\
	&\xrightarrow[\simeq]{\tilde{g}^*_\ren \otimes \id_{\Vect^\gr}} \Shv_\mixed(\mathcal{Y}_m)^\ren \otimes_{\Shv_\mixed(\pt_m)} \Vect^\gr \tag{\cref{prop:tilde{g}^*_ren_iso_mixed}} \\
	&\simeq \Shv_\gr(\mathcal{Y}_m)^\ren,
\end{align*}
whose composition is denoted by $\bar{g}^*_\ren$. \cref{prop:tilde{g}^*_ren_iso_mixed} implies that $\bar{g}^*_\ren$ is also an equivalence and we have the following result.

\begin{prop} \label{prop:gr_sheaves_invariant_base_change}
Let $\mathcal{Y}_n \in \Stk_{k_n}$ and $\mathcal{Y}_m$ its base change to $k_m$. Let $g: \mathcal{Y}_m \to \mathcal{Y}_n$ be the canonical map and $\bar{g}^*_\ren$ be defined as above. Then, $\bar{g}^*_\ren$ is an equivalence of objects in $\ComAlg(\Mod_{\Vect^\gr})$
\[
	\bar{g}^*_\ren: \Shv_\gr(\mathcal{Y}_n)^\ren \xrightarrow{\simeq} \Shv_\gr(\mathcal{Y}_m)^\ren,
\]
i.e., it is an equivalence of symmetric monoidal categories, compatible with the $\Vect^\gr$-module structures on both sides.
\end{prop}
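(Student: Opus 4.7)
My plan is to obtain $\bar{g}^*_\ren$ as an equivalence essentially for free from \cref{prop:tilde{g}^*_ren_iso_mixed}, by just chasing through the definitions and invoking associativity of the relative tensor product. Concretely, I would start from the definition
\[
	\Shv_\gr(\mathcal{Y}_n)^\ren \defeq \Shv_\mixed(\mathcal{Y}_n)^\ren \otimes_{\Shv_\mixed(\pt_n)} \Vect^\gr,
\]
factor the tensor with $\Vect^\gr$ through $\Shv_\mixed(\pt_m)$ using that the functor $\Shv_\mixed(\pt_n) \to \Vect^\gr$ factors through $\Shv_\mixed(\pt_m)$ (the diagram just before \cref{eq:g_tilde_^*_ren} shows precisely this compatibility), and then apply $\tilde g^*_\ren$ tensored with $\id_{\Vect^\gr}$. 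This gives the displayed composition in the paragraph preceding the statement, whose composite is $\bar g^*_\ren$. Every step in this composition is already known to be an equivalence in $\ComAlg(\Mod_{\Vect^\gr})$: the first three are formal manipulations of iterated tensor products (using associativity of relative tensor, \cite[Vol.~I, Chap.~1, \S4.2]{gaitsgory_study_2017}), and the crucial one is $\tilde g^*_\ren \otimes \id_{\Vect^\gr}$.

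The main point, therefore, is that tensoring $\tilde g^*_\ren$ with $\id_{\Vect^\gr}$ over $\Shv_\mixed(\pt_m)$ preserves the equivalence from \cref{prop:tilde{g}^*_ren_iso_mixed}. This is automatic: $\tilde g^*_\ren$ is an equivalence in $\ComAlg(\Mod_{\Shv_\mixed(\pt_m)})$, and $- \otimes_{\Shv_\mixed(\pt_m)} \Vect^\gr$ is a symmetric monoidal functor on $\Mod_{\Shv_\mixed(\pt_m)}$, so it sends equivalences of symmetric monoidal module categories to equivalences of symmetric monoidal module categories over $\Vect^\gr$. Compatibility with the $\Vect^\gr$-module structure is part of the data of the relative tensor.

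Hence, there is really no obstacle left once \cref{prop:tilde{g}^*_ren_iso_mixed} is in hand; the proof of the graded statement is a one-line consequence of the mixed one. The actual mathematical content—showing that extending scalars from $\Shv_\mixed(\pt_n)$ to $\Shv_\mixed(\pt_m)$ on $\Shv_\mixed(\mathcal{Y}_n)^\ren$ recovers $\Shv_\mixed(\mathcal{Y}_m)^\ren$—has already been absorbed into \cref{lem:tilde{g}^*_ren_fully_faithful,lem:mixed_tensor_up_essentially_surjective}. The hardest conceptual point in that chain is the essential surjectivity, which uses the retract argument via the base-change isomorphism $g^*_\ren g_{*, \ren}\mathcal{F} \simeq \mathcal{F}^{\oplus r}$; but this is already established and can be used as a black box here.
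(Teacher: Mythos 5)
Your proposal is correct and matches the paper's own argument: the paper likewise defines $\bar{g}^*_\ren$ as the displayed composite obtained by factoring $\Shv_\mixed(\pt_n) \to \Vect^\gr$ through $\Shv_\mixed(\pt_m)$, using associativity of the relative tensor, and applying $\tilde{g}^*_\ren \otimes \id_{\Vect^\gr}$, so that the equivalence follows directly from \cref{prop:tilde{g}^*_ren_iso_mixed}. The real content is indeed in \cref{lem:tilde{g}^*_ren_fully_faithful,lem:mixed_tensor_up_essentially_surjective}, exactly as you say.
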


\begin{rmk}
It is important to note that in forming $\Shv_\gr(\mathcal{Y}_m)^\ren$, we are viewing $\mathcal{Y}_m$ as an object in $\Stk_{k_m}$; see also \cref{rmk:warning_graded_sheaves_structure_map}. In particular, the functor $\bar{g}^*_\ren$ defined above does not fall into the purview of \cref{subsec:functoriality_graded_sheaves} since over there, we pull and push along maps of geometric objects defined over \emph{the same} base.
\end{rmk}

\subsubsection{}
\label{subsubsec:notation_invariant_scalar_extension}
By our finiteness condition, any $\mathcal{Y} \in \Stk_k$ is a pullback of some $\mathcal{Y}_n \in \Stk_{k_n}$, see~\cite[Chap. 4]{laumon_champs_2000} or~\cite[Thm. 2.1.13]{kubrak_hodge--rham_2021}. If $\mathcal{Y}_{n_1} \in \Stk_{k_{n_1}}$ and $\mathcal{Y}'_{n_2} \in \Stk_{k_{n_2}}$ such that they both pullback to $\mathcal{Y}$ over $\pt = \Spec k$, then there exists $m \gg 0$ such that their pullbacks to $\pt_m$ agree. Moreover, any morphism $f: \mathcal{Y} \to \mathcal{Z}$ between objects in $\Stk_{k}$ is already defined over some $k_n$. 

Thus, by \cref{prop:gr_sheaves_invariant_base_change}, we can view $\Shv_\gr(-)^\ren$ as a sheaf theory on $\Stk_k$. In particular, it makes sense to talk about $\Shv_\gr(\mathcal{Y})^\ren$ for any $\mathcal{Y} \in \Stk_k$, equipped with the usual six-functor formalism described above. This can be made precise in the following theorem.

\begin{thm} \label{thm:Shv_gr_is_a_sheaf_theory_on_Stk_k}
We can attach for each $\mathcal{Y} \in \Stk_k$ the category of graded sheaves $\Shv_\gr(\mathcal{Y})^\ren \in \ComAlg(\Mod_{\Vect^\gr})$ on $\mathcal{Y}$. Moreover, for each $f: \mathcal{Y} \to \mathcal{Z}$ where $\mathcal{Y}, \mathcal{Z} \in \Stk_k$, we have the usual functors $f^*_\ren, f_{*, \ren}, f^!_\ren$, where $f^*_\ren \dashv f_{*, \ren}$. When $f_!$ (for $\Shv(-)$) preserves constructibility, $f^!_\ren$ admits a left adjoint $f_{!, \ren}$.
\end{thm}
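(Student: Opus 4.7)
The plan is to bootstrap from the $\Stk_{k_n}$-level theory, already constructed in \cref{subsec:graded_sheaves_and_functoriality,subsec:functoriality_graded_sheaves}, using the extension-of-scalars invariance \cref{prop:gr_sheaves_invariant_base_change} together with the spreading-out principle recalled in \cref{subsubsec:notation_invariant_scalar_extension}. First, for $\mathcal{Y}\in\Stk_k$, pick any pair $(n,\mathcal{Y}_n)$ with $\mathcal{Y}_n\in\Stk_{k_n}$ a $k_n$-form of $\mathcal{Y}$ and set $\Shv_\gr(\mathcal{Y})^\ren\defeq\Shv_\gr(\mathcal{Y}_n)^\ren$. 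To see this is independent of the choice, suppose $(n_1,\mathcal{Y}_{n_1})$ and $(n_2,\mathcal{Y}'_{n_2})$ both descend $\mathcal{Y}$; by spreading out, they become isomorphic after base change to some $\pt_m$, and \cref{prop:gr_sheaves_invariant_base_change} supplies canonical equivalences in $\ComAlg(\Mod_{\Vect^\gr})$
\[
	\Shv_\gr(\mathcal{Y}_{n_1})^\ren \xrightarrow{\bar g^*_\ren} \Shv_\gr(\mathcal{Y}_m)^\ren \xleftarrow{\bar g'^*_\ren} \Shv_\gr(\mathcal{Y}'_{n_2})^\ren
\]
that identify the two candidates. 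A similar invariance argument in one more variable shows that these identifications satisfy the cocycle condition, so $\Shv_\gr(\mathcal{Y})^\ren$ is canonically defined as an object of $\ComAlg(\Mod_{\Vect^\gr})$.

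Next, for $f:\mathcal{Y}\to\mathcal{Z}$ in $\Stk_k$, choose $n$ large enough that $f$ descends to $f_n:\mathcal{Y}_n\to\mathcal{Z}_n$ in $\Stk_{k_n}$; such $n$ exists by the finite-type spreading-out results cited in \cref{subsubsec:notation_invariant_scalar_extension}. Set $f^{?}_\ren\defeq f^{?}_{n,\ren}$ for $?\in\{*,!\}$ and $f_{*,\ren}\defeq f_{n*,\ren}$, and when $f_!$ on $\Shv(-)$ preserves constructibility define $f_{!,\ren}\defeq f_{n!,\ren}$. Independence from the choice of descent datum follows by comparing to the base change to any larger $k_m$: the relevant squares
\[
\begin{tikzcd}
	\mathcal{Y}_m \ar{d}{g_\mathcal{Y}} \ar{r}{f_m} & \mathcal{Z}_m \ar{d}{g_\mathcal{Z}} \\
	\mathcal{Y}_n \ar{r}{f_n} & \mathcal{Z}_n
\end{tikzcd}
\]
are Cartesian, and the smooth base-change statements of \cref{thm:base_change_graded_sheaves} together with the concrete form of the comparison equivalence $\bar g^*_\ren$ in \cref{prop:gr_sheaves_invariant_base_change} (which factors through $\tilde g^*_\ren$ of \cref{prop:tilde{g}^*_ren_iso_mixed}) identify $f^{?}_{n,\ren}$ with $f^{?}_{m,\ren}$ under the canonical equivalences of the previous paragraph.

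The adjunctions $f^*_\ren\dashv f_{*,\ren}$ and $f_{!,\ren}\dashv f^!_\ren$ are inherited from the $\Stk_{k_n}$-level, where they were established in \cref{subsec:functoriality_graded_sheaves} via the strict $\Shv_\mixed(\pt_n)$-module structure and the relative tensor with $\Vect^\gr$; adjunctions are preserved by the equivalences $\bar g^*_\ren$, and the $\Vect^\gr$-linear/symmetric-monoidal structure is automatic because every construction above takes place in $\ComAlg(\Mod_{\Vect^\gr})$. The existence of $f_{!,\ren}$ under the constructibility hypothesis on $f_!$ transfers from $\Stk_{k_n}$ to $\Stk_k$ because the hypothesis is invariant under extension of scalars.

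The genuine subtlety, which I expect to be the main obstacle, is not the pointwise existence but the homotopy-coherent assembly of these choices into a functor on (some flavor of) $\Stk_k$. At the level of the current statement it suffices to produce the objects and functors together with their adjunctions, which the above achieves; the full $\infty$-functorial enhancement (and the compatibility of all the base-change isomorphisms appearing in \cref{thm:base_change_graded_sheaves}) is deferred to \cref{subsec:graded_sheaves_and_correspondences}, where the Liu--Zheng machinery of \cites{liu_enhanced_2012,liu_enhanced_2017} is invoked to upgrade the spread-and-descend recipe above to a functor out of a category of correspondences.
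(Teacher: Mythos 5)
Your bootstrap is the same one the paper relies on---everything rests on \cref{prop:gr_sheaves_invariant_base_change} plus spreading out---but the assembly step is genuinely different. The paper does not make objectwise choices and then verify consistency: it invokes the equivalence $\Stk_k \simeq \colim_n \Stk_{k_n}$ of \cite[Thm. 2.1.13]{kubrak_hodge--rham_2021} (a filtered colimit over the poset of finite extensions of $k_1$), notes that \cref{prop:gr_sheaves_invariant_base_change} makes the functors $\Stk_{k_n} \to \Mod_{\Vect^\gr}$ encoding, say, the $*$-pushforward compatible with the base-change functors $\Stk_{k_n} \to \Stk_{k_m}$, and then obtains a functor $\Stk_k \to \Mod_{\Vect^\gr}$ directly from the universal property of the colimit, treating the remaining push/pull operations in the same way. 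That device buys all homotopy coherences at once, which is exactly the point you single out as the ``genuine subtlety'' and then set aside.

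Two caveats about your version. First, the claim that a cocycle check ``in one more variable'' makes $\Shv_\gr(\mathcal{Y})^\ren$ canonically defined is not literally sufficient in the $\infty$-categorical setting: gluing along identifications requires all higher coherences, and the clean repair is precisely the colimit presentation above. For the statement as literally phrased your construction still goes through---one may simply fix a form of each stack and of each morphism, your Cartesian-square/smooth (finite \etale) base-change argument correctly identifies $f^{?}_{n,\ren}$ with $f^{?}_{m,\ren}$ under $\bar g^*_\ren$, and adjunctions transfer along these equivalences---but then ``canonical'' should be weakened to ``well defined up to canonical equivalence.'' Second, deferring the coherent assembly to \cref{subsec:graded_sheaves_and_correspondences} is mildly backwards relative to the paper's logic: the correspondence-level functors over $\Stk_k$ in \cref{thm:graded_sheaves_correspondence} are produced by composing the $\Stk_{k_n}$-level Liu--Zheng functors with $-\otimes_{\Shv_\mixed(\pt_n)}\Vect^\gr$ \emph{and then using the passage from $k_n$ to $k$ established here}, so they cannot be the source of that passage; the colimit argument (or an equivalent) is still needed somewhere.
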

\begin{proof}
By~\cite[Thm. 2.1.13]{kubrak_hodge--rham_2021},
\[
	\Stk_k \simeq \colim_n \Stk_{k_n} \teq \label{eq:spread_out_stacks}
\]
where functors  $\Stk_{k_n} \to \Stk_k$ and $\Stk_{k_n} \to \Stk_{k_m}$ are given by base changes. Here, the colimit is taken over the partially ordered set of finite extensions of $k_1$, which is a filtered system. Moreover, \cref{prop:gr_sheaves_invariant_base_change} furnishes us with compatible functors
\[
\begin{tikzcd}[sep=small]
	\cdots \ar{r} & \Stk_{k_{n}} \ar{dr} \ar{r} & \Stk_{k_{n'}} \ar{d} \ar{r} & \Stk_{k_{n''}} \ar{dl} \ar{r} & \cdots \\
	&& \Mod_{\Vect^\gr}
\end{tikzcd}
\]
where $\Stk_{k_n} \to \Mod_{\Vect^\gr}$ encodes the $*$-pushforward functor. \cref{eq:spread_out_stacks} then implies that we obtain a functor
\[
	\Stk_k \to \Mod_{\Vect^\gr},
\]
which encodes the $*$-pushforward functor of graded sheaves on stacks over $\pt = \Spec k$. The rest of the pull/push functors are obtained similarly.
\end{proof}

\subsubsection{Change of notation}
The construction of $\Shv_\gr(-)^\ren$ on $\Stk_k$ above implies that all the properties that we have proved earlier for $\Shv_\gr(-)^\ren$ on $\Stk_{k_n}$ automatically carry over. Thus, everywhere $\Shv_\gr(\mathcal{Y}_n)^\ren$ is used, we can replace it by $\Shv_\gr(\mathcal{Y})^\ren$. From this point onward, we will thus uniformly use notations that reflect this. For example, instead of writing $\gr_{\mathcal{Y}_n}: \Shv_\mixed(\mathcal{Y}_n)^\ren \to \Shv_\gr(\mathcal{Y}_n)^\ren$, we will write $\gr_{\mathcal{Y}_n}: \Shv_\mixed(\mathcal{Y}_n)^\ren \to \Shv_\gr(\mathcal{Y})^\ren$. 

\subsubsection{}
We end this subsection with the following useful lemma, which is a direct consequence of \cref{prop:F_M_is_conservative_when_F_is}.

\begin{lem}
\label{lem:oblv_gr_conservative}
The functor $\oblv_\gr: \Shv_{\gr}(\mathcal{Y})^{\ren} \to \Shv(\mathcal{Y})^{\ren}$ is conservative.
\end{lem}

\begin{rmk}
\label{rmk:unmixed_Hom_direct_sum_gr_Hom}
We factor $\oblv_\gr$ as follows
\[
	\Shv_\gr(\mathcal{Y})^\ren \simeq \Shv_\mixed(\mathcal{Y}_n)^\ren \otimes_{\Shv_\mixed(\pt_n)} \Vect^\gr \to \Shv_\mixed(\mathcal{Y}_n)^\ren \otimes_{\Shv_\mixed(\pt_n)} \Vect \hookrightarrow \Shv(\mathcal{Y})^\ren, \teq\label{eq:oblv_gr_conservative}
\]
where the last functor is fully faithful for the same reason as \cref{lem:tilde{g}^*_ren_fully_faithful}. Together with \cref{prop:induction_Hom_tensor}, \cref{eq:oblv_gr_conservative} implies that for $(\mathcal{F}^c, \mathcal{G}) \in \Shv_{\gr, c}(\mathcal{Y})\times \Shv_\gr(\mathcal{Y})^\ren$, we have the following expected equivalences
\begin{align*}
	&\alignsep\bigoplus_{k \in \mathbb{Z}} \cHom_{\Shv_\gr(\mathcal{Y})^\ren}(\mathcal{F}^c, \mathcal{G}\lrangle{k}) \\
	&\simeq \bigoplus_{k \in \mathbb{Z}} \cHom_{\Shv_\gr(\mathcal{Y})^\ren}^\gr(\mathcal{F}^c, \mathcal{G})_k \\
	&\eqdef \oblv_\gr(\cHom_{\Shv_\gr(\mathcal{Y})^\ren}^\gr(\mathcal{F}^c, \mathcal{G})) \\
	&\simeq \cHom_{\Shv(\mathcal{Y})^\ren}(\oblv_\gr(\mathcal{F}^c), \oblv_\gr(\mathcal{G})).
\end{align*}
\end{rmk}

\subsection{Functoriality via correspondences}
\label{subsec:graded_sheaves_and_correspondences}
We will now describe how $\Shv_\gr(-)^\ren$ can be enhanced to a functor out of the category of correspondences in $\Stk_k$. This structure encodes various base change results of \cref{thm:base_change_graded_sheaves} in a homotopy coherent way, which allows us to construct monoidal structures coming from convolutions. This is necessary since we are dealing with $\infty$-categories, where all compatibilities, such as associativity and commutativity, contain an infinite amount of data. The statements in this subsection are thus technical in nature. Fortunately, due to the way the theory of graded sheaves is set up, everything we need follows from the usual theory of $\ell$-adic sheaves and has already been established in~\cites{liu_enhanced_2017,liu_enhanced_2012}. Readers who are only interested in monoidal structures on triangulated categories can safely skip this subsection.

\subsubsection{Category of correspondences}
\label{subsubsec:correspondences}
Let $\mathcal{C}$ be any $\infty$-category. The $\infty$-category $\Corr(\mathcal{C})$ of correspondences in $\mathcal{C}$ is defined in~\cite[Vol. I, Chap. 7]{gaitsgory_study_2017}. We will quickly recall the ideas here. Roughly speaking, $\Corr(\mathcal{C})$ has the same collection of objects as $\mathcal{C}$. Moreover, given $c_1, c_2 \in \mathcal{C}$, a morphism from $c_1$ to $c_2$ is given by the following diagram in $\mathcal{C}$ 
\[
\begin{tikzcd}
	c \ar{d}{v} \ar{r}{h} & c_1 \\
	c_2
\end{tikzcd} \teq\label{eq:correspondence}
\]
where $c\in \mathcal{C}$, and where compositions are given by Cartesian squares.

More generally, let $\vertc$ and $\horiz$ be two collections of morphisms in $\mathcal{C}$ such that $\vertc$ (resp. $\horiz$) is closed under pulling back along a morphism in $\horiz$ (resp. $\vertc$). Then, we let $\Corr(\mathcal{C})_{\vertc,\horiz}$ be the (non-full) subcategory of $\Corr(\mathcal{C})$ containing the same collection of objects but morphisms are given by \cref{eq:correspondence} such that $v \in \vertc$ and $h \in \horiz$. We will also write $\Corr(\mathcal{C})_{\all,\all}$ to denote $\Corr(\mathcal{C})$ where $\all$ means \emph{all} morphisms are allowed.

When $\mathcal{C}$ is closed under finite products such that $\vertc$ and $\horiz$ are stable under these products, $\Corr(\mathcal{C})_{\vertc,\horiz}$ is equipped with a symmetric monoidal structure given by taking products.

We use $\mathcal{C}_\vertc$ and $\mathcal{C}_\horiz$ to denote the (non-full) subcategories of $\mathcal{C}$ where only morphisms in $\vertc$ and $\horiz$, respectively, are allowed. Then, we have natural functors
\[
	\mathcal{C}_\vertc \to \Corr(\mathcal{C})_{\vertc,\horiz} \qquad\text{and}\qquad \mathcal{C}^\opp_\horiz \to \Corr(\mathcal{C})_{\vertc,\horiz}
\]
which send $c_1 \to c_2$ in $\mathcal{C}_{\vertc}$ and $c_1 \to c_2$ in $\mathcal{C}_{\horiz}$ to 
\[
\begin{tikzcd}
	c_1 \ar{d} \ar[equal]{r} & c_1 \\
	c_2
\end{tikzcd}
\qquad\text{and}\qquad
\begin{tikzcd}
	c_1 \ar{r} \ar[equal]{d} & c_2 \\
	c_1
\end{tikzcd}
\]
respectively. Note that the second correspondence is a morphism from $c_2$ to $c_1$ in $\Corr(\mathcal{C})_{\vertc,\horiz}$.

These functors are symmetric monoidal with respect to the finite product monoidal structures if they are available.

\subsubsection{}
Let $\mathcal{S}$ be any $\infty$-category. Then a functor $\Phi: \Corr(\mathcal{C})_{\vertc,\horiz} \to \mathcal{S}$ induces two functors
\[
	\Phi_\vertc: \mathcal{C}_\vertc \to \mathcal{S} \qquad\text{and}\qquad \Phi_\horiz: \mathcal{C}_\horiz^\opp \to \mathcal{S}. \teq\label{eq:restriction_corr_to_vert_horiz}
\]
Moreover, for each Cartesian square in $\mathcal{C}$
\[
\begin{tikzcd}
	c' \ar{r}{h'} \ar{d}{v'} & c \ar{d}{v} \\
	d' \ar{r}{h} & d
\end{tikzcd}
\]
where $v, v' \in \vertc$ and $h, h' \in \horiz$, we are given (as part of the data of $\Phi$) an equivalence
\[
	\Phi_\vertc(v') \circ \Phi_\horiz(h') \xrightarrow{\simeq} \Phi_\horiz(h) \circ \Phi_\vertc(v),
\]
which has the same form as the usual base change results. The functor $\Phi$ encodes this base change equivalence along with all the homotopy coherence data.

\subsubsection{}
Suppose that $\mathcal{S}$ is symmetric monoidal and $\Corr(\mathcal{C})_{\vertc,\horiz}$ is equipped with a symmetric monoidal structure as above. Then, a lax symmetric monoidal functor $\Phi: \Corr(\mathcal{C})_{\vertc,\horiz} \to \mathcal{S}$ induces lax symmetric monoidal structures on $\Phi_\vertc$ and $\Phi_\horiz$. In particular, for any $c_1, c_2 \in \mathcal{C}$, we are given a morphism
\[
	\Phi(c_1) \otimes \Phi(c_2) \xrightarrow{\boxtimes} \Phi(c_1 \times c_2).
\]
Moreover, this morphism is natural in $c_1$ and $c_2$ via both $\Phi_\vertc$ and $\Phi_\horiz$. This is the shape that our sheaf theory will take.

\subsubsection{Mixed sheaves as functors out of the category of correspondences}
The theory developed in~\cite{liu_enhanced_2012,liu_enhanced_2017} provides us with a right-lax symmetric monoidal functor
\[
	\Shv_{\mixed,!}^*: \Corr(\Stk_{k_n})_{\all,\all} \to \Mod_{\Shv_\mixed(\pt_n)} \teq\label{eq:Shv_mixed^*_!_correspondence}
\]
which sends each $\mathcal{Y}_n \in \Stk_{k_n}$ to $\Shv_\mixed(\mathcal{Y}_n)$ and which encodes $*$-pullback (resp. $!$-pushforward) along all maps as well as the proper base change theorem for mixed sheaves. Note that the right-lax symmetric monoidal  structure encodes the procedure of taking box-tensor
\[
	\Shv_\mixed(\mathcal{Y}_1) \otimes \Shv_\mixed(\mathcal{Y}_2) \xrightarrow{\boxtimes} \Shv_\mixed(\mathcal{Y}_1 \times \mathcal{Y}_2), \quad\text{for all } \mathcal{Y}_1, \mathcal{Y}_2 \in \Stk_{k_n}
\]
as well its compatibility with $!$-pushforwards (\Kuenneth{} formula) and $*$-pullbacks. Here, for $\mathcal{F}_i \in \Shv_\mixed(\mathcal{Y}_i), i \in \{1, 2\}$,
\[
	\mathcal{F}_1 \boxtimes \mathcal{F}_2 \defeq p_1^* \mathcal{F}_1 \otimes p_2^* \mathcal{F}_2
\]
where $p_i: \mathcal{Y}_1 \times \mathcal{Y}_2 \to \mathcal{Y}_i$ denotes the projection onto the $i$-th factor.

\begin{rmk}
Note that the theory of $\ell$-adic sheaves developed in \cite{liu_enhanced_2017} is different from the one in \cite{gaitsgory_weils_2019,gaitsgory_atiyah-bott_2015,hemo_constructible_2021}. However, the two theories agree on the subcategories of constructible sheaves, which is what we use to construct the renormalized sheaf theory. Moreover, the general results of \cite{liu_enhanced_2012} can take as input the results of \cite{gaitsgory_weils_2019,gaitsgory_atiyah-bott_2015,hemo_constructible_2021} and yield the desired correspondence-functoriality as already done in~\cite{liu_enhanced_2017}. We thank Y. Liu for pointing this out to us. See also~\cite{chowdhury_motivic_2021} where the general theory of~\cite{liu_enhanced_2017} is used to obtain correspondence-functoriality for motivic homotopy theory.
\end{rmk}

Before continuing, we need the following definition.

\begin{defn}
A morphism $f: \mathcal{Y} \to \mathcal{Z}$ in $\Stk_{k_n}$ is said to be \emph{universally constructible} with respect to the $*$-pushforward functor (resp. $!$-pushforward functor) if for all $\mathcal{Z}' \to \mathcal{Z}$ in $\Stk_{\FF}$, $f_{\mathcal{Z}', *}$ (resp. $f_{\mathcal{Z}', !}$) preserves constructibility, where $f_{\mathcal{Z}'}: \mathcal{Z}'\times_{\mathcal{Z}} \mathcal{Y} \to \mathcal{Z}'$ is the base change of $f$ to $\mathcal{Z}'$. We also say that $f$ is $\UC_*$ (resp. $\UC_!$) in this case.

A morphism $f: \mathcal{Y} \to \mathcal{Z}$ in $\Stk_k$ is said to be $\UC_*$ (resp. $\UC_!$) if $f$ is the pullback of a $\UC_*$ (resp. $\UC_!$) morphism in $\Stk_{k_n}$.
\end{defn}

\begin{rmk}
The property of being $\UC_*$ or $\UC_!$ is stable under pullbacks and finite extensions of scalars. This allows one to make sense of the second part of the definition above.
\end{rmk}

\begin{rmk}
Representable maps are $\UC_*$ and $\UC_!$. However, there are more $\UC_*$ and $\UC_!$ morphisms than just these. For example, consider $B\Ga \to \pt_n$ where $\Ga = \Spec k_n[t]$ is the additive group scheme over $k_n$.
\end{rmk}

\subsubsection{} \label{subsubsec:Shv^*_{mixed, !}}
The functor \cref{eq:Shv_mixed^*_!_correspondence} restricts to a functor
\[
	\Shv^*_{\mixed, !}: \Corr(\Stk_{k_n})_{\UC_!, \all} \to \Mod_{\Shv_\mixed(\pt_n)}
\]
which induces the following right-lax symmetric monoidal functor by restricting to the full subcategories of constructible sheaves
\[
	\Shv_{\mixed, c, !}^*: \Corr(\Stk_{k_n})_{\UC_!, \all} \to \Mod_{\Shv_{\mixed, c}(\pt_n)}.
\]
Taking $\Ind$, which is symmetric monoidal, we obtain a right-lax symmetric monoidal functor
\[
	\Shv_{\mixed, !}^{\ren, *}: \Corr(\Stk_{k_n})_{\UC_!, \all} \to \Mod_{\Shv_\mixed(\pt_n)}, \teq\label{eq:renormalized_mixed_sheaves_corr_!-pushforward}
\]
which encodes the renormalized $!$-pushforward functors along $\UC_!$-morphisms and $*$-pullback functors along all morphisms.

\subsubsection{}
To formulate the dual we need the following result.

\begin{prop} \label{prop:Kuenneth_renormalized_pushforward}
Let $f_i: \mathcal{Y}_i \to \mathcal{Z}_i$ be morphisms in $\Stk_{k_n}$, where $i \in \{1, 2\}$. Then, $f_{i, *, \ren}$ satisfy \Kuenneth; namely, the following diagram,
\[
\begin{tikzcd}
	\Shv_\mixed(\mathcal{Y}_1)^\ren \otimes \Shv_\mixed(\mathcal{Y}_2)^\ren \ar{r}{\boxtimes} \ar{d}{f_{1, *, \ren} \otimes f_{2, *, \ren}} & \Shv_\mixed(\mathcal{Y}_1 \times \mathcal{Y}_2)^\ren \ar{d}{(f_1 \times f_2)_{*, \ren}} \\
	\Shv_\mixed(\mathcal{Z}_1)^\ren \otimes \Shv_\mixed(\mathcal{Z}_2)^\ren \ar{r}{\boxtimes} & \Shv_\mixed(\mathcal{Z}_1 \times \mathcal{Z}_2)^\ren
\end{tikzcd}
\]
which a priori commutes up to a 2-morphism given by 
\[
	f_{1, *, \ren} \mathcal{F}_1 \boxtimes f_{2, *, \ren} \mathcal{F}_2 \to (f_1 \times f_2)_{*, \ren} (\mathcal{F}_1 \boxtimes \mathcal{F}_2), \qquad\text{for all } \mathcal{F}_i \in \Shv_\mixed(\mathcal{Y}_i)^\ren, i\in \{1, 2\},
\]
is actually commutative, i.e., the canonical morphism above is an equivalence.
\end{prop}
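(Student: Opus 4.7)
The plan is to reduce the K\"unneth equivalence to the corresponding classical statement for $*$-pushforward of constructible mixed $\ell$-adic sheaves, by transferring the problem from the renormalized to the usual sheaf theory via the functor $\unren$.

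First, I would reduce to the case where $\mathcal{F}_1, \mathcal{F}_2$ are constructible. Both sides of the K\"unneth map are continuous in each variable separately: the external tensor $\boxtimes = p_1^{*,\ren}(-) \otimes^\ren p_2^{*,\ren}(-)$ is continuous in each input, and each of $f_{i,*,\ren}$ and $(f_1\times f_2)_{*,\ren}$ is continuous (as right adjoints of $*$-pullbacks that preserve compactness). Since $\Shv_\mixed(\mathcal{Y}_i)^\ren$ is compactly generated by $\Shv_{\mixed,c}(\mathcal{Y}_i)$, it suffices to verify the equivalence when the inputs are constructible.

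For constructible $\mathcal{F}_i$, left $t$-exactness of $f_{i,*,\ren}$ (see the discussion in \cref{subsubsec:ind_constructible_on_stacks}) ensures that both $f_{i,*,\ren}\mathcal{F}_i$ and $(f_1\times f_2)_{*,\ren}(\mathcal{F}_1 \boxtimes \mathcal{F}_2)$, and consequently both sides of the K\"unneth map, lie in $\Shv_\mixed(\mathcal{Z}_1\times\mathcal{Z}_2)^{\ren,+}$. Since $\unren$ restricts to an equivalence $\Shv_\mixed(-)^{\ren,+} \simeq \Shv_\mixed(-)^+$, it suffices to check the equivalence after applying $\unren$. Using \cref{prop:interaction_*-functors_ren} (general compatibility of $\unren$ with $*$-pullback, and compatibility with $*$-pushforward on constructible inputs) together with the facts that $\unren$ preserves colimits and that $\otimes^\ren$ is ind-extended from the tensor product on constructible sheaves, one verifies directly that $\unren$ intertwines $\boxtimes$ and the $*$-pushforwards. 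After $\unren$, the K\"unneth map thus becomes the classical comparison map
\[
	f_{1,*}\mathcal{F}_1 \boxtimes f_{2,*}\mathcal{F}_2 \longrightarrow (f_1 \times f_2)_* (\mathcal{F}_1 \boxtimes \mathcal{F}_2)
\]
for constructible mixed sheaves in the usual (non-renormalized) sheaf theory.

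It remains to establish this classical K\"unneth map is an equivalence on stacks for constructible inputs. By smooth descent, one can choose compatible smooth atlases $Y_i \to \mathcal{Y}_i$ and $Z_i \to \mathcal{Z}_i$ (so that $Y_1 \times Y_2 \to \mathcal{Y}_1 \times \mathcal{Y}_2$ and $Z_1 \times Z_2 \to \mathcal{Z}_1 \times \mathcal{Z}_2$ are smooth covers) and descend to the scheme case, where the K\"unneth formula for constructible $\ell$-adic sheaves is classical. The main technical obstacle I expect is making this descent argument fully rigorous while keeping track of the interaction with the $*$-pushforward; alternatively, one can invoke the framework of \cites{liu_enhanced_2012, liu_enhanced_2017} in its $*$-pushforward variant, which directly packages the K\"unneth formula as part of a right-lax symmetric monoidal structure on a category of correspondences, thereby bypassing an ad hoc descent argument.
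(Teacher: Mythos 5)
Your first two reductions are exactly the paper's: by continuity of all functors involved you may assume $\mathcal{F}_i$ constructible, and then, since the renormalized $*$-pushforwards are left $t$-exact and $\unren$ is an equivalence on the bounded-below parts (\cref{prop:interaction_*-functors_ren}, \cref{subsubsec:t-structures_on_cats_of_sheaves}), the statement transfers to the usual (non-renormalized) mixed sheaf theory. The gap is in your final step. First, descending the \emph{source} to schemes is not innocuous: for a stacky source, $f_{i,*}$ is computed as a totalization over the \v{C}ech nerve of an atlas, and you would need $\boxtimes$ (and the pushforward in the other factor) to commute with these limits, which is neither automatic nor addressed. Second, even with schemes on both sides, the K\"unneth formula for $Rf_*$ of constructible sheaves is not the ``classical'' one — the classical statement is for $Rf_!$; the $*$-pushforward version over a field is a genuine theorem. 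The paper avoids both issues by descending only the \emph{targets}: pulling back along a smooth atlas of $\mathcal{Z}_1\times\mathcal{Z}_2$ is conservative and commutes with $*$-pushforward by smooth base change, so one reduces to scheme targets with possibly stacky sources, and then quotes \cite[Thm. 3.4.5.1]{gaitsgory_weils_2019}, which is precisely this $*$-pushforward K\"unneth statement.

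Your proposed fallback — invoking the $*$-pushforward variant of \cites{liu_enhanced_2012,liu_enhanced_2017} — does not work within this paper's own architecture and is close to circular: the correspondence-functor $\Shv^{\ren,!}_{\mixed,*}$ encoding $*$-pushforwards is obtained by passing to right adjoints, which a priori only yields a \emph{weakly} right-lax symmetric monoidal structure, and it is exactly the present proposition (together with its $!$-pullback analogue) that upgrades it to an honest right-lax one. The Liu--Zheng enhanced operations package $f_!$ and $f^*$; they do not hand you the $f_*$ K\"unneth for free. So the missing ingredient in your argument is the substantive input \cite[Thm. 3.4.5.1]{gaitsgory_weils_2019} (or an equivalent proof of $*$-pushforward K\"unneth with stacky sources), not a descent formality.
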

\begin{proof}
Since all functors are continuous, it suffices to assume that $\mathcal{F}_i \in \Shv_{\mixed, c}(\mathcal{Y}_i) \subset \Shv_\mixed(\mathcal{Y}_i)^+, i \in \{1, 2\}$. By \cref{prop:interaction_*-functors_ren}, combined with the fact that renormalized *-pushforward functors are left exact and $\unren$ induces an equivalence $\Shv_\mixed(-)^{\ren, +} \xrightarrow{\simeq} \Shv_\mixed(-)^+$, we can work with non-renormalized, i.e., usual, sheaves and functors. By smooth base change and descent, it suffices to prove the statement when $\mathcal{Z}_i$'s are schemes. But then, the result is proved in~\cite[Theorem 3.4.5.1]{gaitsgory_weils_2019}.
\end{proof}

\begin{rmk} \label{rmk:Kuenneth_renormalized_!-pullback}
The analogous result for the renormalized $!$-pullback functors can be shown analogously.
\end{rmk}

Now, using~\cite[Vol. I, Chap. 12]{gaitsgory_study_2017}, we can pass \cref{eq:Shv_mixed^*_!_correspondence} to right adjoints and obtain a weakly right-lax symmetric monoidal functor
\[
	\Shv^!_{\mixed, *}: \Corr(\Stk_{k_n})_{\all, \all} \to \Mod_{\Shv_\mixed(\pt_n)}.
\]
Here, weakly right-lax symmetric monoidal means that the transformation
\[
	\Shv_\mixed(\mathcal{Y}_1) \otimes \Shv_\mixed(\mathcal{Y}_2) \xrightarrow{\boxtimes} \Shv_\mixed(\mathcal{Y}_1 \times \mathcal{Y}_2)
\]
is only natural in $\mathcal{Y}_1$ and $\mathcal{Y}_2$ in a lax way, i.e., up to a 2-morphism, see also \cite[Vol. I, Chap. 10, \S3.2.1]{gaitsgory_study_2017}. Applying the regularization procedure of \cite[Thm. 4.3.6, Lem. 4.6.1]{preygel_ind-coherent_2012} (which is a formal way to formulate the passage from $\Shv_\mixed(-)$ to $\Shv_\mixed(-)^\ren$), we obtain a weakly right-lax symmetric monoidal functor
\[
	\Shv_{\mixed, *}^{\ren, !}: \Corr(\Stk_{k_n})_{\all,\all} \to \Mod_{\Shv_\mixed(\pt_n)}. \teq \label{eq:renormalized_mixed_sheaves_corr_*-pushforward}
\]
By \cref{prop:Kuenneth_renormalized_pushforward,rmk:Kuenneth_renormalized_!-pullback}, we know that $\Shv_{\mixed, *}^{\ren, !}$ is actually a right-lax symmetric monoidal functor.

\begin{rmk}
Without using the regularization machinery of~\cite{preygel_ind-coherent_2012}, we can obtain the following right-lax symmetric monoidal functor in a more straightforward way
\[
	\Shv_{\mixed, *}^{\ren, !}: \Corr(\Stk_{k_n})_{\UC_*,\all} \to \Mod_{\Shv_\mixed(\pt_n)},
\]
following \cref{subsubsec:Shv^*_{mixed, !}}. This suffices for the applications in \cref{sec:Hecke_categories}.
\end{rmk}

\subsubsection{Graded sheaves as functors out of the category of correspondences}
Composing \cref{eq:renormalized_mixed_sheaves_corr_!-pushforward,eq:renormalized_mixed_sheaves_corr_*-pushforward} with
\[
	\Mod_{\Shv_\mixed(\pt_n)} \xrightarrow{-\otimes_{\Shv_\mixed(\pt_n)}\Vect^\gr} \Mod_{\Vect^\gr}
\]
and using \cref{subsec:invariance_extensions_scalars}, we obtain the following corresponding results for graded sheaves.

\begin{thm}
\label{thm:graded_sheaves_correspondence}
We have a right-lax symmetric monoidal functor
\[
	\Shv_{\gr, !}^{\ren, *}: \Corr(\Stk_k)_{\UC_!, \all} \to \Mod_{\Vect^\gr}
\]
which, in particular, encodes the renormalized $!$-pushforward functors along $\UC_!$-morphisms and renormalized $*$-pullback functors along all morphisms.

Similarly, we have a right-lax symmetric monoidal functor
\[
	\Shv_{\gr, *}^{\ren, !}: \Corr(\Stk_k)_{\all, \all} \to \Mod_{\Vect^\gr}
\]
which, in particular, encodes the renormalized $*$-pushforward and renormalized $!$-pullback functors along all morphisms.

We also have ``small category'' versions of the two functors above
\begin{align*}
	\Shv_{\gr, c, !}^*: \Corr(\Stk_k)_{\UC_!, \all} \to \Mod_{\Vect^{\gr, c}}, \\
	\Shv_{\gr, c, *}^!: \Corr(\Stk_k)_{\UC_*, \all} \to \Mod_{\Vect^{\gr, c}}.
\end{align*}
\end{thm}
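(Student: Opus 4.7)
The plan is to assemble the two functors out of $\Corr(\Stk_k)$ by first producing them in the mixed setting over each $k_n$, then applying base change to $\Vect^\gr$, and finally spreading out via \cref{prop:gr_sheaves_invariant_base_change,thm:Shv_gr_is_a_sheaf_theory_on_Stk_k}. The input from the $\ell$-adic side, namely the right-lax symmetric monoidal functors
\[
\Shv_{\mixed,!}^{\ren,*}: \Corr(\Stk_{k_n})_{\UC_!,\all} \to \Mod_{\Shv_\mixed(\pt_n)}
\quad\text{and}\quad
\Shv_{\mixed,*}^{\ren,!}: \Corr(\Stk_{k_n})_{\all,\all} \to \Mod_{\Shv_\mixed(\pt_n)},
\]
has already been set up in \cref{eq:renormalized_mixed_sheaves_corr_!-pushforward,eq:renormalized_mixed_sheaves_corr_*-pushforward} above. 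So my first move is to post-compose with the (symmetric monoidal, continuous) relative tensor functor
\[
-\otimes_{\Shv_\mixed(\pt_n)} \Vect^\gr: \Mod_{\Shv_\mixed(\pt_n)} \to \Mod_{\Vect^\gr},
\]
induced by the symmetric monoidal $\gr: \Shv_\mixed(\pt_n) \to \Vect^\gr$. Because relative tensor along a symmetric monoidal functor carries commutative algebras to commutative algebras and preserves (right-)lax symmetric monoidal structures, this yields right-lax symmetric monoidal functors
\[
\Shv_{\gr,!}^{\ren,*,(n)}: \Corr(\Stk_{k_n})_{\UC_!,\all} \to \Mod_{\Vect^\gr},
\quad
\Shv_{\gr,*}^{\ren,!,(n)}: \Corr(\Stk_{k_n})_{\all,\all} \to \Mod_{\Vect^\gr}.
\]

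Next, I would spread out across finite fields exactly as in the proof of \cref{thm:Shv_gr_is_a_sheaf_theory_on_Stk_k}. Using the presentation
\[
\Stk_k \simeq \colim_n \Stk_{k_n}
\]
of~\cite[Thm.~2.1.13]{kubrak_hodge--rham_2021} (which also gives $\Corr(\Stk_k) \simeq \colim_n \Corr(\Stk_{k_n})$ by the definition of correspondence categories, since Cartesian squares are preserved under base change), the functors $\Shv_{\gr,!}^{\ren,*,(n)}$ and $\Shv_{\gr,*}^{\ren,!,(n)}$ assemble into functors out of $\Corr(\Stk_k)$ provided they are compatible under scalar extension. That compatibility is exactly what \cref{prop:gr_sheaves_invariant_base_change} provides on objects; on morphisms, one gets it by combining the base-change naturality data encoded in the source correspondence-functors with the equivalence $\bar g^*_\ren$ of \cref{prop:gr_sheaves_invariant_base_change}. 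One must also check that the notions $\UC_!$ and $\UC_*$ are stable under the relevant scalar extensions — this is immediate from the definition and the fact that constructibility is preserved under finite extensions.

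For the small-category variants, I would restrict to constructible objects. By \cref{cor:compact_generators_relative_tensors}, passing to compact objects commutes with the relative tensor $-\otimes_{\Shv_\mixed(\pt_n)^c} \Vect^{\gr,c}$, so the restriction of $\Shv_{\gr,!}^{\ren,*}$ to constructible objects produces a functor landing in $\Mod_{\Vect^{\gr,c}}$. The only thing to verify is that along morphisms $(v,h)$ with $v \in \UC_!$ (resp. $v \in \UC_*$), the composition $v_! h^*$ (resp. $v_* h^!$) preserves constructibility. For $\UC_!$ with respect to $h^*$ this is clear since $h^*$ preserves constructibility and by definition $v_!$ does, and similarly for the $\UC_*$ case with $h^!$; this yields the advertised
\[
\Shv_{\gr,c,!}^*: \Corr(\Stk_k)_{\UC_!,\all} \to \Mod_{\Vect^{\gr,c}},
\quad
\Shv_{\gr,c,*}^!: \Corr(\Stk_k)_{\UC_*,\all} \to \Mod_{\Vect^{\gr,c}}.
\]

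The main obstacle I anticipate is verifying the right-lax symmetric monoidal structure is preserved under the spreading-out step: one must ensure that the K\"unneth-type coherence data
\[
\Shv_\gr(\mathcal{Y}_1)^\ren \otimes \Shv_\gr(\mathcal{Y}_2)^\ren \xrightarrow{\boxtimes} \Shv_\gr(\mathcal{Y}_1 \times \mathcal{Y}_2)^\ren
\]
produced at finite level is natural in $n$. This reduces to the compatibility of relative tensor with filtered colimits in $\Mod_{\Vect^\gr}$ together with the fact that the equivalences $\bar g^*_\ren$ of \cref{prop:gr_sheaves_invariant_base_change} are themselves symmetric monoidal (which is built into their construction, since every step in their definition is an arrow in $\ComAlg(\Mod_{\Vect^\gr})$). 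Once that is in place, the two functors on $\Corr(\Stk_k)$ inherit the right-lax symmetric monoidal structure from the functors at finite level, and the theorem follows.
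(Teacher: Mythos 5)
Your proposal matches the paper's argument: the paper likewise obtains the theorem by post-composing the mixed-sheaf correspondence functors \cref{eq:renormalized_mixed_sheaves_corr_!-pushforward,eq:renormalized_mixed_sheaves_corr_*-pushforward} with $-\otimes_{\Shv_\mixed(\pt_n)}\Vect^\gr$ and then invoking the invariance under extension of scalars from \cref{subsec:invariance_extensions_scalars} to descend to $\Corr(\Stk_k)$, with the small versions obtained by restricting to constructible objects. Your extra verifications (compatibility of the lax structure under the colimit over $n$, constructibility preservation for the $\UC_!$/$\UC_*$ classes) are exactly the points the paper leaves implicit, so the proof is correct and essentially identical in route.
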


\subsubsection{}
We also have the following variant.

\begin{thm}
We have a right-lax symmetric monoidal functor
\[
	\Shv_{\gr, *}^{\ren, *}: \Corr(\Stk_k)_{\all, \sm} \to \Mod_{\Vect^\gr} \teq\label{eq:shv_corr_all_sm}
\]
which, in particular, encodes the renormalized $*$-pushforward functors along all morphisms and renormalized $*$-pullback functors along smooth morphisms.

Similarly, we have a right-lax symmetric monoidal functor
\[
	\Shv_{\gr, *}^{\ren, *}: \Corr(\Stk_k)_{\pr, \all} \to \Mod_{\Vect^\gr} \teq\label{eq:shv_corr_pr_all}
\]
which, in particular, encodes the renormalized $*$-pushforward functors along proper morphisms and renormalized $*$-pullback functors along all morphisms.

We also have ``small category'' versions of the two functors above
\begin{align*}
	\Shv_{\gr, c, *}^*: \Corr(\Stk_k)_{\pr, \all} \to \Mod_{\Vect^{\gr, c}}, \\
	\Shv_{\gr, c, *}^*: \Corr(\Stk_k)_{\UC_*, \sm} \to \Mod_{\Vect^{\gr, c}}.
\end{align*}
\end{thm}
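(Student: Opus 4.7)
The strategy mirrors that of \cref{thm:graded_sheaves_correspondence}: I would first establish the analogous statements at the level of mixed sheaves over each $\pt_n$, then compose with $-\otimes_{\Shv_\mixed(\pt_n)}\Vect^\gr$ and invoke the scalar-extension invariance of \cref{subsec:invariance_extensions_scalars} to pass to $\Stk_k$. The small-category versions follow by restricting to the compact objects on both sides, using that proper $*$-pushforward preserves constructibility (being equivalent to $!$-pushforward along a $\UC_!$ morphism) and that $*$-pullback preserves constructibility along arbitrary morphisms (so in particular smooth ones).

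For \cref{eq:shv_corr_pr_all}, the key observation is that every proper morphism in $\Stk_{k_n}$ is representable, hence $\UC_!$, which identifies $\Corr(\Stk_{k_n})_{\pr,\all}$ as a (non-full) subcategory of $\Corr(\Stk_{k_n})_{\UC_!,\all}$. Restricting the functor $\Shv^{\ren,*}_{\mixed,!}$ of \cref{eq:renormalized_mixed_sheaves_corr_!-pushforward} along this inclusion produces a right-lax symmetric monoidal functor encoding $!$-pushforward along proper morphisms and $*$-pullback along arbitrary morphisms. The canonical natural equivalence $f_{!,\ren}\simeq f_{*,\ren}$ for proper $f$ (which holds at the constructible level and extends by continuity) then reinterprets the pushforward data, yielding the desired functor. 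Proper base change becomes an instance of the general base change encoded by the correspondence functoriality.

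For \cref{eq:shv_corr_all_sm}, I would start from $\Shv^{\ren,!}_{\mixed,*}$ on $\Corr(\Stk_{k_n})_{\all,\all}$ (equation \cref{eq:renormalized_mixed_sheaves_corr_*-pushforward}) and restrict to $\Corr(\Stk_{k_n})_{\all,\sm}$. This records $*$-pushforward along all morphisms and $!$-pullback along smooth morphisms. The remaining task is to twist the $!$-pullback into a $*$-pullback using the standard equivalence $f^*\simeq f^!\lrangle{2d}(-d)$ for a smooth morphism $f$ of relative dimension $d$, where the grading shift and Tate twist are compatible under our conventions (see \cref{subsubsec:shift_of_gradings}). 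Concretely, I would assemble this shift-and-twist over the dimension function of the smooth morphism into a natural modification, which in the framework of \cites{liu_enhanced_2012,liu_enhanced_2017} is set up functorially on correspondence categories where the horizontal morphisms are smooth; smooth base change for $f^!$ then transports to smooth base change for $f^*$.

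\paragraph{Main obstacle.}
The technical heart of the argument lies in the $\all,\sm$ case: the shift-and-twist modification $f^*\simeq f^!\lrangle{2d}(-d)$ must be organized into a homotopy-coherent datum compatible with composition of smooth morphisms (where relative dimensions add), with all base changes against arbitrary pushforwards, and with the symmetric monoidal structure (where $d$ is additive on products). The bookkeeping is non-trivial in the $\infty$-categorical setting, but this is exactly the sort of coherence handled by the enhanced-operation machinery of~\cites{liu_enhanced_2012,liu_enhanced_2017}, whose output can be composed with $-\otimes_{\Shv_\mixed(\pt_n)}\Vect^\gr$ just as in \cref{thm:graded_sheaves_correspondence}. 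Once the mixed statements are in hand, the transition to $\Stk_k$ and the deduction of the small-category versions are formal, and the compatibility with pushforward/pullback follows from the restriction procedure \cref{eq:restriction_corr_to_vert_horiz}.
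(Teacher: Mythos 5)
Two issues, one fixable and one that is a real gap.

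First, your ``key observation'' for \cref{eq:shv_corr_pr_all} --- that every proper morphism in $\Stk_{k_n}$ is representable --- is false: a relative $BG$ for $G$ finite (or any proper gerbe) is proper but not representable. The conclusion you actually need, namely that proper morphisms are $\UC_!$, is still true in this paper's setting (stacks of finite type with affine, hence finite, stabilizers, so that $f_!\simeq f_*$ and pushforward preserves constructibility), but it requires this separate argument rather than representability. With that repaired, the $(\pr,\all)$ half of your route --- restrict $\Shv^{\ren,*}_{\mixed,!}$ of \cref{eq:renormalized_mixed_sheaves_corr_!-pushforward} to $\Corr(\Stk_{k_n})_{\pr,\all}$, identify $f_{!,\ren}\simeq f_{*,\ren}$ for proper $f$, tensor with $\Vect^\gr$, and pass to $\Stk_k$ --- is workable, though you should say a word about why the identification $f_!\simeq f_*$ is coherent enough to ``reinterpret'' the vertical leg of the functor.

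The genuine gap is in the $(\all,\sm)$ case. Your plan restricts $\Shv^{\ren,!}_{\mixed,*}$ of \cref{eq:renormalized_mixed_sheaves_corr_*-pushforward} to $\Corr(\Stk_{k_n})_{\all,\sm}$ and then converts $h^!$ into $h^*$ via the purity equivalence $h^!\simeq h^*[2d]\lrangle{2d}$ (cf.\ \cref{prop:functoriality_weights}), but you yourself identify the assembly of this shift-and-twist into a homotopy-coherent modification --- compatible with composition of smooth maps, with all base changes against arbitrary pushforwards, and with the monoidal structure --- as ``the technical heart,'' and then you do not carry it out; you only assert that the machinery of \cite{liu_enhanced_2012,liu_enhanced_2017} handles it, which is not something those references provide in the form you need. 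This is exactly the difficulty the paper's proof avoids: it never twists $!$-pullbacks at all, but instead applies the universal property of categories of correspondences from \cite[Vol.~1, Chap.~7, \S3]{gaitsgory_study_2017} directly to the graded sheaf theory, using only the adjunction $f^*_\ren\dashv f_{*,\ren}$ together with the smooth and proper base change results already established in \cref{thm:base_change_graded_sheaves}, and then imports the right-lax symmetric monoidal structure from \cite[\S3.2]{liu_enhanced_2012}. So either supply the coherent purity datum in detail (a substantial construction), or replace this half of your argument by the Beck--Chevalley/universal-property argument, which needs no new input beyond what the paper has already proved for graded sheaves.
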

\begin{proof}
By the adjunction $f^*_\ren \dashv f_{*, \ren}$ and the smooth and proper base change results of \cref{thm:base_change_graded_sheaves}, the statements in the theorem, without the right-lax symmetric monoidal structures, are direct consequences of the universal property of the categories of correspondences proved in~\cite[Vol. 1, Chap. 7, \S3]{gaitsgory_study_2017} after ignoring non-invertible 2-morphisms.\footnote{Note that non-invertible 2-morphisms do not appear in our definition of the category of correspondences. In contrast, \cite{gaitsgory_study_2017} allows non-invertible morphisms. Thus, to obtain our result from theirs, we just remove non-invertible 2-morphisms from the answer.} A variant of this result but with right-lax symmetric monoidal structures and only invertible 2-morphisms are proved in~\cite[\S3.2]{liu_enhanced_2012}. Note that the latter uses the language of multi-simplicial sets rather than correspondences. However, the two formulations are the same, see~\cite[\S6.1]{liu_enhanced_2012} and~\cite[Vol. 1, Part. III, \S1.3]{gaitsgory_study_2017}.
\end{proof}

\section{Graded sheaves: weight structure and perverse \texorpdfstring{$t$}{t}-structure}
\label{sec:weight_perverse_t_structures_graded_sheaves}

In this section, we construct a perverse $t$-structure and a weight structure on the category of constructible graded sheaves $\Shv_{\gr, c}(\mathcal{Y})$ for any $\mathcal{Y} \in \Stk_{k}$. Unlike previous sections, we work exclusively with \emph{small} categories here. In particular, unless otherwise specified, $\DG$-categories appearing in this section are assumed to be small and idempotent complete; see also \cref{subsec:large_vs_small_cats}. It is mostly due to convenience since most of the available literature on weight structures, including our main source for this section~\cite{bondarko_weight_2012}, operate in this setting. It is expected that this restriction on size can be lifted~\cite[Rmk. 1.2.3]{bondarko_weight_2012}.\footnote{\cite[Lem. C.2.4.3]{lurie_spectral_2018} or~\cite[Prop. 2.13]{antieau_k-theoretic_2019} allows one to $\Ind$-extend a $t$-structure on a small stable $\infty$-category $\mathcal{C}$ to one on $\Ind (\mathcal{C})$. A similar statement but for weight structures can be found in~\cite[Thm. 4.1.2]{bondarko_morphisms_2021}.} However, we will not pursue this direction, as it is not needed for the applications we have in mind.

We will review the basics of weight structures in \cref{subsec:weight_structures_review}. This is followed by technical preparations needed to actually construct a perverse $t$-structure and a weight structure on $\Shv_{\gr, c}(\mathcal{Y})$. More specifically, in \cref{subsec:pure_graded_perv_sheaves,subsec:pure_graded_perv_sheaves_generate}, we will construct categories of pure graded perverse sheaves of a given weight and show that these categories generate the whole category of constructible graded sheaves in a precise sense. The actual construction is given in \cref{subsec:weight_and_perverse_t-str_graded_sheaves}, which follows directly from the work of Bondarko~\cite{bondarko_weight_2012}. In \cref{subsec:formal_properties_weight_Shv_gr}, various expected results regarding the interactions between the weight/$t$-structure and functoriality of graded sheaves are established. These results follow naturally from the standard ones for mixed sheaves. Finally, in \cref{subsec:mixed_geometry}, we describe connections between our construction and various notions and constructions in the mixed geometry literature.

Since we work mostly with constructible sheaves in this section, in situations where there is no difference between the usual pull/push functors and their renormalized versions, we will, for brevity's sake, omit $\ren$ from the notation, see also \cref{rmk:renormalize_vs_normal_interchangeably}.

We note that the term weight is used to refer to either the Frobenius weight structures on mixed sheaves in the sense of~\cite{beilinson_faisceaux_2018} or to weights in the sense of weight structure (on an arbitrary category) in the sense of~\cite{bondarko_weight_2010,pauksztello_compact_2008}. When we want to emphasize the former, we will use the term Frobenius weight.

\subsection{A quick review of weight structures}
\label{subsec:weight_structures_review}
We will now give a brief review of weight structures (or co-$t$-structures) as discovered independently by Pauksztello and Bondarko~\cite{pauksztello_compact_2008,bondarko_weight_2010}. For more modern treatments, using the language of stable $\infty$-categories, the readers may consult~\cites{sosnilo_theorem_2019,aoki_weight_2020,elmanto_nilpotent_2021}, which are our main sources for this subsection. Note that the indexing convention used in these papers is the reverse of that in~\cite{bondarko_weight_2010} but is the same as the one in~\cite{bondarko_weight_2012}. We will follow the convention used in~\cites{bondarko_weight_2012,sosnilo_theorem_2019,aoki_weight_2020,elmanto_nilpotent_2021}. We note that the proofs of all of the results stated here can easily be found in these papers.

\begin{defn}
\label{defn:weight_structure}
A \emph{weight structure} on a stable $\infty$-category $\mathcal{C}$ is the data of two retract-closed full additive subcategories $(\mathcal{C}^{w\leq 0}, \mathcal{C}^{w \geq 0})$ such that
\begin{myenum}{(\roman*)}
\item $\mathcal{C}^{w\geq 0}[1] \subseteq \mathcal{C}^{w\geq 0}$ and $\mathcal{C}^{w\leq 0}[-1] \subseteq \mathcal{C}^{w\leq 0}$. We write
\[
	\mathcal{C}^{w\geq n} \defeq \mathcal{C}^{w\geq 0}[n] \qquad\text{and}\qquad \mathcal{C}^{w\leq n} \defeq \mathcal{C}^{w\leq 0}[n].
\]
\item \label{enum:defn_weight_structure_negativity} If $c\in \mathcal{C}^{w\leq 0}$ and $d\in \mathcal{C}^{w\geq 1}$, then\footnote{When $\mathcal{C}$ is a $\DG$-category, this condition is equivalent to saying that $\Ho^0(\cHom_\mathcal{C}(c, d)) = 0$.}
\[
	\pi_0 \Hom_\mathcal{C}(c, d) \simeq 0.
\]
\item For any object of $c \in \mathcal{C}$, we have a cofiber sequence
\[
	w_{\leq 0} c \to c \to w_{\geq 1} c
\]
where $w_{\leq 0} c \in \mathcal{C}^{w\leq 0}$ and $w_{\geq 1}c \in \mathcal{C}^{w\geq 1}$. Such a sequence is called \emph{a} weight truncation of $c$.\footnote{Note that unlike the case of $t$-structures, weight truncations are not canonical.}
\end{myenum}

We will use the term \emph{weight category} to refer to a stable $\infty$-category equipped with a weight structure. We say that a weight structure on $\mathcal{C}$ is \emph{bounded} if
\[
	\mathcal{C} \simeq \bigcup_{n} (\mathcal{C}^{w\geq -n} \cap \mathcal{C}^{w\leq n}).
\]

We let $\Cat^w_\infty$ and $\Cat^{w, b}_\infty$ denote the $\infty$-categories of weight categories and bounded weight categories, respectively.
\end{defn}

\begin{rmk}
Note that the definition of a weight structure on a stable $\infty$-category $\mathcal{C}$ does not use any $\infty$-categorical data. Thus, we could equivalently define what it means to have a weight structure on a triangulated category and then state that a weight structure on a stable $\infty$-category $\mathcal{C}$ is that on its homotopy category $\ho\mathcal{C}$. This approach is taken, for example, in~\cite{aoki_weight_2020}, and is parallel to how~\cite{lurie_higher_2017} defines a $t$-structure.
\end{rmk}

\subsubsection{The heart of a weight structure}
\label{subsubsec:weight_heart}
Let $\mathcal{C}$ be a stable $\infty$-category equipped with a weight structure. We let $\mathcal{C}^{\weightheart} \defeq \mathcal{C}^{w\leq 0} \cap \mathcal{C}^{w\geq 0}$ denote the weight heart of the weight structure. An object $c\in \mathcal{C}$ is said to be \emph{pure of weight $n$} if $c \in \mathcal{C}^{\weightheart}[n] \simeq \mathcal{C}^{w=n}$.

Unlike the case of $t$-structures described in~\cite[Rmk. 1.2.1.12]{lurie_higher_2017}, $\mathcal{C}^{\weightheart}$ is not necessarily \emph{classical}, i.e., $\mathcal{C}^{\weightheart}$ is different from its homotopy category $\ho\mathcal{C}^{\weightheart}$. In more concrete terms, let $c, d\in \mathcal{C}^{\weightheart}$. Then, $\Hom_\mathcal{C}(c, d)$ might have non-trivial higher homotopy groups. In the setting of $\DG$-categories, being non-classical means $\cHom_\mathcal{C}(c, d)$ might have non-vanishing negative\footnote{Negative because we are using cohomological indexing convention.} cohomology groups. On the other hand, $\cHom_{\mathcal{C}}(c, d)$ can only concentrate in non-positive degrees in general. Indeed, for any $n \geq 1$, $d[n] \in \mathcal{C}^{w=n} \subseteq \mathcal{C}^{\geq 1}$, and hence, by \cref{defn:weight_structure}.\ref{enum:defn_weight_structure_negativity},
\[
	\Ho^n(\cHom_\mathcal{C}(c, d)) \simeq \Ho^0(\cHom_\mathcal{C}(c, d[n])) \simeq \pi_0 \Hom_\mathcal{C}(c, d[n]) \simeq 0.
\]
The situation is thus \emph{dual} to the case of $t$-structures: elements in the weight heart have no ``positive'' homomorphism whereas elements in the $t$-heart, which is always classical, have no ``negative'' homomorphism.

\subsubsection{}
The weight heart $\mathcal{C}^{\weightheart}$ is an additive $\infty$-category in the sense that it has all finite products and co-products and moreover, its homotopy category $\ho\mathcal{C}^{\weightheart}$ is an additive category in the usual sense. In particular, finite products and co-products in $\mathcal{C}^{\weightheart}$ coincide. See~\cite[\S2]{gepner_universality_2016} and~\cite[Appx. C.1.5]{lurie_spectral_2018} for a more detailed discussion of additive $\infty$-categories.

We use $\Cat^\add_\infty$ to denote the $\infty$-category of additive $\infty$-categories.

\begin{rmk}
The statements above regarding $\DG$-categories could have also been stated more generally for stable $\infty$-categories using $\Hom$-spectra instead of $\Vect$-enriched ones. However, this generality is not needed in the paper and we expect that the readers are more likely to be familiar with $\Vect$.
\end{rmk}

\subsubsection{From additive $\infty$-categories to weight categories}
The procedure of taking the weight heart forms a functor
\[
	(-)^{\weightheart}: \Cat^w_\infty \to \Cat^\add_\infty.
\]
It is possible to go the other direction as well using the $(-)^\fin$ construction, which we will now briefly review for the reader's convenience. We note that for our purposes, it is enough to know the existence of such a functor. For more details, see~\cite[\S2.2.7]{elmanto_nilpotent_2021}. 

Let $\mathcal{A} \in \Cat^\add_\infty$. Consider the stable $\infty$-category $\hat{\mathcal{A}} \defeq \Fun^\times(\mathcal{A}^\opp, \Sptr)$ consisting of functors that preserves finite products, i.e., those that turn finite co-products in $\mathcal{A}$ to products in the category $\Sptr$ of spectra. The Yoneda lemma furnishes a fully faithful embedding $\mathcal{A} \to \hat{\mathcal{A}}$. Let $\mathcal{A}^\fin$ be the smallest stable $\infty$-subcategory of $\hat{\mathcal{A}}$ containing the image of $\mathcal{A}$. The category $\mathcal{A}^\fin$ is equipped with a natural weight structure.

\begin{thm}[{\cite[Thm. 2.2.9]{elmanto_nilpotent_2021}}]
\label{thm:adjoint_functor_add_w}
We have an adjoint pair
\[
	(-)^\fin: \Cat^\add_\infty \rightleftarrows \Cat^{w, b}_\infty: (-)^{\weightheart}.
\]
Moreover,
\begin{myenum}{(\roman*)}
	\item the right adjoint $(-)^{\weightheart}$ is fully faithful, and
	\item the adjoint pair restricts to a pair of mutually inverse equivalences of $\infty$-categories of idempotent complete $\infty$-categories on both sides.
\end{myenum}
\end{thm}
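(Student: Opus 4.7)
The plan is to establish the adjunction through a universal-property argument based on the construction of $\mathcal{A}^\fin$ as the stable hull of $\mathcal{A}$ inside $\hat{\mathcal{A}} = \Fun^\times(\mathcal{A}^\opp, \Sptr)$, then to verify fully faithfulness and the equivalence on idempotent-complete objects by leveraging weight decompositions. I note that the statement is cited from Elmanto--Sosnilo, so the proof would be a review; nonetheless, I sketch the strategy one would follow.

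First I would construct the unit $\eta_\mathcal{A} \colon \mathcal{A} \to (\mathcal{A}^\fin)^{\weightheart}$. By the additive Yoneda lemma, the embedding $y \colon \mathcal{A} \hookrightarrow \hat{\mathcal{A}}$ is fully faithful, and by construction it factors through $\mathcal{A}^\fin$. The weight structure on $\mathcal{A}^\fin$ is the one whose connective (resp. coconnective) part is the smallest full subcategory closed under extensions, retracts, and shifts $[n]$ for $n \leq 0$ (resp. $n \geq 0$) containing $y(\mathcal{A})$; one verifies conditions (i)--(iii) of \cref{defn:weight_structure} using the fact that the negativity property \cref{defn:weight_structure}.\ref{enum:defn_weight_structure_negativity} holds on generators because $\Hom_{\hat{\mathcal{A}}}(y(a), y(b)[n])$ vanishes for $n > 0$ by the additivity of $\mathcal{A}$. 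Then $y$ takes values in $\mathcal{A}^{\fin, w = 0}$, which defines $\eta_\mathcal{A}$ as an additive functor.

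Next I would verify the universal property. Given $F \colon \mathcal{A} \to \mathcal{C}^{\weightheart}$ with $\mathcal{C}$ a bounded weight category, the functor $F$ extends uniquely, up to contractible choice, to an exact $\tilde{F} \colon \mathcal{A}^\fin \to \mathcal{C}$ that is weight-exact. Uniqueness is immediate because $\mathcal{A}^\fin$ is generated from $y(\mathcal{A})$ under finite (co)limits and retracts; existence is the essential content: one first left Kan extends $F$ along $y$ to $\hat{\mathcal{A}} \to \Ind(\mathcal{C})$, and then restricts to the stable subcategory $\mathcal{A}^\fin$, using that finite colimits of objects in the weight heart remain bounded in $\mathcal{C}$ to see that $\tilde{F}$ lands in $\mathcal{C}$. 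This gives the desired adjunction equivalence $\Fun^{w\mhyphensymb\ex}(\mathcal{A}^\fin, \mathcal{C}) \simeq \Fun^\add(\mathcal{A}, \mathcal{C}^{\weightheart})$.

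For fully faithfulness of $(-)^{\weightheart}$, I would argue that the counit $\varepsilon_\mathcal{C} \colon (\mathcal{C}^{\weightheart})^\fin \to \mathcal{C}$ is fully faithful for any bounded weight $\mathcal{C}$. This reduces, by boundedness and the existence of weight filtrations, to comparing $\Hom$-spaces between pure objects on the two sides, where both are computed purely from $\mathcal{C}^{\weightheart}$. The key mechanism here is the weight-complex formalism of Bondarko, Sosnilo, and Aoki--Wang, which realizes every object of $\mathcal{C}$ as the colimit of a canonical tower whose associated graded pieces lie in $\mathcal{C}^{\weightheart}$. This is the main obstacle: one has to verify that this reconstruction is compatible with the embedding into $(\mathcal{C}^{\weightheart})^\fin$, i.e., that no non-trivial information beyond the weight heart is needed to recover the category. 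Once fully faithfulness is known, the restriction to an equivalence on the idempotent-complete subcategories on both sides follows: when $\mathcal{A}$ is idempotent complete, the weight heart $(\mathcal{A}^\fin)^{\weightheart}$ coincides with the retract closure of $y(\mathcal{A})$, which equals $\mathcal{A}$; symmetrically, when $\mathcal{C}$ is idempotent complete, $\varepsilon_\mathcal{C}$ is essentially surjective because every object of $\mathcal{C}$ is a retract of one built by finite extensions from $\mathcal{C}^{\weightheart}$.
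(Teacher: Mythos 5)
First, note that the paper does not prove this statement at all: it is quoted verbatim from Elmanto--Sosnilo and the surrounding text explicitly defers all proofs of this review subsection to the cited sources, so there is no in-paper argument to match your sketch against; what can be assessed is whether your outline would actually close the proof of the cited result.

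Your architecture (spectral Yoneda embedding into $\Fun^\times(\mathcal{A}^\opp,\Sptr)$, the generated weight structure on $\mathcal{A}^\fin$ with negativity checked on representables via connectivity of additive mapping spectra, the universal property via Kan extension) is the standard and correct route, but the step you yourself flag as ``the main obstacle'' --- full faithfulness of the counit $(\mathcal{C}^{\weightheart})^\fin \to \mathcal{C}$ --- is left open, and the tool you propose for it would not close it. The weight complex functor lands in $\Ch^b(\ho\mathcal{C}^{\weightheart})$ and is lossy exactly when the heart is not classical, and weight filtrations are neither canonical nor functorial, so there is no ``canonical tower'' from which to reconstruct $\mathcal{C}$. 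The argument that actually works is more elementary: since both sides are generated under finite extensions and shifts by the heart (boundedness plus \cref{defn:weight_structure}(iii)), full faithfulness reduces by a finite induction to comparing mapping spectra between heart objects. By the Yoneda lemma in $\hat{\mathcal{A}}$, the mapping spectrum from $y(a)$ to $y(b)$ in $(\mathcal{C}^{\weightheart})^\fin$ is the connective mapping spectrum of the additive $\infty$-category $\mathcal{C}^{\weightheart}$, i.e.\ $\tau_{\geq 0}$ of the mapping spectrum in $\mathcal{C}$; the negativity axiom \cref{defn:weight_structure}.\ref{enum:defn_weight_structure_negativity} (cf.\ the discussion in \cref{subsubsec:weight_heart}: heart objects admit no positive Ext's) says that the latter is already connective, so the two agree, and essential surjectivity of the counit follows from generation. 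This also corrects a structural misplacement in your (ii): idempotent completeness is not what makes the counit essentially surjective (that holds for every bounded weight category, which is exactly why $(-)^{\weightheart}$ is fully faithful); it enters on the additive side, where $(\mathcal{A}^\fin)^{\weightheart}$ is the retract closure of $y(\mathcal{A})$ --- note $\mathcal{A}^\fin$ is generated under finite (co)limits and shifts only, not retracts, so identifying the heart with that retract closure is itself a Bondarko-type lemma --- and one further needs that $\mathcal{A}^\fin$ is idempotent complete whenever its heart is, for the restricted equivalence in (ii) to be well posed.
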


\subsubsection{Weight complex functor}
\label{subsubsec:weight_complex_functor}
Let $\mathcal{A}$ be a classical additive category. Then, we can form a stable $\infty$-category of bounded chain complexes $\Ch^b(\mathcal{A})$, see~\cite[\S1.3.1]{lurie_higher_2017}. Its homotopy category is the homotopy category $K^b(\mathcal{A})$ of bounded chain complexes in $\mathcal{A}$. The category $\Ch^b(\mathcal{A})$ is equipped with a natural weight structure where weight truncations are given by stupid/brutal truncations of complexes,~\cite[\S1.1]{bondarko_weight_2010}. This is a proto-typical example of a weight structure. Note that for the axioms of \cref{defn:weight_structure} to hold, the differentials in $\Ch^b(\mathcal{A})$ is \emph{homological}, i.e., they \emph{decrease} the indices.

\subsubsection{}
For any $\mathcal{C} \in \Cat^{w, b}_\infty$, there exists a \emph{weight complex functor}
\[
	\wt: \mathcal{C} \to \Ch^b(\ho\mathcal{C}^{\weightheart}), \teq\label{eq:wt_complex_functor}
\]
see~\cite[Cor. 3.5]{sosnilo_theorem_2019} and~\cite[Expl. 5.1.7]{elmanto_nilpotent_2021}. This functor is the image of the natural functor $\mathcal{C}^{\weightheart} \to \ho\mathcal{C}^{\weightheart}$ under the following equivalence, coming from \cref{thm:adjoint_functor_add_w}
\[
	\Hom_{\Cat^\add_\infty}(\mathcal{C}^{\weightheart}, \ho\mathcal{C}^{\weightheart}) \simeq \Hom_{\Cat^{w, b}_\infty}(\mathcal{C}, \Ch^b(\ho\mathcal{C}^{\weightheart})).
\]

In particular, when $\mathcal{C}^{\weightheart}$ is classical, i.e., $\mathcal{C}^{\weightheart} \simeq \ho\mathcal{C}^{\weightheart}$, then the weight complex functor induces an equivalence of categories $\mathcal{C} \simeq \Ch^b(\ho\mathcal{C}^{\weightheart})$.

\begin{rmk}
\label{rmk:weight_complex_functor_explicit}
The construction of the weight complex functor given in~\cite[Cor. 3.5]{sosnilo_theorem_2019} is an $\infty$-categorical enhancement of the one given in~\cite[\S3]{bondarko_weight_2010}, which has a more concrete description. Given an object $c\in \mathcal{C}$ where $\mathcal{C}$ is a bounded weight category, by \cref{defn:weight_structure}.(iii), there exists a (non-canonical) finite filtration $c_\bullet$ of $c$, such that the $i$-th associated graded piece $\assgr_i (c_\bullet)$ is pure of weight $i$. Thus, $\assgr_i(c_\bullet)[-i]$ is pure of weight $0$, i.e., $\assgr_i(c_\bullet)[-i] \in \mathcal{C}^{\weightheart}$. A standard construction in homological algebra,~\cite[Defn. 1.2.2.2 and Rmk. 1.2.2.3]{lurie_higher_2017}, gives us a chain complex
\[
	\cdots \to \assgr_{i+1}(c_\bullet)[-(i+1)] \to \assgr_i(c_\bullet)[-i] \to \assgr_{i-1}(c_\bullet)[-(i-1)] \to \cdots
\]
The content of the weight complex functor is that this complex (up to homotopy) is canonical in $c$ even though the weight filtration is \emph{not} canonical. 

We note that this construction takes the same shape as the construction of the chromatographic complex in~\cite[\S3.5]{webster_geometric_2017}, which is not known to be a functor. As we will see, their construction can be realized as the composition of the canonical functor $\Shv_{\mixed, c}(\mathcal{Y}_n) \to \Shv_{\gr, c}(\mathcal{Y})$ and the weight complex functor on $\Shv_{\gr, c}(\mathcal{Y})$. In particular, this shows that the procedure of taking chromatographic complex is a functor.
\end{rmk}

\subsubsection{Compatibility with monoidal structures}
Let $\mathcal{C}$ be a bounded weight category, equipped with a compatible monoidal structure in the sense that $\mathcal{C}^{w\leq 0}$ and $\mathcal{C}^{w\geq 0}$ are closed under tensor products. Here, the monoidal structure can be symmetric or not. In fact, more generally, $\mathcal{C}$ can be $\En_n$-monoidal in the sense of~\cite[\S5.4]{lurie_higher_2017}, with $\En_1$-monoidal, $\En_2$-monoidal, and $\En_\infty$-monoidal being the usual, braided, and symmetric monoidal structures, respectively. Such an $\En_n$-monoidal structure gives rise to an $\En_n$-monoidal structure on $\mathcal{C}^{\weightheart}$ and hence, also on $(\mathcal{C}^{\weightheart})^\fin$ and $(\ho\mathcal{C}^{\weightheart})^\fin \simeq \Ch^b(\ho\mathcal{C}^{\weightheart})$. We have the following result.

\begin{thm}[\cite{aoki_weight_2020}]
\label{thm:aoki_monoidal_wt_complex_functor}
Let $\mathcal{C}$ be a bounded weight category, equipped with an $\En_n$-monoidal structure such that $\mathcal{C}^{w\leq 0}$ and $\mathcal{C}^{w\geq 0}$ are preserved under tensor products. Then, $\mathcal{C}^{\weightheart}$, and hence $(\mathcal{C}^{\weightheart})^\fin$ and $(\ho\mathcal{C}^{\weightheart})^\fin$ are equipped with natural $\En_n$-monoidal structures. Moreover, the following natural functors
\[
	\mathcal{C} \to (\mathcal{C}^{\weightheart})^\fin \to (\ho\mathcal{C}^{\weightheart})^\fin \simeq \Ch^b(\ho\mathcal{C}^{\weightheart})
\]
are $\En_n$-monoidal. In particular, the weight complex functor is $\En_n$-monoidal. 
\end{thm}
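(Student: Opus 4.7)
The plan is to enhance the adjunction $(-)^\fin \dashv (-)^\weightheart$ of \cref{thm:adjoint_functor_add_w} to a symmetric monoidal adjunction between suitably monoidal $\infty$-categories of categories; passing to $\En_n$-algebras will then yield all the claims of the theorem simultaneously. As a first step, I would verify that $\mathcal{C}^\weightheart$ inherits an $\En_n$-monoidal structure from $\mathcal{C}$: by hypothesis both $\mathcal{C}^{w\leq 0}$ and $\mathcal{C}^{w\geq 0}$ are closed under tensor products, so their intersection is too, and the monoidal unit lies in $\mathcal{C}^\weightheart$ because $1_\mathcal{C}\otimes 1_\mathcal{C} \simeq 1_\mathcal{C}$ is in both halves. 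Hence the fully faithful inclusion $\mathcal{C}^\weightheart\hookrightarrow \mathcal{C}$ lifts to an $\En_n$-monoidal functor, and the $\En_n$-monoidal structure on $\ho\mathcal{C}^\weightheart$ follows because the homotopy-category functor preserves finite products and is therefore symmetric monoidal.

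Next I would equip $\Cat^\add_\infty$ and $\Cat^{w,b}_\infty$ with symmetric monoidal structures making $(-)^\fin \dashv (-)^\weightheart$ symmetric monoidal. On $\Cat^\add_\infty$ one uses the tensor product of additive $\infty$-categories of~\cite{gepner_universality_2016}; on $\Cat^{w,b}_\infty$ the analogous universal construction classifies functors that are exact and compatible with the weight truncations in each variable. To upgrade $(-)^\fin$ to a symmetric monoidal functor, recall that $\mathcal{A}^\fin$ is the smallest stable subcategory of $\Fun^\times(\mathcal{A}^\opp,\Sptr)$ containing the Yoneda image of $\mathcal{A}$; an $\En_n$-structure on $\mathcal{A}$ induces Day convolution on $\Fun^\times(\mathcal{A}^\opp,\Sptr)$ that agrees with the given tensor product on representables, and stability of $\mathcal{A}^\fin$ under this convolution reduces to the closure of the Yoneda image under $\otimes$. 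The right adjoint $(-)^\weightheart$ is then automatically lax symmetric monoidal, and applying $\Alg_{\En_n}(-)$ yields an adjunction of $\En_n$-algebras in the two $\infty$-categories of categories, with unit and counit enhanced to $\En_n$-monoidal natural transformations.

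With this scaffolding the remaining assertions become essentially formal. Applying the symmetric monoidal functor $(-)^\fin$ to $\mathcal{C}^\weightheart$ and to $\ho\mathcal{C}^\weightheart$ produces the $\En_n$-monoidal structures on $(\mathcal{C}^\weightheart)^\fin$ and $(\ho\mathcal{C}^\weightheart)^\fin \simeq \Ch^b(\ho\mathcal{C}^\weightheart)$, where the last equivalence is $\En_n$-monoidal because stupid truncations of tensor products of bounded complexes coincide naturally with tensor products of stupid truncations. The arrow $(\mathcal{C}^\weightheart)^\fin \to (\ho\mathcal{C}^\weightheart)^\fin$ is the image under $(-)^\fin$ of the $\En_n$-monoidal functor $\mathcal{C}^\weightheart\to \ho\mathcal{C}^\weightheart$, hence is $\En_n$-monoidal; the weight complex functor $\mathcal{C}\to (\mathcal{C}^\weightheart)^\fin$ is supplied by the unit/counit data of the upgraded adjunction (as in \cref{subsubsec:weight_complex_functor}), hence $\En_n$-monoidal as well.

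The main obstacle I foresee is the construction of the symmetric monoidal enhancement of $(-)^\fin$: one has to package the defining universal property of $\mathcal{A}^\fin$ into an $\infty$-operadic statement and verify that Day convolution preserves the relevant subcategories coherently, including on the level of morphisms and higher homotopies. Once this is in hand, the rest of the theorem follows by mechanically applying $\Alg_{\En_n}(-)$ and reading off the resulting unit and counit morphisms.
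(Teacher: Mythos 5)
First, a point of calibration: the paper does not prove this statement at all — it is imported from \cite{aoki_weight_2020}, with the passage from symmetric monoidal to general $\En_n$ justified only by the remark that the key tool, \cite{nikolaus_stable_2016}, works for arbitrary $\En_n$. Your sketch is essentially a reconstruction of that reference's strategy (Day convolution on additive presheaves, universal property of $(-)^\fin$, functoriality in $\mathcal{C}^{\weightheart}\to\ho\mathcal{C}^{\weightheart}$), so in spirit you are on the same route as the cited proof rather than a genuinely different one. However, as written the proposal defers or fumbles the points that carry the actual content.

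Concretely: (1) Your argument that the unit lies in $\mathcal{C}^{\weightheart}$ is circular — from closure of $\mathcal{C}^{w\leq 0}$ and $\mathcal{C}^{w\geq 0}$ under tensor products you can only conclude membership of a tensor product whose factors are already known to lie in those subcategories, so $1_\mathcal{C}\otimes 1_\mathcal{C}\simeq 1_\mathcal{C}$ proves nothing; unitality of the heart must be built into the compatibility hypothesis (as it is in Aoki's definition; the paper's phrasing elides this too). (2) The weight complex functor is not simply read off from ``the unit/counit data'': with the adjunction of \cref{thm:adjoint_functor_add_w}, $(-)^\fin \dashv (-)^{\weightheart}$, the counit runs $(\mathcal{C}^{\weightheart})^\fin \to \mathcal{C}$, i.e., the wrong way. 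One must either invert it (using full faithfulness of $(-)^{\weightheart}$, cf.\ \cref{subsubsec:weight_complex_functor}) or, as Aoki does, construct $\mathcal{C}\to(\mathcal{C}^{\weightheart})^\fin$ directly as a restricted Yoneda functor into $\Fun^\times((\mathcal{C}^{\weightheart})^{\opp},\Sptr)$ and then verify it is weight-exact and \emph{strictly} (not merely lax) $\En_n$-monoidal; that strictness verification is a real step your outline does not address. (3) The step you yourself flag as ``the main obstacle'' — a coherent $\En_n$-monoidal (Day convolution) structure on $\Fun^\times(\mathcal{A}^{\opp},\Sptr)$ together with a multiplicative refinement of the Yoneda embedding, compatible with the product-preserving condition — is exactly the multiplicative Yoneda lemma of \cite{nikolaus_stable_2016} and is the substantive input of the whole theorem; leaving it as an acknowledged obstacle means the crux is not closed. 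Moreover, your packaging via symmetric monoidal structures on $\Cat^\add_\infty$ and $\Cat^{w,b}_\infty$ and a monoidal adjunction is extra, unconstructed infrastructure (existence of a tensor product of bounded weight categories, monoidality of $(-)^\fin$ relative to it) that the cited argument deliberately avoids by working with a fixed $\mathcal{C}$ and presheaf categories only. So the outline is reasonable and parallel to the source, but the proposal as it stands has a circular sub-argument, a directional slip in producing the weight complex functor, and an undischarged main step.
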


\begin{rmk}
The main result of~\cite{aoki_weight_2020} is stated only for symmetric monoidal categories. However, the same proof works more generally. Indeed, the main tool used is~\cite{nikolaus_stable_2016}, which works more generally; see also~\cite[Rmk. 6.11]{nikolaus_stable_2016}.
\end{rmk}

\subsubsection{The case of $\Vect^{\gr, c}$}
As mentioned earlier, our goal is to equip $\Shv_{\gr, c}(\mathcal{Y})$ with a weight structure and a perverse $t$-structure for any $\mathcal{Y} \in \Stk_k$. The remainder of this section will be devoted to this goal. As a warm-up exercise, we will now equip $\Vect^{\gr, c} \simeq \Shv_{\gr, c}(\pt)$ with a bounded weight structure and a $t$-structure. 

The $t$-structure is just the usual one, obtained from the usual $t$-structure on $\Vect^c$. The weight structure is defined as follows: $\Vect^{\gr, c, w\geq 0}$ (resp. $\Vect^{\gr, c, w\leq 0}$) consists of graded perfect chain complexes $A = (A_i)$ where the $i$-th graded piece $A_i$ concentrates in cohomological degrees $\leq i$ (resp. $\geq i$). It is easy to check that the pair $\Vect^{\gr, c, w\leq 0}$ and $\Vect^{\gr, c, w\geq 0}$ satisfies the conditions given in \cref{defn:weight_structure}.

From the description above, we see that $\Vect^{\gr, c, \weightheart} \simeq \bigoplus_i \Vect^{c, \theart}[-i]$. Namely, given any $A \in \Vect^{\gr, c, \weightheart}$, we have an equivalence $A \simeq \bigoplus_i \Ho^i(A_i)\lrangle{-i}[-i]$. This is the ``baby case'' of the decomposition theorem for pure graded sheaves we will establish in \cref{subsec:weight_and_perverse_t-str_graded_sheaves}.

From the description above, it is clear that $\Vect^{\gr, c, \weightheart}$ is classical. Thus, the weight complex functor provides an equivalence of symmetric monoidal categories 
\[
	\Vect^{\gr, c} \simeq \Ch^b(\Vect^{\gr, c, \weightheart}) \simeq \bigoplus_i \Ch^b(\Vect^{c, \theart}[-i]) \simeq \bigoplus_i \Ch^b(\Vect^{c, \theart}),
\]
which is evident. Note that the last equivalence is just a re-indexing.

\subsection{Pure graded perverse sheaves}
\label{subsec:pure_graded_perv_sheaves}
This subsection and the next make the necessary preparation to apply the results of~\cite{bondarko_weight_2012} to construct perverse $t$-structure and a weight structure on the category $\Shv_{\gr, c}(\mathcal{Y})$. Throughout, we let $\mathcal{Y}_n \in \Stk_{k_n}$ whose base changes to $k$ and $k_m$ are $\mathcal{Y}$ and $\mathcal{Y}_m$, respectively, for any $m\in n\mathbb{Z}_{>0}$. 

More specifically, we will now construct various categories $\Perv_{\gr, c}(\mathcal{Y}_m)^{w=\nu}$ of pure graded perverse sheaves of a fixed weight $w=\nu$ and study their formal properties. In the next subsection, we will show that these categories together generate $\Shv_{\gr, c}(\mathcal{Y})$ in the appropriate sense.

\subsubsection{}
For any $\nu \in \mathbb{Z}$, we define the category of graded perverse sheaves of pure weight $\nu$ to be
\[
	\Perv_{\gr, c}(\mathcal{Y}_m)^{w=\nu} \defeq \Idem(\Im(\Perv_{\mixed, c}(\mathcal{Y}_m)^{w=\nu} \xrightarrow{\gr_{\mathcal{Y}_m}} \Shv_{\gr, c}(\mathcal{Y}))) \teq\label{eq:defn_pure_gr_perv_sheaves}
\]
where $\Idem$ is the functor of taking idempotent completion and $\Perv_{\mixed, c}(\mathcal{Y}_m)^{w=\nu}$ is the full subcategory of the category of constructible mixed perverse sheaves $\Perv_{\mixed, c}(\mathcal{Y}_m)$ of (Frobenius) weight $w=\nu$, in the sense of~\cites{beilinson_faisceaux_2018,laszlo_perverse_2009}.\footnote{What we write $\Perv_{\mixed, c}(\mathcal{Y}_m)$ is more usually written as $\Perv_{\mixed}(\mathcal{Y}_m)$.} Since $\Shv_{\gr, c}(\mathcal{Y})$ is idempotent complete, $\Perv_{\gr, c}(\mathcal{Y}_m)^{w=\nu}$ is naturally a full subcategory of $\Shv_{\gr, c}(\mathcal{Y})$. The main goal of this subsection is to have a more explicit description of $\Perv_{\gr, c}(\mathcal{Y}_m)^{w=\nu}$ for any $m$ and $\nu$.

\begin{rmk}
Directly from the construction, we see that the endofunctor $\lrangle{k}: \Shv_\gr(\mathcal{Y})^\ren \to \Shv_\gr(\mathcal{Y})^\ren$ described in \cref{subsubsec:shift_of_gradings} induces an equivalence of categories
\[
	\Perv_{\gr, c}(\mathcal{Y}_m)^{w=\nu} \to \Perv_{\gr, c}(\mathcal{Y}_m)^{w=\nu-k}.
\]
\end{rmk}

\begin{rmk}
\label{rmk:preview_Perv_gr_c}
Note that the term ``pure graded perverse sheaves'' does not a priori have a meaning. However, in \cref{subsec:weight_and_perverse_t-str_graded_sheaves}, we will construct a perverse $t$-structure and a weight structure on $\Shv_{\gr, c}(\mathcal{Y})$ that make sense of this term. Namely, the $t$-heart, $\Perv_{\gr, c}(\mathcal{Y})$, is obtained by assembling $\Perv_{\gr, c}(\mathcal{Y}_m)^{w=\nu}$ for various $\nu$ together. Moreover, $\Perv_{\gr, c}(\mathcal{Y}_m)^{w=\nu}$ consists precisely of objects in $\Perv_{\gr, c}(\mathcal{Y})$ of weight $\nu$ in the new weight structure.
\end{rmk}

\subsubsection{Invariance under of extensions of scalars}
As \cref{rmk:preview_Perv_gr_c} suggests, $\Perv_{\gr, c}(\mathcal{Y}_m)^{w=\nu}$ is also independent of $m$. This is indeed the case.

\begin{lem} \label{lem:gr_pure_perv_sheaves_invariant_base_change}
Let $\mathcal{Y}$, $\mathcal{Y}_n$, and $\mathcal{Y}_m$ as above. Then, for any $\nu\in \mathbb{Z}$, $\Perv_{\gr, c}(\mathcal{Y}_n)^{w=\nu}$ and $\Perv_{\gr, c}(\mathcal{Y}_m)^{w=\nu}$ coincide as full subcategories of $\Shv_{\gr, c}(\mathcal{Y})$.
\end{lem}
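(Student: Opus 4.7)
The core strategy is to exploit the equivalence $\bar{g}^*_\ren \colon \Shv_\gr(\mathcal{Y}_n)^\ren \xrightarrow{\simeq} \Shv_\gr(\mathcal{Y}_m)^\ren$ established in \cref{prop:gr_sheaves_invariant_base_change}, combined with the basic properties of the finite étale map $g\colon \mathcal{Y}_m \to \mathcal{Y}_n$ obtained by base change from $\pt_m \to \pt_n$ along $\mathcal{Y}_n \to \pt_n$. First I would record the compatibility $\gr_{\mathcal{Y}_m} \circ g^*_\ren \simeq \bar{g}^*_\ren \circ \gr_{\mathcal{Y}_n}$, which is immediate from the construction of $\bar{g}^*_\ren$ in \cref{subsec:invariance_extensions_scalars} together with \cref{thm:compatibility_functoriality_mixed_graded_non_mixed}. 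I would also note that since $g$ is finite étale, both $g^*$ and $g_*$ are $t$-exact for the perverse $t$-structure and preserve Frobenius weights (finiteness of $g$ gives $g_* = g_!$, and $g_!$ preserves $w \leq$; Verdier duality then yields preservation of $w \geq$, hence of pure weight $\nu$).

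For the inclusion $\Perv_{\gr,c}(\mathcal{Y}_n)^{w=\nu} \subseteq \Perv_{\gr,c}(\mathcal{Y}_m)^{w=\nu}$, I would start with $\mathcal{F}_n \in \Perv_{\mixed, c}(\mathcal{Y}_n)^{w=\nu}$. Then $g^* \mathcal{F}_n$ lies in $\Perv_{\mixed, c}(\mathcal{Y}_m)^{w=\nu}$ by the previous paragraph, and the compatibility above together with \cref{prop:gr_sheaves_invariant_base_change} identifies $\gr_{\mathcal{Y}_n}(\mathcal{F}_n)$ with $\gr_{\mathcal{Y}_m}(g^* \mathcal{F}_n)$ as objects of $\Shv_{\gr,c}(\mathcal{Y})$. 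This gives an inclusion at the level of the essential images used in \cref{eq:defn_pure_gr_perv_sheaves}, and passing to idempotent completions yields the desired containment.

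For the reverse inclusion $\Perv_{\gr,c}(\mathcal{Y}_m)^{w=\nu} \subseteq \Perv_{\gr,c}(\mathcal{Y}_n)^{w=\nu}$, the plan is the following direct-summand argument. Given $\mathcal{F}_m \in \Perv_{\mixed, c}(\mathcal{Y}_m)^{w=\nu}$, consider $g_* \mathcal{F}_m \in \Perv_{\mixed, c}(\mathcal{Y}_n)^{w=\nu}$. Since $\pt_m \times_{\pt_n} \pt_m \simeq \bigsqcup_{\sigma \in \Gal(k_m/k_n)} \pt_m$, the Cartesian square for $g$ against itself yields a decomposition $\mathcal{Y}_m \times_{\mathcal{Y}_n} \mathcal{Y}_m \simeq \bigsqcup_\sigma \mathcal{Y}_m$; base change then gives
\[
	g^* g_* \mathcal{F}_m \simeq \bigoplus_{\sigma \in \Gal(k_m/k_n)} \sigma^* \mathcal{F}_m,
\]
which exhibits $\mathcal{F}_m$ (the summand indexed by $\sigma = \id$) as a direct summand of $g^* g_* \mathcal{F}_m$. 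Applying $\gr_{\mathcal{Y}_m}$ and using the compatibility with $\bar{g}^*_\ren$, we see that $\gr_{\mathcal{Y}_m}(\mathcal{F}_m)$ is a direct summand of $\gr_{\mathcal{Y}_m}(g^* g_* \mathcal{F}_m)$, which is identified with $\gr_{\mathcal{Y}_n}(g_* \mathcal{F}_m)$ in $\Shv_{\gr, c}(\mathcal{Y})$. Hence $\gr_{\mathcal{Y}_m}(\mathcal{F}_m)$ lies in the idempotent completion of the image of $\gr_{\mathcal{Y}_n}$ restricted to $\Perv_{\mixed, c}(\mathcal{Y}_n)^{w=\nu}$, which gives the claim.

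The main obstacle is really just bookkeeping: one must verify weight preservation of $g_*$ (routine, as sketched above) and check carefully that the identifications in \cref{prop:gr_sheaves_invariant_base_change} intertwine $\gr_{\mathcal{Y}_n}$ and $\gr_{\mathcal{Y}_m}$ with the pull-push along $g$ in the expected manner. No deeper input is needed, since everything ultimately reduces to the finite étale nature of $g$ and the functoriality already established.
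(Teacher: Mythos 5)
Your proposal is correct and follows essentially the same route as the paper: the containment $\Perv_{\gr,c}(\mathcal{Y}_n)^{w=\nu}\subseteq\Perv_{\gr,c}(\mathcal{Y}_m)^{w=\nu}$ via the factorization $\gr_{\mathcal{Y}_n}\simeq\gr_{\mathcal{Y}_m}\circ g^*$ with $g$ finite \'etale, and the reverse containment by exhibiting $\gr_{\mathcal{Y}_m}(\mathcal{F}_m)$ as a direct summand of $\gr_{\mathcal{Y}_n}(g_*\mathcal{F}_m)$ (the paper cites the retract argument of \cref{lem:mixed_tensor_up_essentially_surjective}, which is exactly your $g^*g_*$ decomposition) and invoking idempotent completeness. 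Your extra bookkeeping on weight preservation of $g_*$ and the Galois-orbit decomposition only spells out what the paper leaves implicit.
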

\begin{proof}
The proof follows essentially the same strategy as that of \cref{prop:tilde{g}^*_ren_iso_mixed}. We have the following factorization where the horizontal arrow is given by pulling back along $\mathcal{Y}_m \xrightarrow{g} \mathcal{Y}_n$, which preserves perversity and Frobenius weights since it is a finite \etale{} map\footnote{Since we are working with constructible sheaves only, $g^*$ and $g^*_\ren$ are the same.}
\[
\begin{tikzcd}
	\Perv_{\mixed, c}(\mathcal{Y}_n)^{w=\nu} \ar{r}{g^*} \ar{dr}[swap]{\gr_{\mathcal{Y}_n}} & \Perv_{\mixed, c}(\mathcal{Y}_m)^{w=\nu} \ar{d}{\gr_{\mathcal{Y}_m}} \\
	& \Shv_{\gr, c}(\mathcal{Y})
\end{tikzcd} \teq\label{eq:factorization_gr_perv}
\]
which implies the inclusion $\Perv_{\gr, c}(\mathcal{Y}_n)^{w=\nu} \subseteq \Perv_{\gr, c}(\mathcal{Y}_m)^{w=\nu}$ as full subcategories of $\Shv_{\gr, c}(\mathcal{Y})$.

To show that the two coincide, it suffices to show that $\gr_{\mathcal{Y}_m}(\mathcal{F}) \in \Perv_{\gr, c}(\mathcal{Y}_n)^{w=\nu}$ for any $\mathcal{F} \in \Perv_{\mixed, c}(\mathcal{Y}_m)^{w=\nu}$. As in \cref{lem:mixed_tensor_up_essentially_surjective}, $\gr_{\mathcal{Y}_m}(\mathcal{F})$ is a direct summand of $\gr_{\mathcal{Y}_n}(g_* \mathcal{F})$ where $g_* \mathcal{F} \in \Perv_{\mixed, c}(\mathcal{Y}_n)^{w=\nu}$ since $g$ is finite \etale. We are thus done since $\Perv_{\gr, c}(\mathcal{Y}_n)^{w=\nu}$ is idempotent complete, by definition.
\end{proof}

As in the case of graded sheaves explained in \cref{subsubsec:notation_invariant_scalar_extension}, for any $\mathcal{Y} \in \Stk_n$, we can define
\[
	\Perv_{\gr, c}(\mathcal{Y})^{w=\nu} \defeq \Perv_{\gr, c}(\mathcal{Y}_n)^{w=\nu} \subset \Shv_{\gr, c}(\mathcal{Y}_n) \simeq \Shv_{\gr, c}(\mathcal{Y})
\]
where $\mathcal{Y}_n \in \Stk_{k_n}$ is any choice such that its base change to $\pt$ is $\mathcal{Y}$. Indeed, \cref{lem:gr_pure_perv_sheaves_invariant_base_change} guarantees that this is well-defined.

\subsubsection{$\Hom$ estimates and semisimplicity} We will now study morphisms between objects in $\Perv_{\gr, c}(\mathcal{Y})^{w=\nu}$ for various $\nu$. As a consequence, we will obtain the fact that $\Perv_{\gr, c}(\mathcal{Y})^{w=\nu}$ is classical and semi-simple.

\begin{prop}
\label{prop:Hom_Shv_gr_estimates_using_weights}
Let $\mathcal{F} \in \Perv_{\gr, c}(\mathcal{Y})^{w=k}$ and $\mathcal{G} \in \Perv_{\gr, c}(\mathcal{Y})^{w=l}$. Then, $\cHom_{\Shv_\gr(\mathcal{Y})^\ren}(\mathcal{F}, \mathcal{G})$ concentrates in cohomological degrees $[0, k-l]$. In particular, when $k-l<0$, then $\cHom_{\Shv_\gr(\mathcal{Y})^\ren}(\mathcal{F}, \mathcal{G})\simeq 0$.
\end{prop}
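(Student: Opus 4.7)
First I would reduce to the case where $\mathcal{F}$ and $\mathcal{G}$ come from mixed perverse sheaves via $\gr$. By the defining formula \cref{eq:defn_pure_gr_perv_sheaves}, any $\mathcal{F} \in \Perv_{\gr, c}(\mathcal{Y})^{w=k}$ is a retract of some $\gr_{\mathcal{Y}_n}(\mathcal{F}_n)$ with $\mathcal{F}_n \in \Perv_{\mixed, c}(\mathcal{Y}_n)^{w=k}$ (after passing to a sufficiently large $k_n$-form $\mathcal{Y}_n$), and similarly $\mathcal{G}$ is a retract of $\gr_{\mathcal{Y}_n}(\mathcal{G}_n)$ with $\mathcal{G}_n \in \Perv_{\mixed, c}(\mathcal{Y}_n)^{w=l}$. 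Since $\cHom$ commutes with retracts in each variable, it suffices to prove the estimate for $\cHom_{\Shv_\gr(\mathcal{Y})^\ren}(\gr_{\mathcal{Y}_n}(\mathcal{F}_n), \gr_{\mathcal{Y}_n}(\mathcal{G}_n))$.

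Next I would invoke \cref{prop:hom_graded_sheaves}, specifically the formula \cref{eq:cHom_Shv_gr}, to identify
\[
    \cHom_{\Shv_\gr(\mathcal{Y})^\ren}(\gr_{\mathcal{Y}_n}\mathcal{F}_n, \gr_{\mathcal{Y}_n}\mathcal{G}_n) \simeq \cHom^{\mixed}_{\Shv_\mixed(\mathcal{Y}_n)^\ren}(\mathcal{F}_n, \mathcal{G}_n)_0,
\]
the naive Frobenius weight $0$ part of the mixed Hom-complex. Using \cref{lem:mixed_Hom_vs_internal_Hom}, this mixed Hom-complex is $\pi_{n,*,\ren}\,\cuHom_{\Shv_\mixed(\mathcal{Y}_n)^\ren}(\mathcal{F}_n,\mathcal{G}_n)$, where $\pi_n\colon \mathcal{Y}_n \to \pt_n$ is the structure map. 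The problem thus becomes an estimate on the Frobenius weights of cohomology of this complex.

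The main input is now the BBD weight formalism: since $\mathcal{F}_n$ is pure of weight $k$, in particular it has weight $\leq k$, and since $\mathcal{G}_n$ is pure of weight $l$, it has weight $\geq l$. The sheaf-theoretic internal Hom $\cuHom(\mathcal{F}_n,\mathcal{G}_n)$ is then of weight $\geq l-k$ by BBD's stability estimates (\cite[Thm.~5.1.14]{beilinson_faisceaux_2018} applied to $\cuHom = \DVer(\mathcal{F}_n \otimes^! \DVer\mathcal{G}_n)$ after rewriting via Verdier duality, or equivalently by the stability of $\geq$-weights under $\cuHom$ with entries of opposite weight bounds, extended to stacks as in~\cite{sun_l-series_2012}). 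Since $\pi_{n,*}$ preserves the condition ``weight $\geq w$'' on stacks, $\pi_{n,*,\ren}\,\cuHom(\mathcal{F}_n,\mathcal{G}_n)$ also has weight $\geq l-k$. By definition of the weight filtration, this means $\Ho^i$ of this complex is a mixed Galois module whose Frobenius eigenvalues have absolute values $\geq (q^n)^{(l-k+i)/2}$; in particular its naive weight $0$ summand vanishes unless $l - k + i \leq 0$, i.e.\ $i \leq k-l$. This gives the upper bound.

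Finally, the lower bound $i \geq 0$ is free: since $\mathcal{F}_n,\mathcal{G}_n$ are perverse sheaves, perversity of the $t$-structure on $\Shv_{\mixed,c}(\mathcal{Y}_n)$ forces $\Ho^i \cHom^{\mixed}(\mathcal{F}_n,\mathcal{G}_n) = \Ext^i(\mathcal{F}_n,\mathcal{G}_n) = 0$ for $i<0$, and this vanishing is inherited by the weight $0$ summand. The ``in particular'' clause for $k - l < 0$ follows immediately since the interval $[0, k-l]$ is empty. The main subtlety to verify carefully is the weight estimate on $\cuHom$ at the level of stacks rather than schemes; I would handle this by reducing to a smooth atlas via \cref{prop:pullback_internal_Hom_mixed} and appealing to the spectral sequence argument of~\cite[Thm.~2.11]{sun_l-series_2012} to bootstrap the scheme-level BBD estimates to stacks.
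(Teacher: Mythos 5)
Your proposal is correct and follows essentially the same route as the paper's proof: reduce to objects of the form $\gr(\mathcal{F}_n)$, $\gr(\mathcal{G}_n)$ with $\mathcal{F}_n,\mathcal{G}_n$ pure mixed perverse sheaves, identify the graded $\cHom$ with the naive weight-$0$ part of the mixed $\cHom$ via \cref{prop:hom_graded_sheaves}, get the upper bound $i\le k-l$ from the weight estimate on $R\mathcal{H}om$ between pure perverse sheaves (the paper simply cites \cite[Prop.~3.9]{sun_decomposition_2012} for the stack case, which is exactly what you re-derive from~\cite{beilinson_faisceaux_2018} plus the descent/spectral-sequence argument), and get the lower bound $i\ge 0$ from perversity. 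Two parenthetical slips worth correcting, though neither is a gap: the duality rewriting should be $\cuHom(\mathcal{F}_n,\mathcal{G}_n)\simeq \DVer(\mathcal{F}_n\otimes \DVer\mathcal{G}_n)$ with the ordinary tensor (your $\otimes^!$ version computes $\DVer\mathcal{F}_n\otimes\mathcal{G}_n$), and the cohomology of the enriched object $\cHom^{\mixed}_{\Shv_\mixed(\mathcal{Y}_n)^\ren}(\mathcal{F}_n,\mathcal{G}_n)$ is the \emph{geometric} Ext over $\mathcal{Y}$ with its Frobenius action rather than the arithmetic Ext over $\mathcal{Y}_n$ — its vanishing in negative degrees follows from perversity of the pullbacks, which is how the paper phrases the lower bound via \cref{cor:graded_Hom_direct_summand_unmixed_Hom}.
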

\begin{proof}
By construction, objects in $\Perv_{\gr, c}(\mathcal{Y})^{w=\nu}$ are direct summands of objects of the form $\gr_{\mathcal{Y}_n}(\mathcal{F})$ where $\mathcal{F} \in \Perv_{\mixed, c}(\mathcal{Y}_n)^{w=\nu}$. It thus suffices to show the statement above for $\mathcal{H}_\gr \defeq \cHom_{\Shv_\gr(\mathcal{Y})^\ren}(\gr(\mathcal{F}_n), \gr(\mathcal{G}_n))$ where $\mathcal{F}_n \in \Perv_{\mixed, c}(\mathcal{Y}_n)^{w=k}$ and  $\mathcal{G}_n \in \Perv_{\mixed, c}(\mathcal{Y}_n)^{w=l}$. We will use $\mathcal{F}$ and $\mathcal{G}$ to denote their pullbacks to $\mathcal{Y}$.

By~\cite[Prop. 3.9]{sun_decomposition_2012}, we know that $\mathcal{H}_\mixed \defeq \cHom_{\Shv_\mixed(\mathcal{Y}_n)^\ren}^\mixed(\mathcal{F}_n, \mathcal{G}_n) \in \Shv_\mixed(\pt_n)$ has Frobenius weights $\geq l-k$; see also~\cite[Prop. 5.1.15]{beilinson_faisceaux_2018} for the scheme version. By definition, this means $\Ho^i(\mathcal{H}_\mixed)$ has Frobenius weights $\geq i + l - k$. By \cref{prop:hom_graded_sheaves}, $\mathcal{H}_\gr$ is the \emph{naive} weight $0$ part of $\mathcal{H}_\mixed$. Thus, $\Ho^i(\mathcal{H}_\gr) \simeq 0$ when $i+l-k>0$ or equivalently, when $i>k-l$. In other words, we have shown that $\mathcal{H}_\gr$ is cohomologically supported on $(-\infty, k-l]$.

We know that $\mathcal{H} \defeq \cHom_{\Shv(\mathcal{Y})^\ren}(\mathcal{F}, \mathcal{G})$ concentrates in cohomological degrees $\geq 0$ since $\mathcal{F}$ and $\mathcal{G}$ lie in the heart of a $t$-structure, namely, the perverse $t$-structure, of $\Shv(\mathcal{Y})^\ren$. By \cref{cor:graded_Hom_direct_summand_unmixed_Hom}, $\mathcal{H}_\gr$ is a direct summand of $\mathcal{H}$. Thus, $\mathcal{H}_\gr$ is also supported in cohomological degrees $[0, \infty)$. Combined with the above, we know that $\mathcal{H}_\gr$ is supported in cohomological degrees $[0, k-l]$.
\end{proof}

The following result is a direct consequence of \cref{prop:Hom_Shv_gr_estimates_using_weights} above.

\begin{cor}
\label{cor:Perv_gr_w=v_is_classical_semisimple}
Let $\mathcal{F}, \mathcal{G} \in \Perv_{\gr, c}(\mathcal{Y})^{w=\nu}$. Then, $\cHom_{\Shv_\gr(\mathcal{Y})^\ren}(\mathcal{F}, \mathcal{G})$ concentrates in cohomological degree $0$. In particular, $\Perv_{\gr, c}(\mathcal{Y})^{w=\nu}$ is classical (as opposed to being a genuine $\infty$-category) and semi-simple. 

Here, semi-simple means that any exact triangle $A \to B \to C$ splits, where $A, C \in \Perv_{\gr, c}(\mathcal{Y})^{w=\nu}$. This, in particular, implies that $B \in \Perv_{\gr, c}(\mathcal{Y})^{w=\nu}$ as well since $B \simeq A \oplus C$.
\end{cor}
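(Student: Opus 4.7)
The corollary is essentially a direct specialization of \cref{prop:Hom_Shv_gr_estimates_using_weights}, and my plan is to extract the three claims in sequence.

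First, the cohomological concentration claim is immediate: applying \cref{prop:Hom_Shv_gr_estimates_using_weights} with $k=l=\nu$ shows that $\cHom_{\Shv_\gr(\mathcal{Y})^\ren}(\mathcal{F},\mathcal{G})$ is concentrated in the interval $[0,\nu-\nu]=[0,0]$, i.e.\ in cohomological degree $0$.

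Second, to deduce that $\Perv_{\gr, c}(\mathcal{Y})^{w=\nu}$ is classical, I recall that for a $\DG$-category the mapping space between two objects is obtained from the non-positive truncation of the $\cHom$-complex via Dold--Kan. Since $\cHom_{\Shv_\gr(\mathcal{Y})^\ren}(\mathcal{F},\mathcal{G})$ has no cohomology in negative degrees, the mapping space reduces to the discrete set $\Ho^0\cHom_{\Shv_\gr(\mathcal{Y})^\ren}(\mathcal{F},\mathcal{G})$, so as a full subcategory of $\Shv_{\gr, c}(\mathcal{Y})$, $\Perv_{\gr, c}(\mathcal{Y})^{w=\nu}$ coincides with its homotopy category.

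Finally, for semi-simplicity, given an exact triangle $A\to B\to C$ with $A,C\in \Perv_{\gr, c}(\mathcal{Y})^{w=\nu}$, the extension class is classified by the connecting morphism $C\to A[1]$, which defines an element of
\[
\pi_0\Hom_{\Shv_\gr(\mathcal{Y})^\ren}(C,A[1]) \simeq \Ho^1\cHom_{\Shv_\gr(\mathcal{Y})^\ren}(C,A).
\]
By the concentration just established, this group vanishes, so the connecting map is null-homotopic and the triangle splits, giving $B\simeq A\oplus C$. Since $\Perv_{\gr, c}(\mathcal{Y})^{w=\nu}$ is defined as an idempotent completion (and in particular is closed under direct sums inside $\Shv_{\gr, c}(\mathcal{Y})$), this also shows $B\in \Perv_{\gr, c}(\mathcal{Y})^{w=\nu}$. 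There is no real obstacle here; the whole statement is packaged inside the two-sided $\Hom$-estimate of the preceding proposition, and the only thing to be careful about is the direction of the shift convention, ensuring that an $\Ext^1$ (equivalently, degree $1$ of $\cHom$) is what controls the splitting.
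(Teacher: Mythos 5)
Your proposal is correct and follows exactly the paper's route: the paper derives this corollary directly from \cref{prop:Hom_Shv_gr_estimates_using_weights} with $k=l=\nu$, and your unwinding of the degree-$0$ concentration into discreteness of mapping spaces and vanishing of the connecting map $C\to A[1]$ is precisely the intended argument. The closing remark about $B\simeq A\oplus C$ lying in the (additive, idempotent complete) subcategory is also as in the paper.
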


\cref{prop:Hom_Shv_gr_estimates_using_weights} can be slightly strengthened. We start with a lemma involving mixed sheaves.

\begin{lem}
\label{lem:no_weight_zero_Hom_perv_sheaves_diff_weights}
Let $\mathcal{F}, \mathcal{G} \in \Perv_{\mixed, c}(\mathcal{Y}_n)$ of Frobenius weights $k$ and $l$ respectively such that $k\neq l$. Then, $\Ho^0(\cHom_{\Shv_\mixed(\mathcal{Y}_n)^\ren}^\mixed(\mathcal{F}, \mathcal{G})) \in \Shv_\mixed(\pt_n)^{\heartsuit}$ has no weight $0$ component.
\end{lem}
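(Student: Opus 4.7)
The plan is to upgrade the weight inequality used in the proof of \cref{prop:Hom_Shv_gr_estimates_using_weights} into an equality, showing that $\mathcal{H}_{\mixed} \defeq \cHom_{\Shv_\mixed(\mathcal{Y}_n)^\ren}^\mixed(\mathcal{F}, \mathcal{G})$ is pure of BBD Frobenius weight $l - k$, and then to translate this to a statement about the naive-weight decomposition of its degree-zero cohomology.

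First I would recall from the proof of \cref{prop:Hom_Shv_gr_estimates_using_weights} that, via the stack version of the Hom-purity result \cite[Prop. 3.9]{sun_decomposition_2012} (alternatively, \cite[Prop. 5.1.15]{beilinson_faisceaux_2018} in the scheme case combined with smooth descent), $\mathcal{H}_\mixed$ has BBD weight $\geq l - k$. Next I would establish the matching upper bound $\leq l - k$ by Verdier duality: applying the same Hom-purity result to the pure perverse sheaves $\DVer\mathcal{G}$ (of weight $-l$) and $\DVer\mathcal{F}$ (of weight $-k$), together with the identification $\cuHom_{\Shv_\mixed(\mathcal{Y}_n)^\ren}(\DVer\mathcal{G}, \DVer\mathcal{F}) \simeq \DVer\cuHom_{\Shv_\mixed(\mathcal{Y}_n)^\ren}(\mathcal{F}, \mathcal{G})$ on $\mathcal{Y}_n$, this yields that $\DVer\mathcal{H}_\mixed$ has BBD weight $\geq l - k$, equivalently that $\mathcal{H}_\mixed$ has BBD weight $\leq l - k$. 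Combining the two bounds shows that $\mathcal{H}_\mixed$ is pure of BBD weight $l - k$.

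To finish, I would invoke the convention already used in the proof of \cref{prop:Hom_Shv_gr_estimates_using_weights}: a complex of pure BBD weight $w$ has $\Ho^i$ pure of naive Frobenius weight $w + i$. Specialising to $w = l - k$ and $i = 0$, one obtains that $\Ho^0(\mathcal{H}_\mixed) \in \Shv_\mixed(\pt_n)^{\heartsuit}$ is pure of naive Frobenius weight $l - k$. Since $k \neq l$ by hypothesis, this naive weight is nonzero, and therefore the naive-weight-$0$ summand in the decomposition $\Shv_\mixed(\pt_n)^{\heartsuit} = \bigoplus_w \Shv_\mixed(\pt_n)^{\heartsuit}_w$ of \cref{subsubsec:mixed_sheaves_on_a_pt} vanishes, which is the desired conclusion.

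The only delicate step I anticipate is the upper bound on the BBD weight of $\mathcal{H}_\mixed$, which was not spelled out earlier in the paper but is formally dual to the lower bound already invoked. I expect no serious difficulty: it follows from Sun's Hom-purity statement applied to the Verdier duals of $\mathcal{F}$ and $\mathcal{G}$, and everything else in the argument is a routine unpacking of the weight conventions.
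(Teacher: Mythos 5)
There is a genuine gap, and it sits exactly at the step you flag as delicate: the Verdier-duality identification you use to get the upper weight bound is false. By biduality and the tensor--Hom identity one has $\cuHom_{\Shv_\mixed(\mathcal{Y}_n)^\ren}(\DVer\mathcal{G}, \DVer\mathcal{F}) \simeq \DVer(\DVer\mathcal{G}\otimes\mathcal{F}) \simeq \cuHom_{\Shv_\mixed(\mathcal{Y}_n)^\ren}(\mathcal{F}, \mathcal{G})$, \emph{not} $\DVer\cuHom_{\Shv_\mixed(\mathcal{Y}_n)^\ren}(\mathcal{F},\mathcal{G})$; so applying the Hom-estimate of Sun/BBD to the dual pair merely reproduces the lower bound $\geq l-k$ and gives no upper bound. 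Worse, the intermediate claim that $\mathcal{H}_\mixed$ is pure of weight $l-k$ is simply false in general: take $\mathcal{Y}_n = \Gm$, $\mathcal{F} = \Qlbar[1]$ (pure perverse of weight $1$) and $\mathcal{G} = \Qlbar(1)[1]$ (pure perverse of weight $-1$); then $\mathcal{H}_\mixed \simeq R\Gamma(\Gm, \Qlbar(1))$ has $\Ho^1 \simeq \Qlbar$ of weight $0$, whereas purity of weight $l-k=-2$ would force weight $-1$ in degree $1$. This failure is structural: the enriched Hom is $\pi_{*}$ of the internal Hom and $\pi_*$ only preserves \emph{lower} weight bounds for non-proper $\pi$; indeed, Hom-purity between pure objects is exactly the nontrivial condition of \cref{prop:criterion_weight_heart_gr_sheaves_classical}, which the paper emphasizes does not hold for general stacks. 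Your lower bound does dispose of the case $l>k$ (then $\Ho^0$ has weights $\geq l-k>0$), but for $k>l$ the argument collapses, and that is the half of the lemma with real content.

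For contrast, the paper argues directly on $\Ho^0$ without any upper weight bound: if the weight-$0$ part of $\Ho^0(\cHom^\mixed(\mathcal{F},\mathcal{G}))$ were nonzero, one may twist $\mathcal{G}$ by a rank-one weight-$0$ object so that some Frobenius eigenvector becomes Frobenius-invariant, producing an actual nonzero morphism $\varphi\colon \mathcal{F}\to\mathcal{G}$ in $\Perv_{\mixed,c}(\mathcal{Y}_n)$; passing to the simple constituents of the weight filtrations of $\mathcal{F}$ and $\mathcal{G}$, such a $\varphi$ would have to be an isomorphism between simple perverse sheaves of distinct weights, which is absurd. If you want to rescue your plan, you must replace the purity claim by an input of this kind (something controlling the weight-$0$ part of $\Ho^0$ directly); duality manipulations alone cannot supply the missing upper bound.
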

\begin{proof}
Suppose $\Ho^0(\cHom_{\Shv_\mixed(\mathcal{Y}_n)^\ren}^\mixed(\mathcal{F}, \mathcal{G}))$ has non-zero weight $0$-component. Without changing the weight of $\mathcal{G}$, we can twist it by a sheaf of weight $0$ using a Frobenius eigenvalue of $\Ho^0(\cHom_{\Shv_\mixed(\mathcal{Y}_n)^\ren}^\mixed(\mathcal{F}, \mathcal{G}))$. We can thus assume that $\Ho^0(\cHom^\mixed_{\Shv_\mixed(\mathcal{Y}_n)^\ren}(\mathcal{F}, \mathcal{G})^\ren)^F \neq 0$ where $F$ is the Frobenius. We thus obtain a non-zero morphism $\varphi: \mathcal{F} \to \mathcal{G}$ between perverse sheaves of different weights. We will show that such a $\varphi$ cannot exist.

Since $\mathcal{F}$ and $\mathcal{G}$ have finite filtrations whose associated graded pieces are simple perverse sheaves, we can assume that $\mathcal{F}$ and $\mathcal{G}$ themselves are simple, without loss of generality. But then, $\varphi$ being non-zero implies that $\varphi$ is an isomorphism. This is not possible since they have different weights. The proof thus concludes.
\end{proof}

\begin{cor}
\label{cor:orthogonality_perv_gr_diff_weights}
Let $\mathcal{F} \in \Perv_{\gr, c}(\mathcal{Y})^{w=k}$ and $\mathcal{G} \in \Perv_{\gr, c}(\mathcal{Y})^{w=l}$ such that $k$ and $l$ are distinct. Then, $\Ho^0(\cHom_{\Shv_\gr(\mathcal{Y})^\ren}(\mathcal{F}, \mathcal{G})) \simeq 0$.
\end{cor}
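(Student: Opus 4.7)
The plan is to reduce the corollary to \cref{lem:no_weight_zero_Hom_perv_sheaves_diff_weights} via the explicit $\Hom$-formula in \cref{prop:hom_graded_sheaves}. First, by \cref{eq:defn_pure_gr_perv_sheaves}, any object of $\Perv_{\gr,c}(\mathcal{Y})^{w=\nu}$ is a retract of $\gr_{\mathcal{Y}_n}(\mathcal{H})$ for some $\mathcal{H} \in \Perv_{\mixed,c}(\mathcal{Y}_n)^{w=\nu}$. The functor $\Ho^0(\cHom_{\Shv_\gr(\mathcal{Y})^\ren}(-,-))$ is additive in each variable and its vanishing is preserved by retracts in either variable, so it suffices to prove the statement in the case $\mathcal{F} = \gr_{\mathcal{Y}_n}(\mathcal{F}_n)$ and $\mathcal{G} = \gr_{\mathcal{Y}_n}(\mathcal{G}_n)$ with $\mathcal{F}_n \in \Perv_{\mixed, c}(\mathcal{Y}_n)^{w=k}$ and $\mathcal{G}_n \in \Perv_{\mixed, c}(\mathcal{Y}_n)^{w=l}$.

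Next, I would invoke \cref{eq:cHom_Shv_gr} of \cref{prop:hom_graded_sheaves} to identify
\[
  \cHom_{\Shv_\gr(\mathcal{Y})^\ren}(\gr(\mathcal{F}_n), \gr(\mathcal{G}_n)) \simeq \cHom^\mixed_{\Shv_\mixed(\mathcal{Y}_n)^\ren}(\mathcal{F}_n, \mathcal{G}_n)_0,
\]
where the subscript $0$ denotes the naive weight-zero summand in the sense of \cref{subsubsec:mixed_sheaves_on_a_pt}. Because the naive weight decomposition $\Shv_\mixed(\pt_n) \simeq \bigoplus_w \Shv_\mixed(\pt_n)_w$ is an orthogonal direct-sum decomposition of categories, each summand inherits the standard $t$-structure, so the projection $(-)_0$ is $t$-exact and commutes with $\Ho^0$. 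Applying $\Ho^0$ therefore yields
\[
  \Ho^0(\cHom_{\Shv_\gr(\mathcal{Y})^\ren}(\gr(\mathcal{F}_n), \gr(\mathcal{G}_n))) \simeq \Ho^0(\cHom^\mixed_{\Shv_\mixed(\mathcal{Y}_n)^\ren}(\mathcal{F}_n, \mathcal{G}_n))_0.
\]

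Finally, \cref{lem:no_weight_zero_Hom_perv_sheaves_diff_weights}, applied to $\mathcal{F}_n$ and $\mathcal{G}_n$, states precisely that the right-hand side vanishes under the hypothesis $k \neq l$. No substantial obstacle is expected, since the real geometric input has already been packaged in \cref{lem:no_weight_zero_Hom_perv_sheaves_diff_weights}; the only point to check carefully is the compatibility of the naive weight-zero projection with $\Ho^0$, which follows immediately from the orthogonality of the naive weight decomposition on $\Shv_\mixed(\pt_n)$.
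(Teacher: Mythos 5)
Your proposal is correct and follows essentially the same route as the paper: reduce to objects of the form $\gr(\mathcal{F}_n)$, $\gr(\mathcal{G}_n)$ using the retract description of $\Perv_{\gr,c}(\mathcal{Y})^{w=\nu}$, identify the graded $\cHom$ with the naive weight-$0$ part of the mixed $\cHom$ via \cref{prop:hom_graded_sheaves}, and conclude by \cref{lem:no_weight_zero_Hom_perv_sheaves_diff_weights}. The extra remark that $(-)_0$ commutes with $\Ho^0$, which the paper leaves implicit, is a harmless and correct elaboration.
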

\begin{proof}
As in the proof of \cref{prop:Hom_Shv_gr_estimates_using_weights}, we can assume that $\mathcal{F}$ and $\mathcal{G}$ are of the form $\gr(\mathcal{F})$ and $\gr(\mathcal{G})$ respectively, where $\mathcal{F}, \mathcal{G} \in \Perv_{\mixed, c}(\mathcal{Y}_n)$ are of weights $k$ and $l$ respectively. \cref{prop:hom_graded_sheaves} implies that $\Ho^0(\cHom_{\Shv_\gr(\mathcal{Y})^\ren}(\gr(\mathcal{F}), \gr(\mathcal{G})))$ is the weight $0$ part of $\Ho^0(\cHom_{\Shv_\mixed(\mathcal{Y}_n)^\ren}^\mixed(\mathcal{F}, \mathcal{G}))$, which vanishes by \cref{lem:no_weight_zero_Hom_perv_sheaves_diff_weights}.
\end{proof}

Combining \cref{prop:Hom_Shv_gr_estimates_using_weights,cor:orthogonality_perv_gr_diff_weights}, we obtain the following result.

\begin{thm}
\label{thm:refined_Hom_Shv_gr_estimates_using_weights}
Let $\mathcal{F} \in \Perv_{\gr, c}(\mathcal{Y})^{w=k}$ and $\mathcal{G} \in \Perv_{\gr, c}(\mathcal{Y})^{w=l}$. Then, $\cHom_{\Shv_\gr(\mathcal{Y})^\ren}(\mathcal{F}, \mathcal{G})$ concentrates in cohomological degrees $(0, k-l]$ when $k\neq l$ and $\{0\}$ when $k=l$. In particular, when $k-l<0$, then $\cHom_{\Shv_\gr(\mathcal{Y})^\ren}(\mathcal{F}, \mathcal{G})\simeq 0$.
\end{thm}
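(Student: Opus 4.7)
The statement is a direct combination of the two preceding results: Proposition~\ref{prop:Hom_Shv_gr_estimates_using_weights} and Corollary~\ref{cor:orthogonality_perv_gr_diff_weights}. The plan is simply to assemble them.

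First, I would invoke Proposition~\ref{prop:Hom_Shv_gr_estimates_using_weights} to conclude that $\cHom_{\Shv_\gr(\mathcal{Y})^\ren}(\mathcal{F}, \mathcal{G})$ is cohomologically supported in the interval $[0, k-l]$. This in particular already handles the case $k=l$, where the interval collapses to $\{0\}$, and the ``in particular'' clause, where the interval is empty if $k-l<0$. So the only additional content is to show that the degree $0$ part vanishes when $k \neq l$.

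For this, I would apply Corollary~\ref{cor:orthogonality_perv_gr_diff_weights}, which says precisely that $\Ho^0(\cHom_{\Shv_\gr(\mathcal{Y})^\ren}(\mathcal{F}, \mathcal{G})) \simeq 0$ under the hypothesis $k \neq l$. Combining with the cohomological bounds from Proposition~\ref{prop:Hom_Shv_gr_estimates_using_weights}, we obtain that $\cHom$ is supported in $[0, k-l] \setminus \{0\} = (0, k-l]$, as required.

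There is essentially no obstacle here: both ingredients have been established, the first by comparing with Frobenius weights via \cref{prop:hom_graded_sheaves} and the perverse $t$-structure on $\Shv(\mathcal{Y})^\ren$, and the second by reducing to the fact (\cref{lem:no_weight_zero_Hom_perv_sheaves_diff_weights}) that there can be no nonzero morphism between simple mixed perverse sheaves of distinct weights. The theorem thus serves as a clean packaging of these two bounds.
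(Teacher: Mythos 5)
Your proposal is correct and matches the paper's argument exactly: the theorem is stated there as an immediate combination of \cref{prop:Hom_Shv_gr_estimates_using_weights} (support in $[0,k-l]$) with \cref{cor:orthogonality_perv_gr_diff_weights} (vanishing in degree $0$ when $k\neq l$). Nothing further is needed.
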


\subsubsection{Orthogonality}
\cref{cor:orthogonality_perv_gr_diff_weights} can be interpreted as a statement about orthogonality between $\Perv_{\gr, c}(\mathcal{Y})^{w=\nu}$ for different $\nu$. We will now study the same question but within the same $\nu$, see \cref{prop:Hom_simple_graded} below. We start with the following conservativity statement.

\begin{lem}
\label{lem:oblv_gr_isom_iff_for_pure_perv}
Let $\mathcal{F}_\gr, \mathcal{G}_\gr \in \Perv_{\gr, c}(\mathcal{Y})^{w=\nu}$, $\mathcal{F} = \oblv_\gr(\mathcal{F}_\gr)$, and $\mathcal{G} = \oblv_\gr(\mathcal{G}_\gr)$. Then $\mathcal{F}_\gr \simeq \mathcal{G}_\gr$ if and only if $\mathcal{F} \simeq \mathcal{G}$.
\end{lem}
\begin{proof}
The only if direction is clear. For the if direction, suppose $\mathcal{F} \simeq \mathcal{G}$. Then, by \cref{rmk:unmixed_Hom_direct_sum_gr_Hom}, 
\[
	\bigoplus_{k\in \mathbb{Z}} \Ho^0(\cHom_{\Shv_\gr(\mathcal{Y})^\ren}(\mathcal{F}_\gr, \mathcal{G}_\gr\lrangle{k})) \simeq \Ho^0(\cHom_{\Shv(\mathcal{Y})^\ren}(\mathcal{F}, \mathcal{G})).
\]
But unless $k=0$, $\mathcal{F}_\gr$ and $\mathcal{G}_\gr\lrangle{k}$ can have no non-trivial morphism between them, by \cref{thm:refined_Hom_Shv_gr_estimates_using_weights}. Thus, $k=0$, and we obtain an isomorphism
\[
	\Ho^0(\cHom_{\Shv_\gr(\mathcal{Y})^\ren}(\mathcal{F}_\gr, \mathcal{G}_\gr))) \simeq \Ho^0(\cHom_{\Shv(\mathcal{Y})^\ren}(\mathcal{F}, \mathcal{G})). \teq\label{eq:H^0_Hom_same_oblv_gr}
\]
In particular, there exists a non-zero $\varphi_\gr: \mathcal{F}_\gr \to \mathcal{G}_\gr$ such that $\oblv_\gr(\varphi_\gr): \mathcal{F} \to \mathcal{G}$ is an isomorphism. The proof concludes by the conservativity of $\oblv_\gr$, see \cref{lem:oblv_gr_conservative}.
\end{proof}

\begin{prop}
\label{prop:Hom_simple_graded}
In the situation of \cref{lem:oblv_gr_isom_iff_for_pure_perv}, suppose further that $\mathcal{F}, \mathcal{G} \in \Perv_c(\mathcal{Y})$ are simple. Then
\[
	\cHom_{\Shv_\gr(\mathcal{Y})^\ren}(\mathcal{F}_\gr, \mathcal{G}_\gr) \simeq 
	\begin{cases}
		\Qlbar, &\text{when }\mathcal{F}_\gr \simeq \mathcal{G}_\gr \text{ or equivalently, } \mathcal{F} \simeq \mathcal{G}, \\
		0, &\text{otherwise}.
	\end{cases}
\]
\end{prop}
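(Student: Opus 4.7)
My plan is to reduce the computation in $\Shv_{\gr}(\mathcal{Y})^{\ren}$ to the usual (non-graded) situation via the direct-sum decomposition of \cref{rmk:unmixed_Hom_direct_sum_gr_Hom}, and then use the weight bounds of \cref{thm:refined_Hom_Shv_gr_estimates_using_weights} to show that only the untwisted summand contributes.

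First, \cref{cor:Perv_gr_w=v_is_classical_semisimple} tells us that $\cHom_{\Shv_{\gr}(\mathcal{Y})^{\ren}}(\mathcal{F}_{\gr}, \mathcal{G}_{\gr})$ is concentrated in cohomological degree $0$, so it suffices to compute $\Ho^{0}$. By \cref{rmk:unmixed_Hom_direct_sum_gr_Hom},
\[
  \bigoplus_{k \in \mathbb{Z}} \Ho^{0}\bigl(\cHom_{\Shv_{\gr}(\mathcal{Y})^{\ren}}(\mathcal{F}_{\gr}, \mathcal{G}_{\gr}\lrangle{k})\bigr) \simeq \Ho^{0}\bigl(\cHom_{\Shv(\mathcal{Y})^{\ren}}(\mathcal{F}, \mathcal{G})\bigr),
\]
and since $\mathcal{F}, \mathcal{G}$ are simple perverse over the algebraically closed coefficient field $\Qlbar$, Schur's lemma identifies the right-hand side with $\Qlbar$ when $\mathcal{F} \simeq \mathcal{G}$ and $0$ otherwise.

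Next, I would observe (using the equivalence $\lrangle{k}\colon \Perv_{\gr, c}(\mathcal{Y})^{w=\nu} \xrightarrow{\simeq} \Perv_{\gr, c}(\mathcal{Y})^{w=\nu-k}$ noted immediately after \cref{eq:defn_pure_gr_perv_sheaves}) that $\mathcal{G}_{\gr}\lrangle{k}$ lies in weight $\nu-k$. Applying \cref{thm:refined_Hom_Shv_gr_estimates_using_weights} to the pair $(\mathcal{F}_{\gr}, \mathcal{G}_{\gr}\lrangle{k})$, its $\cHom$ vanishes when $k<0$ and is concentrated in strictly positive cohomological degrees $(0,k]$ when $k>0$; in either case $\Ho^{0}=0$. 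Thus only the $k=0$ summand survives, yielding
\[
  \Ho^{0}\bigl(\cHom_{\Shv_{\gr}(\mathcal{Y})^{\ren}}(\mathcal{F}_{\gr}, \mathcal{G}_{\gr})\bigr) \simeq \Ho^{0}\bigl(\cHom_{\Shv(\mathcal{Y})^{\ren}}(\mathcal{F}, \mathcal{G})\bigr).
\]
Combined with the degree-$0$ concentration statement, this is either $\Qlbar$ or $0$ according as $\mathcal{F} \simeq \mathcal{G}$ or not.

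Finally, to upgrade the dichotomy to $\mathcal{F}_{\gr} \simeq \mathcal{G}_{\gr}$ versus not, I invoke \cref{lem:oblv_gr_isom_iff_for_pure_perv}, which gives $\mathcal{F}_{\gr} \simeq \mathcal{G}_{\gr} \iff \mathcal{F} \simeq \mathcal{G}$. I do not anticipate a genuine obstacle here: all of the ingredients (semisimplicity in the heart, the $\oblv_{\gr}$-decomposition of enriched $\Hom$, and the weight-based vanishing) are already in hand. The only bookkeeping point worth double-checking is the sign of the weight shift under $\lrangle{k}$, which I would verify against the remark following \cref{eq:defn_pure_gr_perv_sheaves} so that \cref{thm:refined_Hom_Shv_gr_estimates_using_weights} is applied with the correct weight pair.
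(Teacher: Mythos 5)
Your argument is correct and follows essentially the same route as the paper: degree-$0$ concentration from the weight estimates, the identification $\Ho^{0}(\cHom_{\Shv_\gr(\mathcal{Y})^\ren}(\mathcal{F}_\gr,\mathcal{G}_\gr))\simeq \Ho^{0}(\cHom_{\Shv(\mathcal{Y})^\ren}(\mathcal{F},\mathcal{G}))$ via the direct-sum decomposition plus vanishing of the twisted summands, Schur's lemma for simple perverse sheaves, and \cref{lem:oblv_gr_isom_iff_for_pure_perv} for the dichotomy. The only cosmetic difference is that you re-derive \cref{eq:H^0_Hom_same_oblv_gr} inline (with the correct weight-shift convention for $\lrangle{k}$) rather than citing it from the proof of that lemma.
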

\begin{proof}
By \cref{thm:refined_Hom_Shv_gr_estimates_using_weights}, we know that $\cHom_{\Shv_\gr(\mathcal{Y})^\ren}(\mathcal{F}_\gr, \mathcal{G}_\gr)$ concentrates in cohomological degree $0$. In particular, 
\[
	\cHom_{\Shv_\gr(\mathcal{Y})^\ren}(\mathcal{F}_\gr, \mathcal{G}_\gr) \simeq \Ho^0(\cHom_{\Shv_\gr(\mathcal{Y})^\ren}(\mathcal{F}_\gr, \mathcal{G}_\gr)).
\]
Thus we only need to deal with the cohomological degree $0$ part.

By \cref{lem:oblv_gr_isom_iff_for_pure_perv}, $\mathcal{F}_\gr \simeq \mathcal{G}_\gr$ if and only if $\mathcal{F} \simeq \mathcal{G}$. Moreover, simplicity of $\mathcal{F}$ and $\mathcal{G}$ implies that 
\[
	\Ho^0(\cHom_{\Shv(\mathcal{Y})^\ren}(\mathcal{F}, \mathcal{G})) \simeq 
	\begin{cases}
		\Qlbar, & \text{when } \mathcal{F} \simeq \mathcal{G}, \\
		0, &\text{otherwise}.
	\end{cases}
\]
We thus conclude using \cref{eq:H^0_Hom_same_oblv_gr}.
\end{proof}

\subsubsection{Generators} We will now describe the generators of $\Perv_{\gr, c}(\mathcal{Y})^{w=\nu}$, which results in an explicit description of the category. To start, let $\wtilde{\Perv}_{\gr, c}(\mathcal{Y})^{w=\nu}$ be the full subcategory of $\Shv_{\gr, c}(\mathcal{Y})$ spanned by finite direct sums of objects of the form $\gr_{\mathcal{Y}_n}(\mathcal{F}_n)$ for $\mathcal{F}_n \in \Perv_{\mixed, c}(\mathcal{Y}_n)^{w=\nu}$ for some $n$ such that the pullback of $\mathcal{F}_n$ to $\mathcal{Y}$ is simple. By construction and \cref{lem:gr_pure_perv_sheaves_invariant_base_change}, $\wtilde{\Perv}_{\gr, c}(\mathcal{Y})^{w=\nu}$ is a full subcategory of $\Perv_{\gr, c}(\mathcal{Y})^{w=\nu}$. By \cref{prop:Hom_simple_graded}, we obtain
\[
	\wtilde{\Perv}_{\gr, c}(\mathcal{Y})^{w=\nu} = \bigoplus_{s\in S} \Vect^{c, \heartsuit} 
\]
where $\Vect^{c, \heartsuit}$ is the \emph{abelian} category of finite dimensional vector spaces and $S$ is the set of simple perverse sheaves $\mathcal{F} \in \Perv_c(\mathcal{Y})$ such that $\mathcal{F}$ is the pullback of some (necessarily simple) perverse sheaf $\mathcal{F}_n$ over $\mathcal{Y}_n$ for some $n$. It is clear from the description above that $\wtilde{\Perv}_{\gr, c}(\mathcal{Y})^{w=\nu}$ is a semi-simple abelian category. In particular, it is also idempotent complete.

\begin{thm}
\label{thm:explicit_description_Perv_gr_c_w=nu}
$\Perv_{\gr, c}(\mathcal{Y})^{w=\nu} = \wtilde{\Perv}_{\gr, c}(\mathcal{Y})^{w=\nu}$ as full subcategories of $\Shv_{\gr, c}(\mathcal{Y})$. In particular,
\[
	\Perv_{\gr, c}(\mathcal{Y})^{w=\nu} \simeq \bigoplus_{s\in S} \Vect^{c, \heartsuit}
\]
is a semi-simple abelian category, where $S$ is the set of simple perverse sheaves $\mathcal{F} \in \Perv_c(\mathcal{Y})$ such that $\mathcal{F}$ is the pullback of some (necessarily simple) perverse sheaf $\mathcal{F}_n$ over $\mathcal{Y}_n$ for some $n$.
\end{thm}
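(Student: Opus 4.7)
The inclusion $\wtilde{\Perv}_{\gr, c}(\mathcal{Y})^{w=\nu} \subseteq \Perv_{\gr, c}(\mathcal{Y})^{w=\nu}$ is immediate from the construction together with \cref{lem:gr_pure_perv_sheaves_invariant_base_change}. For the reverse inclusion, I would use the fact, established just before the theorem, that $\wtilde{\Perv}_{\gr, c}(\mathcal{Y})^{w=\nu} \simeq \bigoplus_{s \in S} \Vect^{c, \heartsuit}$ is a semi-simple abelian category, hence idempotent complete. Since $\Perv_{\gr, c}(\mathcal{Y})^{w=\nu}$ was defined as the idempotent completion of the essential image of $\gr_{\mathcal{Y}_m} \colon \Perv_{\mixed, c}(\mathcal{Y}_m)^{w=\nu} \to \Shv_{\gr, c}(\mathcal{Y})$ (for any $m$), it therefore suffices to show that $\gr_{\mathcal{Y}_m}(\mathcal{F}_m) \in \wtilde{\Perv}_{\gr, c}(\mathcal{Y})^{w=\nu}$ for every $m$ and every $\mathcal{F}_m \in \Perv_{\mixed, c}(\mathcal{Y}_m)^{w=\nu}$.

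To achieve this, I would pass to a sufficiently large finite extension $g \colon \mathcal{Y}_{m'} \to \mathcal{Y}_m$ and explicitly decompose $g^*\mathcal{F}_m$ along its geometric isotypic pieces. By the BBD decomposition theorem, the pullback $\mathcal{F}$ of $\mathcal{F}_m$ to $\mathcal{Y}$ is semi-simple: $\mathcal{F} \simeq \bigoplus_i \mathcal{G}_i^{\oplus n_i}$ with the $\mathcal{G}_i$ pairwise non-isomorphic simple perverse sheaves. I would take $m'$ divisible by $m$ large enough that (a) each Frobenius orbit in $\{\mathcal{G}_i\}$ is trivial, and (b) each $\mathcal{G}_i$ descends to a simple mixed perverse sheaf $\mathcal{G}_{i, m'} \in \Perv_{\mixed, c}(\mathcal{Y}_{m'})$ of some weight $\nu_i$; in particular each $\mathcal{G}_i$ belongs to the set $S$. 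Because the geometric isotypic decomposition is canonical, condition (a) ensures it is preserved by the descent datum, giving $g^*\mathcal{F}_m \simeq \bigoplus_i \mathcal{H}_i$ in $\Perv_{\mixed, c}(\mathcal{Y}_{m'})$ with $\mathcal{H}_i$ having geometric pullback $\mathcal{G}_i^{\oplus n_i}$.

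The hard part will be identifying each $\mathcal{H}_i$ concretely. Using Schur's lemma $\End_{\Perv(\mathcal{Y})}(\mathcal{G}_i) \simeq \Qlbar$ together with the chosen descent $\mathcal{G}_{i, m'}$, the descent datum for $\mathcal{G}_i^{\oplus n_i}$ from $\mathcal{Y}$ down to $\mathcal{Y}_{m'}$ reduces to specifying a linear Frobenius action on the $n_i$-dimensional multiplicity space. This identifies the category of mixed perverse sheaves on $\mathcal{Y}_{m'}$ whose geometric realization is isotypic to $\mathcal{G}_i$ with the category $\Shv_\mixed(\pt_{m'})^{\heartsuit}$ of (finite-dimensional) Frobenius modules, via $V \mapsto \mathcal{G}_{i, m'} \otimes V$. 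Consequently $\mathcal{H}_i \simeq \mathcal{G}_{i, m'} \otimes V_i$ for some $V_i \in \Shv_\mixed(\pt_{m'})$, and the purity of $\mathcal{H}_i$ of weight $\nu$ forces $V_i$ to be pure of weight $\nu - \nu_i$.

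The last step then assembles these ingredients. Applying $\gr_{\mathcal{Y}_{m'}}$ and invoking the identity $\gr_{\mathcal{Y}_m}(\mathcal{F}_m) = \gr_{\mathcal{Y}_{m'}}(g^*\mathcal{F}_m)$ from the proof of \cref{lem:gr_pure_perv_sheaves_invariant_base_change}, together with the $\Shv_\mixed(\pt_{m'})$-linearity of $\gr_{\mathcal{Y}_{m'}}$, yields $\gr_{\mathcal{Y}_m}(\mathcal{F}_m) \simeq \bigoplus_i \gr_{\mathcal{Y}_{m'}}(\mathcal{G}_{i, m'}) \otimes \gr(V_i)$ in $\Shv_{\gr, c}(\mathcal{Y})$. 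Choosing for each $i$ a one-dimensional Frobenius module $L_i \in \Shv_\mixed(\pt_{m'})$ pure of weight $\nu - \nu_i$, the graded vector space $\gr(V_i)$ is a direct sum of $\dim V_i$ copies of $\gr(L_i)$, so each summand of the decomposition is isomorphic to $\gr_{\mathcal{Y}_{m'}}(\mathcal{G}_{i, m'} \otimes L_i)$, where $\mathcal{G}_{i, m'} \otimes L_i$ is a mixed perverse sheaf on $\mathcal{Y}_{m'}$ of weight $\nu$ with simple pullback $\mathcal{G}_i \in S$. Each summand therefore lies in $\wtilde{\Perv}_{\gr, c}(\mathcal{Y})^{w=\nu}$, completing the argument.
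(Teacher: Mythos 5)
Your proposal is correct in substance, but after the common opening reduction it takes a genuinely different route from the paper. Both arguments start the same way: one inclusion is immediate from \cref{lem:gr_pure_perv_sheaves_invariant_base_change}, and by idempotent completeness of $\wtilde{\Perv}_{\gr, c}(\mathcal{Y})^{w=\nu}$ it suffices to treat $\gr_{\mathcal{Y}_m}(\mathcal{F}_m)$ for $\mathcal{F}_m$ pure of weight $\nu$. From there the paper stays at the level of the mixed category over $\mathcal{Y}_m$: it uses the already-established semisimplicity of $\Perv_{\gr,c}(\mathcal{Y})^{w=\nu}$ (\cref{cor:Perv_gr_w=v_is_classical_semisimple}) to split $\gr(\mathcal{F}_m)$ into $\gr$'s of the \emph{simple mixed} constituents $\mathcal{K}$, and then applies \cref{prop:simple_perv_are_pushforward} (BBD 5.3.9(ii)) to write $f_{m'm}^*\mathcal{K} \simeq \mathcal{L}_{m'}^{\oplus d}$ with $\mathcal{L}_{m'}$ geometrically simple, so that $\gr_{\mathcal{Y}_m}(\mathcal{K}) \simeq \gr_{\mathcal{Y}_{m'}}(\mathcal{L}_{m'})^{\oplus d}$ via \cref{eq:factorization_gr_perv}. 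You instead work geometrically: you invoke the decomposition theorem over $k$ to get semisimplicity of the pullback, descend the canonical isotypic decomposition after enlarging the base field, and use Schur's lemma to write each isotypic piece as $\mathcal{G}_{i,m'}\otimes V_i$ with $V_i$ a pure Frobenius module, then exploit $\Shv_\mixed(\pt_{m'})$-linearity of $\gr$. This works, and it has the virtue of exhibiting the multiplicity spaces explicitly, but it buys this at the cost of heavier inputs that you assert rather than justify: (a) geometric semisimplicity of pure perverse sheaves on Artin stacks needs the decomposition theorem of \cite{sun_decomposition_2012} (for schemes, \cite[5.3.8]{beilinson_faisceaux_2018}); (b) the claim that each simple geometric constituent $\mathcal{G}_i$ descends to some $\mathcal{Y}_{m'}$ is not automatic and is most naturally deduced from exactly the proposition the paper uses, \cref{prop:simple_perv_are_pushforward} (or BBD 5.1.2); and (c) descending the isotypic projectors requires that $\Hom$ over $\mathcal{Y}_{m'}$ be the Frobenius invariants of $\Hom$ over $\mathcal{Y}$ (a BBD 5.1.2-type full faithfulness statement, which in the paper's framework can be extracted from the mixed-$\Hom$ formalism and \cref{prop:hom_graded_sheaves}, but should be made explicit for stacks). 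With those three points supplied with references, your argument is complete; the paper's route is shorter because the semisimplicity corollary lets it bypass the geometric decomposition and all descent-data bookkeeping.
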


Before proving \cref{thm:explicit_description_Perv_gr_c_w=nu}, we recall~\cite[Prop. 5.3.9.(ii)]{beilinson_faisceaux_2018} below. While the result is stated for schemes there, the same proof works for stacks. Indeed, the only ingredient is~\cite[Prop. 5.1.2]{beilinson_faisceaux_2018}, which is also valid for stacks, see also~\cite[Proof of Thm. 3.11]{sun_decomposition_2012}.

\begin{prop}[{\cite[Prop. 5.3.9.(ii)]{beilinson_faisceaux_2018}}]
\label{prop:simple_perv_are_pushforward}
Let $\mathcal{K}_n \in \Perv_{\mixed, c}(\mathcal{Y}_n)$ be a simple perverse sheaf. Then, there exists $m=nd$ for some $d$ and $\mathcal{L}_m \in \Perv_{\mixed, c}(\mathcal{Y}_m)$ such that $\mathcal{K}_n$ is the pushforward of $\mathcal{L}_m$ under $f_{mn}: \mathcal{Y}_m \to \mathcal{Y}_n$. Moreover, the pullback of $\mathcal{L}_m$ to $\mathcal{Y}$ is simple.
\end{prop}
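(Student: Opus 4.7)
The plan is to adapt the proof of \cite[Prop. 5.3.9.(ii)]{beilinson_faisceaux_2018} to the stack setting. The only ingredient of BBD's argument that is not immediately available for Artin stacks is the semisimplicity of the geometric pullback of a simple mixed perverse sheaf, but this is precisely the stack version of \cite[Prop. 5.1.2]{beilinson_faisceaux_2018}, which the paper records as being established by the spectral-sequence argument of \cite[Proof of Thm. 3.11]{sun_decomposition_2012}.

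First I would pull $\mathcal{K}_n$ back along $h_n : \mathcal{Y} \to \mathcal{Y}_n$ to obtain $\mathcal{K} := h_n^* \mathcal{K}_n$, and apply the stack-version of BBD 5.1.2 to write
\[
  \mathcal{K} \simeq \bigoplus_{i \in I} \mathcal{K}_i^{\oplus a_i}
\]
with $I$ finite and the $\mathcal{K}_i$ pairwise non-isomorphic simple perverse sheaves on $\mathcal{Y}$. The arithmetic Frobenius $F = \Frob_{k_n}$ permutes the set $\{[\mathcal{K}_i]\}_{i\in I}$ of isomorphism classes, and simplicity of $\mathcal{K}_n$ together with Schur's lemma applied to $\mathrm{End}(\mathcal{K}_n)$ forces this action on $I$ to be transitive.

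Next I would produce $m$ and $\mathcal{L}_m$. Set $d := |I|$ and $m := nd$, so that $F^d = \Frob_{k_m}$ fixes every $[\mathcal{K}_i]$. By standard Galois descent for simple objects (possibly enlarging $d$ by a further finite factor to split the residual Galois action on the multiplicity spaces $\Qlbar^{a_i}$, which is automatic by continuity), each $\mathcal{K}_i$ descends to a simple object $\mathcal{L}_i \in \Perv_{\mixed, c}(\mathcal{Y}_m)$ and we may arrange $f_{mn}^* \mathcal{K}_n \simeq \bigoplus_{i \in I} \mathcal{L}_i^{\oplus a_i}$. Choose $\mathcal{L}_m$ to be any $\mathcal{L}_{i_0}$; by construction $h_m^* \mathcal{L}_m \simeq \mathcal{K}_{i_0}$ is simple, which is the ``moreover'' clause.

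Finally, to verify $f_{mn,*} \mathcal{L}_m \simeq \mathcal{K}_n$, write $\pi := f_{mn}$ and exploit that $\pi$ is finite étale with $\mathrm{Gal}(k_m/k_n) \simeq \mathbb{Z}/d$. The Cartesian description $\mathcal{Y}_m \times_{\mathcal{Y}_n} \mathcal{Y}_m \simeq \bigsqcup_\sigma \mathcal{Y}_m$ together with proper base change yields
\[
  \pi^* \pi_* \mathcal{L}_m \simeq \bigoplus_{\sigma \in \mathrm{Gal}(k_m/k_n)} \sigma^* \mathcal{L}_m,
\]
and transitivity of the Galois/Frobenius action on $I$ identifies the right-hand side with $\bigoplus_{i\in I} \mathcal{L}_i^{\oplus a_i} \simeq \pi^* \mathcal{K}_n$. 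Conservativity of $\pi^*$ (from finite étale surjectivity of $\pi$) then upgrades this to $\pi_* \mathcal{L}_m \simeq \mathcal{K}_n$. The main obstacle is the descent bookkeeping in the third paragraph: ensuring $\mathcal{L}_m$ can be chosen whose geometric pullback is a single simple $\mathcal{K}_{i_0}$ rather than $\mathcal{K}_{i_0}^{\oplus a_{i_0}}$, which comes down to splitting the Galois action on multiplicity spaces and is standard over schemes; the stack-version follows by smooth descent from a presentation of $\mathcal{Y}_n$.
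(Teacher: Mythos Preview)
Your approach is essentially identical to the paper's: the paper does not give its own proof but simply remarks that the argument of \cite[Prop.~5.3.9.(ii)]{beilinson_faisceaux_2018} goes through verbatim for stacks, the only nontrivial ingredient being the stack analogue of \cite[Prop.~5.1.2]{beilinson_faisceaux_2018}, supplied by \cite[Proof of Thm.~3.11]{sun_decomposition_2012}. You have correctly identified this ingredient and sketched the BBD argument in somewhat more detail than the paper does; one minor point is that in your last step, conservativity of $\pi^*$ alone does not produce an isomorphism $\pi_*\mathcal{L}_m \simeq \mathcal{K}_n$---you should instead use the adjunction to get a nonzero map between these two simple perverse sheaves.
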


\begin{proof}[Proof of \cref{thm:explicit_description_Perv_gr_c_w=nu}]
It remains to show that $\Perv_{\gr, c}(\mathcal{Y})^{w=\nu} \subseteq \wtilde{\Perv}_{\gr, c}(\mathcal{Y})^{w=\nu}$. Since the latter is idempotent complete, it suffices to show that $\gr(\mathcal{F}_n) \in \wtilde{\Perv}_{\gr, c}(\mathcal{Y})^{w=\nu}$ for any $\mathcal{F}_n \in \Perv_{\mixed, c}(\mathcal{Y}_n)^{w=\nu}$. Since $\Perv_{\gr, c}(\mathcal{Y})^{w=\nu}$ is semi-simple by \cref{cor:Perv_gr_w=v_is_classical_semisimple} and since any such $\mathcal{F}_n$ has a finite filtration whose associated graded pieces are simple, $\gr(\mathcal{F}_n)$ is a finite direct sum of objects of the form $\gr(\mathcal{K}_n)$ where $\mathcal{K}_n \in \Perv_{\mixed, c}(\mathcal{Y}_n)$ is simple. It remains to show that $\gr(\mathcal{K}_n) \in \wtilde{\Perv}_{\gr, c}(\mathcal{Y})^{w=\nu}$.

Let $\mathcal{L}_m \in \Perv_{\mixed, c}(\mathcal{Y}_m)$ be as in \cref{prop:simple_perv_are_pushforward}. Then, by definition, $\gr_{\mathcal{Y}_m}(\mathcal{L}_m) \in \wtilde{\Perv}_{\gr, c}(\mathcal{Y})$. Moreover, $f_{mn}^* \mathcal{K}_n \simeq \mathcal{L}_m^{\oplus d}$. By the factorization \cref{eq:factorization_gr_perv}, we obtain that
\[
	\gr_{\mathcal{Y}_n}(\mathcal{K}_n) \simeq \gr_{\mathcal{Y}_m}(\mathcal{L}_m)^{\oplus d}.
\]
Thus, $\gr_{\mathcal{Y}_n}(\mathcal{K}_n) \in \wtilde{\Perv}_{\gr, c}(\mathcal{Y})^{w=\nu}$ as desired.
\end{proof}

\subsection{Generation}
\label{subsec:pure_graded_perv_sheaves_generate}
We will now show that $\Perv_{\gr, c}(\mathcal{Y})^{w=\nu}$ for all $\nu$ together generate $\Shv_{\gr, c}(\mathcal{Y})$, i.e., $\Shv_{\gr, c}(\mathcal{Y})$ is the smallest full $\DG$-subcategory (or equivalently, full stable $\infty$-subcategory) of $\Shv_\gr(\mathcal{Y})^\ren$ containing $\Perv_{\gr, c}(\mathcal{Y})^{w=\nu}$ for all $\nu$.

Let $\mathcal{C}$ be a triangulated/stable $\infty$-/$\DG$-category and $\{S_i\}_{i \in I}$, where $I$ is an indexing set, a family of collections of objects in $\mathcal{C}$ or subcategories $\mathcal{C}$. Then, following~\cite{bondarko_weight_2012}, we use $\lrangle{S_i}_{i\in I}$ to denote the smallest full triangulated/stable $\infty$-/$\DG$-subcategory of $\mathcal{C}$ containing $S_i$ for all $i$. When $I$ is a singleton, i.e., we simply have one $S$, then we simply write $\lrangle{S}$.

The rest of the current subsection will be devoted to the proof of the following result.

\begin{thm}
\label{thm:Shv_gr_c_generated_pure_graded_perv_sheaves}
The categories $\Perv_{\gr, c}(\mathcal{Y})^{w=\nu}$ together generate $\Shv_{\gr, c}(\mathcal{Y})$. More precisely,
\[
	\Shv_{\gr, c}(\mathcal{Y}) \simeq \lrangle{\Perv_{\gr, c}(\mathcal{Y})^{w=\nu}}_{\nu \in \mathbb{Z}}.
\]
\end{thm}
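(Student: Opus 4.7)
The plan is to reduce the statement to two classical structural results for mixed sheaves: the perverse $t$-structure and the BBD weight filtration. More precisely, the strategy proceeds in three stages plus a final idempotent-completeness remark.

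First, by \cref{cor:compact_generators_relative_tensors} together with \cref{subsubsec:shift_of_gradings}, the category $\Shv_{\gr, c}(\mathcal{Y})$ is generated (via finite colimits and retracts) by objects of the form $\gr_{\mathcal{Y}_n}(\mathcal{F})\lrangle{k}$ for $\mathcal{F} \in \Shv_{\mixed, c}(\mathcal{Y}_n)$, $k \in \mathbb{Z}$, and any $\Fq$-form $\mathcal{Y}_n$ of $\mathcal{Y}$. Since the grading shift $\lrangle{k}$ sends $\Perv_{\gr, c}(\mathcal{Y})^{w=\nu}$ to $\Perv_{\gr, c}(\mathcal{Y})^{w=\nu-k}$, it suffices to prove that every $\gr_{\mathcal{Y}_n}(\mathcal{F})$ lies in $\lrangle{\Perv_{\gr, c}(\mathcal{Y})^{w=\nu}}_{\nu \in \mathbb{Z}}$.

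Second, I would use the perverse $t$-structure on $\Shv_{\mixed, c}(\mathcal{Y}_n)$ of~\cites{beilinson_faisceaux_2018, laszlo_perverse_2009} to reduce to the case where $\mathcal{F}$ is a mixed perverse sheaf. Since $\mathcal{F}$ is constructible, its perverse $t$-structure truncations are eventually constant, so $\mathcal{F}$ sits in a finite tower of cofiber sequences whose associated graded pieces are of the form ${}^{p}\Ho^i(\mathcal{F})[-i]$. Applying the exact functor $\gr_{\mathcal{Y}_n}$ — which preserves cofiber sequences because it is a functor of stable $\infty$-categories — produces a corresponding finite tower for $\gr_{\mathcal{Y}_n}(\mathcal{F})$, reducing the problem to the case $\mathcal{F} \in \Perv_{\mixed, c}(\mathcal{Y}_n)$ (up to a shift, which is harmless for membership in a stable subcategory).

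Third, for such an $\mathcal{F}$, BBD's weight filtration (\cite[Thm.~5.3.5]{beilinson_faisceaux_2018}, extended to stacks in~\cite{sun_l-series_2012}) supplies a finite increasing filtration $W_\bullet \mathcal{F}$ with $\gr^W_\nu \mathcal{F} \in \Perv_{\mixed, c}(\mathcal{Y}_n)^{w=\nu}$. Applying $\gr_{\mathcal{Y}_n}$ once more, we obtain a finite filtration of $\gr_{\mathcal{Y}_n}(\mathcal{F})$ whose associated graded pieces are, by the definition~\cref{eq:defn_pure_gr_perv_sheaves}, in $\Perv_{\gr, c}(\mathcal{Y})^{w=\nu}$. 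This exhibits $\gr_{\mathcal{Y}_n}(\mathcal{F})$ as an iterated extension of objects in the right-hand side, completing the core of the argument.

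The remaining, and I expect most delicate, point is the idempotent completion: $\Shv_{\gr, c}(\mathcal{Y})$ is idempotent complete, whereas the subcategory $\lrangle{\Perv_{\gr, c}(\mathcal{Y})^{w=\nu}}_{\nu \in \mathbb{Z}}$ is defined as the smallest stable full subcategory and is not a priori so. To handle this, I would invoke the fact that each generating subcategory $\Perv_{\gr, c}(\mathcal{Y})^{w=\nu}$ is a semisimple abelian category (\cref{cor:Perv_gr_w=v_is_classical_semisimple}, \cref{thm:explicit_description_Perv_gr_c_w=nu}), combined with the orthogonality bounds of \cref{thm:refined_Hom_Shv_gr_estimates_using_weights} — namely $\cHom(\mathcal{F}, \mathcal{G}) = 0$ whenever $\mathcal{F} \in \Perv_{\gr,c}(\mathcal{Y})^{w=k}$ and $\mathcal{G} \in \Perv_{\gr,c}(\mathcal{Y})^{w=l}$ with $k < l$. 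These are precisely the hypotheses under which Bondarko's machinery~\cite{bondarko_weight_2012} produces a bounded weight structure whose heart is idempotent complete, and in particular the generated stable subcategory agrees with its idempotent completion, yielding the desired equality.
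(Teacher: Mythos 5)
Your first three steps reproduce the paper's own argument for the containment $\gr_{\mathcal{Y}_n}(\mathcal{F}) \in \lrangle{\Perv_{\gr, c}(\mathcal{Y})^{w=\nu}}_{\nu}$: reduce to the essential image of $\gr$ using compact generation and grading shifts, then to perverse objects using the perverse $t$-structure on $\Shv_{\mixed,c}(\mathcal{Y}_n)$, then to pure objects using the weight filtration (the paper quotes the stacky statement \cite[Thm.~9.2]{laszlo_perverse_2009} rather than BBD plus Sun, which is immaterial). So the only point of real divergence is your last step, the idempotent completeness of $\mathcal{D}\defeq\lrangle{\Perv_{\gr, c}(\mathcal{Y})^{w=\nu}}_{\nu\in\mathbb{Z}}$, and there your argument as written has a genuine jump.

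Bondarko's theorem (\cref{thm:bondarko_main_thm}) has \emph{generation} of the ambient category by the family $\{A_\nu\}$ as part of hypothesis (i); the two hypotheses you cite (semisimplicity of each $\Perv_{\gr,c}(\mathcal{Y})^{w=\nu}$ and the orthogonality bounds of \cref{thm:refined_Hom_Shv_gr_estimates_using_weights}) are not by themselves enough. In particular you cannot apply it to $\Shv_{\gr,c}(\mathcal{Y})$ at this stage — that would presuppose exactly the statement being proved. You can apply it to $\mathcal{D}$ itself, where generation is tautological; but then what you obtain is a bounded transversal weight and $t$-structure on $\mathcal{D}$, and the inference you assert — that a bounded weight structure with idempotent complete heart forces $\mathcal{D}$ to coincide with its idempotent completion — is precisely the delicate content and is left unproved. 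It can be closed, for instance by passing through the $t$-structure and invoking the fact that a stable category with a bounded $t$-structure is idempotent complete (\cite[Cor.~2.14]{antieau_k-theoretic_2019}, used by the paper in \cref{cor:Shv_infty_c_idempotent_complete}), or by extending the weight structure to the idempotent completion of $\mathcal{D}$, checking the heart does not grow, and using boundedness; one must also check that Bondarko's theorem places no Karoubianness hypothesis on the ambient category, or the argument becomes circular again. The paper instead proves idempotent completeness of $\mathcal{D}$ directly, by induction on the length of the weight interval, via the explicit semiorthogonal decomposition and idempotent-splitting argument of \cref{lem:semi_orthogonality_adjoint,lem:idempotent_complete_generation}, and only invokes Bondarko afterwards in \cref{thm:weight_t_structures_graded_sheaves}, once generation is in hand. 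If you supply the missing inference (the bounded-$t$-structure route is cleanest), your variant does go through and is shorter, at the cost of leaning on these external results.
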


\subsubsection{}
\label{subsubsec:perv_gr_c_w=nu_generates_steps}
By definition, $\lrangle{\Perv_{\gr, c}(\mathcal{Y})^{w=\nu}} \subseteq \Shv_{\gr, c}(\mathcal{Y})$. To prove the other inclusion, it suffices to show that
\begin{myenum}{(\roman*)}
	\item \label{item:a} $\lrangle{\Perv_{\gr, c}(\mathcal{Y})^{w=\nu}}_{\nu \in \mathbb{Z}}$ contains $\gr(\mathcal{F}_n)$ for all $\mathcal{F}_n \in \Shv_{\mixed, c}(\mathcal{Y}_n)$ for some fixed (and hence, all) $n$, and
	\item $\lrangle{\Perv_{\gr, c}(\mathcal{Y})^{w=\nu}}_{\nu\in \mathbb{Z}}$ is idempotent complete.
\end{myenum}

\subsubsection{} 
We will now prove the first item in \cref{subsubsec:perv_gr_c_w=nu_generates_steps}. Since $\lrangle{\Perv_{\gr, c}(\mathcal{Y})^{w=\nu}}_{\nu \in \mathbb{Z}}$ is closed under extensions, using the perverse $t$-structure on $\Shv_{\mixed, c}(\mathcal{Y})$, we reduce to the case where $\mathcal{F}_n \in \Perv_{\mixed, c}(\mathcal{Y}_n)$. By~\cite[Theorem 9.2]{laszlo_perverse_2009}, $\mathcal{F}_n$ has a weight filtration whose associated graded pieces are pure. This allows us to further reduce to the case where $\mathcal{F}_n$ is a pure perverse sheaf of weight $\nu$ for some $\nu$. But then, we are done since, $\gr(\mathcal{F}_n) \in \Perv_{\gr, c}(\mathcal{Y})^{w=\nu}$.

\subsubsection{}
We will prove the second item in \cref{subsubsec:perv_gr_c_w=nu_generates_steps} in the remainder of this subsection. It suffices to show that $\lrangle{\Perv_{\gr, c}(\mathcal{Y})^{w=\nu}}_{\nu\in[k, l]}$ is idempotent complete for any $k\leq l$ since $\lrangle{\Perv_{\gr, c}(\mathcal{Y})^{w=\nu}}_{\nu\in\mathbb{Z}}$ is the union of categories of this form. We will prove this by induction on the length $l-k$.

\subsubsection{Base case}
For the base case $k=l=\nu$, consider $\lrangle{\Perv_{\gr, c}(\mathcal{Y})^{w=\nu}}$. \cref{thm:explicit_description_Perv_gr_c_w=nu,prop:Hom_simple_graded} imply that
\[
	\lrangle{\Perv_{\gr, c}(\mathcal{Y})^{w=\nu}} \simeq \bigoplus_{s \in S} \Vect^c,
\]
which is idempotent complete.

\subsubsection{Induction step}
Suppose we know that $\lrangle{\Perv_{\gr, c}(\mathcal{Y})^{w=\nu}}_{\nu\in [k, l]}$ is idempotent complete. We will show that $\lrangle{\Perv_{\gr, c}(\mathcal{Y})^{w=\nu}}_{\nu\in [k, l+1]}$ is also idempotent complete. But first, we will need some preparation. 

We start with the following lemma, which is a $\DG$-categorical counterpart of~\cite[Lem. 1.1.5]{bondarko_weight_2012}.

\begin{lem}
\label{lem:semi_orthogonality_adjoint}
Let $\mathcal{C}$ be a $\DG$-category and $\mathcal{C}_1$, $\mathcal{C}_2$ full $\DG$-subcategories of $\mathcal{C}$. Let $\mathcal{C}^\circ = \lrangle{\mathcal{C}_1, \mathcal{C}_2}$ be the smallest full $\DG$-subcategory of $\mathcal{C}$ containing $\mathcal{C}_1$ and $\mathcal{C}_2$. Suppose that for any $c_1 \in \mathcal{C}_1$ and $c_2 \in \mathcal{C}_2$, $\cHom_\mathcal{C}(c_1, c_2) \simeq 0$. Then, we have the following adjoint pairs $F_1 \dashv G_1$ and $F_2 \dashv G_2$ fitting into the following diagram
\[
\begin{tikzcd}
	\mathcal{C}_1 \ar[shift left=\arrdisp, hookrightarrow]{r}{F_1} & \ar[shift left=\arrdisp]{l}{G_1} \mathcal{C}^\circ \ar[shift left=\arrdisp]{r}{F_2} & \ar[shift left=\arrdisp, hookrightarrow]{l}{G_2} \mathcal{C}_2
\end{tikzcd}
\]
such that for any $c\in \mathcal{C}^\circ$, we have an exact triangle
\[
	F_1G_1 c \to c \to G_2F_2 c.
\]
\end{lem}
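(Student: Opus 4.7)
The plan is a standard semi-orthogonal decomposition argument, with the twist that we must work coherently at the $\DG$-level rather than just in the homotopy category. The crucial observation is that the hypothesis $\cHom_\mathcal{C}(c_1,c_2)\simeq 0$ is a vanishing statement for \emph{complexes} (not merely $\pi_0$), which provides coherent uniqueness for all constructions.

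First, I would show that every $c\in\mathcal{C}^\circ$ fits into an exact triangle $c_1\to c\to c_2$ with $c_i\in\mathcal{C}_i$; call such a triangle a \emph{decomposition} of $c$. Let $\mathcal{D}\subseteq\mathcal{C}^\circ$ denote the full subcategory of objects admitting a decomposition. Trivially $\mathcal{C}_1\cup\mathcal{C}_2\subseteq\mathcal{D}$ (take one side to be zero), and $\mathcal{D}$ is stable under shifts. For stability under cofibers, given $\alpha\colon c\to c'$ with decompositions $c_1\to c\to c_2$ and $c_1'\to c'\to c_2'$, the composite $c_1\to c\xrightarrow{\alpha}c'\to c_2'$ lives in $\cHom_\mathcal{C}(c_1,c_2')\simeq 0$, so it is canonically null-homotopic; this yields (up to contractible choice) a morphism of triangles lifting $\alpha$, and taking cofibers gives a decomposition of $\Cone(\alpha)$. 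Since $\mathcal{C}^\circ$ is generated under shifts and cofibers by $\mathcal{C}_1\cup\mathcal{C}_2$, we conclude $\mathcal{D}=\mathcal{C}^\circ$.

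Next, I would upgrade this existence to genuine functors $G_1\colon\mathcal{C}^\circ\to\mathcal{C}_1$ and $F_2\colon\mathcal{C}^\circ\to\mathcal{C}_2$ with $G_1(c)=c_1$ and $F_2(c)=c_2$. The key is that the fiber sequence of $\Vect$-valued $\cHom$-complexes
\[
\cHom_\mathcal{C}(c_1,c_1')\to\cHom_\mathcal{C}(c_1,c')\to\cHom_\mathcal{C}(c_1,c_2')
\]
has vanishing right term, so the first arrow is an equivalence; dually, $\cHom_\mathcal{C}(c_2,c_2')\xrightarrow{\simeq}\cHom_\mathcal{C}(c,c_2')$. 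These equivalences of $\Hom$-complexes immediately give the adjunctions
\[
\Hom_{\mathcal{C}^\circ}(F_1 c_1,c)\simeq\Hom_{\mathcal{C}_1}(c_1,G_1 c),\qquad
\Hom_{\mathcal{C}^\circ}(c,G_2 c_2')\simeq\Hom_{\mathcal{C}_2}(F_2 c,c_2'),
\]
where $F_1,G_2$ are the inclusions; the existence of the right/left adjoints (along with their full functoriality) then follows formally by an $\infty$-categorical adjoint functor/corepresentability argument (cf.~\cite[Prop. 5.2.4.2]{lurie_higher_2017-1}). The unit/counit of these adjunctions are precisely the maps $F_1 G_1 c\to c$ and $c\to G_2 F_2 c$, and the decomposition triangle from Step 1 is exactly the desired exact triangle $F_1 G_1 c\to c\to G_2 F_2 c$.

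The main obstacle, and the only place the $\infty$-categorical context demands care, is proving functoriality in the coherent sense. In a triangulated setting one would only get a map of triangles up to non-canonical choices, but here the vanishing of the entire $\cHom$-\emph{complex} (not just its $\Ho^0$) ensures the null-homotopies, lifts, and adjunction data are determined up to contractible choice. Thus all constructions assemble into genuine $\infty$-functors without additional coherence work; this is the reason for phrasing the hypothesis as an equivalence of $\Hom$-complexes rather than a vanishing of $\pi_0\Hom$.
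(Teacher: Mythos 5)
Your proof is correct, and it reaches the same two adjunctions by the same underlying mechanism as the paper — a d\'evissage over the stable subcategory generated by $\mathcal{C}_1$ and $\mathcal{C}_2$ combined with the observation that the vanishing of the full $\cHom$-complex makes the relevant presheaves representable — but the bookkeeping is organized differently. The paper first defines $G_1$ pointwise, taking $\mathcal{D}^\circ \subseteq \mathcal{C}^\circ$ to be the locus where $\cHom_{\mathcal{C}}(F_1(-),d)$ is representable, checks that $\mathcal{D}^\circ$ contains $\mathcal{C}_1$ and $\mathcal{C}_2$ and is closed under sums, shifts, and cones, and only afterwards sets $F_2(c) = \Cone(F_1G_1c \to c)$ and runs a \emph{second} d\'evissage to prove $\ker G_1 = \mathcal{C}_2$, which is what identifies the right-hand adjunction with $\mathcal{C}_2$. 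You instead run a single d\'evissage on the property ``admits a triangle $c_1 \to c \to c_2$ with $c_i \in \mathcal{C}_i$,'' which requires the extra step of lifting a morphism to a morphism of triangles (legitimate here, since $\cHom_{\mathcal{C}}(c_1,c_2')\simeq 0$ gives both the null-homotopy and the essentially unique factorization through the fiber), and then both adjoints and the triangle $F_1G_1c \to c \to G_2F_2c$ are read off at once from the corepresentability equivalences. Your route makes the semi-orthogonal-decomposition content explicit and avoids the second induction; the paper's route avoids constructing maps of triangles, at the cost of the separate argument that the cofiber of the counit lands in $\mathcal{C}_2$. Both rely on the same pointwise-representability principle to upgrade objectwise data to genuine functors, so your appeal to the adjoint-functor criterion is exactly parallel to the paper's implicit use of it.
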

\begin{proof}
Let $\mathcal{D}^\circ$ be the full subcategory of $\mathcal{C}^\circ$ spanned by objects $d$ such that the functor
\begin{align*}
	\mathcal{C}_1^\opp &\to \Vect \\
	c &\mapsto \cHom_\mathcal{C}(F_1(c), d)
\end{align*}
is representable. Denoting the representing object $G_1(d)$, we obtain a functor $G_1: \mathcal{D}^\circ \to \mathcal{C}_1$ which is a partial right adjoint to $F_1$. It is easy to see that $\mathcal{D}^\circ$ contains $\mathcal{C}_1$ and $\mathcal{C}_2$: $G_1(d) = d$ when $d\in \mathcal{C}_1$ and $G_1(d) = 0$ when $d\in \mathcal{C}_2$. It is also easy to check that $\mathcal{D}^\circ$ is closed under finite direct sums, shifts, and cones, or equivalently, $G_1$ is compatible with these operations. In particular, $\mathcal{D}^\circ = \mathcal{C}^\circ$ and hence, $G_1$ is defined on $\mathcal{C}^\circ$ as desired.

We have the following pairs of adjoint functors
\[
\begin{tikzcd}
	\mathcal{C}_1 \ar[shift left=\arrdisp, hookrightarrow]{r}{F_1} & \ar[shift left=\arrdisp]{l}{G_1} \mathcal{C}^\circ \ar[shift left=\arrdisp]{r}{F_2} & \ar[shift left=\arrdisp, hookrightarrow]{l}{G_2} \ker{G_1}
\end{tikzcd}
\]
where $\ker G_1$ is the full subcategory of $\mathcal{C}^\circ$ consisting of objects $c \in \mathcal{C}^\circ$ such that $G_1(c) \simeq 0$, and $F_2(c) = \Cone(F_1G_1 c \to c)$. We will now show that $\ker G_1 = \mathcal{C}_2$, which will conclude the proof. 

As already seen above, $\mathcal{C}_2 \subseteq \ker G_1$. For the other inclusion, it suffices to show that $F_2 c \in \mathcal{C}_2$ for any $c\in \mathcal{C}^\circ$. For that, note that if we let $\mathcal{E}^\circ$ be the full subcategory of $\mathcal{C}^\circ$ consisting of objects $c$ such that $F_2 c \in \mathcal{C}_2$, then $\mathcal{E}^\circ$ contains $\mathcal{C}_1, \mathcal{C}_2$ and is closed under finite direct sums, shifts, and cones. Thus, $\mathcal{E}^\circ = \mathcal{C}^\circ$ and we are done.
\end{proof}

\begin{lem}
\label{lem:idempotent_complete_generation}
Consider the situation of \cref{lem:semi_orthogonality_adjoint}. Suppose that $\mathcal{C}_1$ and $\mathcal{C}_2$ are idempotent complete. Then, so is $\mathcal{C}^\circ = \lrangle{\mathcal{C}_1, \mathcal{C}_2}$.
\end{lem}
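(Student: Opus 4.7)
The plan is to show that the idempotent completion $\widehat{\mathcal{C}^\circ}$ of $\mathcal{C}^\circ$ coincides with $\mathcal{C}^\circ$ itself. Given any $\hat c \in \widehat{\mathcal{C}^\circ}$, I will exhibit $\hat c$ as the cofiber of a morphism between two objects already in $\mathcal{C}^\circ$; since $\mathcal{C}^\circ$ is stable and hence closed under cofibers, this forces $\hat c \in \mathcal{C}^\circ$.

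First, I would extend the relevant functors from \cref{lem:semi_orthogonality_adjoint} to the idempotent completion. Because $\mathcal{C}_1$ and $\mathcal{C}_2$ are already idempotent complete, the exact functors $G_1\colon \mathcal{C}^\circ \to \mathcal{C}_1$ and $F_2\colon \mathcal{C}^\circ \to \mathcal{C}_2$ extend uniquely to exact functors $\widehat G_1\colon \widehat{\mathcal{C}^\circ} \to \mathcal{C}_1$ and $\widehat F_2\colon \widehat{\mathcal{C}^\circ} \to \mathcal{C}_2$ by the universal property of idempotent completion. The fully faithful embeddings $F_1$ and $G_2$ already land in $\mathcal{C}^\circ \subseteq \widehat{\mathcal{C}^\circ}$ with no extension needed.

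Next, I would extend the functorial cofiber sequence $F_1 G_1 c \to c \to G_2 F_2 c$ of \cref{lem:semi_orthogonality_adjoint} to all of $\widehat{\mathcal{C}^\circ}$. The natural transformations $F_1 G_1 \Rightarrow \mathrm{id} \Rightarrow G_2 F_2$, together with their higher coherence data, constitute a diagram in $\Fun(\mathcal{C}^\circ, \mathcal{C}^\circ)$ that extends uniquely to $\Fun(\widehat{\mathcal{C}^\circ}, \widehat{\mathcal{C}^\circ})$, since the target category is idempotent complete. Exactness of $\widehat G_1$ and $\widehat F_2$ ensures that the extended sequence
\[
    F_1 \widehat G_1(\hat c) \to \hat c \to G_2 \widehat F_2(\hat c)
\]
remains a cofiber sequence for every $\hat c \in \widehat{\mathcal{C}^\circ}$. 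Since $F_1 \widehat G_1(\hat c) \in F_1(\mathcal{C}_1) \subseteq \mathcal{C}^\circ$ and $G_2 \widehat F_2(\hat c) \in G_2(\mathcal{C}_2) \subseteq \mathcal{C}^\circ$, this realizes $\hat c$ as an extension of two objects of $\mathcal{C}^\circ$, so $\hat c$ lies in the stable subcategory $\mathcal{C}^\circ$ as required.

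The main obstacle is justifying that the cofiber sequence structure, together with all higher homotopy coherences, extends along the idempotent completion in a fully $\infty$-categorical sense. This reduces to the universal property of idempotent completion together with the fact that exact functors between stable $\infty$-categories preserve cofiber sequences, so the extended natural transformations automatically assemble into a cofiber sequence at every $\hat c$. All other steps are then formal consequences of the adjunctions and semi-orthogonality already established in \cref{lem:semi_orthogonality_adjoint}.
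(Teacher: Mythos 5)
Your argument is correct, but it runs on a different engine than the paper's. The paper proves the lemma by hand: it takes a coherent idempotent $\rho\colon \Idem \to \mathcal{C}^\circ$, realizes its splitting as the sequential colimit of $c \xrightarrow{e} c \xrightarrow{e} \cdots$ inside the (idempotent-completed) ambient category, applies $F_1G_1$ and $G_2F_2[-1]$ to that whole sequential diagram, and uses idempotent completeness of $\mathcal{C}_1$ and $\mathcal{C}_2$ to see that the colimits of the two auxiliary rows lie in $\mathcal{C}^\circ$, whence so does the colimit of the bottom row because the columns are exact triangles. You instead never touch an explicit idempotent: you invoke the universal property of the idempotent completion to extend $G_1$, $F_2$, and the functorial triangle $F_1G_1 \to \mathrm{id} \to G_2F_2$ of \cref{lem:semi_orthogonality_adjoint} to $\widehat{\mathcal{C}^\circ}$, and then observe that every $\hat c \in \widehat{\mathcal{C}^\circ}$ is an extension of $G_2\widehat F_2(\hat c)$ by $F_1\widehat G_1(\hat c)$, both of which lie in the full stable subcategory $\mathcal{C}^\circ$, so the inclusion $\mathcal{C}^\circ \hookrightarrow \widehat{\mathcal{C}^\circ}$ is essentially surjective. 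Both proofs rest on the same two inputs (the semiorthogonal triangle and idempotent completeness of the pieces), but yours trades the paper's concrete $\Idem$/$\mathbb{Z}_{\geq 0}$-colimit bookkeeping for the formal statement that restriction along $\mathcal{C}^\circ \to \widehat{\mathcal{C}^\circ}$ is an equivalence on exact functors into idempotent-complete stable targets; this is slicker and generalizes immediately to longer semiorthogonal decompositions, at the cost of leaning on that universal property rather than on elementary colimit manipulations.

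One step you should tighten: the claim that the extended sequence at $\hat c$ ``remains a cofiber sequence'' is not really a consequence of exactness of $\widehat G_1$ and $\widehat F_2$ alone. The clean justification is either (a) form the cofiber $Q$ of the extended counit $F_1\widehat G_1 \to \mathrm{id}_{\widehat{\mathcal{C}^\circ}}$ in the stable functor category, note that $Q$ is exact and restricts to $G_2F_2$ on $\mathcal{C}^\circ$, and conclude $Q \simeq G_2\widehat F_2$ by uniqueness of extensions (cofibers of functors being computed pointwise then gives the triangle at every $\hat c$); or (b) note that every $\hat c$ is a retract of some $c \in \mathcal{C}^\circ$ and that a retract of a cofiber sequence is a cofiber sequence. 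With either patch the argument is complete, since $\mathcal{C}^\circ$ is a full exact subcategory of $\widehat{\mathcal{C}^\circ}$ and hence closed under fibers of maps between its objects.
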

\begin{proof}
Replacing $\mathcal{C}$ by its idempotent completion if necessary, we can assume that $\mathcal{C}$ is also idempotent complete without changing $\mathcal{C}_1$, $\mathcal{C}_2$, or $\mathcal{C}^\circ = \lrangle{\mathcal{C}_1, \mathcal{C}_2}$. Let $\Idem$ denote the category given in~\cite[Defn. 4.4.5.2]{lurie_higher_2017-1}. Consider $\rho: \Idem \to \mathcal{C}^\circ$, which determines an object $c \in \mathcal{C}$ and a map $e: c\to c$ such that $e^2$ is homotopic to $e$. Choosing a left cofinal map $\mathbb{Z}_{\geq 0} \to \Idem$,~\cite[Prop. 4.4.5.17]{lurie_higher_2017-1}, we are reduced to showing that the following diagram
\[
	c \xrightarrow{e} c \xrightarrow{e} c \xrightarrow{e} \cdots
\]
has a colimit in $\mathcal{C}^\circ$.

From \cref{lem:semi_orthogonality_adjoint}, we obtain functors $\mathbb{Z}_{\geq 0} \to \Idem \to \mathcal{C}_1$ and $\mathbb{Z}_{\geq 0} \to \Idem \to \mathcal{C}_2$ given by idempotents $e_1=F_1G_1e$ on $F_1G_1c$ and $e_2[-1] = G_2F_2e[-1]$ on $G_2F_2c[-1]$, fitting into the following diagram
\[
\begin{tikzcd}
	G_2F_2 c[-1] \ar{d} \ar{r}{e_2[-1]} & G_2F_2 c[-1] \ar{d} \ar{r}{e_2[-1]} & G_2F_2 c[-1] \ar{d} \ar{r}{e_2[-1]} & \cdots\\
	F_1G_1 c \ar{d} \ar{r}{e_1} & F_1G_1 c \ar{d} \ar{r}{e_1} & F_1G_1 c \ar{d} \ar{r}{e_1} & \cdots \\
	c \ar{r}{e} & c \ar{r}{e} & c \ar{r}{e} & \cdots
\end{tikzcd}
\]
where the columns are exact triangles. The colimits of the first two rows are in $\mathcal{C}^\circ$ and hence, so is the colimit of the last row. Thus, we are done.
\end{proof}

\begin{rmk}
The lemmas above hold equally true for stable $\infty$-categories in general. The only change is that instead of $\cHom_\mathcal{C}(c_1, c_2)$, we need to formulate our statement in terms of spectra-enriched $\Hom$ instead. We do not need this in the current paper.
\end{rmk}

Back to our induction step, by \cref{thm:refined_Hom_Shv_gr_estimates_using_weights}, it is easy to see that if we take $\mathcal{C}_1 = \lrangle{\Perv_{\gr, c}(\mathcal{Y})^{w=\nu}}_{\nu\in [k, l]}$ and $\mathcal{C}_2 = \lrangle{\Perv_{\gr, c}(\mathcal{Y})^{w=l+1}}$, then $\mathcal{C}_1, \mathcal{C}_2$ and $\mathcal{C} = \Shv_{\gr, c}(\mathcal{Y})$ satisfy the conditions of \cref{lem:idempotent_complete_generation}. Consequently, $\lrangle{\Perv_{\gr, c}(\mathcal{Y})^{w=\nu}}_{\nu\in [k, l+1]}$ is idempotent complete and the induction step concludes.

\subsubsection{} We have thus finished proving \cref{thm:Shv_gr_c_generated_pure_graded_perv_sheaves}.

\subsection{Weight structure and perverse \texorpdfstring{$t$}{t}-structure}
\label{subsec:weight_and_perverse_t-str_graded_sheaves}

In this subsection, we will finally construct a weight structure and a perverse $t$-structure on $\Shv_{\gr, c}(\mathcal{Y})$ for any $\mathcal{Y} \in \Stk_k$. The preparation done in \cref{subsec:pure_graded_perv_sheaves,subsec:pure_graded_perv_sheaves_generate} allows us to apply~\cite{bondarko_weight_2012} directly, yielding a \emph{transversal weight} and $t$-structure on $\Shv_{\gr, c}(\mathcal{Y})$.

\subsubsection{Transversal weight and $t$-structures}
We begin by recalling various results from~\cite{bondarko_weight_2012}.

\begin{defn}[{\cite[Defn. 1.1.4]{bondarko_weight_2012}}]
\label{defn:semi_orthogonal_generating_family}
\begin{myenum}{(\roman*)}
\item Let $\mathcal{C}$ be a triangulated category. We say that a family $\{A_i\}_{i\in \mathbb{Z}}$ of full subcategories $A_i \subseteq \mathcal{C}$ is \emph{semi-orthogonal} if for any $i, j$, $\Hom_\mathcal{C}(A_i, A_j[*])$ is supported on $*\in [0, i-j]$. We will say that $\{A_i\}_{i\in \mathbb{Z}}$ is \emph{strongly semi-orthogonal} if, in addition to the above, $\Hom_\mathcal{C}(A_i, A_j) = 0$ when $i \neq j$.\footnote{Note that since we are working with a triangulated category $\mathcal{C}$, $\Hom_\mathcal{C}(-, -)$ is a set, rather than a space.}

\item We will say that $\{A_i\}_{i \in \mathbb{Z}}$ is \emph{generating} (in $\mathcal{C}$) if $\lrangle{A_i}_{i\in \mathbb{Z}} = \mathcal{C}$. See \cref{subsec:pure_graded_perv_sheaves_generate} for the definition of $\lrangle{-}$.\footnote{Note that this notion is different from the one discussed in \cref{subsubsec:compact_generation}.}
\end{myenum}
\end{defn}

\begin{thm}[{\cite[Thm. 1.2.1]{bondarko_weight_2012}}]
\label{thm:bondarko_main_thm}
Fix a triangulated category $\mathcal{C}$.

Let $\{A_i\}_{i\in \mathbb{Z}}$ be a family of full subcategories of $\mathcal{C}$. Then the following are equivalent (all weight and $t$-structures appearing below are bounded)
\begin{myenum}{(\roman*)}
	\item Each $A_i$ is abelian semi-simple and $\{A_i\}_{i\in \mathbb{Z}}$ is a strongly semi-orthogonal generating family (see \cref{defn:semi_orthogonal_generating_family}) in $\mathcal{C}$.
	\item $\{A_i\}_{i\in \mathbb{Z}}$ is a semi-orthogonal family in $\mathcal{C}$ such that if we let $\mathcal{C}^{t\leq 0}$ (resp. $\mathcal{C}^{t\geq 0}$) be the smallest extension-closed full subcategory of $\mathcal{C}$ containing $\cup_{i \in \mathbb{Z}, j \geq 0} A_i[j]$ (resp. $\cup_{i \in \mathbb{Z}, j \leq 0} A_i[j]$), then $(\mathcal{C}^{t\leq 0}, \mathcal{C}^{t\geq 0})$ yields a $t$-structure on $\mathcal{C}$.
	\item \label{item:bondarko_main_thm:t-heart} $\{A_i\}_{i\in \mathbb{Z}}$ is a semi-orthogonal family in $\mathcal{C}$ such that the smallest extension-closed full subcategory of $\mathcal{C}$ containing $\cup_{i\in \mathbb{Z}} A_i$ is the heart of a $t$-structure.
	\item $\{A_i\}_{i\in \mathbb{Z}}$ is a semi-orthogonal family in $\mathcal{C}$ and $\mathcal{C}$ has a $t$-structure whose heart $\mathcal{C}^{\theart}$ contains $A_i$'s such that for all $X \in \mathcal{C}^{\theart}$ there exists an exhaustive separated increasing filtration by subobjects $W_{\leq i} X$ such that $W_{\leq i} X/W_{\leq i-1}X \in A_i$ for all $i$.
\end{myenum}

The above conditions are equivalent to the following equivalent conditions
\begin{myenum}{(\alph*)}
	\item $\mathcal{C}$ has a weight and a $t$-structure, such that for all $i\in \mathbb{Z}$ and $X\in \mathcal{C}^{\theart}$ there exists a weight truncation $w_{\leq i}X \to X \to w_{\geq i+1}X$ such that all terms are in $\mathcal{C}^{\theart}$. We call such a truncation a \emph{nice decomposition}.
	\item $\mathcal{C}$ has a weight and a $t$-structure such that nice decompositions exist for any object in $\mathcal{C}^{\theart}$. Moreover, they are functorial in $X \in \mathcal{C}^{\theart}$ (if we fix $i$) and the corresponding functors $X \mapsto w_{\leq i} X$ and $X\mapsto w_{\geq i+1} X$ are exact (as endo-functors on $\mathcal{C}^{\theart}$).
	\item $\mathcal{C}$ has a weight and a $t$-structure such that for any $X \in \mathcal{C}^{\theart}$ and any $w_{\leq i} X$ (that is part of a weight truncation of $X$), $\Im(\tH{t}^0(w_{\leq i}X)\to X) \to X$ extends to a nice decomposition of $X$.
\end{myenum}

Moreover the $\{A_i\}_{i\in \mathbb{Z}}$ in the first set of equivalent conditions (indexed by Roman numerals) can be obtained from the weight and $t$-structures in the second set of equivalent conditions (indexed by Latin characters) as follows: $A_i = \mathcal{C}^{\theart} \cap \mathcal{C}^{w=i}$. Namely, $A_i$ consists of objects in the $t$-heart that have weight $i$.
\end{thm}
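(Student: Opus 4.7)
The plan is to establish the equivalences in the order (i) $\Rightarrow$ (iii) $\Rightarrow$ (iv) $\Rightarrow$ (ii) $\Rightarrow$ (i) for the Roman-numeral group, then (a) $\Rightarrow$ (b) $\Rightarrow$ (c) $\Rightarrow$ (a) for the Latin group, and finally to bridge the two groups through the identification $A_i = \mathcal{C}^{\theart}\cap\mathcal{C}^{w=i}$. For (i) $\Rightarrow$ (iii), given strong semi-orthogonality and generation by semi-simple abelian $A_i$, I would define the putative heart $\mathcal{H}$ to be the smallest extension-closed full subcategory of $\mathcal{C}$ containing $\bigcup_i A_i$ and prove that every $X\in\mathcal{H}$ admits a canonical finite filtration $0 = W_{<k}X \subseteq W_{\leq k}X \subseteq \cdots \subseteq W_{\leq l}X = X$ whose successive quotients lie in $A_i$: strong semi-orthogonality ($\Hom(A_i,A_j)=0$ for $i\neq j$) makes this filtration functorial and split at the level of homotopy classes, so it behaves like the usual weight filtration on a mixed object. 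One then checks the truncation axiom for the associated $t$-structure using only that $\Hom(A_i[m],A_j) = 0$ for $m<0$ or ($m=0$, $i\neq j$), which is built into the semi-orthogonality.

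For (iii) $\Rightarrow$ (iv) and (iv) $\Rightarrow$ (i), both directions are largely formal: the filtration in (iv) hands us back the $A_i$ as the subquotient categories, which are then automatically abelian (being closed in the abelian category $\mathcal{C}^{\theart}$) and semi-simple (by the $\Hom$-vanishing at $m=0$, $i\neq j$). Equivalence (ii) $\Leftrightarrow$ (iii) is a routine matter of rewriting the extension closures. For the Latin equivalences, the content is in constructing nice decompositions: given a weight and $t$-structure such that every $X\in\mathcal{C}^{\theart}$ admits weight truncations staying in $\mathcal{C}^{\theart}$, standard five-lemma arguments in the heart upgrade existence to functoriality and exactness. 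The converse direction (c) $\Rightarrow$ (a) uses the hypothesis that $\Im(\tH{t}^0 w_{\leq i}X \to X)$ extends to a nice decomposition, which is exactly the input needed to replace an arbitrary weight truncation by one in the heart via its $t$-image.

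To bridge the two sets of conditions, starting from (i) I would build a weight structure by declaring $\mathcal{C}^{w\leq 0}$ (resp.\ $\mathcal{C}^{w\geq 0}$) to be the smallest retract-closed extension-closed subcategory containing $\bigcup_{i\leq 0,\, j\geq 0} A_i[j]$ (resp.\ $\bigcup_{i\geq 0,\, j\leq 0} A_i[j]$); the strong semi-orthogonality precisely yields Axiom \ref{defn:weight_structure}.(ii), and the weight truncation of a generator is constructed from the $A_i$-filtration by brutal truncation. Conversely, starting from a weight and $t$-structure as in (a), one defines $A_i \defeq \mathcal{C}^{\theart}\cap\mathcal{C}^{w=i}$ and checks that these are semi-simple abelian via the $\Hom$-vanishing forced by transversality, and that they generate using that the weight tower of any object has pieces in $\mathcal{C}^{\theart}[n]$'s. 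The hardest step will be the construction and uniqueness (up to non-canonical but functorial isomorphism) of nice decompositions: weight truncations are intrinsically non-canonical, and what needs to be shown is that within the rigid environment of $\mathcal{C}^{\theart}$ one can always select truncations that are simultaneously functorial and exact, which ultimately rests on the $\Hom$-vanishing forcing all ambiguities to collapse inside the heart.
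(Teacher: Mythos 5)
First, a point of orientation: the paper does not prove this statement at all --- it is quoted directly from Bondarko \cite{bondarko_weight_2012}*{Thm.~1.2.1}, recalled together with \cref{defn:semi_orthogonal_generating_family} and \cref{prop:properties_transversal_weight_t}, and then used as a black box to produce the transversal weight and perverse $t$-structures on $\Shv_{\gr,c}(\mathcal{Y})$. So there is no in-paper argument to compare yours against; what you have written is an attempted reconstruction of Bondarko's theorem, and it has to be judged on its own terms.

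On those terms the outline follows the natural route, but two steps are genuinely under-argued. In (iv)~$\Rightarrow$~(i) you claim the $A_i$ are ``automatically abelian (being closed in the abelian category $\mathcal{C}^{\theart}$)''; a full subcategory of an abelian category is not automatically abelian, and closure of $A_i$ under kernels, cokernels and subquotients inside $\mathcal{C}^{\theart}$ is something to prove, using the filtration. More seriously, (iv) only assumes \emph{semi}-orthogonality, so the vanishing $\Hom_{\mathcal{C}}(A_i,A_j)=0$ for $i>j$ is not among your hypotheses and cannot be invoked to get semi-simplicity or strong semi-orthogonality ``automatically''; it must be derived, e.g.\ by showing that for $f\colon X\to Y$ with $X\in A_i$, $Y\in A_j$, $i>j$, the image $\Im(f)$ is forced to lie in weights $\leq j$ and $\geq i$ simultaneously, hence vanishes --- and this argument needs the functoriality and exactness of $X\mapsto W_{\leq k}X$ on $\mathcal{C}^{\theart}$, i.e.\ precisely the content of conditions (b)/(c) that your plan postpones to the Latin-letter cycle. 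Relatedly, you yourself flag the construction of functorial, exact nice decompositions (and the passage from an arbitrary $w_{\leq i}X$ to $\Im(\tH{t}^0(w_{\leq i}X)\to X)$) as the hardest step, and then dispose of it with ``standard five-lemma arguments'' and ``ambiguities collapse''; but that is where the actual content of the theorem sits, and without it the bridge between the two sets of conditions, as well as (iv)~$\Rightarrow$~(i), is not established. Similarly, in (i)~$\Rightarrow$~(iii) the verification of the truncation axiom is not a pure $\Hom$-vanishing statement: it needs the generation hypothesis and an induction showing that the class of objects admitting truncation triangles is a triangulated subcategory containing all the $A_i$. As it stands the proposal is a plausible roadmap, not a proof.
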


\begin{defn}
Let $\mathcal{C}$ be a triangulated category equipped with a weight $w$ and a $t$-structure. If $w$ and $t$ satisfy the (equivalent) conditions of \cref{thm:bondarko_main_thm}, we will say that $t$ is transversal to $w$.
\end{defn}

\begin{rmk}
\label{rmk:transversal_weight_t_structures}
\begin{myenum}{(\roman*)}
\item Note that since weight and $t$-structures on a stable $\infty$-category is defined as structures on the underlying triangulated category, all theorems about weight and $t$-structures on triangulated category (\cref{thm:bondarko_main_thm} above and \cref{prop:properties_transversal_weight_t} below, in particular) apply equally well to stable $\infty$-categories.
\item From the proof of \cref{thm:bondarko_main_thm}, we see that the weight structure on $\mathcal{C}$ is given by the the pair $(\mathcal{C}^{w\leq 0}, \mathcal{C}^{w\geq 0})$ where $\mathcal{C}^{w\leq 0}$ (resp. $\mathcal{C}^{w\geq 0}$) is the idempotent-closure in $\mathcal{C}$ of the smallest extension-closed full subcategory of $\mathcal{C}$ containing finite direct sum of objects in $\cup_{i\in \mathbb{Z}, j, i+j\leq 0} A_i[j]$ (resp. $\cup_{i\in \mathbb{Z}, j, i+j\geq 0} A_i[j]$).
\item \label{item:decomposition_theorem} $\mathcal{C}^{\weightheart}$ is the full subcategory of $\mathcal{C}$ spanned by finite direct sums of objects in $A_i[-i]$'s. In particular, any $X \in \mathcal{C}^{\weightheart}$ is equivalent to $\bigoplus_i \Ho^i(X)[-i]$ where $\Ho^i(X)[-i] \in A_i[-i]$, see~\cite[Rmk. 1.2.3.2]{bondarko_weight_2012}. This is an analog of the decomposition theorem of~\cite{beilinson_faisceaux_2018}.
\end{myenum}
\end{rmk}

\begin{prop}[{\cite[Prop. 1.2.4]{bondarko_weight_2012}}]
\label{prop:properties_transversal_weight_t}
Let $\mathcal{C}$ be a triangulated category equipped with a weight structure and a $t$-structure such that $t$ is transversal to $w$. Then,
\begin{myenum}{(\roman*)}
	\item $t$-truncations $X \mapsto \tau^{\leq i} X$ and $X \mapsto \tau^{\geq i} X$ are weight-exact for any $i$, i.e., they preserve $\mathcal{C}^{w\geq 0}$, $\mathcal{C}^{w\leq 0}$, and hence, $\mathcal{C}^{\weightheart}$.
	\item Let $X \in \mathcal{C}$ and $i \in \mathbb{Z}$. Then, $X \in \mathcal{C}^{w\leq i}$ (resp. $X \in \mathcal{C}^{w\geq i}$) if and only if for all $j \in \mathbb{Z}$, $W_{\leq i+j} \tH{t}^j(X) \simeq \tH{t}^j(X)$ (resp. $W_{\leq i+j-1} \tH{t}^j(X) \simeq 0$).
	\item For $X \in \mathcal{C}^{\theart}$, denote $W_{\geq i+1} X \defeq X/W_{\leq i}X$. We have the following pairs of adjoint functors
	\[
	\begin{tikzcd}
		\mathcal{C}^{w\leq i} \cap \mathcal{C}^{\theart}\ar[shift left=\arrdisp, hookrightarrow]{r} & \ar[shift left=\arrdisp]{l}{W_{\leq i}} \mathcal{C}^{\theart} \ar[shift left=\arrdisp]{r}{W_{\geq i+1}} & \ar[shift left=\arrdisp, hookrightarrow]{l} \mathcal{C}^{w\geq i+1} \cap \mathcal{C}^{\theart}
	\end{tikzcd}
	\]
	Moreover, both $W_{\leq i}$ and $W_{\geq i+1}$ are exact.
	\item The functors $X \mapsto W_{\leq i}(W_{\geq i} X)$ and $X \mapsto W_{\geq i}(W_{\leq i}X)$ are canonically isomorphic as functors $\mathcal{C}^{\theart} \to A_i$. For $X \in \mathcal{C}^{\theart}$, we write $\Gr^W_i X \defeq W_{\leq i} X/W_{\leq i-1} X \in \mathcal{C}^{\theart}$.
	\item For $X \in \mathcal{C}^{\theart}$, $X\in \mathcal{C}^{w\leq i}$ (resp. $X \in \mathcal{C}^{w\geq j}$) if and only if $\Gr_j^W X = 0$ for all $j>i$ (resp. $j<i$).
\end{myenum}
\end{prop}

\subsubsection{} We will now apply the discussion above to the case of $\Shv_{\gr, c}(\mathcal{Y})$ where $\mathcal{Y} \in \Stk_k$.

\begin{thm}
\label{thm:weight_t_structures_graded_sheaves}
For any $\mathcal{Y} \in \Stk_k$, $\Shv_{\gr, c}(\mathcal{Y})$ is equipped with a weight and a $t$-structure such that the $t$-structure is transversal to the weight structure.

This $t$-structure will be referred to as the perverse $t$-structure, whose heart is the category of graded perverse sheaves $\Perv_{\gr, c}(\mathcal{Y}) \defeq \Shv_{\gr, c}(\mathcal{Y})^{\theart}$.
\end{thm}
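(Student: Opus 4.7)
The plan is to apply Bondarko's theorem (\cref{thm:bondarko_main_thm}) with the family $A_i \defeq \Perv_{\gr, c}(\mathcal{Y})^{w=i}$ of pure graded perverse sheaves of weight $i$, verifying condition (i): that each $A_i$ is abelian semi-simple and that $\{A_i\}_{i\in \mathbb{Z}}$ is a strongly semi-orthogonal generating family in $\Shv_{\gr, c}(\mathcal{Y})$. All three ingredients have essentially been assembled in the preceding subsections, so the proof is a matter of assembly.

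First, each $A_i = \Perv_{\gr, c}(\mathcal{Y})^{w=i}$ is abelian and semi-simple by \cref{cor:Perv_gr_w=v_is_classical_semisimple} together with the explicit description in \cref{thm:explicit_description_Perv_gr_c_w=nu}, which identifies it with a direct sum of copies of $\Vect^{c, \heartsuit}$ indexed by the relevant simple perverse sheaves. Second, the strong semi-orthogonality of $\{A_i\}_{i\in\mathbb{Z}}$ follows from \cref{thm:refined_Hom_Shv_gr_estimates_using_weights}: for $\mathcal{F} \in A_i, \mathcal{G}\in A_j$, the complex $\cHom_{\Shv_\gr(\mathcal{Y})^\ren}(\mathcal{F}, \mathcal{G})$ is concentrated in degree $0$ if $i=j$, in $(0, i-j]$ if $i > j$, and is zero if $i < j$; in particular $\Ho^*(\cHom)$ is supported in $[0, i-j]$ (semi-orthogonality) and $\Hom^0$ vanishes for $i\neq j$ (the ``strong'' part, already recorded in \cref{cor:orthogonality_perv_gr_diff_weights}). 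Third, the generation property $\lrangle{A_i}_{i\in\mathbb{Z}} = \Shv_{\gr, c}(\mathcal{Y})$ is precisely \cref{thm:Shv_gr_c_generated_pure_graded_perv_sheaves}.

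Having verified condition (i) of \cref{thm:bondarko_main_thm}, we obtain a bounded weight structure $w$ and a bounded $t$-structure on $\Shv_{\gr, c}(\mathcal{Y})$ such that $t$ is transversal to $w$, and under the correspondence at the end of the theorem we recover $A_i = \Shv_{\gr, c}(\mathcal{Y})^{\theart} \cap \Shv_{\gr, c}(\mathcal{Y})^{w=i}$. Finally, the $t$-heart, which we call $\Perv_{\gr, c}(\mathcal{Y})$, is by \cref{thm:bondarko_main_thm}\ref{item:bondarko_main_thm:t-heart} the smallest extension-closed full subcategory of $\Shv_{\gr, c}(\mathcal{Y})$ containing $\bigcup_{i\in\mathbb{Z}} \Perv_{\gr, c}(\mathcal{Y})^{w=i}$, justifying the name ``graded perverse sheaves.''

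Since the preparatory material has already done the real work (the refined $\Hom$-estimates of \cref{thm:refined_Hom_Shv_gr_estimates_using_weights} and the generation statement \cref{thm:Shv_gr_c_generated_pure_graded_perv_sheaves}), no serious obstacle remains; the only thing to be careful about is the sign/indexing convention in \cref{defn:semi_orthogonal_generating_family}, which must match the form $[0, i-j]$ used by Bondarko, and this is indeed what \cref{thm:refined_Hom_Shv_gr_estimates_using_weights} delivers.
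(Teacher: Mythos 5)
Your proposal is correct and follows exactly the paper's argument: the paper's proof also takes $A_i = \Perv_{\gr, c}(\mathcal{Y})^{w=i}$ and invokes \cref{thm:refined_Hom_Shv_gr_estimates_using_weights}, \cref{thm:explicit_description_Perv_gr_c_w=nu}, and \cref{thm:Shv_gr_c_generated_pure_graded_perv_sheaves} to verify the hypotheses of \cref{thm:bondarko_main_thm}. Your write-up just spells out the verification of condition (i) in more detail than the paper does, which is fine.
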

\begin{proof}
Take $A_i = \Perv_{\gr, c}(\mathcal{Y})^{w=i}$ as in \cref{eq:defn_pure_gr_perv_sheaves}. \cref{thm:refined_Hom_Shv_gr_estimates_using_weights,thm:explicit_description_Perv_gr_c_w=nu,thm:Shv_gr_c_generated_pure_graded_perv_sheaves} imply that the family $\{A_i\}_{i \in \mathbb{Z}}$ satisfies the conditions of \cref{thm:bondarko_main_thm}. The proof thus concludes.
\end{proof}

\begin{rmk}
\label{rmk:pure_graded_perverse_sheaves_are_indeed_pure}
\cref{thm:bondarko_main_thm} gives another characterization of of $\Perv_{\gr, c}(\mathcal{Y})^{w=k}$ defined in \cref{eq:defn_pure_gr_perv_sheaves}. Namely, $\Perv_{\gr, c}(\mathcal{Y})^{w=k} = \Perv_{\gr, c}(\mathcal{Y}) \cap \Shv_{\gr, c}(\mathcal{Y})^{w=k}$ is precisely the category of graded perverse sheaves of pure weight $k$. We will also use $\Perv_{\gr, c}(\mathcal{Y})^{\weightheart}$ to denote $\Perv_{\gr, c}(\mathcal{Y})^{w=0}$, the category of pure graded perverse sheaves of weight $0$.
\end{rmk}

\begin{rmk}
The action of $\Vect^{\gr, c} \simeq \Shv_{\gr, c}(\pt)$ on $\Shv_{\gr, c}(\mathcal{Y})$ is compatible with the weight and $t$-structures involved. Indeed, it suffices to check for a single vector space concentrated in one graded and cohomological degree. But now, it is easy to see since the action by such an object is simply given by a combination of taking finite direct sums, cohomological degree shifts, and graded degree shifts. As a result, $\Vect^{\gr, c, \weightheart}$ acts on $\Shv_{\gr, c}(\mathcal{Y})^{\weightheart}$ and $\Vect^{\gr, c, \theart}$ acts on $\Perv_{\gr, c}(\mathcal{Y})$. Thus, $\Vect^{\gr, c} \simeq \Ch^b(\Vect^{\gr, c, \weightheart})$ also acts on $\Ch^b(\ho\Shv_{\gr, c}(\mathcal{Y})^{\weightheart})$. Moreover, this action is compatible with the weight complex functors.
\end{rmk}

\subsection{Formal properties}
\label{subsec:formal_properties_weight_Shv_gr}
Since $\Shv_{\gr, c}(\mathcal{Y})$ is equipped with a perverse $t$-structure, transversal to a weight structure, all statements in \cref{thm:bondarko_main_thm,prop:properties_transversal_weight_t,rmk:transversal_weight_t_structures} apply to $\Shv_{\gr, c}(\mathcal{Y})$ as well. In this subsection, we are interested in the interaction between these structures and those on the categories of mixed sheaves on $\mathcal{Y}_n$ and constructible sheaves on $\mathcal{Y}$.

The general meta-theorem is that the perverse $t$-structure on $\Shv_{\gr, c}(\mathcal{Y})$ is compatible with those on $\Shv_{\mixed, c}(\mathcal{Y}_n)$ and $\Shv_c(\mathcal{Y})$. Similarly, the weight structure on $\Shv_{\gr, c}(\mathcal{Y})$ is compatible with the notion of weight on $\Shv_{\mixed, c}(\mathcal{Y}_n)$ as defined in~\cite{beilinson_faisceaux_2018,laszlo_perverse_2009}. Finally, results regarding weights in the theory of mixed sheaves have natural analogs in the theory of graded sheaves. We will collect some of these properties below. Our goal is not to be exhaustive; rather, we aim demonstrate how straightforward it is to adapt known results from the theory of (mixed) sheaves to the graded sheaf setting.

\begin{prop}
\label{prop:t-exactness}
Let $\mathcal{Y}_n \in \Stk_{k_n}$ and $\mathcal{Y}$ its base change to $k$. Then, the functors
\[
	\Shv_{\mixed, c}(\mathcal{Y}_n) \xrightarrow{\gr} \Shv_{\gr, c}(\mathcal{Y}) \xrightarrow{\oblv_\gr} \Shv_c(\mathcal{Y})
\]
preserves and reflects $t$-structures with respect to the perverse $t$-structures. Namely, for any $n$ and any $\mathcal{F} \in \Shv_{\gr, c}(\mathcal{Y})$, $\mathcal{F} \in \Shv_{\gr, c}(\mathcal{Y})^{t\leq n}$ (resp. $\mathcal{F} \in \Shv_{\gr, c}(\mathcal{Y})^{t\geq n}$) if and only if $\oblv_\gr(\mathcal{F}) \in \Shv_c(\mathcal{Y})^{t\leq n}$ (resp. $\oblv_\gr(\mathcal{F}) \in \Shv_c(\mathcal{Y})^{t\geq n}$). We have a similar statement for $\gr$.
\end{prop}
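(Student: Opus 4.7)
The plan is to reduce everything to the statement that both functors send perverse hearts to perverse hearts, using the characterizations of the $t$-structures from \cref{thm:bondarko_main_thm} together with conservativity of $\oblv_\gr$ (\cref{lem:oblv_gr_conservative}) and the standard fact that the pullback $h_\mathcal{Y}^*\colon \Shv_{\mixed,c}(\mathcal{Y}_n)\to \Shv_c(\mathcal{Y})$ along the base change to $k$ is $t$-exact and conservative with respect to the perverse $t$-structures.

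First, I would show $\gr$ sends $\Perv_{\mixed,c}(\mathcal{Y}_n)$ into $\Perv_{\gr,c}(\mathcal{Y})$. Any $\mathcal{F}_n\in \Perv_{\mixed,c}(\mathcal{Y}_n)$ admits a finite weight filtration (\cite[Thm.~9.2]{laszlo_perverse_2009}) whose graded pieces are pure perverse of some weight $\nu$. By definition (see \cref{eq:defn_pure_gr_perv_sheaves} and \cref{rmk:pure_graded_perverse_sheaves_are_indeed_pure}), $\gr$ maps each such graded piece into $\Perv_{\gr,c}(\mathcal{Y})^{w=\nu}\subseteq \Perv_{\gr,c}(\mathcal{Y})$; since $\gr$ is exact and $\Perv_{\gr,c}(\mathcal{Y})$ is extension-closed (being a $t$-heart), we conclude that $\gr(\mathcal{F}_n)\in \Perv_{\gr,c}(\mathcal{Y})$. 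Dually, \cref{thm:bondarko_main_thm}.\ref{item:bondarko_main_thm:t-heart} says $\Perv_{\gr,c}(\mathcal{Y})$ is the smallest extension-closed subcategory containing $\bigcup_\nu \Perv_{\gr,c}(\mathcal{Y})^{w=\nu}$, and the latter consists of direct summands of objects of the form $\gr(\mathcal{F}_n)$ with $\mathcal{F}_n$ pure perverse. Applying $\oblv_\gr$ and using the identification $\oblv_\gr\circ \gr_{\mathcal{Y}_n}\simeq h_{\mathcal{Y},\ren}^*$ from \cref{lem:gr_and_oblv_gr_for_stacks}, together with the $t$-exactness of $h_{\mathcal{Y},\ren}^*$, we deduce $\oblv_\gr(\Perv_{\gr,c}(\mathcal{Y}))\subseteq \Perv_c(\mathcal{Y})$ (using extension-closure and closure under direct summands of $\Perv_c(\mathcal{Y})$). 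Since both $\gr$ and $\oblv_\gr$ are exact functors between stable categories with bounded $t$-structures and they send hearts to hearts, they send $\mathcal{C}^{t\leq 0}$ (resp.\ $\mathcal{C}^{t\geq 0}$) into the analogous subcategory of the target.

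For the reflection statement, I would use the standard truncation-triangle trick. Given $\mathcal{F}\in \Shv_{\gr,c}(\mathcal{Y})$ with $\oblv_\gr(\mathcal{F})\in \Shv_c(\mathcal{Y})^{t\leq 0}$, consider the triangle $\tau^{\leq 0}\mathcal{F}\to \mathcal{F}\to \tau^{>0}\mathcal{F}$. Applying the $t$-exact functor $\oblv_\gr$ yields $\oblv_\gr(\tau^{>0}\mathcal{F})\simeq \tau^{>0}\oblv_\gr(\mathcal{F})\simeq 0$, whence $\tau^{>0}\mathcal{F}\simeq 0$ by the conservativity of $\oblv_\gr$ (\cref{lem:oblv_gr_conservative}). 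The $t\geq 0$ case is symmetric. For the analogous reflection statement for $\gr$, I would combine the above for $\oblv_\gr$ with the conservativity and $t$-exactness of $h_{\mathcal{Y},\ren}^*$: if $\gr(\mathcal{F}_n)\in \Shv_{\gr,c}(\mathcal{Y})^{t\leq 0}$, then $h^*\mathcal{F}_n \simeq \oblv_\gr\gr(\mathcal{F}_n)\in \Shv_c(\mathcal{Y})^{t\leq 0}$, and the analogous truncation argument applied to $h^*$ shows $\mathcal{F}_n\in \Shv_{\mixed,c}(\mathcal{Y}_n)^{t\leq 0}$.

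The only mildly subtle point—and likely the main obstacle—is verifying that $h_{\mathcal{Y},\ren}^*\colon \Shv_{\mixed,c}(\mathcal{Y}_n)\to \Shv_c(\mathcal{Y})$ is both $t$-exact and conservative for the perverse $t$-structure. The $t$-exactness is classical since $h_\mathcal{Y}$ is a pro-étale base change (relative dimension $0$) and thus perverse-$t$-exact stratum by stratum; conservativity follows from the fact that base changing a nonzero constructible sheaf to the algebraic closure cannot produce the zero sheaf, which one can verify on a smooth atlas by reducing to the scheme case. With this input, the argument above is essentially formal.
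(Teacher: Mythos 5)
Your proposal is correct and follows essentially the same route as the paper: reduce $t$-exactness to preservation of the perverse hearts via the weight filtration of mixed perverse sheaves and the definition of $\Perv_{\gr,c}(\mathcal{Y})^{w=\nu}$, then deduce the reflection statements from conservativity plus $t$-exactness (your truncation-triangle argument is precisely the proof of the general fact the paper invokes). The only cosmetic difference is that for reflection along $\gr$ the paper uses conservativity of $\gr$ itself (\cref{cor:F_Mc_is_conservative_when_F_is}) rather than routing through the base-change pullback $h^*_{\mathcal{Y},\ren}$, which spares one from separately verifying the (standard) perverse $t$-exactness and conservativity of that pullback.
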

\begin{proof}
We will now show that $\gr$ is $t$-exact, i.e., it preserves the $t$-structures involved; $t$-exactness for $\oblv_{\gr}$ can be argued similarly. It suffices to show that $\gr$ preserves the $t$-heart. By~\cite[Thm. 9.2]{laszlo_perverse_2009}, any mixed perverse sheaf admits a finite filtration whose associated graded pieces are pure perverse sheaves. Thus, it suffices to show that $\gr$ sends pure perverse sheaves to (pure) graded perverse sheaves. But this follows from the description of the $t$-heart given in \cref{thm:bondarko_main_thm}.\ref{item:bondarko_main_thm:t-heart} and the definition of $A_i = \Perv_{\gr, c}(\mathcal{Y})^{w=i}$ given in \cref{eq:defn_pure_gr_perv_sheaves}.

Note the general fact that a conservative $t$-exact functor necessarily reflects the $t$-structure as well. The desired conclusion then follows from the conservativity of $\gr$ and $\oblv_\gr$, see \cref{prop:F_M_is_conservative_when_F_is,lem:oblv_gr_conservative}.
\end{proof}

\begin{prop}
\label{prop:w-exactness}
Let $\mathcal{Y}$ and $\mathcal{Y}_n$ be as above. The functor $\gr: \Shv_{\mixed, c}(\mathcal{Y}_n) \to \Shv_{\gr, c}(\mathcal{Y})$ preserves and reflects weights, where we use Frobenius weights on $\Shv_{\mixed, c}(\mathcal{Y}_n)$ and our weight structure on $\Shv_{\gr, c}(\mathcal{Y})$. Namely, for any $k$ and $\mathcal{F} \in \Shv_{\mixed, c}(\mathcal{Y})$, we have $\mathcal{F} \in \Shv_{\mixed, c}(\mathcal{Y})^{w\leq k}$ (resp. $\mathcal{F} \in \Shv_{\mixed, c}(\mathcal{Y})^{w\geq k}$) if and only if $\gr(\mathcal{F}) \in \Shv_{\gr, c}(\mathcal{Y})^{w \leq k}$ (resp. $\gr(\mathcal{F}) \in \Shv_{\gr, c}(\mathcal{Y})^{w \geq k}$).
\end{prop}
\begin{proof}
Perverse $t$-truncation is compatible with Frobenius weights on mixed sheaves, by~\cite[Thm. 3.5]{sun_decomposition_2012}, and is compatible with the transversal weight structure on graded sheaves by \cref{prop:properties_transversal_weight_t}. Moreover, $\gr$ is $t$-exact, by \cref{prop:t-exactness}. It, therefore, suffices to consider the case where $\mathcal{F}$ is perverse.

Now, $\mathcal{F}$ has a finite filtration whose associated graded pieces are pure perverse sheaves, by~\cite[Thm. 9.2]{laszlo_perverse_2009}. In particular, $\mathcal{F}$ has Frobenius weight $\leq k$ (resp. $\geq k$) if and only if all the pure pieces have Frobeinus weights $\leq k$ (resp. $\geq k$). This allows us to further reduce to the case where $\mathcal{F}$ is pure. But then, this follows from the definition of $\Perv_{\gr, c}(\mathcal{Y})^{w=k}$ given in \cref{eq:defn_pure_gr_perv_sheaves} and \cref{rmk:pure_graded_perverse_sheaves_are_indeed_pure}.
\end{proof}

\begin{rmk}
Pre-composing the weight complex functor $\wt: \Shv_{\gr, c}(\mathcal{Y}) \to \Ch^b(\ho\Shv_{\gr, c}(\mathcal{Y})^{\weightheart})$ (see \cref{eq:wt_complex_functor}) with $\gr$, we obtain a functor $\Shv_{\mixed, c}(\mathcal{Y}_n) \to \Ch^b(\ho\Shv_{\gr, c}(\mathcal{Y})^{\weightheart})$. Due to weight exactness of $\gr$, \cref{prop:w-exactness}, any (Frobenius) weight filtration on $\mathcal{F} \in \Shv_{\mixed, c}(\mathcal{Y}_n)$ yields a weight filtration on $\gr(\mathcal{F})$. Hence, using $\Chr(\mathcal{F}) \in \Ch^b(\ho\Shv_{\mixed, c}(\mathcal{Y}_n)^{w=0})$ to denote the chromatographic complex of $\mathcal{F}$ in the sense of~\cite[\S3.5]{webster_geometric_2017}, we have
\[
	\gr(\Chr(\mathcal{F})) \simeq \wt(\gr(\mathcal{F})),
\]
see also \cref{rmk:weight_complex_functor_explicit} for an explicit description of the weight complex functor. This is the precise sense in which the weight complex functor is compatible with the chromatographic complex construction, answering the question posed in~\cite[Rmk. 2]{webster_geometric_2017}.
\end{rmk}

\begin{cor}
The weight structure and perverse $t$-structure on $\Shv_{\gr, c}(\mathcal{Y})$ is symmetric with respect to the Verdier duality functor for graded sheaves. Namely, for any integer $n$, Verdier duality induces an equivalence of categories
\begin{align*}
	\Shv_{\gr, c}(\mathcal{Y})^{t\leq n} &\xrightarrow[\simeq]{\DVer} \Shv_{\gr, c}(\mathcal{Y})^{t\geq -n} \\
	\Shv_{\gr, c}(\mathcal{Y})^{w\leq n} &\xrightarrow[\simeq]{\DVer} \Shv_{\gr, c}(\mathcal{Y})^{w\geq -n}.
\end{align*}
\end{cor}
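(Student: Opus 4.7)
The plan is to reduce everything to the two corresponding statements on mixed sheaves, where the compatibility of Verdier duality with the perverse $t$-structure and the Frobenius weight structure is classical, and then transport these via the description of $\DVer$ on graded sheaves as a tensor of the mixed Verdier duality with linear duality on $\Vect^{\gr,c}$.

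First, I would record the key compatibility: by the very construction in \cref{subsec:functoriality_graded_sheaves}, for $\mathcal{F}_n\in \Shv_{\mixed,c}(\mathcal{Y}_n)$ and $V\in \Vect^{\gr,c}$, we have $\DVer(\mathcal{F}_n\boxtimes V)\simeq \DVer(\mathcal{F}_n)\boxtimes V^{\vee}$. Since cohomological and grading shifts correspond to tensoring by $\Qlbar[1]$ and $\Qlbar\lrangle{1}$ respectively, and these are interchanged by $(-)^\vee$ on $\Vect^{\gr,c}$ with their inverses, $\DVer$ commutes (contravariantly) with cohomological and grading shifts. Moreover, since $\DVer$ on $\Shv_{\mixed,c}(\mathcal{Y}_n)$ preserves $\Perv_{\mixed,c}(\mathcal{Y}_n)$ and sends Frobenius weight $\nu$ to Frobenius weight $-\nu$, combining with the definition of $\Perv_{\gr,c}(\mathcal{Y})^{w=\nu}$ in \cref{eq:defn_pure_gr_perv_sheaves} and its characterization from \cref{rmk:pure_graded_perverse_sheaves_are_indeed_pure}, we get
\[
	\DVer\bigl(\Perv_{\gr,c}(\mathcal{Y})^{w=\nu}\bigr)\subseteq \Perv_{\gr,c}(\mathcal{Y})^{w=-\nu}.
\]
Since $\DVer$ is an equivalence (an involution up to natural equivalence), the inclusion is in fact an equivalence.

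For the weight-structure statement, recall from \cref{rmk:transversal_weight_t_structures} that $\Shv_{\gr,c}(\mathcal{Y})^{w\leq 0}$ (resp. $\Shv_{\gr,c}(\mathcal{Y})^{w\geq 0}$) is the idempotent-closure of the extension-closure of $\bigcup_{k+j\leq 0}\Perv_{\gr,c}(\mathcal{Y})^{w=k}[j]$ (resp. $\bigcup_{k+j\geq 0}\Perv_{\gr,c}(\mathcal{Y})^{w=k}[j]$). By the preceding paragraph, $\DVer$ sends $\Perv_{\gr,c}(\mathcal{Y})^{w=k}[j]$ to $\Perv_{\gr,c}(\mathcal{Y})^{w=-k}[-j]$, and the inequality $k+j\leq n$ is sent to $(-k)+(-j)\geq -n$. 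Since $\DVer$ is an exact equivalence (it preserves cofibers and retracts), it therefore induces the claimed equivalence $\Shv_{\gr,c}(\mathcal{Y})^{w\leq n}\xrightarrow{\simeq}\Shv_{\gr,c}(\mathcal{Y})^{w\geq -n}$; applying $\DVer$ again and using involutivity shows it is in fact an equivalence and not merely an inclusion.

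For the $t$-structure statement, I would invoke \cref{thm:bondarko_main_thm}.(ii): $\Shv_{\gr,c}(\mathcal{Y})^{t\leq 0}$ (resp. $\Shv_{\gr,c}(\mathcal{Y})^{t\geq 0}$) is the extension-closure of $\bigcup_{i\in\mathbb{Z},\,j\geq 0}\Perv_{\gr,c}(\mathcal{Y})^{w=i}[j]$ (resp. with $j\leq 0$). The same bookkeeping as above then gives the desired equivalence, using that $\DVer$ interchanges $[j]$ and $[-j]$ and preserves the class of pure graded perverse sheaves.

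I do not expect any serious obstacle: the content is entirely in the compatibility $\DVer_{\Shv_{\gr,c}}\simeq \DVer_{\Shv_{\mixed,c}}\boxtimes(-)^\vee$ together with the well-known behavior of mixed Verdier duality on the perverse $t$-structure and Frobenius weights, both of which are already available by this point in the paper. The one mildly delicate point is to confirm that $\DVer$ on $\Shv_{\gr,c}(\mathcal{Y})$ is indeed exact (i.e., a triangulated equivalence to the opposite), but this is automatic from the construction of $\DVer$ as a $\DG$-functor at the end of \cref{subsec:functoriality_graded_sheaves}.
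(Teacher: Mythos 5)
Your proposal is correct and follows essentially the route the paper intends: the corollary is left as an immediate consequence of $\DVer(\mathcal{F}\boxtimes V)\simeq\DVer(\mathcal{F})\boxtimes V^{\vee}$, the classical behavior of mixed Verdier duality on perversity and Frobenius weights, and the fact that both structures on $\Shv_{\gr,c}(\mathcal{Y})$ are generated (via extensions, shifts, and retracts) by the categories $\Perv_{\gr,c}(\mathcal{Y})^{w=\nu}$, which is exactly the bookkeeping you carry out. No gaps.
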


\begin{prop}
\label{prop:functoriality_weights}
Let $f: \mathcal{Y} \to \mathcal{Z}$ be a morphism in $\Stk_k$. Then, for any $k \in \mathbb{Z}$,
\begin{myenum}{(\roman*)}
	\item $f^* \ (\text{resp. } f^!): \Shv_{\gr, c}(\mathcal{Z}) \to \Shv_{\gr, c}(\mathcal{Y})$ preserves the property of having weight $\leq k$ (resp. $\geq k$). Moreover, $f^*$ and $f^!$ are weight exact when $f$ is smooth.
	\item $f_* \ (\text{resp. } f_!): \Shv_{\gr, c}(\mathcal{Y}) \to \Shv_{\gr, c}(\mathcal{Z})$ preserves the property of having weight $\geq k$ (resp. $\leq k$). In particular, $f_* \simeq f_!$ is weight-exact when $f$ is proper.
	\item $f_*$ and $f_!$ are $t$-exact when $f$ is an affine open embedding.
	\item $f^*[d]\lrangle{d} \simeq f^![-d]\lrangle{-d}: \Shv_{\gr, c}(\mathcal{Z}) \to \Shv_{\gr, c}(\mathcal{Y})$ is weight and $t$-exact when $f$ is smooth of relative dimension $d$.
\end{myenum}
\end{prop}
\begin{proof}
We will show that $f^*$ is weight exact when $f$ is smooth. The rest can be argued in a similar manner. As before, by \cref{prop:properties_transversal_weight_t}, it suffices to show that $f^* \mathcal{F}$ is pure when $\mathcal{F}$ is a pure graded perverse sheaf. But we have a complete classification of pure graded perverse sheaves given in \cref{thm:explicit_description_Perv_gr_c_w=nu}. The desired result then follows from the corresponding statement about Frobenius weight under smooth pullbacks, using compatibility between functoriality of graded sheaves and mixed sheaves, \cref{thm:compatibility_functoriality_mixed_graded_non_mixed}.
\end{proof}

\begin{prop}
\label{prop:artinian_simple_objects}
The category $\Perv_{\gr, c}(\mathcal{Y})$ is Artinian and Noetherian. Simple graded perverse sheaves are necessarily pure and moreover, $\mathcal{F} \in \Perv_{\gr, c}(\mathcal{Y})$ is simple if and only if $\oblv_\gr(\mathcal{F}) \in \Perv_c(\mathcal{Y})$ is simple.
\end{prop}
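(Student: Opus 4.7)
The plan is to use the transversal weight and perverse $t$-structures on $\Shv_{\gr,c}(\mathcal{Y})$ from \cref{thm:weight_t_structures_graded_sheaves}, together with the explicit description of $\Perv_{\gr,c}(\mathcal{Y})^{w=i}$ from \cref{thm:explicit_description_Perv_gr_c_w=nu}, to control the subobject structure of arbitrary objects in $\Perv_{\gr,c}(\mathcal{Y})$. The main tool is \cref{prop:properties_transversal_weight_t}, which supplies, for any $\mathcal{F} \in \Perv_{\gr,c}(\mathcal{Y})$, a functorial finite weight filtration $W_{\leq i}\mathcal{F}$ of $\mathcal{F}$ inside the abelian category $\Perv_{\gr,c}(\mathcal{Y})$ with $\Gr_i^W\mathcal{F} \in \Perv_{\gr,c}(\mathcal{Y})^{w=i}$, and the weight truncation functors $W_{\leq i}$ and $W_{\geq i+1}$ are exact on $\Perv_{\gr,c}(\mathcal{Y})$.

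First, I would prove that simple graded perverse sheaves are pure. Let $\mathcal{S} \in \Perv_{\gr,c}(\mathcal{Y})$ be simple and consider its weight filtration. Exactness of $W_{\leq i}$ ensures that $W_{\leq i}\mathcal{S} \hookrightarrow \mathcal{S}$ is a subobject, so simplicity gives $W_{\leq i}\mathcal{S} \in \{0, \mathcal{S}\}$ for every $i$. Since the weight filtration is finite and exhaustive, there exists a unique $i_0$ with $W_{\leq i_0 - 1}\mathcal{S} = 0$ and $W_{\leq i_0}\mathcal{S} = \mathcal{S}$, so $\mathcal{S} = \Gr_{i_0}^W \mathcal{S} \in \Perv_{\gr,c}(\mathcal{Y})^{w=i_0}$, i.e., $\mathcal{S}$ is pure.

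Next, I would address the ``simple iff $\oblv_\gr$-simple'' statement. By the previous paragraph, any simple $\mathcal{S}$ lies in $\Perv_{\gr,c}(\mathcal{Y})^{w=i_0}$, which by \cref{thm:explicit_description_Perv_gr_c_w=nu} is the semisimple abelian category $\bigoplus_{s\in S}\Vect^{c,\heartsuit}$; hence $\mathcal{S}$ corresponds to a single $s \in S$ with one-dimensional multiplicity space, so $\oblv_\gr(\mathcal{S})$ is the corresponding simple perverse sheaf on $\mathcal{Y}$. Conversely, suppose $\oblv_\gr(\mathcal{F})$ is simple, and let $\mathcal{F}' \hookrightarrow \mathcal{F}$ be a subobject in $\Perv_{\gr,c}(\mathcal{Y})$. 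Since $\oblv_\gr$ is $t$-exact (\cref{prop:t-exactness}), hence exact between the hearts, $\oblv_\gr(\mathcal{F}') \hookrightarrow \oblv_\gr(\mathcal{F})$ is a subobject, so it is either $0$ or all of $\oblv_\gr(\mathcal{F})$; in either case the vanishing of $\oblv_\gr$ applied to $\mathcal{F}'$ or $\mathcal{F}/\mathcal{F}'$, combined with the conservativity of $\oblv_\gr$ on $\Shv_{\gr,c}(\mathcal{Y})$ (\cref{lem:oblv_gr_conservative}), yields simplicity of $\mathcal{F}$.

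Finally, for the Artinian and Noetherian claim, I would combine the two observations above. Any $\mathcal{F} \in \Perv_{\gr,c}(\mathcal{Y})$ has a finite weight filtration with associated graded pieces in the categories $\Perv_{\gr,c}(\mathcal{Y})^{w=i}$, and each such piece lies in $\bigoplus_{s\in S}\Vect^{c,\heartsuit}$, so it is a \emph{finite} direct sum of simple objects (finite because all but finitely many components vanish, and each finite-dimensional vector space has finite length). Therefore $\mathcal{F}$ is a finite iterated extension of finite-length objects, and hence itself of finite length, yielding both the ascending and descending chain conditions. The argument is essentially formal once \cref{prop:properties_transversal_weight_t,thm:explicit_description_Perv_gr_c_w=nu,prop:t-exactness,lem:oblv_gr_conservative} are in hand, so I do not anticipate any substantial obstacle; the only point requiring care is the finiteness of the direct sum decomposition of each $\Gr_i^W\mathcal{F}$, which is automatic from the definition of the direct sum $\bigoplus_{s\in S}\Vect^{c,\heartsuit}$.
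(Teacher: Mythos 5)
Your argument is correct, and two of the three claims are handled exactly as in the paper: purity of simples via the weight filtration, and the equivalence ``$\mathcal{F}$ simple $\Leftrightarrow$ $\oblv_\gr(\mathcal{F})$ simple'' via conservativity and $t$-exactness of $\oblv_\gr$ in one direction and the classification of \cref{thm:explicit_description_Perv_gr_c_w=nu} in the other. Where you diverge is the chain conditions: the paper disposes of Artinian-ness and Noetherian-ness in one line by transporting them from $\Perv_c(\mathcal{Y})$ along the exact, conservative functor $\oblv_\gr$ (a strictly increasing chain of subobjects stays strict after applying $\oblv_\gr$, since cokernels are preserved and nonzero objects are not killed), whereas you prove finite length intrinsically, using the functorial weight filtration from \cref{prop:properties_transversal_weight_t} and the semisimplicity of each $\Perv_{\gr,c}(\mathcal{Y})^{w=i}$. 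Both work; the paper's route is shorter and leans on the known finite length of ordinary perverse sheaves, while yours is self-contained on the graded side and gives finite length (hence Jordan--H\"older) directly. One point you should make explicit in your version: to conclude that each $\Gr_i^W\mathcal{F}$ has finite length \emph{in} $\Perv_{\gr,c}(\mathcal{Y})$, you need the simple objects of the semisimple subcategory $\Perv_{\gr,c}(\mathcal{Y})^{w=i}\simeq\bigoplus_{s\in S}\Vect^{c,\heartsuit}$ to remain simple in the full heart; this is exactly your ``conversely'' paragraph (their images under $\oblv_\gr$ are simple perverse sheaves), which is proved independently of the length argument, so there is no circularity --- but the dependence should be stated.
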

\begin{proof}
As $\oblv_\gr: \Shv_{\gr, c}(\mathcal{Y}) \to \Shv_c(\mathcal{Y})$ is $t$-exact by \cref{prop:t-exactness} and conservative by \cref{lem:oblv_gr_conservative}, Artinian-ness and Noetherian-ness of $\Perv_{\gr, c}(\mathcal{Y})$ follow from the same properties of $\Perv_c(\mathcal{Y})$.

Simple objects are necessarily pure since otherwise, the weight filtration will provide a non-trivial filtration.

Now, let $\mathcal{F} \in \Perv_{\gr, c}(\mathcal{Y})$. By conservativity of $\oblv_\gr$, \cref{lem:oblv_gr_conservative}, $\mathcal{F}$ is simple if $\oblv_\gr(\mathcal{F})$ is. Conversely, suppose $\mathcal{F}$ is simple, we would like to show that $\oblv_\gr(\mathcal{F})$ is also simple. As seen above, $\mathcal{F}$ is necessarily pure. But now, the desired statement follows from the complete description of pure graded perverse sheaves given in \cref{thm:explicit_description_Perv_gr_c_w=nu}.
\end{proof}

We also have a version of the decomposition theorem of~\cite{beilinson_faisceaux_2018,sun_decomposition_2012} in the graded setting.

\begin{thm}
\label{thm:decomposition_theorem}
Let $\mathcal{F} \in \Shv_{\gr, c}(\mathcal{Y})^{\weightheart}$. Then
\[
	\mathcal{F} \simeq \bigoplus_i \tH{p}^i(\mathcal{F})[-i] \simeq \bigoplus_i \bigoplus_k \mathcal{G}_{ik}[-i] \teq\label{eq:decomposition_theorem_graded_sheaves}
\]
where the direct sums are finite and $\mathcal{G}_{ik}$ are simple objects in $\Perv_{\gr, c}(\mathcal{Y})^{w=i}$. 

Moreover, if $\mathcal{F} \simeq \gr \mathcal{F}'$ for some $\mathcal{F}' \in \Shv_{\mixed, c}(\mathcal{Y}_n)^{w=0}$, for some $n\in \mathbb{Z}_{>0}$. Then, 
\[
	\oblv_\gr(\mathcal{F}) \simeq \bigoplus_i \bigoplus_k \oblv_\gr \mathcal{G}_{ik}[-i] \teq\label{eq:compatibility_with_usual_decomposition_thm}
\]
where $\mathcal{G}_{ik}$ are simple perverse sheaves on $\mathcal{Y}$. In other words, the decomposition above is compatible with the usual decomposition theorem of~\cite{beilinson_faisceaux_2018,sun_decomposition_2012}.
\end{thm}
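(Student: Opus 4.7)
The plan is to deduce both isomorphisms in \cref{eq:decomposition_theorem_graded_sheaves} from the transversal weight/$t$-structure framework built in \cref{subsec:weight_and_perverse_t-str_graded_sheaves}, together with the explicit semi-simple description of the pure hearts from \cref{subsec:pure_graded_perv_sheaves}. The first isomorphism $\mathcal{F}\simeq\bigoplus_i \tH{p}^i(\mathcal{F})[-i]$ is a formal consequence of transversality: by \cref{rmk:transversal_weight_t_structures}(iii), every object of $\Shv_{\gr,c}(\mathcal{Y})^{\weightheart}$ is a finite direct sum of shifted objects from the pure pieces $A_i[-i] = \Perv_{\gr,c}(\mathcal{Y})^{w=i}[-i]$, with the summand living in $A_i[-i]$ canonically identified with $\tH{p}^i(\mathcal{F})[-i]$. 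That $\tH{p}^i(\mathcal{F})$ actually lies in $\Perv_{\gr,c}(\mathcal{Y})^{w=i}$ is immediate from \cref{prop:properties_transversal_weight_t}(i): $t$-truncations preserve the weight heart, so $\tH{p}^i(\mathcal{F})[-i] = \tau^{\leq i}\tau^{\geq i}\mathcal{F}$ has weight $0$, forcing $\tH{p}^i(\mathcal{F})$ to have weight $i$. Finiteness of the sum follows from the perverse cohomological boundedness of any constructible object.

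For the second isomorphism, each $\tH{p}^i(\mathcal{F}) \in \Perv_{\gr,c}(\mathcal{Y})^{w=i}$ lies in a semi-simple abelian category that is completely described by \cref{thm:explicit_description_Perv_gr_c_w=nu} and \cref{cor:Perv_gr_w=v_is_classical_semisimple}. Since $\tH{p}^i(\mathcal{F})$ is in particular Noetherian and Artinian (\cref{prop:artinian_simple_objects}), it decomposes as a finite direct sum $\bigoplus_k \mathcal{G}_{ik}$ with each $\mathcal{G}_{ik}$ simple in $\Perv_{\gr,c}(\mathcal{Y})^{w=i}$. Plugging into the first isomorphism then gives the asserted \cref{eq:decomposition_theorem_graded_sheaves}.

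For the moreover part, I apply the exact, direct-sum-preserving functor $\oblv_\gr$ (conservative and $t$-exact by \cref{lem:oblv_gr_conservative} and \cref{prop:t-exactness}) to \cref{eq:decomposition_theorem_graded_sheaves} to obtain
\[
  \oblv_\gr(\mathcal{F}) \simeq \bigoplus_i \bigoplus_k \oblv_\gr(\mathcal{G}_{ik})[-i].
\]
By \cref{prop:artinian_simple_objects}, each simple $\mathcal{G}_{ik} \in \Perv_{\gr,c}(\mathcal{Y})^{w=i}$ is sent by $\oblv_\gr$ to a simple perverse sheaf on $\mathcal{Y}$, giving \cref{eq:compatibility_with_usual_decomposition_thm}. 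The hypothesis $\mathcal{F}\simeq\gr(\mathcal{F}')$ with $\mathcal{F}'\in\Shv_{\mixed,c}(\mathcal{Y}_n)^{w=0}$ ensures that $\oblv_\gr(\mathcal{F})$ is the underlying $\ell$-adic complex of a pure mixed sheaf of weight $0$, precisely the input to the classical decomposition theorem of \cite{beilinson_faisceaux_2018, sun_decomposition_2012}; our construction thus both produces and is compatible with the classical decomposition.

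The proof is largely bookkeeping once the infrastructure is in place: the genuinely substantive steps---establishing the transversal weight/$t$-structure (\cref{thm:weight_t_structures_graded_sheaves}) and pinning down the explicit semi-simple description of $\Perv_{\gr,c}(\mathcal{Y})^{w=\nu}$ (\cref{thm:explicit_description_Perv_gr_c_w=nu})---have already been completed. The only mildly subtle point is matching weights on both sides of the shift in the first isomorphism, which is handled cleanly by \cref{prop:properties_transversal_weight_t}(i), so I do not anticipate any serious obstacle.
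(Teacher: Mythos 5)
Your proposal is correct and follows essentially the same route as the paper: the first isomorphism from the transversality remark (\cref{rmk:transversal_weight_t_structures}), the second from the semi-simple description of the pure hearts (\cref{thm:explicit_description_Perv_gr_c_w=nu}), and the compatibility statement by applying $\oblv_\gr$ and invoking \cref{prop:artinian_simple_objects}. The extra details you supply (weight placement of $\tH{p}^i(\mathcal{F})$ via \cref{prop:properties_transversal_weight_t}, finiteness, and the role of the purity hypothesis) are consistent elaborations of the paper's argument.
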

\begin{proof}
The first equivalence of \cref{eq:decomposition_theorem_graded_sheaves} is by \cref{rmk:transversal_weight_t_structures}.\ref{item:decomposition_theorem}. The second equivalence of \cref{eq:decomposition_theorem_graded_sheaves} is by the complete description of simple graded perverse sheaves given in \cref{thm:explicit_description_Perv_gr_c_w=nu}. Finally, \cref{eq:compatibility_with_usual_decomposition_thm} is a consequence of \cref{eq:decomposition_theorem_graded_sheaves,prop:artinian_simple_objects}.
\end{proof}

\begin{rmk}
The decomposition in \cref{thm:decomposition_theorem} above remembers the weights and thus could be thought of as an enhancement of the usual decomposition theorem.
\end{rmk}

\subsubsection{Intermediate extensions}
Intermediate extensions are one of the main reasons that make perverse sheaves special. The same construction can be applied to graded perverse sheaves as well. Let $j: \mathcal{U} \to \mathcal{X}$ be an open embedding in $\Stk_k$ and $\mathcal{F} \in \Perv_{\gr, c}(U)$. Then, we can form
\[
	j_{!*} \mathcal{F} = \Im(\tH{p}^0(j_! \mathcal{F})\to \tH{p}^0(j_* \mathcal{F})).
\]
Due to $t$-exactness of $\oblv_\gr$ and $\gr$, and the compatibility between the functoriality of graded sheaves and that of (mixed) sheaves, $j_{!*}$ is also compatible with the usual intermediate extension functor for (mixed) sheaves. Namely,
\[
	\oblv_\gr \circ j_{!*} \simeq j_{!*} \circ \oblv_\gr \qquad\text{and}\qquad j_{!*} \circ \gr \simeq \gr  \circ j_{!*}.
\]

Moreover, standard properties of the intermediate extension functor such as purity preserving and simplicity preserving etc. also hold in the graded setting. Moreover, these properties can be obtained easily from corresponding results in the theory of (mixed) sheaves.

\subsubsection{Simple graded perverse sheaves and graded intersection complex}
Suppose $\mathcal{U}$ is smooth and $\mathcal{L} \in \Perv_{\gr, c}(\mathcal{U})$ is a graded local system, i.e., $\oblv_\gr(\mathcal{L})$ is a local system on $\mathcal{U}$, appropriately cohomologically shifted to make it perverse. Then, we also write $\IC_\gr(\mathcal{L}) \defeq j_{!*} \mathcal{L}$ to denote the graded intersection complex. 

More generally, when $\mathcal{U}$ is locally closed in $\mathcal{X}$ with closure $\lbar{\mathcal{U}}$, and $\mathcal{L}$ is as before, we also write $\IC_\gr(\mathcal{L}) \defeq \bar{j}_* j_{!*} \mathcal{L}$, where $j$ and $\bar{j}$ fit into the following diagram
\[
	\mathcal{U} \xrightarrow{j} \lbar{\mathcal{U}} \xrightarrow{\bar{j}} \mathcal{X}.
\]

\cref{prop:artinian_simple_objects} implies that $\mathcal{L}$ is irreducible if and only if $\oblv_\gr(\mathcal{L})$ is. We call such an object an irreducible graded local system. More generally, \cref{thm:explicit_description_Perv_gr_c_w=nu} implies the following expected statement.

\begin{prop}
All simple perverse sheaves $\Perv_{\gr, c}(\mathcal{X})$ are of the form $\IC_\gr(\mathcal{L})$ where $\mathcal{L} \in \Perv_{\gr, c}(\mathcal{U})$ is an irreducible graded local system, appropriately shifted.
\end{prop}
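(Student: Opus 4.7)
The plan is to reduce to the classical $\IC$-classification of simple perverse sheaves and then use the compatibility of $\oblv_\gr$, $\gr$, and $\IC$-extensions to promote it to the graded setting. First I would note that if $\mathcal{F} \in \Perv_{\gr, c}(\mathcal{X})$ is simple, then \cref{prop:artinian_simple_objects} gives both that $\mathcal{F}$ is pure and that $\mathcal{F}_0 \defeq \oblv_\gr \mathcal{F} \in \Perv_c(\mathcal{X})$ is a simple perverse sheaf. The classical theory of \cite{beilinson_faisceaux_2018} then produces a smooth locally closed embedding $i\colon \mathcal{U} \hookrightarrow \mathcal{X}$ factoring as $\mathcal{U} \xrightarrow{j} \bar{\mathcal{U}} \xrightarrow{\bar{j}} \mathcal{X}$ with $j$ open dense and $\bar{j}$ a closed embedding, together with an irreducible local system $\mathcal{L}_0$ on $\mathcal{U}$ (shifted to be perverse) such that $\mathcal{F}_0 \simeq \bar{j}_* j_{!*}\mathcal{L}_0 = \IC(\mathcal{L}_0)$.

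Next I would construct a graded lift $\mathcal{L}$ of $\mathcal{L}_0$ by ``restricting'' $\mathcal{F}$ through this factorization. Since $\mathcal{F}_0$ is supported on $\bar{\mathcal{U}}$ and $\oblv_\gr$ is conservative (\cref{lem:oblv_gr_conservative}), $\mathcal{F}$ itself is supported on $\bar{\mathcal{U}}$, so it descends along $\bar{j}_*$; applying the open pullback $j^*$ (with the same cohomological shift that makes $\mathcal{L}_0$ perverse) then yields $\mathcal{L} \in \Perv_{\gr,c}(\mathcal{U})$, using that $j^*$ is $t$-exact by \cref{prop:functoriality_weights}(iv) applied to the open embedding $j$. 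The compatibility of graded and non-graded functoriality (\cref{thm:compatibility_functoriality_mixed_graded_non_mixed}) ensures $\oblv_\gr \mathcal{L} \simeq \mathcal{L}_0$, and \cref{prop:artinian_simple_objects} applied to $\mathcal{U}$ then shows $\mathcal{L}$ is simple, hence an irreducible graded local system. Forming $\IC_\gr(\mathcal{L}) = \bar{j}_* j_{!*}\mathcal{L}$ and invoking the compatibility $\oblv_\gr \circ \IC_\gr \simeq \IC \circ \oblv_\gr$ recorded just before the proposition, one gets $\oblv_\gr \IC_\gr(\mathcal{L}) \simeq \IC(\mathcal{L}_0) \simeq \oblv_\gr \mathcal{F}$, with both $\mathcal{F}$ and $\IC_\gr(\mathcal{L})$ simple in $\Perv_{\gr,c}(\mathcal{X})$.

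The only real subtlety, which I expect to be the main (and essentially only) obstacle, is that the isomorphism $\oblv_\gr \IC_\gr(\mathcal{L}) \simeq \oblv_\gr \mathcal{F}$ does not a priori lift to an isomorphism of graded objects, since they may differ by a grading shift. I would resolve this by appealing to \cref{rmk:unmixed_Hom_direct_sum_gr_Hom}:
\[
  \bigoplus_{k \in \mathbb{Z}} \Ho^0 \cHom_{\Shv_\gr(\mathcal{X})^\ren}(\mathcal{F}, \IC_\gr(\mathcal{L})\lrangle{k}) \simeq \Ho^0 \cHom_{\Shv(\mathcal{X})^\ren}(\mathcal{F}_0, \IC(\mathcal{L}_0)) \simeq \Qlbar,
\]
the last isomorphism by Schur's lemma for simple perverse sheaves. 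Some summand on the left is therefore non-zero, producing a non-zero morphism $\varphi\colon \mathcal{F} \to \IC_\gr(\mathcal{L})\lrangle{k}$ for some $k \in \mathbb{Z}$; simplicity of both source and target in the abelian category $\Perv_{\gr,c}(\mathcal{X})$ forces $\varphi$ to be an isomorphism. Since $\IC_\gr(\mathcal{L})\lrangle{k} \simeq \IC_\gr(\mathcal{L}\lrangle{k})$ and $\oblv_\gr(\mathcal{L}\lrangle{k}) \simeq \oblv_\gr \mathcal{L} \simeq \mathcal{L}_0$, the grading-shifted object $\mathcal{L}\lrangle{k}$ remains an irreducible graded local system, and we conclude $\mathcal{F} \simeq \IC_\gr(\mathcal{L}\lrangle{k})$, as desired.
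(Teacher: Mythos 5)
Your argument is correct, but it takes a different route from the paper. The paper deduces the proposition in one stroke from \cref{thm:explicit_description_Perv_gr_c_w=nu}: by \cref{prop:artinian_simple_objects} a simple object of $\Perv_{\gr,c}(\mathcal{X})$ is pure, hence lies in some $\Perv_{\gr,c}(\mathcal{X})^{w=\nu}$, whose simple objects were already identified with (summands of) $\gr(\IC)$ for simple mixed perverse sheaves $\IC$ (via \cref{prop:simple_perv_are_pushforward}); since $\gr$ commutes with $j_{!*}$, the classical description of mixed simple objects as intermediate extensions transfers immediately. You instead work on the graded object itself: you apply $\oblv_\gr$, invoke the classical $\IC$-classification, descend $\mathcal{F}$ along the closed embedding and restrict to the open stratum to manufacture a graded lift $\mathcal{L}$ of the local system (using conservativity, $t$-exactness of open pullback, and \cref{thm:compatibility_functoriality_mixed_graded_non_mixed}), and then resolve the only genuine ambiguity --- a possible grading shift --- via \cref{rmk:unmixed_Hom_direct_sum_gr_Hom} and Schur's lemma, which is in effect a reproof of \cref{lem:oblv_gr_isom_iff_for_pure_perv}. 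What your approach buys is independence from the classification theorem: it only needs conservativity and exactness of $\oblv_\gr$ plus the compatibility of $\gr$-functoriality with the classical one, so it would apply verbatim in any setting where a faithful degrading functor with these properties exists. What the paper's route buys is brevity and the extra information that the graded lift comes from $\gr$ of a mixed simple perverse sheaf over some finite field form; the small points you elide (recollement for graded sheaves to justify descent along $\bar{j}_*$, and $\IC_\gr(\mathcal{L})\lrangle{k}\simeq\IC_\gr(\mathcal{L}\lrangle{k})$) do follow from the established compatibilities, so they are gaps of exposition rather than substance.
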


\subsection{Mixed geometry}
\label{subsec:mixed_geometry}
As mentioned in the introduction, our theory of graded sheaves is designed to give a uniform construction of mixed categories in the sense of~\cites{beilinson_koszul_1996,rider_formality_2013}. In this subsection, we will explain why this is the case. More precisely, \cref{prop:gr_shv_grading_ala_BGS,prop:gr_shv_mixed_version_ala_Rider} state that our construction indeed provides a mixed version in the sense of Beilinson--Ginzburg--Soergel and Rider. Moreover, \cref{prop:criterion_weight_heart_gr_sheaves_classical} shows that under some purity condition that is satisfied in those situations considered classically, our construction gives the same answers as those previously constructed.

\subsubsection{Graded sheaves as ``mixed versions''}
For $\mathcal{Y} \in \Stk_k$, we let $\Shv_{\infty, c}(\mathcal{Y})$ be the smallest full $\DG$-subcategory of $\Shv_{c}(\mathcal{Y})$ containing the essential images under pullbacks of $\Shv_{\mixed, c}(\mathcal{Y}_m)$ for all $k_m$-forms $\mathcal{Y}_m$ of $\mathcal{Y}$, for all $m$. Moreover, let $\Pur_c(\mathcal{Y})$ be the additive full subcategory of $\Shv_c(\mathcal{Y})$ consisting of semi-simple complexes $\mathcal{F} \simeq \bigoplus_i \IC_i[k_i]$ where $\IC_i$'s are simple perverse sheaves. Similarly, we let $\Pur_{\infty, c}(\mathcal{Y})$ be the full subcategory of $\Pur_c(\mathcal{Y})$ consisting of those $\mathcal{F}$ such that $\IC_i$'s come from $\Perv_{\mixed, c}(\mathcal{Y}_n)$ for some $n$. We note that when $\mathcal{Y}$ is a finite orbit stack, $\Shv_{\infty, c}(\mathcal{Y})$ and $\Pur_{\infty, c}(\mathcal{Y})$ coincide with $\Shv_c(\mathcal{Y})$ and $\Pur_c(\mathcal{Y})$, respectively.\footnote{The assertion still holds in the case of ind-finite orbit stacks. However, strictly speaking, we have only discussed the finite type situation in this paper.}

We have the following characterization of $\Shv_{\infty, c}(\mathcal{Y})$.
\begin{prop}
\label{prop:characterization_Shv_infty}
Let $\mathcal{F} \in \Shv_c(\mathcal{Y})$. Then the following conditions are equivalent
\begin{myenum}{(\roman*)}
	\item \label{enum:prop_characterization_Shv_infty:defn} $\mathcal{F} \in \Shv_{\infty, c}(\mathcal{Y})$;
	\item \label{enum:prop_characterization_Shv_infty:simple_constituents} the simple constituents of $\pHo^i(\mathcal{F})$ belongs to $\Pur_{\infty, c}(\mathcal{Y})$, i.e., they are simple perverse sheaves coming from $\Shv_{\mixed, c}(\mathcal{Y}_n)$ for some $n$.
\end{myenum}
As a consequence, $\Shv_{\infty, c}(\mathcal{Y})$ is closed under the perverse truncations of $\Shv_c(\mathcal{Y})$, and hence, it inherits the perverse $t$-structure on $\Shv_c(\mathcal{Y})$.
\end{prop}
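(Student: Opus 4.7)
The plan is to prove the equivalence by introducing the auxiliary full subcategory $\mathcal{C} \subseteq \Shv_c(\mathcal{Y})$ consisting of those $\mathcal{F}$ satisfying condition~\ref{enum:prop_characterization_Shv_infty:simple_constituents}, and then show that $\Shv_{\infty, c}(\mathcal{Y}) \subseteq \mathcal{C}$ (for direction \ref{enum:prop_characterization_Shv_infty:defn} $\Rightarrow$ \ref{enum:prop_characterization_Shv_infty:simple_constituents}) as well as the reverse inclusion. The closure statement at the end will then be a formal consequence.

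For \ref{enum:prop_characterization_Shv_infty:defn} $\Rightarrow$ \ref{enum:prop_characterization_Shv_infty:simple_constituents}, I will first verify that $\mathcal{C}$ is a stable subcategory of $\Shv_c(\mathcal{Y})$. Closure under shifts is immediate. For closure under cones, given an exact triangle $\mathcal{F} \to \mathcal{G} \to \mathcal{H}$, the long exact sequence on perverse cohomology identifies $\pHo^i(\mathcal{G})$ as an extension of a subobject of $\pHo^i(\mathcal{H})$ by a quotient of $\pHo^i(\mathcal{F})$; hence the Jordan--H\"older constituents of $\pHo^i(\mathcal{G})$ appear among those of $\pHo^i(\mathcal{F})$ and $\pHo^i(\mathcal{H})$, so $\mathcal{C}$ is closed under cones (and by rotation, fibers). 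Next, $\mathcal{C}$ contains the essential image of $\Shv_{\mixed, c}(\mathcal{Y}_m) \to \Shv_c(\mathcal{Y})$ for any $k_m$-form: given $\mathcal{F}_m \in \Shv_{\mixed, c}(\mathcal{Y}_m)$, the weight filtration for mixed complexes on stacks (\cite{laszlo_perverse_2009}, extending~\cite{beilinson_faisceaux_2018}) produces a finite filtration whose associated graded pieces are pure, and pure mixed complexes on stacks are semi-simple by~\cite{sun_decomposition_2012}, hence are finite direct sums of shifted simple pure perverse sheaves on $\mathcal{Y}_m$. Pulling back to $\mathcal{Y}$, \cref{prop:simple_perv_are_pushforward} expresses each such simple perverse sheaf on $\mathcal{Y}_m$ as a finite direct sum of simple perverse sheaves on $\mathcal{Y}$ that come from $\Perv_{\mixed, c}(\mathcal{Y}_{md})$ for some $d$, hence lie in $\Pur_{\infty, c}(\mathcal{Y})$. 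By closure under extensions this shows the pullback of $\mathcal{F}_m$ lies in $\mathcal{C}$. Minimality of $\Shv_{\infty, c}(\mathcal{Y})$ then gives $\Shv_{\infty, c}(\mathcal{Y}) \subseteq \mathcal{C}$.

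For the reverse direction \ref{enum:prop_characterization_Shv_infty:simple_constituents} $\Rightarrow$ \ref{enum:prop_characterization_Shv_infty:defn}, since $\mathcal{F} \in \Shv_c(\mathcal{Y})$ has only finitely many non-zero perverse cohomology sheaves, it is built from the $\pHo^i(\mathcal{F})[-i]$ via iterated cofiber sequences $\tau^{\leq i-1}\mathcal{F} \to \tau^{\leq i}\mathcal{F} \to \pHo^i(\mathcal{F})[-i]$, so it suffices to prove each $\pHo^i(\mathcal{F})$ lies in $\Shv_{\infty, c}(\mathcal{Y})$. Each is an object of $\Perv_c(\mathcal{Y})$ of finite length, hence an iterated extension of its simple constituents, which by hypothesis lie in $\Pur_{\infty, c}(\mathcal{Y})$; by definition of $\Pur_{\infty, c}(\mathcal{Y})$, each such constituent is the pullback of a simple object in $\Perv_{\mixed, c}(\mathcal{Y}_n)$ for some $n$, thus lies in $\Shv_{\infty, c}(\mathcal{Y})$. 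Stability under extensions concludes the argument.

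Finally, for the consequence, if $\mathcal{F} \in \Shv_{\infty, c}(\mathcal{Y})$ and $j \in \mathbb{Z}$, then the perverse cohomologies of $\tau^{\leq j}\mathcal{F}$ and $\tau^{\geq j}\mathcal{F}$ are each either $\pHo^i(\mathcal{F})$ or $0$, so their simple constituents form a subset of those of $\pHo^\ast(\mathcal{F})$, which lie in $\Pur_{\infty, c}(\mathcal{Y})$ by \ref{enum:prop_characterization_Shv_infty:simple_constituents}; applying \ref{enum:prop_characterization_Shv_infty:simple_constituents} $\Rightarrow$ \ref{enum:prop_characterization_Shv_infty:defn} gives $\tau^{\leq j}\mathcal{F}, \tau^{\geq j}\mathcal{F} \in \Shv_{\infty, c}(\mathcal{Y})$. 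The main technical obstacle is really just invoking the right stacky inputs---the existence of weight filtrations, semi-simplicity of pure complexes, and \cref{prop:simple_perv_are_pushforward}---all of which are available in~\cite{laszlo_perverse_2009, sun_decomposition_2012} and are already reviewed in the present paper; once these are in hand the formal argument is straightforward.
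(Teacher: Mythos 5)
Your argument is correct and follows the same skeleton as the paper's proof: for \ref{enum:prop_characterization_Shv_infty:simple_constituents} $\Rightarrow$ \ref{enum:prop_characterization_Shv_infty:defn} you build $\mathcal{F}$ by successive extensions of simple objects of $\Pur_{\infty,c}(\mathcal{Y})$, and for the converse you observe that condition \ref{enum:prop_characterization_Shv_infty:simple_constituents} cuts out a stable subcategory and check it contains the essential image of each $\Shv_{\mixed,c}(\mathcal{Y}_m)$, which is exactly how the paper argues. The one place where you diverge is in that last verification: you pass through the weight filtration and the semisimplicity of pure complexes, and this step is both heavier than necessary and slightly imprecise as stated. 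The weight filtration of~\cite{laszlo_perverse_2009} is a statement about mixed \emph{perverse} sheaves, so for a general object of $\Shv_{\mixed,c}(\mathcal{Y}_m)$ you must first run the perverse Postnikov filtration and only then weight-filter each $\pHo^i$; and the decomposition theorem of~\cite{sun_decomposition_2012} gives semisimplicity only \emph{geometrically}, i.e.\ after pullback to $\mathcal{Y}$, not over $\mathcal{Y}_m$ itself (a pure object over a finite field can be indecomposable because of Frobenius), so "finite direct sums of shifted simple pure perverse sheaves on $\mathcal{Y}_m$" is not quite right. Both issues are harmless for your conclusion --- you could either perform the decomposition after pulling back, or, more economically, skip purity altogether: since the pullback $\Shv_{\mixed,c}(\mathcal{Y}_m)\to\Shv_c(\mathcal{Y})$ is perverse $t$-exact, $\pHo^i$ of the pullback is the pullback of $\pHo^i(\mathcal{F}_m)$, whose Jordan--H\"older constituents are simple mixed perverse sheaves on $\mathcal{Y}_m$; applying \cref{prop:simple_perv_are_pushforward} to these and using closure of your class $\mathcal{C}$ under extensions finishes the step without invoking weight filtrations or the decomposition theorem. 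This lighter route is precisely what the paper does; what your version buys in exchange is only redundancy, so I would streamline it, but there is no genuine gap.
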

\begin{proof}
Assuming \ref{enum:prop_characterization_Shv_infty:simple_constituents} is satisfied. Then, we can build $\mathcal{F}$ from successive extensions of simple $\IC$'s coming from $\Shv_{\mixed, c}(\mathcal{Y}_n)$ for some $n$. Thus, $\mathcal{F} \in \Shv_{\infty, c}(\mathcal{Y})$ by definition.

Conversely, let $\mathcal{F} \in \Shv_{\infty, c}(\mathcal{Y})$. Using \cref{prop:simple_perv_are_pushforward} and the fact that the pullback functor $\Shv_{\mixed, c}(\mathcal{Y}_n) \to \Shv_c(\mathcal{Y})$ is $t$-exact with respect to the perverse $t$-structure, we know that the essential image of $\Shv_{\mixed, c}(\mathcal{Y}_n)$ satisfies \ref{enum:prop_characterization_Shv_infty:simple_constituents}. We conclude by observing that the condition \ref{enum:prop_characterization_Shv_infty:simple_constituents} is closed under finite direct sums, shifts, and cones.
\end{proof}

\begin{cor}
$\Pur_{\infty, c}(\mathcal{Y}) = \Pur_c(\mathcal{Y}) \cap \Shv_{\infty, c}(\mathcal{Y})$ as full subcategories of $\Shv_c(\mathcal{Y})$.
\end{cor}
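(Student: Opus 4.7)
The inclusion $\Pur_{\infty, c}(\mathcal{Y}) \subseteq \Pur_c(\mathcal{Y}) \cap \Shv_{\infty, c}(\mathcal{Y})$ is essentially immediate from the definitions: by construction $\Pur_{\infty, c}(\mathcal{Y}) \subseteq \Pur_c(\mathcal{Y})$, and any $\mathcal{F} \simeq \bigoplus_i \IC_i[k_i] \in \Pur_{\infty, c}(\mathcal{Y})$ has each $\IC_i$ in the essential image of $\Shv_{\mixed, c}(\mathcal{Y}_n)$ for some $n$, so $\mathcal{F}$ lies in $\Shv_{\infty, c}(\mathcal{Y})$ by definition (as it is built from finitely many shifts and direct sums of such objects).

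For the reverse inclusion, I would take $\mathcal{F} \in \Pur_c(\mathcal{Y}) \cap \Shv_{\infty, c}(\mathcal{Y})$ and write $\mathcal{F} \simeq \bigoplus_i \IC_i[k_i]$ with each $\IC_i$ a simple perverse sheaf, using that $\mathcal{F} \in \Pur_c(\mathcal{Y})$. The goal is to show that each $\IC_i$ belongs to $\Pur_{\infty, c}(\mathcal{Y})$, i.e., comes from a simple perverse sheaf in $\Perv_{\mixed, c}(\mathcal{Y}_n)$ for some $n$. The key observation is that from the semi-simple decomposition, the perverse cohomology sheaves of $\mathcal{F}$ are themselves semi-simple and split as
\[
  \pHo^j(\mathcal{F}) \simeq \bigoplus_{\{i \,:\, k_i = -j\}} \IC_i,
\]
so the simple constituents of $\pHo^j(\mathcal{F})$ are precisely the $\IC_i$'s with $k_i = -j$.

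Now, applying \cref{prop:characterization_Shv_infty} to the hypothesis $\mathcal{F} \in \Shv_{\infty, c}(\mathcal{Y})$, the simple constituents of each $\pHo^j(\mathcal{F})$ lie in $\Pur_{\infty, c}(\mathcal{Y})$. Combined with the description of these constituents above, this forces every $\IC_i$ to lie in $\Pur_{\infty, c}(\mathcal{Y})$, and hence $\mathcal{F} \in \Pur_{\infty, c}(\mathcal{Y})$, completing the proof. The argument is essentially a bookkeeping exercise once \cref{prop:characterization_Shv_infty} is in hand; there is no real obstacle, the only subtle point being the standard fact that a semi-simple complex is determined as a direct sum of shifts of its perverse cohomology sheaves, which are themselves semi-simple.
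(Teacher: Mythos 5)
Your proof is correct and follows the same route as the paper, which deduces the corollary directly from \cref{prop:characterization_Shv_infty}; the forward inclusion is definitional and the reverse inclusion is exactly the bookkeeping with the simple constituents of $\pHo^j(\mathcal{F})$ that you carry out. The paper simply leaves these details implicit.
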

\begin{proof}
This follows directly from \cref{prop:characterization_Shv_infty}.
\end{proof}

\begin{cor}
\label{cor:Shv_infty_c_idempotent_complete}
The category $\Shv_{\infty, c}(\mathcal{Y})$ is idempotent complete.
\end{cor}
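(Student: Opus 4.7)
The plan is to use the characterization given in \cref{prop:characterization_Shv_infty} to reduce idempotent completeness to a statement about Jordan--Hölder constituents of perverse cohomology.

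First, I observe that the ambient category $\Shv_c(\mathcal{Y})$ is itself idempotent complete, since by construction $\Shv_c(\mathcal{Y}) \simeq (\Shv(\mathcal{Y})^\ren)^c$ is the full subcategory of compact objects in a presentable stable $\infty$-category, and such a subcategory is always idempotent complete. Thus any idempotent $e : \mathcal{F} \to \mathcal{F}$ with $\mathcal{F} \in \Shv_{\infty, c}(\mathcal{Y})$ already splits inside $\Shv_c(\mathcal{Y})$, yielding a decomposition $\mathcal{F} \simeq \mathcal{F}' \oplus \mathcal{F}''$ in $\Shv_c(\mathcal{Y})$. It then suffices to show that the direct summands $\mathcal{F}'$ and $\mathcal{F}''$ lie in $\Shv_{\infty, c}(\mathcal{Y})$.

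For this, I will appeal to criterion \ref{enum:prop_characterization_Shv_infty:simple_constituents} of \cref{prop:characterization_Shv_infty}. Since the perverse cohomology functors $\pHo^i$ on $\Shv_c(\mathcal{Y})$ are additive (being the cohomology functors of a $t$-structure on a stable $\infty$-category), the splitting $\mathcal{F} \simeq \mathcal{F}' \oplus \mathcal{F}''$ descends to splittings $\pHo^i(\mathcal{F}) \simeq \pHo^i(\mathcal{F}') \oplus \pHo^i(\mathcal{F}'')$ inside the abelian category $\Perv_c(\mathcal{Y})$. Because $\Perv_c(\mathcal{Y})$ is artinian and noetherian, the simple Jordan--Hölder constituents of each summand $\pHo^i(\mathcal{F}')$ and $\pHo^i(\mathcal{F}'')$ form a subset (with multiplicity) of the simple constituents of $\pHo^i(\mathcal{F})$, which by hypothesis all belong to $\Pur_{\infty, c}(\mathcal{Y})$.

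Applying \cref{prop:characterization_Shv_infty} again (in the direction from \ref{enum:prop_characterization_Shv_infty:simple_constituents} to \ref{enum:prop_characterization_Shv_infty:defn}), we conclude $\mathcal{F}', \mathcal{F}'' \in \Shv_{\infty, c}(\mathcal{Y})$, so the idempotent splits inside $\Shv_{\infty, c}(\mathcal{Y})$. I do not anticipate any serious obstacle here; the only subtlety is ensuring that the Jordan--Hölder argument really goes through for direct summands, which is immediate from artinian-ness of $\Perv_c(\mathcal{Y})$ together with the Krull--Schmidt property of the finite-length subcategory they generate.
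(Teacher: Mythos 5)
Your argument is correct, but it takes a different route from the paper. The paper's proof is a one-liner: by \cref{prop:characterization_Shv_infty}, $\Shv_{\infty,c}(\mathcal{Y})$ inherits the (bounded) perverse $t$-structure from $\Shv_c(\mathcal{Y})$, and a stable $\infty$-category with a bounded $t$-structure is automatically idempotent complete by \cite[Cor. 2.14]{antieau_k-theoretic_2019}. You instead split the idempotent in the ambient idempotent-complete category $\Shv_c(\mathcal{Y})$ and then verify, via criterion \ref{enum:prop_characterization_Shv_infty:simple_constituents} of \cref{prop:characterization_Shv_infty} and additivity of $\pHo^i$, that the resulting summands still lie in $\Shv_{\infty,c}(\mathcal{Y})$; in effect you prove the slightly stronger statement that $\Shv_{\infty,c}(\mathcal{Y})$ is a thick (summand-closed) subcategory of $\Shv_c(\mathcal{Y})$, from which idempotent completeness follows. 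Both arguments lean on \cref{prop:characterization_Shv_infty}, just on different halves of it; yours is more elementary and self-contained, the paper's is shorter and reuses a general $K$-theoretic fact. Two small remarks: the appeal to Krull--Schmidt at the end is unnecessary --- for a finite-length object in an abelian category, the Jordan--H\"older constituents of any subquotient (in particular of a direct summand) form a sub-multiset of those of the object, which is all you need; and when you say ``any idempotent already splits inside $\Shv_c(\mathcal{Y})$'' you are implicitly using that idempotent completeness of the stable $\infty$-category $\Shv_c(\mathcal{Y})$ lets you split the (coherent) idempotent there, which is exactly what the paper's remark preceding \cref{cor:Shv_infty_c_idempotent_complete} provides, so this is harmless.
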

\begin{proof}
Since $\Shv_{\infty, c}(\mathcal{Y})$ has a bounded $t$-structure, it is idempotent complete, by~\cite[Cor. 2.14]{antieau_k-theoretic_2019}.
\end{proof}

We have yet another characterization of $\Shv_{\infty, c}(\mathcal{Y})$.

\begin{prop}
\label{prop:2nd_characterization_Shv_infty}
The functor $\oblv_\gr: \Shv_{\gr, c}(\mathcal{Y}) \to \Shv_c(\mathcal{Y})$ factors through $\Shv_{\infty, c}(\mathcal{Y})$. Moreover, the resulting functor induces an equivalence of category
\[
	\Shv_{\gr, c}(\mathcal{Y}) \otimes_{\Vect^{\gr, c}} \Vect^c \xrightarrow{\simeq} \Shv_{\infty, c}(\mathcal{Y}).
\]
\end{prop}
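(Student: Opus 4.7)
The plan is to first verify the factorization and then construct and analyze a comparison functor.

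\textbf{Step 1: Factorization.} For any $\mathcal{F} \in \Shv_{\gr,c}(\mathcal{Y})$, I note that by \cref{cor:compact_generators_relative_tensors} applied in the small-category setting, $\mathcal{F}$ is a retract of a finite colimit of objects of the form $\gr_{\mathcal{Y}_n}(\mathcal{F}_n) \boxtimes V$ with $\mathcal{F}_n \in \Shv_{\mixed,c}(\mathcal{Y}_n)$ and $V \in \Vect^{\gr,c}$, for various $n$. By \cref{lem:gr_and_oblv_gr_for_stacks}, $\oblv_\gr \circ \gr_{\mathcal{Y}_n} \simeq h_{\mathcal{Y},\ren}^*$ on such objects (and $V$ is killed to a finite direct sum of shifts). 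Since $\Shv_{\infty,c}(\mathcal{Y})$ contains the essential image of all such pullbacks and is closed under finite colimits, shifts, and (by \cref{cor:Shv_infty_c_idempotent_complete}) idempotent splittings, we obtain the desired factorization $\oblv_\gr : \Shv_{\gr,c}(\mathcal{Y}) \to \Shv_{\infty,c}(\mathcal{Y})$.

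\textbf{Step 2: Construction of the comparison functor.} Since $\oblv_\gr : \Vect^{\gr,c} \to \Vect^c$ is a symmetric monoidal functor of rigid compactly generated categories and $\Shv_{\gr,c}(\mathcal{Y})$ is a $\Vect^{\gr,c}$-module (with $\oblv_\gr$ a module functor covering the scalar extension), the universal property of $\otimes_{\Vect^{\gr,c}} \Vect^c$ produces a canonical $\Vect^c$-linear functor
\[
\Phi : \Shv_{\gr,c}(\mathcal{Y}) \otimes_{\Vect^{\gr,c}} \Vect^c \to \Shv_{\infty,c}(\mathcal{Y})
\]
through which $\oblv_\gr$ factors via the unit $\mathcal{F} \mapsto \mathcal{F} \boxtimes \Qlbar$.

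\textbf{Step 3: Full faithfulness.} Since both sides are compactly generated (after passing to ind-completions) by objects of the form $\mathcal{F} \boxtimes V$ with $\mathcal{F} \in \Shv_{\gr,c}(\mathcal{Y})$ and $V \in \Vect^c$, it suffices to compute $\Hom$'s on such objects. Applying \cref{prop:induction_Hom_tensor} to the scalar extension $\Vect^{\gr,c} \to \Vect^c$ gives
\[
\cHom_{\Shv_{\gr,c}(\mathcal{Y}) \otimes_{\Vect^{\gr,c}} \Vect^c}(\mathcal{F} \boxtimes \Qlbar,\, \mathcal{G} \boxtimes \Qlbar) \simeq \oblv_\gr\!\left(\cHom^\gr_{\Shv_\gr(\mathcal{Y})^\ren}(\mathcal{F},\mathcal{G})\right) \simeq \bigoplus_{k \in \mathbb{Z}} \cHom_{\Shv_\gr(\mathcal{Y})^\ren}(\mathcal{F},\mathcal{G}\lrangle{k}).
\]
By \cref{rmk:unmixed_Hom_direct_sum_gr_Hom}, this is precisely $\cHom_{\Shv(\mathcal{Y})^\ren}(\oblv_\gr \mathcal{F}, \oblv_\gr \mathcal{G})$, which is the target $\Hom$. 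Thus $\Phi$ is fully faithful.

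\textbf{Step 4: Essential surjectivity.} The essential image of $\Phi$ contains $\oblv_\gr \gr_{\mathcal{Y}_n}(\mathcal{F}_n) \simeq h_{\mathcal{Y},\ren}^* \mathcal{F}_n$ for every $\mathcal{F}_n \in \Shv_{\mixed,c}(\mathcal{Y}_n)$ and every $n$. This image is moreover closed under finite colimits, shifts, and retracts (the source being a stable idempotent-complete category and $\Phi$ being exact and fully faithful). Since these generate $\Shv_{\infty,c}(\mathcal{Y})$ by definition and by \cref{cor:Shv_infty_c_idempotent_complete}, $\Phi$ is essentially surjective. The main subtlety, and hence the step to handle with care, is verifying that \cref{prop:induction_Hom_tensor} applies in the small-category setting used here; this is addressed via the discussion in \cref{subsec:large_vs_small_cats} on the compatibility between small and large relative tensor products and enriched $\Hom$'s under ind-completion.
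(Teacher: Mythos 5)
Your proposal is correct and follows essentially the same route as the paper: factor $\oblv_\gr$ through $\Shv_{\infty,c}(\mathcal{Y})$ using the generators $\gr(\mathcal{F}_n)$, obtain the induced functor from the relative tensor, prove full faithfulness via the enriched-Hom formula (your Step 3 simply re-derives the content of \cref{rmk:unmixed_Hom_direct_sum_gr_Hom}/\cref{eq:oblv_gr_conservative}, which is what the paper cites), and conclude essential surjectivity because both sides are generated as idempotent-complete stable categories by the same objects. The only point to keep in mind is that the Hom identification must be the one induced by the comparison functor itself, which it is, since it comes from the functors in \cref{eq:oblv_gr_conservative}.
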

\begin{proof}
By definition, $\Shv_{\gr, c}(\mathcal{Y})$ is generated as an idempotent complete $\DG$-category by the essential image of $\Shv_{\mixed, c}(\mathcal{Y}_m)$ for some (in fact, any/all) $\mathbb{F}_{q^m}$-form $\mathcal{Y}_m$ of $\mathcal{Y}$ for some (in fact, any/all) $m$. Thus, $\oblv_\gr$ factors through $\Shv_{\infty, c}(\mathcal{Y})$ and we obtain a functor $\Shv_{\gr, c}(\mathcal{Y}) \to \Shv_{\infty, c}(\mathcal{Y})$. This induces a functor
\[
	\Shv_{\gr, c}(\mathcal{Y}) \otimes_{\Vect^{\gr,c}} \Vect^{c} \to \Shv_{\infty, c}(\mathcal{Y}),
\]
which is fully faithful, see also \cref{eq:oblv_gr_conservative}. Since both the source and target are generated, as idempotent complete $\DG$-categories, by the same collection of objects, the resulting functor is an equivalence of categories and the proof concludes.
\end{proof}

\begin{prop}
\label{prop:gr_shv_mixed_version_ala_Rider}
$\Shv_{\gr, c}(\mathcal{Y})$ is a \emph{mixed version} of $\Shv_{\infty, c}(\mathcal{Y})$  in the sense of~\cite[Defn. 4.2]{rider_formality_2013} with a degree $1$ Tate twist $\lrangle{1}$.
\end{prop}
\begin{proof}
	The first condition of~\cite[Defn. 4.2]{rider_formality_2013} concerning the $t$-exactness of the autoequivalence $\lrangle{1}$ and how $\lrangle{1}$ changes the weight is automatic (and in fact already holds for mixed sheaves).

	Next, by \cref{prop:t-exactness}, $\oblv_{\gr}$ is a $t$-exact functor. It is also faithful, by \cref{rmk:unmixed_Hom_direct_sum_gr_Hom}. Moreover, by construction, $\Shv_{\infty, c}(\mathcal{Y})$ is generated by the essential image of $\Shv_{\gr, c}(\mathcal{Y})$ as a triangulated category. This gives us the second condition of~\cite[Defn. 4.2]{rider_formality_2013}.
	
	For the third condition, observe that we have a natural equivalence by construction
	\[
		\oblv_\gr(\mathcal{F}\lrangle{1}) \xrightarrow[\simeq]{\varepsilon} \oblv_\gr(\mathcal{F}), \qquad\text{for all } \mathcal{F} \in \Shv_\gr(\mathcal{Y}). 
	\]
	
	Lastly, \cref{rmk:unmixed_Hom_direct_sum_gr_Hom} gives us a direct sum decomposition required in~\cite[Eq. (4.3)]{rider_formality_2013}.
\end{proof}

\begin{rmk}
The results above have natural variations (with exactly the same proofs) where instead of $\Shv_{\gr, c}(\mathcal{Y})$ and $\Shv_{\infty, c}(\mathcal{Y})$, we consider the smallest full $\DG$-subcategory of $\Shv_{\gr,c}(\mathcal{Y})$ and $\Shv_c(\mathcal{Y})$ generated by a certain collection of objects coming from $\Shv_{\mixed, c}(\mathcal{Y}_m)$ for some $m$'s as long as the resulting categories are closed under the perverse truncation and Tate twist.
\end{rmk}

\subsubsection{}
Let $\Perv_{\infty, c}(\mathcal{Y}) \defeq \Shv_{\infty, c}(\mathcal{Y})^{\theart}$ denote the perverse $t$-heart. Then, $\oblv_\gr$ restricts to an exact functor
\[
	\oblv_\gr: \Perv_{\gr, c}(\mathcal{Y}) \to \Perv_{\infty, c}(\mathcal{Y})
\]
which is still faithful.

\begin{prop}
\label{prop:gr_shv_grading_ala_BGS}
The functor $\oblv_\gr: \Perv_{\gr, c}(\mathcal{Y}) \to \Perv_{\infty, c}(\mathcal{Y})$ together with $\varepsilon$ realizes $\Perv_{\gr, c}(\mathcal{Y})$ as a grading on $\Perv_{\infty, c}(\mathcal{Y})$ with a degree $1$ Tate twist $\lrangle{1}$ in the sense of~\cite[Defn. 4.3.1.(1)]{beilinson_koszul_1996}.
\end{prop}

This result is slightly more subtle than \cref{prop:gr_shv_mixed_version_ala_Rider}. Before proving it, we need some preparation. 

\subsubsection{}
By $\Ind$-extending, we obtain perverse $t$-structures on $\Shv_{\gr}(\mathcal{Y})^{\ren}$ and
\[
	\Shv_{\infty}(\mathcal{Y})^{\ren} \defeq \Ind(\Shv_{\infty, c}(\mathcal{Y})) \simeq \Shv_{\gr}(\mathcal{Y})^{\ren} \otimes_{\Vect^\gr} \Vect,
\]
where the last equivalence is due to \cref{prop:2nd_characterization_Shv_infty}. The hearts of these $t$-structures are compactly generated abelian categories $\Perv_{\gr}(\mathcal{Y}) \defeq \Ind(\Perv_{\gr,c}(\mathcal{Y}))$ and $\Perv_{\infty}(\mathcal{Y}) \defeq \Ind(\Perv_{\infty,c}(\mathcal{Y}))$, respectively. Moreover, by construction, these $t$-structures are compatible with filtered colimits. In particular, filtered colimits are exact in $\Perv_\gr(\mathcal{Y})$ and $\Perv_\infty(\mathcal{Y})$.

\subsubsection{}
Consider the continuous $\Vect^\gr$-linear adjoint pair
\[
	\oblv_\gr: \Vect^\gr \rightleftarrows \Vect: \const
\]
where $\const$ sends an object $V\in \Vect$ to the constant graded object where each graded component is $V$. Namely,
\[
	\const(V) \defeq \bigoplus_k V\lrangle{k} \in \Vect^\gr.
\]
This induces an eponymous continuous adjoint pair
\[
	\oblv_{\gr} :\Shv_\gr(\mathcal{Y})^{\ren} \simeq \Shv_{\gr}(\mathcal{Y})^{\ren} \otimes_{\Vect^{\gr}} \Vect^{\gr} \rightleftarrows \Shv_{\gr}(\mathcal{Y})^{\ren} \otimes_{\Vect^{\gr}} \Vect \simeq \Shv_{\infty}(\mathcal{Y})^{\ren}: \const,
\]
By construction,
\[
	\const(\mathcal{F}\boxtimes V) \simeq \bigoplus_k \mathcal{F} \otimes V\lrangle{k}, \qquad \text{for all } \mathcal{F} \in \Shv_{\gr}(\mathcal{Y})^{\ren}, V \in \Vect.
\]
In particular,
\[
	\const(\oblv_\gr(\mathcal{F})) \simeq \bigoplus_k \mathcal{F}\lrangle{k}, \qquad \text{for all } \mathcal{F} \in \Shv_{\gr}(\mathcal{Y})^{\ren}. \teq\label{eq:const_oblv_gr}
\]

By construction, $\oblv_\gr$ is $t$-exact and hence, induces a functor
\[
	\oblv_\gr: \Perv_\gr(\mathcal{Y}) \to \Perv_\infty(\mathcal{Y}).
\]

\begin{lem} \label{lem:oblv_gr_const_adjoint_perv}
$\const$ is also $t$-exact, and hence, induces a functor
\[
	\const: \Perv_\infty(\mathcal{Y}) \to \Perv_\gr(\mathcal{Y}).
\]
In particular, the adjoint pair $\oblv_\gr \dashv \const$ restricts to an exact adjoint pair
\[
	\oblv_\gr: \Perv_\gr(\mathcal{Y}) \rightleftarrows \Perv_\infty(\mathcal{Y}): \const.
\]
\end{lem}
\begin{proof}
	We will show that for any $\mathcal{F} \in \Perv_\infty(\mathcal{Y}) \subseteq \Shv_{\infty}(\mathcal{Y})^{\ren}$, $\const(\mathcal{F}) \in \Perv_\gr(\mathcal{Y})$. Let $\mathcal{C}$ be the full subcategory of $\Perv_\infty(\mathcal{Y})$ spanned by objects $\mathcal{F}$ such that $\const(\mathcal{F}) \in \Perv_\gr(\mathcal{Y})$. We will show that $\mathcal{C} = \Perv_\infty(\mathcal{Y})$.
	
	By \cref{eq:const_oblv_gr}, $\mathcal{C}$ contains the essential image of $\Perv_\gr(\mathcal{Y})$ under $\oblv_\gr$. By general nonsense, $\mathcal{C}$ is closed under extensions. Moreover, since $\const$ is continuous and the $t$-structures involved are all compatible with filtered colimits, $\mathcal{C}$ is closed under filtered colimits. But by \cref{prop:2nd_characterization_Shv_infty}, any object of $\Perv_\infty(\mathcal{Y})$ is a filtered colimit of extensions of objects in the essential image of $\Perv_{\gr,c}(\mathcal{Y})$. Thus, $\mathcal{C} = \Perv_\infty(\mathcal{Y})$ and we are done.
\end{proof}

We are now ready to prove \cref{prop:gr_shv_grading_ala_BGS}.

\begin{proof}[Proof of \cref{prop:gr_shv_grading_ala_BGS}] \label{proof:prop:gr_shv_grading_ala_BGS}
Taking \cref{prop:gr_shv_mixed_version_ala_Rider} into account, it remains to show that for $\mathcal{F}, \mathcal{G} \in \Perv_{\gr, c}(\mathcal{Y})$ and all $i\in \mathbb{N}$, the natural map $\oblv_{\gr}(\mathcal{G}\lrangle{1}) \xrightarrow[\simeq]{\varepsilon} \oblv_{\gr}(\mathcal{G})$ induces an isomorphism
\[
	\bigoplus_{k\in \mathbb{Z}} \Ext^i_{\Perv_{\gr,c}(\mathcal{Y})}(\mathcal{F}, \mathcal{G}\lrangle{k}) \simeq \Ext^i_{\Perv_{\infty, c}(\mathcal{Y})}(\oblv_\gr(\mathcal{F}), \oblv_\gr(\mathcal{G})). \teq\label{eq:unmixed_exts}
\]
This will be the result of a certain adjoint pair between the derived categories of $\Perv_{\gr}(\mathcal{Y})$ and $\Perv_{\infty}(\mathcal{Y})$ along with a certain almost compactness property.

The exact adjoint pair of \cref{lem:oblv_gr_const_adjoint_perv} induces a $t$-exact adjoint pair 
\[
	\oblv_{\gr}^D: D(\Perv_{\gr}(\mathcal{Y})) \rightleftarrows D(\Perv_{\infty}(\mathcal{Y})): \const^D,
\]
whose restriction to the hearts recovers the original adjoint pair. Here, $D(-)$ denotes the unbounded derived category as constructed in, for example,~\cite[\S1.3.5]{lurie_higher_2017}. By construction, for any $\mathcal{G} \in \Perv_\gr(\mathcal{Y}) \subset D(\Perv_\gr(\mathcal{Y}))$, 
\[
	\const^D(\oblv_\gr^D(\mathcal{G}[i])) \simeq \const^D(\oblv_\gr^D(\mathcal{G}))[i] \simeq \bigoplus_k \mathcal{G}\lrangle{k}[i] \in \Perv_\infty(\mathcal{Y}) \subseteq D(\Perv_{\infty}(\mathcal{Y})).
\]
Thus, for any $\mathcal{F} \in \Perv_{\gr, c}(\mathcal{Y})$ and $\mathcal{G} \in \Perv_\infty(\mathcal{Y})$, we have
\begin{align*}
	&\alignsep\Ext^i_{\Perv_\infty(\mathcal{Y})}(\oblv_\gr(\mathcal{F}), \oblv_\gr(\mathcal{G})) \\
	&\simeq \Ho^i(\cHom_{D(\Perv_\infty(\mathcal{Y}))}(\oblv_\gr^D(\mathcal{F}), \oblv_\gr^D(\mathcal{G}))) \\
	&\simeq \Ho^i(\cHom_{D(\Perv_\gr(\mathcal{Y}))}(\mathcal{F}, \const^D\oblv_\gr^D(\mathcal{G}))) \\
	&\simeq \Ho^i(\cHom_{D(\Perv_\gr(\mathcal{Y}))}(\mathcal{F}, \bigoplus_k \mathcal{G}\lrangle{k})) \\
	&\simeq \bigoplus_k \Ho^i(\cHom_{D(\Perv_\gr(\mathcal{Y}))}(\mathcal{F}, \mathcal{G}\lrangle{k})) \\
	&\simeq \bigoplus_k \Ext^i_{\Perv_\gr(\mathcal{Y})}(\mathcal{F}, \mathcal{G}\lrangle{k}).
	\end{align*}
	Here, the second to last equivalence is due to the fact that $\mathcal{F}$ is almost compact in $D(\Perv_\gr(\mathcal{Y}))^{\pervt\leq 0}$. Indeed, by \cite[Example C.6.9.5, Remark C.6.8.10]{lurie_spectral_2018} and \cref{prop:artinian_simple_objects}, $D(\Perv_\gr(\mathcal{Y}))^{\pervt\leq 0}$ is locally Noetherian. Then, the fact that $\mathcal{F}$ is almost compact follows from \cite[Prop. C.6.9.3]{lurie_spectral_2018}. The proof thus concludes.
\end{proof}

\subsection{$\Hom$-purity and the usual construction via $\Ch^b$} 
In the above, we discussed how our construction provides the correct answer in the sense of~\cites{beilinson_koszul_1996,rider_formality_2013}. We will now explain how our construction in fact also recovers, and hence, generalizes classical constructions.

Before the current paper (with the exception of~\cites{soergel_perverse_2018,soergel_equivariant_2018}), mixed versions are constructed by hand as the $\DG$-/homotopy category of bounded chain complexes in a chosen collection of semi-simple objects. In our language, the mixed version was defined to be $\Ch^b(\ho\Pur_{\infty, c}(\mathcal{Y}))$ where $\ho\Pur_{\infty, c}(\mathcal{Y})$ is the underlying homotopy category of $\Pur_{\infty, c}(\mathcal{Y})$. However, this is only sensible when the mixed $\cHom$ complexes between those objects satisfy some purity condition. What we did above amounts to saying that a mixed version in fact always exists, free from any extra purity condition. 

However, as we will see in \cref{prop:criterion_weight_heart_gr_sheaves_classical} below, when a purity condition is satisfied, the weight heart $\Shv_{\gr, c}(\mathcal{Y})^{\weightheart}$ is classical and hence, the weight complex functor of \cref{subsubsec:weight_complex_functor} yields an equivalence of (weight) $\DG$-categories $\Shv_{\gr, c}(\mathcal{Y}) \simeq \Ch^b(\Shv_{\gr, c}(\mathcal{Y})^{\weightheart})$. Moreover, in this case, $\Shv_{\gr, c}(\mathcal{Y})^{\weightheart} \simeq \ho\Pur_{\infty, c}(\mathcal{Y})$ and hence, $\Shv_{\gr, c}(\mathcal{Y}) \simeq \Ch^b(\ho\Pur_{\infty, c}(\mathcal{Y}))$, recovering classical constructions.

\subsubsection{}
Before stating our result, we note the following fact. By the decomposition theorem for graded sheaves, \cref{thm:decomposition_theorem}, we have the following factorization
\[
\begin{tikzcd}[sep=small]
	\Shv_{\gr, c}(\mathcal{Y})^{\weightheart} \ar{dr}[swap]{\oblv_\gr^{\weightheart}} \ar{rr} &&  \Shv_{\infty, c}(\mathcal{Y}) \\
	& \Pur_{\infty, c}(\mathcal{Y}) \ar[hookrightarrow]{ur}
\end{tikzcd}
\]
Moreover, from the definition of $\Pur_{\infty, c}(\mathcal{Y})$ and the fact that any simple mixed perverse sheaf on $\mathcal{Y}_n$ is necessarily pure, we know that $\oblv_\gr^{\weightheart}$ is essentially surjective. This justifies the use of the notation $\Pur_{\infty, c}$.

\begin{prop}
\label{prop:criterion_weight_heart_gr_sheaves_classical}
Let $\mathcal{Y} \in \Stk_k$ and $\mathcal{Y}_n \in \Stk_{k_n}$ be any $k_n$-form of $\mathcal{Y}$. Then, the following are equivalent:
\begin{myenum}{(\roman*)}
	\item \label{enum:prop_crit_classical_weightheart:first} $\Shv_{\gr, c}(\mathcal{Y})^{\weightheart}$ is classical.
	\item \label{enum:prop_crit_classical_weightheart:defn} For any $\mathcal{F}^\gr, \mathcal{G}^\gr \in \Shv_{\gr, c}(\mathcal{Y})^{\weightheart}$, $\cHom_{\Shv_\gr(\mathcal{Y})^\ren}(\mathcal{F}^\gr, \mathcal{G}^\gr) \in \Vect$ concentrates in cohomological degree $0$.
	\item \label{enum:prop_crit_classical_weightheart:pure_grHom_anything} For any $\mathcal{F}^\gr, \mathcal{G}^\gr \in \Shv_{\gr, c}(\mathcal{Y})^{\weightheart}$, $\cHom_{\Shv_\gr(\mathcal{Y})^\ren}^\gr(\mathcal{F}^\gr, \mathcal{G}^\gr) \in \Vect^\gr$ is pure of weight $0$, i.e. it concentrates in the diagonal or equivalently, the $k$-graded component concentrates in cohomological degree $k$. 
	\item \label{enum:prop_crit_classical_weightheart:comparing_with_hPur} The natural functor $\ho(\oblv_\gr^{\weightheart}): \ho \Shv_{\gr, c}(\mathcal{Y})^{\weightheart} \to \ho\Pur_{\infty, c}(\mathcal{Y})$ is an equivalence of categories.
	\item \label{enum:prop_crit_classical_weightheart:pure_grHom_simple_perv} For simple $\IC^\gr_1, \IC^\gr_2 \in \Perv_{\gr, c}(\mathcal{Y})^{\weightheart} = \Perv_{\gr, c}(\mathcal{Y})^{w=0}$, the object $\cHom^\gr_{\Shv_\gr(\mathcal{Y})^\ren}(\IC_1^\gr, \IC_2^\gr)$ is pure of weight $0$ as an object in $\Vect^\gr$.
	\item \label{enum:prop_crit_classical_weightheart:pure_mHom_anything} For any $\mathcal{F}_n, \mathcal{G}_n \in \Shv_{\mixed, c}(\mathcal{Y}_n)$ that is pure of weight $0$, $\cHom^\mixed_{\Shv_\mixed(\mathcal{Y}_n)^\ren}(\mathcal{F}_n, \mathcal{G}_n) \in \Shv_\mixed(\pt_n)$ is pure of weight $0$.
	\item \label{enum:prop_crit_classical_weightheart:pure_mHom_simple_perv} For any $\IC_1, \IC_2 \in \Perv_{\mixed, c}(\mathcal{Y}_n)^{w=0}$ that are simple, $\cHom^\mixed_{\Shv_\mixed(\mathcal{Y}_n)^\ren}(\IC_1, \IC_2) \in \Shv_\mixed(\pt_n)$ is pure of weight $0$.
\end{myenum}
\end{prop}
\begin{proof}
By definition and the fact that $\Vect$-enriched $\cHom$'s between elements in the weight heart can only concentrate in non-positive degrees (see \cref{subsubsec:weight_heart}), we have \ref{enum:prop_crit_classical_weightheart:first} $\Leftrightarrow$ \ref{enum:prop_crit_classical_weightheart:defn}. Moreover, it is easy to see that  
\ref{enum:prop_crit_classical_weightheart:pure_grHom_anything} $\Rightarrow$ 
\ref{enum:prop_crit_classical_weightheart:defn},
\ref{enum:prop_crit_classical_weightheart:pure_grHom_simple_perv}, and 
\ref{enum:prop_crit_classical_weightheart:pure_mHom_anything} (the last one, for example, follows from \cref{prop:hom_graded_sheaves}); and 
\ref{enum:prop_crit_classical_weightheart:pure_mHom_anything}
$\Rightarrow$
\ref{enum:prop_crit_classical_weightheart:pure_mHom_simple_perv}. 
It remains to prove that 
\ref{enum:prop_crit_classical_weightheart:defn}
$\Rightarrow$
\ref{enum:prop_crit_classical_weightheart:pure_grHom_anything},
\ref{enum:prop_crit_classical_weightheart:pure_grHom_simple_perv}
$\Rightarrow$
\ref{enum:prop_crit_classical_weightheart:pure_grHom_anything}, 
\ref{enum:prop_crit_classical_weightheart:pure_mHom_simple_perv} 
$\Rightarrow$
\ref{enum:prop_crit_classical_weightheart:pure_grHom_simple_perv}, and
\ref{enum:prop_crit_classical_weightheart:pure_grHom_anything}
$\Leftrightarrow$
\ref{enum:prop_crit_classical_weightheart:comparing_with_hPur}.

For \ref{enum:prop_crit_classical_weightheart:defn} $\Rightarrow$ \ref{enum:prop_crit_classical_weightheart:pure_grHom_anything}, we note that if $\mathcal{G}^\gr \in \Shv_{\gr, c}(\mathcal{Y})^{\weightheart}$ then so is $\mathcal{G}^\gr \lrangle{k}[k]$ for any $k\in \mathbb{Z}$. Thus, for any $\mathcal{F}^\gr, \mathcal{G}^\gr \in \Shv_{\gr, c}(\mathcal{Y})^{\weightheart}$, assuming \ref{enum:prop_crit_classical_weightheart:defn}, we know that
\[
	\cHom_{\Shv_\gr(\mathcal{Y})^\ren}(\mathcal{F}^\gr, \mathcal{G}^\gr\lrangle{k}[k]) \simeq \cHom^\gr_{\Shv_\gr(\mathcal{Y})^\ren}(\mathcal{F}^\gr, \mathcal{G}^\gr)_k[k] \in \Vect
\]
concentrates in cohomological degree $0$ for any $k$. \ref{enum:prop_crit_classical_weightheart:pure_grHom_anything} thus follows.

We will now prove that \ref{enum:prop_crit_classical_weightheart:pure_grHom_simple_perv} $\Rightarrow$ \ref{enum:prop_crit_classical_weightheart:pure_grHom_anything}. By the decomposition theorem for graded sheaves, \cref{thm:decomposition_theorem}, it suffices to prove \ref{enum:prop_crit_classical_weightheart:pure_grHom_anything} when $\mathcal{F}^\gr$ and $\mathcal{G}^\gr$ are of the form $\IC_i^\gr\lrangle{k_i}[k_i]$ where $\IC^\gr_i \in \Perv_{\gr, c}(\mathcal{Y})^{\weightheart}$ are simple, $i\in \{1, 2\}$. But by the hypothesis \ref{enum:prop_crit_classical_weightheart:pure_grHom_simple_perv}, $\cHom^\gr_{\Shv_\gr(\mathcal{Y})^\ren}(\IC_1^\gr, \IC_2^\gr) \in \Vect^\gr$ is pure of weight $0$, and hence, so is 
\begin{align*}
	&\alignsep\cHom^\gr_{\Shv_\gr(\mathcal{Y})^\ren}(\IC_1^\gr\lrangle{k_1}[k_1], \IC_2^\gr\lrangle{k_2}[k_2]) \\
	&\simeq \cHom^\gr_{\Shv_\gr(\mathcal{Y})^\ren}(\IC_1^\gr, \IC_2^\gr)\lrangle{k}[k] \in \Vect^\gr, \quad k = k_2 - k_1.
\end{align*}
We thus obtain \ref{enum:prop_crit_classical_weightheart:pure_grHom_anything}.

We will show that \ref{enum:prop_crit_classical_weightheart:pure_mHom_simple_perv} $\Rightarrow $\ref{enum:prop_crit_classical_weightheart:pure_grHom_simple_perv}. Assuming \ref{enum:prop_crit_classical_weightheart:pure_mHom_simple_perv}, observe that for any $\IC_1, \IC_2$ as in \ref{enum:prop_crit_classical_weightheart:pure_mHom_simple_perv}, by \cref{prop:hom_graded_sheaves}, $\cHom^\gr_{\Shv_\gr(\mathcal{Y})^\ren}(\gr(\IC_1), \gr(\IC_2)) \in \Vect^\gr$ is pure of weight $0$. But now, by \cref{eq:defn_pure_gr_perv_sheaves,thm:explicit_description_Perv_gr_c_w=nu}, any simple object in $\Perv_{\gr, c}(\mathcal{Y})^{\weightheart}$ is a direct summand of an object of the form $\gr(\IC)$ where $\IC \in \Perv_{\mixed, c}(\mathcal{Y})^{w=0}$ is simple. The proof thus concludes.

Next, we will show that \ref{enum:prop_crit_classical_weightheart:pure_grHom_anything}$\Leftrightarrow$ \ref{enum:prop_crit_classical_weightheart:comparing_with_hPur}. We already saw above that $\ho(\oblv_\gr^{\weightheart})$ is essentially surjective. Thus, \ref{enum:prop_crit_classical_weightheart:comparing_with_hPur} is equivalent to the fact that $\ho(\oblv_\gr^{\weightheart})$ is fully faithful. Let $\mathcal{F}^\gr, \mathcal{G}^\gr \in \Shv_{\gr, c}(\mathcal{Y})^{\weightheart}$ and $\mathcal{F}, \mathcal{G} \in \Pur_{\infty, c}(\mathcal{Y})$ their images. We have,
\begin{align*}
	\Hom_{\ho\Shv_{\gr, c}(\mathcal{Y})^{\weightheart}}(\mathcal{F}^\gr, \mathcal{G}^\gr)
	&\simeq \Ho^0(\cHom_{\Shv_\gr(\mathcal{Y})^\ren}(\mathcal{F}^\gr, \mathcal{G}^\gr)) \\
	&\to \bigoplus_k \Ho^0(\cHom^\gr_{\Shv_\gr(\mathcal{Y})^\ren}(\mathcal{F}^\gr, \mathcal{G}^\gr)_k) \teq\label{eq:embed_degree_0} \\
	&\simeq \Ho^0(\cHom_{\Shv(\mathcal{Y})^\ren}(\mathcal{F}, \mathcal{G})) \tag{\cref{rmk:unmixed_Hom_direct_sum_gr_Hom}} \\
	&\simeq \Hom_{\ho\Pur_{\infty, c}(\mathcal{Y})}(\mathcal{F}, \mathcal{G}),
\end{align*}
where \cref{eq:embed_degree_0} is the embedding to the graded degree $0$ part. Now, \ref{enum:prop_crit_classical_weightheart:comparing_with_hPur} is equivalent to \cref{eq:embed_degree_0} being an equivalence for all $\mathcal{F}^\gr, \mathcal{G}^\gr \in \Shv_{\gr, c}(\mathcal{Y})^{\weightheart}$. Consider the above with $\mathcal{G}^\gr$ replaced by $\mathcal{G}^\gr\lrangle{l}[l]$ for all $l\in \mathbb{Z}$, we see that \cref{eq:embed_degree_0} being an equivalence is equivalent to \ref{enum:prop_crit_classical_weightheart:pure_grHom_anything}.
\end{proof}

\begin{rmk}
Note that $\Pur_{\infty, c}(\mathcal{Y})$, and hence also $\Ch^b(\ho\Pur_{\infty, c}(\mathcal{Y}))$, is endowed with a homological shift functor $[k]$ for any $k \in \mathbb{Z}$. These \emph{inner} homological shifts are not to be confused with the \emph{outer} homological shifts coming from $\Ch^b$. When $\Shv_{\gr, c}(\mathcal{Y})^{\weightheart}$ is classical, \cref{prop:criterion_weight_heart_gr_sheaves_classical} provides an equivalence of weight categories $\Shv_{\gr,c}(\mathcal{Y}) \simeq \Ch^b(\ho\Pur_{\infty, c}(\mathcal{Y}))$. Under this equivalence, simultaneous graded-degree and homological shifts $[k]\lrangle{k}$ on the left-hand side get translated to the \emph{inner} homological shifts $[k]$ on the right-hand side.
\end{rmk}

\section{Hecke categories}
\label{sec:Hecke_categories}
We will now apply the theory developed above to obtain a geometric realization of the $\DG$-category of bounded chain complexes of Soergel bimodules. More precisely, fixing a reductive group $G$ over $k$ and subgroups $T \subset B \subset G$ where $T$ is a maximal torus and $B$ a Borel subgroup, the main result of this section, \cref{thm:geometric_incarnation_Hecke}, states that we have an equivalence of monoidal categories
\[
	\Shv_{\gr, c}(B\backslash G/B) \simeq \Shv_{\gr, c}(BB \times_{BG} BB) \simeq \Ch^b(\SBim_W),
\]
compatible with the weight and monoidal structures on both sides, where the weight structure on $\Ch^b(\SBim_W)$ is given by the ``stupid'' truncation. In particular,
\[
	\Shv_{\gr, c}(B\backslash G/B)^{\weightheart} \simeq \SBim_W.
\]
Here, $\SBim_W$ is the category of Soergel bimodules attached to the Coxeter system coming from $G$ and $W$ is the Weyl group of $G$. The main feature of this section is that known statements about sheaves on $B\backslash G/B$ can be readily applied to yield the desired result.

We note that thanks to \cref{prop:criterion_weight_heart_gr_sheaves_classical}, known results about $\Shv_{\mixed, c}(B_n\backslash G_n/B_n)$ such as those proved in~\cite{bezrukavnikov_koszul_2013} can be applied directly to deduce that $\Shv_{\gr, c}(B\backslash G/B)^{\weightheart}$ is classical and hence
\[
	\Shv_{\gr, c}(B\backslash G/B) \simeq \Ch^b(\ho\Pur_c(B\backslash G/B)) \simeq \Ch^b(\SBim_W),
\]
where the last equivalence has already been proved originally by Soergel in~\cite{soergel_kategorie_1990}. Note that here, $G_n$ is a split form of $G$ over $\Fqn$ and moreover, $\Pur_c$ and $\Pur_{\infty, c}$ coincide since $B\backslash G/B$ is a finite orbit stack. In what follows, however, we take a slightly more direct approach, highlighting which computational input is necessary.

\subsection{Geometric Hecke categories}
\label{sec:geometric_hecke_categories}
We will now study the category of constructible graded sheaves on $B\backslash G/B \simeq BB \times_{BG} BB$.

\subsubsection{Monoidal structure via convolution}
Since $BB\times_{BG} BB$ is of the form $X\times_Y X$, it is naturally an algebra object in $\Corr(\Stk_k)$ where the multiplication map is given by the standard convolution correspondence
\[
\begin{tikzcd}
	BB \times_{BG} BB \times_{BG} BB \ar{d} \ar{r} & (BB\times_{BG} BB) \times (BB\times_{BG} BB) \\
	BB\times_{BG} BB
\end{tikzcd}
\]
Applying $\Shv_{\gr, !}^{\ren, *}$ of \cref{thm:graded_sheaves_correspondence}, we obtain a monoidal structure on $\Shv_\gr(B\backslash G/B)^\ren$. More precisely, $\Shv_\gr(B\backslash G/B)^\ren \in \Alg(\Mod_{\Vect^\gr})$. Note that since the vertical map is proper and the horizontal map is smooth, we can, equivalently, apply $\Shv^{\ren, *}_{\gr, *}$ of \cref{eq:shv_corr_all_sm} to obtain the same monoidal structure on $\Shv_\gr(B\backslash G/B)^\ren$. 

For any $\mathcal{F}_1, \mathcal{F}_2 \in \Shv_\gr(B\backslash G/B)^\ren$, we will use $\mathcal{F}_1 * \mathcal{F}_2 \in \Shv_\gr(B\backslash G/B)^\ren$ to denote the convolution of $\mathcal{F}_1$ and $\mathcal{F}_2$.

\subsubsection{Compatibility with weight structures}
In forming the monoidal structure, we pull along smooth (and representable) morphisms and push along proper morphisms. Thus, the monoidal structure on $\Shv_\gr(B\backslash G/B)^\ren$ restricts to one on $\Shv_{\gr, c}(B\backslash G/B)$ which is also compatible with the weight structure. By \cref{thm:aoki_monoidal_wt_complex_functor}, we obtain a monoidal functor
\[
	\wt: \Shv_{\gr, c}(B\backslash G/B) \to \Ch^b(\ho\Shv_{\gr, c}(B\backslash G/B)^{\weightheart}), \teq\label{eq:wt_complex_functor_for_finite_Hecke_stack}
\]
compatible with the action of $\Vect^{\gr, c}$, i.e., a morphism in $\Alg(\Mod_{\Vect^\gr})$.

\subsubsection{Purity}
We will now show that the weight complex functor $\wt$ of \cref{eq:wt_complex_functor_for_finite_Hecke_stack} is an equivalence of categories.

\begin{prop}
\label{prop:shv_gr_Hecke_stack_classical}
The category $\Shv_{\gr, c}(B\backslash G/B)^{\weightheart}$ is classical, i.e., $\Shv_{\gr, c}(B\backslash G/B)^{\weightheart}$ is equivalent to $\ho\Shv_{\gr, c}(B\backslash G/B)^{\weightheart}$. As a result, the weight complex functor gives an equivalence
\[
	\wt: \Shv_{\gr, c}(B\backslash G/B) \xrightarrow{\simeq} \Ch^b(\Shv_{\gr, c}(B\backslash G/B)^{\weightheart})
\]
as objects in $\Alg(\Mod_{\Vect^{\gr, c}})$, i.e., they are equivalent as monoidal categories and the equivalence is compatible with $\Vect^{\gr, c}$-actions.
\end{prop}
\begin{proof}
The second part follows from the first by \cref{subsubsec:weight_complex_functor}. Now, the hypothesis of \cref{prop:criterion_weight_heart_gr_sheaves_classical}.\ref{enum:prop_crit_classical_weightheart:pure_mHom_anything} is satisfied by~\cite[Lem. 3.1.3 and Lem. 3.1.5.(2)]{bezrukavnikov_koszul_2013}. The proof thus concludes.
\end{proof}

\subsection{Soergel bimodules}
We will now quickly recall the definition of the category of Soergel bimodules. We will make use of the following notation: for any $\mathcal{Y} \in \Stk_k$ with the structure map $\pi: \mathcal{Y} \to \pt$, we use
\[
	\Co^*_\gr(\mathcal{Y}) \defeq \pi_{*, \ren} \Qlbar \in \ComAlg(\Vect^\gr) \teq\label{eq:graded_cohomology}
\]
to denote the \emph{graded cohomology} of $\mathcal{Y}$, where $\pi_{*, \ren}$ is the pushforward functor of graded sheaves. Note that this is just the usual cohomology of $\mathcal{Y}$, except that we turn Frobenius weights to the grading in $\Vect^\gr$.

\subsubsection{}
We let $R^\Rightarrow \defeq \Co^*_\gr(BT) \simeq \Co^*_\gr(BB)$. Explicitly,
\[
	R^\Rightarrow \simeq \Sym(X^*(T)\otimes_{\mathbb{Z}} \Qlbar[-2]\lrangle{-2}),
\]
where $X^*(T)$ is the co-character lattice of $T$. Namely, $R^\Rightarrow$ is a polynomial algebra generated by $\rank X^*(T)$ many variables, put in graded degree $2$ and cohomological degree $2$.

Let $\sh^\Rightarrow$ and $\sh^\Leftarrow$ be monoidal auto-equivalences of $\Vect^\gr$ given by\footnote{Note that the shear functors $\sh^\Rightarrow$ and $\sh^\Leftarrow$ are only monoidal rather than symmetric monoidal.}
\[
	\sh^\Rightarrow(V)_i = V_i[-i] \qquad\text{and}\qquad \sh^\Leftarrow(V)_i = V_i[i], \qquad\text{for all } V \in \Vect^\gr,
\]
Let $R \defeq \sh^{\Leftarrow}(R^\Rightarrow)$. Then, $\sh^{\Leftarrow}$ and $\sh^\Rightarrow$ induce equivalences of monoidal categories
\[
	\sh^\Leftarrow: \BiMod_{R^\Rightarrow}(\Vect^\gr) \rightleftarrows \BiMod_{R}(\Vect^\gr): \sh^\Rightarrow
\]
where $\BiMod$ denotes the category of bimodules.

\subsubsection{}
Let $W$ denote the Weyl group of $G$. Then, $W$ acts on $T$, and hence, on $R^\Rightarrow$ and $R$. The category of Soergel bimodules $\SBim_W$ is a full subcategory of $\BiMod_R(\Vect^{\gr, \heartsuit})$ (and hence, also of $\BiMod_R(\Vect^\gr)$) spanned by finite direct sums of graded degree shifts of direct summands of objects of the form
\[
	R \otimes_{R^{s_1}} R \otimes_{R^{s_2}} \cdots \otimes_{R^{s_k}} R
\]
for any sequence of simple reflections $s_i\in W$. Equivalently, $\SBim_W$ is the smallest full subcategory of $\BiMod_R(\Vect^{\gr, \heartsuit})$ containing $R \otimes_{R^s} R$ for all simple reflections $s\in W$ and is closed under taking tensor products, graded degree shifts, finite direct sums, and direct summands.

We define $\SBim_W^\Rightarrow \defeq \sh^\Rightarrow(\SBim_W)$, a full subcategory of $\BiMod_{R^\Rightarrow}(\Vect^\gr)$. As above, we have mutually inverse equivalences of monoidal categories
\[
	\sh^\Leftarrow: \SBim_W^\Rightarrow \rightleftarrows \SBim_W: \sh^\Rightarrow.
\]
Note also that this implies that $\SBim_W^\Rightarrow$ is also a classical category which is closed under simultaneous shifts $[n]\lrangle{n}$ for any $n\in \mathbb{Z}$.

\subsubsection{}
Finally, we let $\Ch^b(\SBim_W)$ and $\Ch^b(\SBim_W^\Rightarrow)$ denote the corresponding monoidal $\DG$-categories of bounded chain complexes of Soergel bimodules. We have mutually inverse equivalences of monoidal categories
\[
	\sh^\Leftarrow: \Ch^b(\SBim_W^\Rightarrow) \rightleftarrows \Ch^b(\SBim_W): \sh^\Rightarrow.
\]
These functors are easily seen to be compatible with $\Vect^{\gr, c}$-actions.

\subsection{\texorpdfstring{$\Shv_\gr(BG)^\ren$}{Renormalized graded sheaves over BG} as a category of modules}
We will now relate the category $\Shv_{\gr}(BG)^\ren$ and the category $\Mod_{\Co^*_\gr(BG)}(\Vect^\gr)$ of graded modules over its graded cohomology $\Co^*_\gr(BG)$. The material is standard and well-known. In the ungraded setting, a version of this also appeared in~\cite{webster_geometric_2008}.

\subsubsection{Identification of categories}

\begin{prop}
\label{prop:Shv_gr_BG_as_category_of_modules}
Let $G$ be any connected algebraic group and $\pi: BG \to \pt$ the structure map. Then, the functor of taking global sections $\pi_{*, \ren}$ induces an equivalence of symmetric monoidal $\Vect^\gr$-module categories $\pi_{*, \ren}^\enh$ fitting into the following commutative diagram
\[
\begin{tikzcd}[sep=large]
	\Shv_\gr(BG)^\ren \ar[shift left=\arrdisp]{dr}{\pi_{*, \ren}^\enh} \ar[shift left=\arrdisp]{d}{\pi_{*, \ren}} \\
	\Vect^\gr \ar[shift left=\arrdisp]{r}{\Co^*_\gr(BG)\otimes-} \ar[shift left=\arrdisp]{u}{\pi^*_\ren} & \Mod_{\Co^*_\gr(BG)}(\Vect^\gr) \ar[shift left=\arrdisp]{ul} \ar[shift left=\arrdisp]{l}{\oblv_{\Co^*_\gr(BG)}}
\end{tikzcd}
\]
\end{prop}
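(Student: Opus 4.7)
The plan is to realize $\mathbf{1} \defeq \pi^*_\ren \Qlbar \in \Shv_\gr(BG)^\ren$ as a compact $\Vect^\gr$-linear generator whose enriched endomorphism algebra is $\Co^*_\gr(BG)$, and then invoke a Morita-style recognition theorem to produce the equivalence. First, by the adjunction $\pi^*_\ren \dashv \pi_{*,\ren}$ together with the fact that these functors are strict functors of $\Vect^\gr$-module categories (inherited from the analogous statement over $\Shv_\mixed(\pt_n)$ in \cref{prop:strict_Shv_m(pt)-mod_functor_stacks} after $-\otimes_{\Shv_\mixed(\pt_n)}\Vect^\gr$), there is a natural equivalence
$$\cHom^\gr_{\Shv_\gr(BG)^\ren}(\mathbf{1}, \mathcal{F}) \simeq \pi_{*,\ren}\mathcal{F}$$
in $\Vect^\gr$, for every $\mathcal{F}$. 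Taking $\mathcal{F} = \mathbf{1}$ identifies the enriched endomorphism algebra of $\mathbf{1}$ with $\Co^*_\gr(BG)$ as an $\En_\infty$-algebra object of $\Vect^\gr$, the commutative structure being forced by the lax symmetric monoidal enhancement of $\pi_{*,\ren}$ and the fact that $\mathbf{1}$ is the monoidal unit.

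Next I verify that $\mathbf{1}$ is a compact $\Vect^\gr$-linear generator of $\Shv_\gr(BG)^\ren$. Compactness is immediate since $\Qlbar_{BG}$ is constructible on $BG$, and constructibles are compact in the renormalized theory by construction. Generation (i.e., that $\{\mathbf{1}\lrangle{k}[l]\}_{k,l \in \mathbb{Z}}$ generate $\Shv_\gr(BG)^\ren$, equivalently that $\pi_{*,\ren}$ is conservative) reduces, via \cref{thm:Shv_gr_c_generated_pure_graded_perv_sheaves,thm:explicit_description_Perv_gr_c_w=nu}, to the assertion that every simple perverse sheaf on $BG$ descending to some $\Fqn$-form $BG_n$ is a Tate twist of the shifted constant sheaf $\Qlbar_{BG}[\dim BG]$. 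This rests on the fact that, for $G$ connected, the étale fundamental group of $BG$ is trivial---equivalently, every $G$-equivariant finite étale cover of a geometric point is a disjoint union of trivial covers---which forces every irreducible $\ell$-adic local system on $BG$ to be trivial. Combined with the standard description of simple perverse sheaves as $\IC$-extensions of irreducible local systems on locally closed substacks (and $BG$ has a unique point), this pins down the simple objects to Tate twists of $\mathbf{1}[\dim BG]$.

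Given these two inputs, the Schwede--Shipley/Lurie recognition theorem for compactly generated presentable stable $\infty$-categories (\cite[\S 7.1.2]{lurie_higher_2017}), applied in the $\Vect^\gr$-linear setting using that $\Vect^\gr$ is compactly generated and rigid, yields an equivalence
$$\pi^\enh_{*,\ren}: \Shv_\gr(BG)^\ren \xrightarrow{\simeq} \Mod_{\Co^*_\gr(BG)}(\Vect^\gr)$$
of $\Vect^\gr$-module categories, fitting into the displayed diagram because $\mathbf{1}$ is sent to $\End^\gr(\mathbf{1}) \simeq \Co^*_\gr(BG)$, which is the free module of rank one and hence also the image of $\Qlbar$ under $\Co^*_\gr(BG)\otimes -$. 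Since $\mathbf{1}$ is the monoidal unit on the left and maps to the monoidal unit $\Co^*_\gr(BG)$ on the right, and since $\pi_{*,\ren}$ is lax symmetric monoidal, the equivalence automatically upgrades to a symmetric monoidal one. The principal obstacle is the generation step: everything else is formal consequences of Morita theory, but the triviality of simple local systems on $BG$ for connected $G$ is the essential geometric input, and care is needed to combine it with the classification of graded pure perverse sheaves established in \cref{sec:weight_perverse_t_structures_graded_sheaves}.
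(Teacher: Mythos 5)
Your proposal is correct and takes essentially the same route as the paper: identify $\pi_{*,\ren}$ with the $\Vect^\gr$-enriched $\Hom$ out of the unit, use connectedness of $G$ to see that the constant sheaf and its shifts compactly generate $\Shv_\gr(BG)^\ren$, and then apply a monadic/Morita recognition theorem, identifying the resulting algebra with $\Co^*_\gr(BG)$ and its commutative structure via the lax symmetric monoidal structure on $\pi_{*,\ren}$. The only (harmless) differences are that you spell out the generation step through the classification of simple graded perverse sheaves and triviality of local systems on $BG$, where the paper simply asserts it, and that you quote Schwede--Shipley/Lurie where the paper invokes Barr--Beck--Lurie directly---the same underlying argument.
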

\begin{proof}
We have a pair of adjoint functors
\[
	\pi^*_\ren: \Vect^\gr \rightleftarrows \Shv_\gr(BG)^\ren: \pi_{*, \ren} \simeq \cHom^\gr_{\Shv_\gr(BG)^\ren}(\Qlbar, -),
\]
where $\pi^*_\ren$ is symmetric monoidal. We know that $\pi^*_\ren$ preserves compactness, and hence, $\pi_{*, \ren} \simeq \cHom^\gr_{\Shv_\gr(BG)^\ren}(\Qlbar, -)$ is also continuous. Moreover, since $G$ is connected, $\Shv_\gr(BG)^\ren$ is compactly generated by the constant sheaf (along with graded degree shifts). As a result, $\pi^*_\ren$ generates the target and hence, by Barr--Beck--Lurie theorem,~\cite[Thm. 4.7.4.5]{lurie_higher_2017}, the functor $\pi_{*, \ren}$ upgrades to an equivalence of $\Vect^\gr$-module categories
\[
	\pi_{*, \ren}^\enh: \Shv_\gr(BG)^\ren \xrightarrow{\simeq} \Mod_{\pi_{*, \ren} \pi^*_\ren}(\Vect^\gr)
\]
where $\pi_{*, \ren} \pi^*_\ren$ is a monad. Since all functors are strict functors of $\Vect^\gr$-modules, we have an equivalence of functors
\[
	\pi_{*, \ren}\pi^*_\ren(-) \simeq \pi_{*, \ren}\pi^*_\ren(\Qlbar) \otimes - \simeq \Co^*_\gr(BG)\otimes -.
\]
where the monad structure on $\pi_{*, \ren} \pi^*_\ren$ induces an algebra structure on $\Co^*_\gr(BG)$. We thus get an equivalence of $\Vect^\gr$-module categories
\[
	\pi_{*, \ren}^\enh: \Shv_\gr(BG)^\ren \xrightarrow{\simeq} \Mod_{\Co^*_\gr(BG)}(\Vect^\gr). \teq\label{eq:equivalence_of_categories_Shvgr_Mod}
\]

The symmetric monoidal structure on $\Shv_\gr(BG)^\ren$ induces one on $\Mod_{\Co^*_\gr(BG)}$ and hence, by applying~\cite[Cor. 4.8.5.20]{lurie_higher_2017}, we can upgrade the algebra structure on $\Co^*_\gr(BG)$ to a commutative algebra structure such that the tensor product on $\Mod_{\Co^*_\gr(BG)}(\Vect^\gr)$ is just the relative tensor product over $\Co^*_\gr(BG)$. The equivalence \cref{eq:equivalence_of_categories_Shvgr_Mod} thus upgrades to a symmetric monoidal equivalence. It remains to identify the commutative algebra structure on $\Co^*_\gr(BG)$ obtained above with the usual one from cup products. 

By construction, the right-lax symmetric monoidal structures on $\pi_{*, \ren}$ and $\oblv_{\Co^*_\gr(BG)}$ are compatible under the identification \cref{eq:equivalence_of_categories_Shvgr_Mod}. In particular, we have an equivalence of commutative algebras
\[
	\oblv_{\Co^*_\gr(BG)} (\Co^*_\gr(BG)) \simeq \pi_{*, \ren} \Qlbar.
\]
But now, the commutative algebra structure on the right-hand side is precisely given by cup product. Thus, we are done.
\end{proof}

\subsubsection{Functoriality} \label{subsubsec:functoriality_sheaves_BG}
From the construction, suppose $h: G \to H$ is a homomorphism of connected algebraic groups, which induces a morphism $\tilde{h}: BG \to BH$ at the level of classifying stacks. Then, we have a morphism of objects in $\ComAlg(\Vect^\gr)$
\[
	\Co^*_\gr(BH) \to \Co^*_\gr(BG).
\]

A similar argument to \cref{prop:Shv_gr_BG_as_category_of_modules} above yields the following commutative diagram
\[
\begin{tikzcd}
	\Shv_\gr(BG)^\ren \ar{d}{\tilde{h}_{*, \ren}} \ar{r}{\simeq} & \Mod_{\Co^*_\gr(BG)}(\Vect^\gr) \ar{d}{\res_{\Co^*_\gr(BH)}^{\Co^*_\gr(BG)}} \\
	\Shv_\gr(BH)^\ren \ar{r}{\simeq} & \Mod_{\Co^*_\gr(BH)}(\Vect^\gr)
\end{tikzcd}
\]
In other words, pushing forward is identified with restriction of scalar $\res_{\Co^*_\gr(BH)}^{\Co^*_\gr(BG)}$ along $\Co^*_\gr(BH) \to \Co^*_\gr(BG)$. Passing to left adjoints, we see that the pullback functor $\tilde{h}^*_\ren$ is identified with induction $-\otimes_{\Co^*_\gr(BH)} \Co^*_\gr(BG)$.

\begin{prop} \label{prop:bimod_vs_graded_sheaves_BG_x_BG}
In the situation of \cref{prop:Shv_gr_BG_as_category_of_modules}, we have an equivalence of categories
\[
	\Shv_\gr(BG \times BG)^\ren \simeq \BiMod_{\Co^*_\gr(BG)}(\Vect^\gr).
\]
Moreover, the monoidal product on $\BiMod_{C^*_\gr(\Vect^\gr)}$ is identified with the usual convolution monoidal structure on $\Shv_\gr(BG\times BG)^\ren$.
\end{prop}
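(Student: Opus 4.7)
The plan is to reduce to \cref{prop:Shv_gr_BG_as_category_of_modules} via a \Kuenneth{}-type identification. First, I would establish an equivalence in $\ComAlg(\Mod_{\Vect^\gr})$
\[
	\Shv_\gr(BG \times BG)^\ren \simeq \Shv_\gr(BG)^\ren \otimes \Shv_\gr(BG)^\ren,
\]
where, following the conventions of \cref{subsec:DG-cats}, the tensor product is relative to $\Vect^\gr$. This should follow from \cref{prop:Kuenneth_renormalized_pushforward,rmk:Kuenneth_renormalized_!-pullback} after tensoring with $\Vect^\gr$: the external product functor $\boxtimes$ preserves compactness and generates the target under colimits, and checking the fully faithfulness on pairs of compact objects is an application of \cref{prop:induction_Hom_tensor} combined with \cref{lem:mixed_Hom_vs_internal_Hom} and the \Kuenneth{} formula for $\cuHom$ on a single copy of $BG$.

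Combining the display above with \cref{prop:Shv_gr_BG_as_category_of_modules} and writing $A \defeq \Co^*_\gr(BG) \in \ComAlg(\Vect^\gr)$, we obtain an equivalence
\[
	\Shv_\gr(BG \times BG)^\ren \simeq \Mod_A(\Vect^\gr) \otimes \Mod_A(\Vect^\gr).
\]
By a standard application of the Barr--Beck--Lurie theorem (or the more direct~\cite[Thm. 4.8.4.6]{lurie_higher_2017}) together with the rigidity of $\Vect^\gr$, the right-hand side is canonically equivalent to $\Mod_{A \otimes A}(\Vect^\gr)$, which by definition is $\BiMod_A(\Vect^\gr)$. This gives the equivalence of underlying $\Vect^\gr$-module categories.

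It remains to identify the monoidal structures. The convolution monoidal structure on $\Shv_\gr(BG \times BG)^\ren$ comes, via \cref{thm:graded_sheaves_correspondence}, from the correspondence
\[
\begin{tikzcd}[row sep=small]
	(BG)^3 \ar{d}[swap]{p_{13}} \ar{r}{p_{12} \times p_{23}} & (BG)^2 \times (BG)^2 \\
	(BG)^2,
\end{tikzcd}
\]
i.e.\ $\mathcal{F} * \mathcal{G} \simeq p_{13,*,\ren}(p_{12,\ren}^* \mathcal{F} \otimes p_{23,\ren}^* \mathcal{G})$ (all maps are representable and smooth and $p_{13}$ is ind-proper in the relevant sense, so \cref{thm:base_change_graded_sheaves} applies). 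Under the tensor identification above, the functors $p_{12,\ren}^*$ and $p_{23,\ren}^*$ correspond (by \cref{subsubsec:functoriality_sheaves_BG}) to inducing along $A \to A \otimes A$ on the first two and last two tensor factors respectively, while $p_{13,*,\ren}$ corresponds to restriction along $A \otimes A \to A \otimes A \otimes A$ along the middle factor. A diagram chase then identifies the convolution with the relative tensor product $M \otimes_A N$ of bimodules, as desired.

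The main obstacle will be step one: making the \Kuenneth{} equivalence precise enough as an equivalence of symmetric monoidal $\Vect^\gr$-module categories, not merely of underlying stable $\infty$-categories, in a way that intertwines convolution on the left with the external tensor product on the right. The cleanest route is to run the entire argument inside the correspondence-functorial framework of \cref{thm:graded_sheaves_correspondence}, where all base-change and \Kuenneth{} compatibilities are packaged homotopy-coherently, so that both identifications above are extracted as morphisms in $\ComAlg(\Mod_{\Vect^\gr})$ by construction rather than checked by hand.
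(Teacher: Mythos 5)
Your proposal is correct in substance and ends up running the same mechanism as the paper, with a slightly longer first half. The paper's proof is just: $BG\times BG\simeq B(G\times G)$, so \cref{prop:Shv_gr_BG_as_category_of_modules} applied to the connected group $G\times G$ together with the \Kuenneth{} isomorphism $\Co^*_\gr(BG\times BG)\simeq \Co^*_\gr(BG)\otimes\Co^*_\gr(BG)$ gives $\Shv_\gr(BG\times BG)^\ren\simeq \Mod_{\Co^*_\gr(BG)\otimes\Co^*_\gr(BG)}(\Vect^\gr)=\BiMod_{\Co^*_\gr(BG)}(\Vect^\gr)$, and then the dictionary of \cref{subsubsec:functoriality_sheaves_BG} (pullback $=$ induction, $*$-pushforward $=$ restriction) applied to the correspondence with horizontal map $\id\times\Delta\times\id$ and vertical map $p_{13}$ identifies convolution with $-\otimes_{\Co^*_\gr(BG)}-$. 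Your route replaces the first step by a categorical \Kuenneth{} equivalence $\Shv_\gr(BG\times BG)^\ren\simeq\Shv_\gr(BG)^\ren\otimes\Shv_\gr(BG)^\ren$ followed by $\Mod_A\otimes\Mod_A\simeq\Mod_{A\otimes A}$ for $A\defeq\Co^*_\gr(BG)$; this lands in the same place, and your identification of the binary operation (induction along $A^{\otimes4}\to A^{\otimes3}$, then restriction along $A^{\otimes2}\to A^{\otimes3}$, yielding $M\otimes_A N$) is exactly the paper's computation, at the same level of coherence-rigor as the paper's.

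Two slips to fix. First, the generation half of your categorical \Kuenneth{} step does not follow from \cref{prop:Kuenneth_renormalized_pushforward}, which only gives the $\Hom$-level computation (full faithfulness); external products do not generate $\Shv_\gr(-)^\ren$ on a product of general stacks, and what you actually need is the connectedness input from the proof of \cref{prop:Shv_gr_BG_as_category_of_modules}, namely that the constant sheaf (with its grading shifts) compactly generates $\Shv_\gr(B(G\times G))^\ren$ --- at which point you may as well quote that proposition for $G\times G$ directly and skip the detour. Second, your parenthetical justification of the convolution formula is wrong: $p_{13}$ is a $BG$-fibration, hence neither representable nor (ind-)proper, and $p_{13,!}\neq p_{13,*}$ on these categories. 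This is harmless only because the convolution in question is defined via the $*$-pushforward formalism of \cref{eq:shv_corr_all_sm} (push-$*$ along all maps, pull-$*$ along smooth maps), which requires no properness and is precisely what matches the restriction-of-scalars dictionary of \cref{subsubsec:functoriality_sheaves_BG} you then invoke; if you instead insisted on the $!$-push functor of \cref{thm:graded_sheaves_correspondence}, the claimed identification of $!$ with $*$ would fail.
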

\begin{proof}
This follows from the discussion above. Indeed, the monoidal structure on $\Shv_\gr(BG \times BG)^\ren$ is given by applying \cref{eq:shv_corr_all_sm} to the following the correspondence below, where we pull along the horizontal map and push along the vertical map
\[
\begin{tikzcd}
	BG \times BG \times BG \ar{d}{p_{13}} \ar{r}{\id \times \Delta \times \id} & BG \times BG \times BG \times BG \\
	BG \times BG
\end{tikzcd}
\]
By the discussion above, in terms of $\BiMod_{\Co^*_\gr(BG)}(\Vect^\gr)$, this is identified with the relative tensor $- \otimes_{\Co^*_\gr(BG)} -$ and the proof concludes.
\end{proof}

\subsection{A geometric realization of the graded finite Hecke category}
As promised, we will prove the following result.

\begin{thm} \label{thm:geometric_incarnation_Hecke}
We have an equivalence of monoidal categories
\[
	\Shv_{\gr, c}(B\backslash G/B) \simeq \Ch^b(\SBim_W^\Rightarrow) \simeq \Ch^b(\SBim_W).	
\]
\end{thm}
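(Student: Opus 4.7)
The plan is to combine the weight complex functor with Soergel's classical theorem to reduce the statement to a computation of $\Hom$-spaces among pure perverse sheaves on $B\backslash G/B$. By \cref{prop:shv_gr_Hecke_stack_classical}, the weight heart is classical and the weight complex functor gives a $\Vect^{\gr,c}$-linear monoidal equivalence
$$\wt: \Shv_{\gr,c}(B\backslash G/B) \xrightarrow{\simeq} \Ch^b(\Shv_{\gr,c}(B\backslash G/B)^{\weightheart}).$$
Since $\sh^\Leftarrow$ gives a monoidal equivalence $\Ch^b(\SBim_W^\Rightarrow) \simeq \Ch^b(\SBim_W)$, it suffices to produce a $\Vect^{\gr,c,\weightheart}$-linear monoidal equivalence of additive categories $\Shv_{\gr,c}(B\backslash G/B)^{\weightheart} \simeq \SBim_W^\Rightarrow$.

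To do so, I would translate the geometric side into bimodules using the dictionary relating graded sheaves on classifying stacks and graded modules over graded cohomology rings. Combining smooth descent (\cref{prop:Shv_gr_smooth_descent}) with \cref{prop:Shv_gr_BG_as_category_of_modules} and \cref{prop:bimod_vs_graded_sheaves_BG_x_BG}, applied to the pullback square $B\backslash G/B \simeq BB \times_{BG} BB$, yields an equivalence
$$\Shv_\gr(B\backslash G/B)^\ren \simeq \Mod_{R^\Rightarrow}(\Vect^\gr)\otimes_{\Mod_{(R^\Rightarrow)^W}(\Vect^\gr)} \Mod_{R^\Rightarrow}(\Vect^\gr),$$
under which convolution corresponds to tensor product of $(R^\Rightarrow, R^\Rightarrow)$-bimodules over $R^\Rightarrow$. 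By Theorem \ref{thm:explicit_description_Perv_gr_c_w=nu} and the graded decomposition theorem (Theorem \ref{thm:decomposition_theorem}) applied to the Schubert stratification, the weight heart is generated, as a $\Vect^{\gr,c,\weightheart}$-linear additive monoidal category closed under direct summands, by the pushforwards to $B\backslash G/B$ of the constant sheaf on $B\backslash P_s/B$ for the minimal parabolics $P_s$ attached to simple reflections $s$. A direct base change computation identifies each such pushforward with the Bott--Samelson bimodule $R^\Rightarrow \otimes_{(R^\Rightarrow)^s} R^\Rightarrow$, which is precisely the generator of $\SBim_W^\Rightarrow$.

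The main obstacle is fully faithfulness: one has to check that the $\Hom$-spaces between convolutions of Bott--Samelson sheaves inside $\Shv_{\gr,c}(B\backslash G/B)^{\weightheart}$ match those in $\SBim_W^\Rightarrow$ computed by Soergel in \cite{soergel_kategorie_1990}. This is where purity enters: by \cref{prop:criterion_weight_heart_gr_sheaves_classical} combined with the $\Hom$-purity input of \cite{bezrukavnikov_koszul_2013} already used in \cref{prop:shv_gr_Hecke_stack_classical}, the $\Vect^{\gr,c,\weightheart}$-enriched $\Hom$-space between pure weight-zero graded perverse sheaves is concentrated in the expected bidegree and agrees, via \cref{rmk:unmixed_Hom_direct_sum_gr_Hom}, with the classical $\Hom$-space of the underlying non-graded pure complexes once one sums over grading shifts. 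Soergel's theorem then identifies these classical $\Hom$-spaces with the morphism spaces of Bott--Samelson bimodules, yielding the desired additive monoidal equivalence on generators. Essential surjectivity (on objects and onto direct summands) follows from the generation statement above together with the idempotent completeness of both sides; composing with $\wt$ and $\sh^\Leftarrow$ concludes the proof.
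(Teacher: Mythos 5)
Your overall skeleton — reduce via \cref{prop:shv_gr_Hecke_stack_classical} and the weight complex functor to an identification of the weight heart with $\SBim_W^\Rightarrow$, generate the heart by the rank-one (Bott--Samelson) objects, and prove fully faithfulness by Hom-purity plus Soergel's computation — is the paper's approach. But the middle step contains a genuine gap: you claim that smooth descent (\cref{prop:Shv_gr_smooth_descent}) together with \cref{prop:Shv_gr_BG_as_category_of_modules,prop:bimod_vs_graded_sheaves_BG_x_BG} yields an equivalence
\[
\Shv_\gr(B\backslash G/B)^\ren \simeq \Mod_{R^\Rightarrow}(\Vect^\gr)\otimes_{\Mod_{(R^\Rightarrow)^W}(\Vect^\gr)}\Mod_{R^\Rightarrow}(\Vect^\gr),
\]
intertwining convolution with the bimodule tensor product. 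This does not follow from the cited results. Smooth descent expresses $\Shv_\gr(BB\times_{BG}BB)$ as a \emph{limit} (totalization over a \v{C}ech nerve), whereas the right-hand side is a \emph{colimit} (relative tensor of module categories over $\Shv_\gr(BG)^\ren$); exchanging the two is a 1-affineness-type statement (descent for sheaves of categories over $BG$) that is nowhere established in the paper for the renormalized graded $\ell$-adic theory, and it is exactly the kind of assertion one must not take for granted: a priori one only gets a functor from the tensor product into $\Shv_\gr(B\backslash G/B)^\ren$, and proving it is an equivalence would require, at minimum, showing that the pullback of the constant sheaf generates and that the relevant monadicity and K\"unneth statements hold. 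The paper deliberately sidesteps this: it never identifies the whole category $\Shv_\gr(B\backslash G/B)^\ren$ with bimodules, but only uses the \emph{monoidal pushforward functor} $\tilde S$ along $BB\times_{BG}BB\to BB\times BB$, which lands in $\BiMod_{R^\Rightarrow}(\Vect^\gr)$ by \cref{prop:bimod_vs_graded_sheaves_BG_x_BG} and the correspondence formalism, and then checks everything on the weight heart.

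The good news is that your argument does not actually need the strong claim: replace it by the functor $\tilde S$ just described and the rest goes through, and is then essentially the paper's proof. Two smaller points to tighten. First, generation of the weight heart by the objects attached to simple reflections is not a direct consequence of \cref{thm:explicit_description_Perv_gr_c_w=nu,thm:decomposition_theorem} alone: you need the birationality of the Demazure-type convolution $\overline{X}_{w_1}\ast\overline{X}_{w_2}\to\overline{X}_{w_1w_2}$ for $\ell(w_1w_2)=\ell(w_1)+\ell(w_2)$ to see that each $\IC_{\overline{X}_w}$ is a direct summand of a convolution of the rank-one objects; this is the step that makes ``generated under convolution, shifts, sums and summands by the $\IC_{\overline{X}_s}$'' true. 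Second, for fully faithfulness it is not enough that graded Homs recover the ungraded Homs after summing over shifts (\cref{rmk:unmixed_Hom_direct_sum_gr_Hom}); you must also know that the comparison functor to bimodules is fully faithful on pure complexes, i.e.\ an equivariant Erweiterungssatz. Citing Soergel for this is legitimate and is the same input the paper takes from~\cite[Prop.~3.1.6]{bezrukavnikov_koszul_2013} together with \cref{prop:hom_graded_sheaves}, so with that citation made explicit your fully faithfulness step matches the paper's.
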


Due to \cref{prop:shv_gr_Hecke_stack_classical}, it suffices to show that we have an equivalence $\Shv_{\gr, c}(B\backslash G/B)^{\weightheart} \simeq \SBim_W^\Rightarrow$. Note that, in particular, this implies that the equivalence stated in \cref{thm:geometric_incarnation_Hecke} is compatible with the weight structures.

The rest of this subsection is devoted to the proof of this statement. 

\subsubsection{Construction of functor}
Consider the following commutative diagram (along with its higher analog, where we consider higher powers of $BB\times BB$ and $BB\times_{BG} BB$)
\[
\begin{tikzcd}
	BB \times_{BG} BB \times BB \times_{BG} BB \ar{d} & \ar{l} BB \times_{BG} BB \times_{BG} BB \ar{d} \ar{r} & BB \times_{BG} BB \ar{d} \\
	BB \times BB \times BB \times BB & \ar{l} BB \times BB \times BB \ar{r} & BB \times BB
\end{tikzcd} \teq\label{eq:from_Hecke_stack_to_BB}
\]
where the first square is Cartesian. Pushing forward, we obtain a monoidal functor $\tilde{S}$ given by the composition
\[
	\Shv_{\gr,c}(B\backslash G/B)^{\weightheart} \hookrightarrow \Shv_\gr(B\backslash G/B)^\ren \to \Shv_\gr(BB \times BB)^\ren \simeq \BiMod_{R^\Rightarrow}(\Vect^\gr).
\]
We will show that $\tilde{S}$ factors through $S$ in the following diagram
\[
\begin{tikzcd}
	\Shv_{\gr, c}(B\backslash G/B)^{\weightheart} \ar{r}{S} \ar[bend right=10]{rr}[swap]{\tilde{S}} & \SBim_W^\Rightarrow \ar{r} & \BiMod_{R^\Rightarrow}(\Vect^\gr)
\end{tikzcd} \teq\label{eq:factorization_comparison_functor_SBim}
\]
and moreover, the functor $S$ is an equivalence.

\subsubsection{Factoring the functor}
For each $w \in W$, we let $X_w = B\backslash BwB/B$ denote the Schubert stack associated to $w$ and $\overline{X}_w$ its closure in $B\backslash G/B$. By the decomposition theorem for graded sheaves, \cref{thm:decomposition_theorem}, for any $\mathcal{F} \in \Shv_{\gr, c}(B\backslash G/B)^{\weightheart}$, we have a finite decomposition
\[
	\mathcal{F} \simeq \bigoplus_{w, i} \IC_{\overline{X}_w}[n_i]\lrangle{n_i}.
\]
Thus, to show the factorization \cref{eq:factorization_comparison_functor_SBim}, it suffices to show that $\tilde{S}$ sends $\IC_{\overline{X}_w}$ to an object in $\SBim_W^\Rightarrow$.

When $w_1, w_2 \in W$ such that $\ell(w_1w_2) = \ell(w_1) + \ell(w_2)$, the morphism $\overline{X}_{w_1} * \overline{X}_{w_2} \to \overline{X}_{w_1 w_2}$ is birational, where $\overline{X}_{w_1} * \overline{X}_{w_2} \subset BB\times_{BG} BB \times_{BG} BB$ is the closed substack corresponding to the preimage of $\overline{X}_{w_1} \times \overline{X}_{w_2} \subset BB\times_{BG} BB \times BB \times_{BG} BB$ under the top left map of \cref{eq:from_Hecke_stack_to_BB}. Thus, $\IC_{\overline{X}_{w_1w_2}}$ is a direct summand of $\IC_{\overline{X}_{w_1}} * \IC_{\overline{X}_{w_2}}$. Note that comparing to~\cite[Prop. 3.2.5]{bezrukavnikov_koszul_2013}, we do not need to invoke Frobenius-semisimplicity.

Since $\tilde{S}$ is monoidal, the discussion above implies that it suffices to show that $\tilde{S}$ sends $\IC_{\overline{X}_s}$ to an object in $\SBim_W^\Rightarrow$ for any simple reflection $s\in W$. In this case, $\overline{X}_s$ is smooth. Moreover, it is a classical computation that the cohomology of $\overline{X}_s$ is simply $(R \otimes_{R^s} R)^\Rightarrow$ which belongs to $\SBim_W^\Rightarrow$ by definition. We thus obtain the factorization \cref{eq:factorization_comparison_functor_SBim}.

Note that this discussion also implies that the functor $S$ is essentially surjective.

\subsubsection{Fully-faithfulness}
To show that $S$ is an equivalence of categories, it remains to show that $S$ is fully-faithful. However, this is a consequence of \cite[Prop. 3.1.6]{bezrukavnikov_koszul_2013} and how $\Hom$ is computed in the category of graded sheaves, see \cref{prop:hom_graded_sheaves}. The proof of \cref{thm:geometric_incarnation_Hecke} concludes.

\appendix

\section{Generalities on \texorpdfstring{$\DG$}{DG}-categories}
\label{sec:generalities_DGCats}

$\DG$-categories play an instrumental role in the paper. While there are many different ways to think about $\DG$-categories, we find the one worked out in~\cite[Vol. I, Chap. 1, \S10]{gaitsgory_study_2017} the most well-documented and most convenient to use, despite the fact that it is based on formidable machinery of $\infty$-categories developed in~\cites{lurie_higher_2017, lurie_higher_2017-1}. For example, as far as we know, relative tensors of categories, which already appear in the definition of the category of graded sheaves, are only developed in this context. More generally, constructions involving limits and colimits can be performed in a much more streamlined way in this setting.

Having said that, since the machinery is packaged in such a convenient and intuitive way, most of it can be taken as a black box. This section thus provides a quick review of the important aspects of the theory that are used throughout the paper, focusing on $\DG$-categories, module categories, and relative tensors thereof. We do not aim to review all the concepts used in the paper as it is impossible to do so. Rather, the main goal is to introduce the notations, provide intuition, and familiarize the readers with the language and references of the subject. We do, however, hope that even for readers who are unfamiliar with the theory, this section still provides sufficient background to follow the main ideas of the paper. We note that most results we recall below hold true in more generality than the way we phrase them. We choose to forfeit generality to keep things simple; interested readers can consult the accompanying references.

Most of the materials written here are either developed in~\cites{lurie_higher_2017, gaitsgory_study_2017} or are direct consequences of the results therein. Because of that, no proof will be given here. This is with the exception of~\cref{subsec:induction_formula_enriched_hom} where we prove an induction formula for the enriched $\Hom$-spaces, crucial in our study of graded sheaves. Although the result seems to be well-known among experts, we cannot find a proof in the literature.

Throughout this section, we fix a field $\Lambda$ of characteristics $0$. For the purpose of constructing the theory of graded sheaves, $\Lambda=\Qlbar$. However, results stated in this section hold for any field $\Lambda$ of characteristic $0$. Thus, except for this section, all $\DG$-categories appearing in this paper are linear over $\Qlbar$.

\subsection{Stable presentable categories}
We will now quickly review the main features of stable presentable ($\infty$-)categories.

\subsubsection{$\infty$-categories}
The theory of $\infty$-category is indispensable for our purposes. The theory was developed in great detail in~\cites{lurie_higher_2017-1,lurie_higher_2017}, but shorter accounts exist, see~\cite[Vol. I, Chap. 1]{gaitsgory_study_2017} and~\cite{cisinski_higher_2019}. One virtue of the theory is that, in the words of~\cite{cisinski_higher_2019}, $\infty$-categories allow for a union between category theory and homotopical algebra. For example, (homotopical) limits and colimits, which are ubiquitous in homotopical algebra, form a natural part of the theory of $\infty$-categories.

We let $\Spc$ denote the $\infty$-category of spaces, or $\infty$-groupoids. For any $\infty$-category $\mathcal{C}$, and $c_1, c_2 \in \mathcal{C}$, we use $\Hom_\mathcal{C}(c_1, c_2)$ to denote the \emph{space} of homomorphisms between $c_1$ and $c_2$ in $\mathcal{C}$. $\Hom$ being a space, rather than just a set, is one of the main distinguishing features of the theory of $\infty$-categories compared to classical category theory.

In this paper, unless otherwise specified, by a category, we always mean an $\infty$-category. A classical category, i.e., a $1$-category, can be viewed as an $\infty$-category where the $\Hom$-space is now just a discrete set.

\subsubsection{Presentable categories}
Roughly speaking, a \emph{presentable category} is a ``large'' category (i.e., the collection of objects forms a class rather than a set) but is, in a precise sense, ``generated'' by a set of objects. The theory is worked out in detail in~\cite[\S5.5]{lurie_higher_2017-1}. We let $\Pr^L$ denote the category of presentable categories where morphisms are continuous functors, i.e., they commute with colimits. For $\mathcal{C}, \mathcal{D} \in \Pr^L$, we use $\Fun_{\cont}(\mathcal{C}, \mathcal{D}) \subseteq \Fun(\mathcal{C}, \mathcal{D})$ to denote the full subcategory consisting of \emph{continuous functors}, i.e., those that commute with colimits.\footnote{Note that this differs from the convention used in~\cite{gaitsgory_study_2017} where continuous functors are required to commute with only filtered colimits. However, when working in the setting of stable categories and exact functors, which is where all our categorical constructions happen, there is no difference between the two~\cite[Vol. I, Chap. 1, \S5.1.6]{gaitsgory_study_2017}.}

Presentable categories enjoy many nice features. For example, they admit all (small) limits and colimits~\cite[Def. 5.5.0.1 and Cor. 5.5.2.4]{lurie_higher_2017-1} and they satisfy the adjoint functor theorem~\cite[Cor. 5.5.2.9]{lurie_higher_2017-1}. Moreover, $\Pr^L$ itself admits small limits and colimits~\cite[\S5.5.3]{lurie_higher_2017-1} as well as a symmetric monoidal structure (also known as the Lurie tensor product),~\cite[Prop. 4.8.1.15]{lurie_higher_2017} (see also~\cite[Vol. I, Chap. 1, Thm. 6.1.2]{gaitsgory_study_2017}). The category $\Spc$ is the unit of the symmetric monoidal structure on $\Pr^{L}$.

\subsubsection{}
Due to the importance of this tensor product, let us quickly recall its characterization. Let $\mathcal{C}$ and $\mathcal{D}$ be presentable categories. Then, $\mathcal{C}\otimes \mathcal{D}$ is initial among presentable categories $\mathcal{E}$ which receive a functor from $\mathcal{C} \times \mathcal{D}$ that is continuous in each variable. In particular, for any such $\mathcal{E}$, the category of continuous functors from $\mathcal{C}\otimes \mathcal{D}$ to $\mathcal{E}$ is identified with the category of functors $\mathcal{C} \times \mathcal{D} \to \mathcal{E}$ that is continuous in each variable. We thus have a canonical functor $\mathcal{C} \times \mathcal{D} \to \mathcal{C} \otimes \mathcal{D}$ that is continuous in each variable. For $c\in \mathcal{C}$ and $d\in \mathcal{D}$, we write $c\boxtimes d$ to denote the image of $(c, d) \in \mathcal{C} \times \mathcal{D}$ under this functor.

\subsubsection{Stable categories}
The language of $\infty$-category allows for an elegant formulation of triangulated categories which are called \emph{stable} ($\infty$-)categories,~\cite[Defn. 1.1.1.9]{lurie_higher_2017}. More precisely, an $\infty$-category $\mathcal{C}$ is stable if
\begin{myenum}{--}
\item It is pointed, i.e., it has a $0$ object (which is both final and initial).
\item Every morphism $f: X\to Y$ in $\mathcal{C}$ can be completed into a pullback and a pushout square, respectively, as follows
\[
\begin{tikzcd}
	F \ar{d} \ar{r} & X \ar{d}{f} \\
	0 \ar{r} & Y
\end{tikzcd} \qquad\text{and}\qquad
\begin{tikzcd}
	X \ar{d}{f} \ar{r} & 0 \ar{d} \\
	Y \ar{r} & C
\end{tikzcd} \teq\label{eq:fiber_cofiber_squares}
\]
We call the first (resp. second) square a fiber (resp. co-fiber) sequence. Moreover, $F$ and $C$ are referred to as the fiber and co-fiber of $f$, respectively.
\item The squares in~\cref{eq:fiber_cofiber_squares} above are simultaneously pullback and pushout squares.\footnote{More generally, pullback squares and pushout squares are the same in a stable category~\cite[Prop. 1.1.3.4]{lurie_higher_2017}.}
\end{myenum}

\subsubsection{}
Despite the simple formulation, the homotopy category $\ho\mathcal{C}$ of a stable category $\mathcal{C}$ is a triangulated category~\cite[Thm. 1.1.2.14 and Rmk. 1.1.2.15]{lurie_higher_2017}. Moreover, in this language,
\[
	F \simeq \coCone(X\to Y) \simeq \Cone(X\to Y)[-1] \quad\text{and}\quad  C \simeq \Cone(X\to Y)
\]
where $F$ and $C$ are as in~\cref{eq:fiber_cofiber_squares}. In other words, these squares become distinguished triangles in the resulting triangulated category.

\subsubsection{}
Let $\mathcal{C}$ and $\mathcal{D}$ be stable categories. We use $\Fun_{\ex}(\mathcal{C}, \mathcal{D})$ to denote the full subcategory of $\Fun(\mathcal{C}, \mathcal{D})$ consisting of functors preserving pullbacks, or equivalently, pushouts. It is easy to see that these functors also preserve finite limits and colimits.  

We let $\Pr^{L, \stab}$ denote the full subcategory of $\Pr^L$ consisting of stable presentable categories. It is closed under limits, colimits, and tensor products in $\Pr^L$. The category $\Sptr$ of spectra is the unit of the symmetric monoidal structure on $\Pr^{L, \stab}$. For $\mathcal{C}, \mathcal{D} \in \Pr^{L, \stab}$, $\Fun_{\cont}(\mathcal{C}, \mathcal{D})$ consists of continuous functors that also preserve small colimits.

\subsubsection{Compact generation}
\label{subsubsec:compact_generation}
Among the presentable stable categories, compactly generated ones are particularly manageable. Fortunately, all of the stable categories appearing in this paper are of this type. We start by reviewing the notion of compactness and generation.

An object $c$ in a category $\mathcal{C}$ is said to be \emph{compact} if
\[
	\Hom_\mathcal{C}(c, -): \mathcal{C} \to \Spc
\]
commutes with filtered colimits. We let $\mathcal{C}^c$ denote the full subcategory of $\mathcal{C}$ containing compact objects.

Let $\mathcal{C} \in \Pr^{L, \stab}$. A collection of objects $\{c_\alpha\}$ in $\mathcal{C}$ is said to \emph{generate} if
\[
	\Hom_\mathcal{C}(c_\alpha, c[i]) = 0, \forall \alpha, \forall i=0, 1, \dots \quad\Rightarrow\quad c \simeq 0.
\]

A category $\mathcal{C} \in \Pr^{L, \stab}$ is said to be \emph{compactly generated} if it admits a set of compact generators.

\subsubsection{Ind-completion}
Given any small stable category $\mathcal{C}_0$, we let $\Ind(\mathcal{C}_0)$ denote the smallest full-subcategory of $\Fun(\mathcal{C}_0^{\opp}, \Spc)$ that contains the image of $\mathcal{C}_0$ under the Yoneda embedding and is closed under all filtered colimits. Equivalently, we can define $\Ind(\mathcal{C}_0)$ to be the full subcategory containing all functors that preserve finite limits (or equivalently, fiber products)~\cite[Cor. 5.3.5.4]{lurie_higher_2017-1} or~\cite[Vol. I, Chap. 1, Def. 7.1.3]{gaitsgory_study_2017}. 

We call $\Ind(\mathcal{C}_0)$ the \emph{ind-completion} of $\mathcal{C}_0$. It has the following universal property: for any stable category $\mathcal{D}$ that admits all (small) colimits (for example, when $\mathcal{D}$ is stable and presentable),
\[
	\Fun_{\cont}(\Ind(\mathcal{C}_0), \mathcal{D}) \simeq \Fun_{\ex}(\mathcal{C}_0, \mathcal{D}).
\]
In other words, continuous functors out of $\Ind(\mathcal{C}_0)$, a \emph{large} category, is determined by its restriction to $\mathcal{C}_0$, a \emph{small} category.

\subsubsection{}
\label{subsubsec:Ind_compactly-generated}
$\Ind(\mathcal{C}_0)$ is stable, presentable, and compactly generated. Moreover, the Yoneda embedding $\mathcal{C}_0 \to \Ind(\mathcal{C}_0)$ factors through $\Ind(\mathcal{C}_0)^c$ and the essential image of $\mathcal{C}_0$ forms a collection of compact generators of $\Ind(\mathcal{C}_0)$. Furthermore, all elements in $\Ind(\mathcal{C}_0)^c$ are direct summands of (the Yoneda embeddings of) objects in $\mathcal{C}_0$~\cite[Vol. I, Chap. 1, Lem. 7.2.4]{gaitsgory_study_2017}. In other words, $\Ind(\mathcal{C}_0)^c$ is the \emph{idempotent completion} of $\mathcal{C}_0$.

By~\cite[Vol. I, Chap. 1, Lem. 7.2.4.(3')]{gaitsgory_study_2017}, all compactly generated categories $\mathcal{C}$ are of the form $\Ind(\mathcal{C}_0)$ for some $\mathcal{C}_0$.

We record the following relation between the continuity of functors and compactness.

\begin{lem}[{\cite[Vol. I, Chap. 1, Lem. 7.1.5]{gaitsgory_study_2017}}]
\label{lem:continuous_right_adjoint_vs_compactness}
Let $F: \mathcal{C}\to \mathcal{D}$ be a functor in $\Pr^{L, \stab}$ and $F^R$ its right adjoint (which exists by the adjoint functor theorem). Suppose that $\mathcal{C}$ is compactly generated. Then $F^R$ is continuous if and only if $F$ preserves compactness, i.e., $F$ sends $\mathcal{C}^c$ to $\mathcal{D}^c$.
\end{lem}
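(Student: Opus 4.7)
The plan is to prove both implications by chasing the adjunction $F \dashv F^R$ through the formula
\[
  \Hom_{\mathcal{D}}(F(c), d) \simeq \Hom_{\mathcal{C}}(c, F^R(d)),
\]
and exploiting compact generation of $\mathcal{C}$ to detect equivalences in $\mathcal{C}$ via mapping from compact objects.

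For the forward direction, I would assume $F^R$ is continuous and take an arbitrary $c \in \mathcal{C}^c$ together with an arbitrary filtered diagram $\{d_i\}$ in $\mathcal{D}$. Combining the adjunction, the continuity of $F^R$, and the compactness of $c$, one obtains a natural chain of equivalences
\[
  \Hom_{\mathcal{D}}\!\bigl(F(c), \colim d_i\bigr) \simeq \Hom_{\mathcal{C}}\!\bigl(c, F^R \colim d_i\bigr) \simeq \Hom_{\mathcal{C}}\!\bigl(c, \colim F^R d_i\bigr) \simeq \colim \Hom_{\mathcal{C}}\!\bigl(c, F^R d_i\bigr) \simeq \colim \Hom_{\mathcal{D}}\!\bigl(F(c), d_i\bigr),
\]
which says precisely that $F(c) \in \mathcal{D}^c$.

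For the converse, I would first reduce to checking that $F^R$ preserves filtered colimits. Indeed, as a right adjoint $F^R$ preserves all limits, and in a stable setting a functor that preserves finite limits is exact, hence preserves finite colimits as well; together with filtered colimits this gives all (small) colimits, since we work inside $\Pr^{L,\stab}$. So, given a filtered diagram $\{d_i\}$ in $\mathcal{D}$, consider the canonical assembly map $\colim F^R(d_i) \to F^R(\colim d_i)$ in $\mathcal{C}$. By the compact generation hypothesis (recalled in \cref{subsubsec:compact_generation}), it suffices to show this map induces an equivalence after applying $\Hom_{\mathcal{C}}(c, -)$ for every $c \in \mathcal{C}^c$. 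The same adjunction chain as above, now read from both ends, converts both sides to $\colim \Hom_{\mathcal{D}}(F(c), d_i)$ using compactness of $c$ on the left and compactness of $F(c)$ (which is the hypothesis) on the right.

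The argument is essentially formal, so I do not anticipate a serious obstacle. The only point that requires a little care is the reduction of continuity of $F^R$ to preservation of filtered colimits; this is where stability and presentability are actually used, and it is precisely what allows the compact-object criterion to suffice.
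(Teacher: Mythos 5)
Your proof is correct. The paper itself does not prove this lemma --- it is quoted verbatim from \cite[Vol.~I, Chap.~1, Lem.~7.1.5]{gaitsgory_study_2017}, and the surrounding section explicitly defers all proofs to the references --- but your argument is the standard one and is essentially the proof given there: the forward direction is the four-step adjunction/continuity/compactness chain you wrote, and the converse reduces continuity of $F^R$ to preservation of filtered colimits (using that a limit-preserving functor between stable categories is exact) and then tests the assembly map $\colim F^R(d_i) \to F^R(\colim d_i)$ against compact generators. The only point worth making explicit is why mapping spaces out of compacts detect equivalences: since $\mathcal{C}^c$ is closed under shifts, an equivalence $\Hom_{\mathcal{C}}(c,-)$ on both sides for all $c \in \mathcal{C}^c$ forces the fiber of the assembly map to have vanishing homotopy in all degrees against all generators, hence to vanish by compact generation and stability; with that remark your argument is complete.
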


\subsubsection{Compact generation of tensors}
Tensor products of compactly generated categories are themselves compactly generated.

\begin{prop}[{\cite[Vol. I, Chap. 1, Prop. 7.4.2]{gaitsgory_study_2017}}] 
\label{prop:compact_generation_tensors}
Let $\mathcal{C}, \mathcal{D} \in \Pr^{L, \stab}$ such that they are both compactly generated. Then, $\mathcal{C} \otimes \mathcal{D}$ is compactly generated by objects of the form $c_0 \boxtimes d_0$ where $c_0\in \mathcal{C}^c$ and $d_0\in \mathcal{D}^c$.
\end{prop}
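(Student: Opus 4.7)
The plan is to realize $\mathcal{C} \otimes \mathcal{D}$ as the ind-completion of a small stable category built from $\mathcal{C}^c$ and $\mathcal{D}^c$, and then invoke the general properties of ind-completions recalled in \cref{subsubsec:Ind_compactly-generated}. Since $\mathcal{C}$ and $\mathcal{D}$ are compactly generated, we have $\mathcal{C} \simeq \Ind(\mathcal{C}^c)$ and $\mathcal{D} \simeq \Ind(\mathcal{D}^c)$ with $\mathcal{C}^c$ and $\mathcal{D}^c$ small, stable, and idempotent-complete. The first step is to construct a small stable category, which I shall denote $\mathcal{C}^c \otimes^{\ex} \mathcal{D}^c$, equipped with a functor from $\mathcal{C}^c \times \mathcal{D}^c$ that is exact in each variable and is universal with respect to this property: exact functors $\mathcal{C}^c \otimes^{\ex} \mathcal{D}^c \to \mathcal{E}$ into any small stable $\mathcal{E}$ correspond naturally to functors $\mathcal{C}^c \times \mathcal{D}^c \to \mathcal{E}$ exact in each variable separately. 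Such a tensor product is available via Lurie's machinery of symmetric monoidal $\infty$-categories applied to small stable $\infty$-categories with exact functors.

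The second step, which is the heart of the argument, is to identify $\mathcal{C} \otimes \mathcal{D} \simeq \Ind(\mathcal{C}^c \otimes^{\ex} \mathcal{D}^c)$ by matching universal properties against an arbitrary $\mathcal{E} \in \Pr^{L, \stab}$. For the left-hand side, continuous functors out of $\mathcal{C} \otimes \mathcal{D}$ correspond, by the defining property of the Lurie tensor product, to functors $\mathcal{C} \times \mathcal{D} \to \mathcal{E}$ continuous in each variable, and then by the universal property of $\Ind$ applied variable-by-variable to bi-exact functors $\mathcal{C}^c \times \mathcal{D}^c \to \mathcal{E}$. For the right-hand side, continuous functors out of $\Ind(\mathcal{C}^c \otimes^{\ex} \mathcal{D}^c)$ correspond to exact functors $\mathcal{C}^c \otimes^{\ex} \mathcal{D}^c \to \mathcal{E}$, which by the defining property of $\otimes^{\ex}$ are again bi-exact functors $\mathcal{C}^c \times \mathcal{D}^c \to \mathcal{E}$. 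Matching these identifications yields a canonical equivalence under which $c_0 \boxtimes d_0$ on the left corresponds to the image of $(c_0, d_0)$ on the right.

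The conclusion is then immediate from \cref{subsubsec:Ind_compactly-generated}: $\Ind(\mathcal{C}^c \otimes^{\ex} \mathcal{D}^c)$ is compactly generated by the essential image of $\mathcal{C}^c \otimes^{\ex} \mathcal{D}^c$, and under the above equivalence this image consists precisely of the objects $c_0 \boxtimes d_0$ with $c_0 \in \mathcal{C}^c$ and $d_0 \in \mathcal{D}^c$. The main technical point is the careful $\infty$-categorical bookkeeping needed to match the two universal properties at the level of functor $\infty$-categories rather than homotopy categories; once the symmetric monoidal structure on small stable $\infty$-categories with exact functors is in hand, every other step reduces to a formal comparison of representing objects.
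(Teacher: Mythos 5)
The paper does not prove this statement at all: it is recalled as background with a citation to Gaitsgory--Rozenblyum, so there is no in-text argument to compare against. Your proposal is, in substance, the standard proof of the cited result: identify $\mathcal{C}\otimes\mathcal{D}$ with $\Ind(\mathcal{C}^c\otimes^{\ex}\mathcal{D}^c)$ by matching the universal property of the Lurie tensor product (functors continuous in each variable) against the universal property of $\Ind$ and of the small stable tensor product (functors exact in each variable), and then invoke the fact that an ind-completion is compactly generated by the image of the small category. This is correct and also explains the compactness of $c_0\boxtimes d_0$, which is the point that genuinely needs the identification (it is not automatic that $\mathcal{C}\times\mathcal{D}\to\mathcal{C}\otimes\mathcal{D}$ sends pairs of compacts to compacts). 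One sentence of yours should be tightened: the essential image of $\mathcal{C}^c\otimes^{\ex}\mathcal{D}^c$ in the ind-completion does \emph{not} consist precisely of the pure tensors $c_0\boxtimes d_0$; it contains everything built from them under finite colimits, shifts, and retracts. This does not damage the conclusion, because the pure tensors generate $\mathcal{C}^c\otimes^{\ex}\mathcal{D}^c$ under exactly these operations (the stable, retract-closed subcategory they span receives a bi-exact functor from $\mathcal{C}^c\times\mathcal{D}^c$, so by the universal property the identity of $\mathcal{C}^c\otimes^{\ex}\mathcal{D}^c$ factors through it), and hence vanishing of maps out of all shifts of pure tensors still forces an object to vanish; but that intermediate step should be stated rather than elided.
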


\subsection{Module categories and enriched \texorpdfstring{$\Hom$}{Hom}-spaces}
\label{subsec:module_categories}

\subsubsection{Algebra and module objects}
\label{subsubsec:alg_mod_DGCat}
For any symmetric monoidal category $\mathcal{A}$, we use $\ComAlg(\mathcal{A})$ to denote the category of commutative algebra objects in $\mathcal{A}$. For any such commutative algebra $A \in \ComAlg(\mathcal{A})$, we use $\Mod_A(\mathcal{A})$ to denote the category of $A$-module objects in $\mathcal{A}$. See~\cite[Vol. I, Chap. 1, \S3]{gaitsgory_study_2017} for a more detailed discussion. 

The Lurie tensor product on $\Pr^{L, \stab}$ allows us to talk about commutative algebra objects in $\Pr^{L, \stab}$ and modules over such an object. In other words, it makes sense to talk about $\ComAlg(\Pr^{L, \stab})$ and for any $\mathcal{A} \in \ComAlg(\Pr^{L, \stab})$, we can talk about $\mathcal{A}$-module categories, which form a category $\Mod_\mathcal{A}(\Pr^{L, \stab})$. For brevity's sake, unless confusion is likely to happen, we will write $\Mod_\mathcal{A}$ to denote $\Mod_\mathcal{A}(\Pr^{L, \stab})$ in the situation above.

\subsubsection{}
Let us quickly unwind the definitions. A commutative algebra object $\mathcal{A} \in \ComAlg(\Pr^{L, \stab})$ is a symmetric monoidal stable presentable category such that the tensor product $\mathcal{A} \times \mathcal{A} \to \mathcal{A}$ is continuous in each variable (which induces a continuous functor $\mathcal{A} \otimes \mathcal{A} \to \mathcal{A}$). We will use $\mult_\mathcal{A}: \mathcal{A} \otimes \mathcal{A} \to \mathcal{A}$ (and sometimes, also $\mult_A: A \times \mathcal{A} \to \mathcal{A}$) to denote the operation of taking the tensor product. In other words, for $a, b \in \mathcal{A}$, $a\otimes b \defeq \mult_\mathcal{A}(a, b) = \mult_{\mathcal{A}}(a\boxtimes b)$.

\subsubsection{} \label{subsubsec:unwind_module_category}
Similarly, an $\mathcal{M} \in \Mod_\mathcal{A}$ is equipped with a ``multiplication'' map $\mathcal{A} \times \mathcal{M} \to \mathcal{M}$ that is continuous on each variable (and hence, induces a continuous functor $\mathcal{A} \otimes \mathcal{M} \to \mathcal{M}$) along with higher compatibilities. We will use $\act_{\mathcal{A}, \mathcal{M}}: \mathcal{A}\otimes \mathcal{M} \to \mathcal{M}$ (and sometimes, also $\act_{\mathcal{A}, \mathcal{M}}: \mathcal{A} \times \mathcal{M} \to \mathcal{M}$) to denote the action of $\mathcal{A}$ on $\mathcal{M}$ given by the module structure. For $a\in \mathcal{A}$ and $m\in \mathcal{M}$, we will also use $a\otimes m$ to denote $a\otimes m \defeq \act_{\mathcal{A}, \mathcal{M}}(a, m) = \act_{\mathcal{A}, \mathcal{M}}(a\boxtimes m)$. 

Since $\mathcal{A}$ is symmetric monoidal, a left-module structure is the same as a right-module structure. Thus, in the above, we also sometimes use $m\otimes a$ to denote the action.

When $F: \mathcal{A} \to \mathcal{B}$ is a symmetric monoidal functor between symmetric monoidal category. Then $\mathcal{B}$ obtains the structure of an $\mathcal{A}$-module. In particular, for $a\in \mathcal{A}$ and $b \in \mathcal{B}$, we can talk about $a \otimes b = F(a) \otimes b$, where the first and second tensor products come from the $\mathcal{A}$-module structure and the monoidal structure on $\mathcal{B}$, respectively. Similarly to the above, since the monoidal structures are \emph{symmetric}, we will also write $b\otimes a = b \otimes F(a)$.

\subsubsection{Lax functors}
Let $\mathcal{A} \in \ComAlg(\Pr^{L, \stab})$ and $\mathcal{M}, \mathcal{N} \in \Mod_\mathcal{A}$. Then one can talk about left/right-lax $\mathcal{A}$-module functors $F: \mathcal{M} \to \mathcal{N}$~\cite[Vol. I, Chap. 1, \S3.5.1]{gaitsgory_study_2017}. Roughly speaking, if $F$ is left-lax, then for any $a\in \mathcal{A}, m\in \mathcal{M}$, we have a natural map
\[
	F(a\otimes m) \to a\otimes F(m).
\]
Similarly, if $F$ is right-lax, we have a map in the opposite direction. These maps are equivalences if $F$ is a strict (rather than lax) functor of $\mathcal{A}$-modules.

\begin{lem}[{\cite[Cor. 7.3.2.7]{lurie_higher_2017}} or~{\cite[Vol. I, Chap. 1, Lem. 3.5.3]{gaitsgory_study_2017}}] \label{lem:adjoint_laxness_mod}
Let $\mathcal{A}, \mathcal{M}, \mathcal{N}$ be as above and $F: \mathcal{M} \rightleftarrows \mathcal{N}: G$ a pair of adjoint functors. Then, the structure on $F$ of a left-lax functor of $\mathcal{A}$-modules is equivalent to the structure on $G$ of a right-lax functor of $\mathcal{A}$-modules.
\end{lem}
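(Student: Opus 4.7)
The plan is to establish the equivalence by explicitly writing down the mate construction for adjunctions, and then recognizing that the mate operation automatically preserves the operadic coherences that encode lax module structures.

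Given a left-lax $\mathcal{A}$-module structure on $F$, whose underlying structure maps are $\alpha_{a, m}: F(a \otimes m) \to a \otimes F(m)$ for $a \in \mathcal{A}$ and $m \in \mathcal{M}$, I would build the putative right-lax structure on $G$ by defining the structure map $\beta_{a, n}: a \otimes G(n) \to G(a \otimes n)$ as the mate of $\alpha$, namely the composite
\[
a \otimes G(n) \xrightarrow{\eta_{a \otimes G(n)}} G F(a \otimes G(n)) \xrightarrow{G(\alpha_{a, G(n)})} G(a \otimes F G(n)) \xrightarrow{G(\id_a \otimes \varepsilon_n)} G(a \otimes n),
\]
where $\eta$ and $\varepsilon$ denote the unit and counit of $F \dashv G$. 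Symmetrically, starting from a right-lax $G$ one recovers $\alpha$ from $\beta$ by the evidently dual expression. A direct triangle-identity argument shows that these two constructions are mutually inverse on the level of the underlying structure morphisms, which gives the claim on homotopy categories.

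To promote this to an equivalence at the full $\infty$-categorical level, I would organize the problem using the operadic/straightening language of~\cite[\S3 and \S4.7]{lurie_higher_2017}. A module structure on a category is a coCartesian fibration over the relevant operad encoding $\mathcal{A}$-actions, and a left-lax (resp. right-lax) module functor corresponds to a morphism of fibrations that preserves only inert (resp. only coCartesian) edges. In this setup, both $F$ with its left-lax structure and $G$ with its right-lax structure can be assembled into a single relative adjunction over the base operad, and the standard passage between an adjunction and its mate transformation, formulated inside the relevant $(\infty, 2)$-category as in~\cite[\S7.3.2]{lurie_higher_2017}, upgrades the pointwise construction above to a fully coherent equivalence of data.

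The main obstacle is not the construction itself, which is essentially forced, but rather the bookkeeping of higher coherences: one must verify that the mate construction is compatible with iterated applications of the $\mathcal{A}$-action, i.e., that it respects the multi-variable structure maps $F(a_1 \otimes \cdots \otimes a_k \otimes m) \to a_1 \otimes \cdots \otimes a_k \otimes F(m)$ and their associativity data simultaneously with the adjunction. The cleanest way to discharge this is to avoid hand-written coherences entirely and instead invoke the general principle, proved in~\cite[Cor. 7.3.2.7]{lurie_higher_2017} and in the form used by~\cite{gaitsgory_study_2017}, that adjoints in a presentable $(\infty, 2)$-category of module categories and lax module functors automatically exchange left-lax and right-lax structures via mates; the statement of the lemma is then this general result applied to the $(\infty, 2)$-category $\Mod_\mathcal{A}(\Pr^{L, \stab})$.
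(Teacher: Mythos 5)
The paper gives no independent proof of this lemma: it is quoted verbatim from the references, and your final step---delegating all higher coherences to \cite[Cor.~7.3.2.7]{lurie_higher_2017} (equivalently \cite[Vol.~I, Chap.~1, Lem.~3.5.3]{gaitsgory_study_2017}) applied to $\Mod_{\mathcal{A}}(\Pr^{L,\stab})$---is exactly the same reduction, with your explicit mate composite $a\otimes G(n)\to GF(a\otimes G(n))\to G(a\otimes FG(n))\to G(a\otimes n)$ being the correct $1$-categorical shadow of that result. One minor caution, not load-bearing for your argument: your fibrational gloss is imprecise, since a module functor preserving \emph{all} coCartesian edges is strict rather than right-lax (laxness is encoded by preserving only the coCartesian lifts of inert morphisms in the appropriate (co)Cartesian model), but since you ultimately invoke the cited general theorem rather than this description, the proposal stands.
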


\subsubsection{Enriched $\Hom$-spaces}
\label{subsubsec:enriched_Hom}
For any ($\infty$-)category $\mathcal{C}$ and two objects $c_1, c_2 \in \mathcal{C}$, recall that $\Hom_\mathcal{C}(c_1, c_2) \in \Spc$ denotes the the \emph{space} of maps between $c_1$ and $c_2$. The theory of module categories allows for a richer notion of $\Hom$-space that we will now turn to.

Let $\mathcal{A} \in \ComAlg(\Pr^{L, \stab})$ and $\mathcal{M} \in \Mod_\mathcal{A}$. For any two objects $m, n \in \mathcal{M}$, we consider the following functor
\begin{align*}
	\mathcal{A}^{\opp} &\to \Spc \\
	a &\mapsto \Hom_\mathcal{M}(a \otimes m, n).
\end{align*}
By assumption, this functor preserves limits and hence, by presentability of all categories involved, we know that this functor is representable~\cite[Prop. 5.5.2.2]{lurie_higher_2017-1}. We denote the representing object $\cuHom_\mathcal{M}^\mathcal{A}(m, n)$, the \emph{$\mathcal{A}$-enriched $\Hom$-space} between $m$ and $n$. This object is also called the \emph{relative inner $\Hom$} in~\cite[Vol. I, Chap. 1, \S3.6]{gaitsgory_study_2017}.

\subsubsection{}
By definition, we have
\[
	\Hom_\mathcal{M}(a\otimes m, n) \simeq \Hom_\mathcal{A}(a, \cuHom_\mathcal{M}^\mathcal{A}(m, n)). \teq\label{eq:defining_formula_enriched_Hom}
\]
In particular, evaluating at $a = 1_\mathcal{A}$, the monoidal unit of $\mathcal{A}$, we recover the usual $\Hom$-space from the $\mathcal{A}$-enriched one
\[
	\Hom_\mathcal{M}(m, n) \simeq \Hom_\mathcal{A}(1_\mathcal{A}, \cuHom_\mathcal{M}^\mathcal{A}(m, n)).
\]

\subsubsection{} \label{subsubsec:enriched_hom_as_adjoint}
Equation \cref{eq:defining_formula_enriched_Hom} also implies that for any $m \in \mathcal{M}$, we have a pair of adjoint functors
\[
\begin{tikzcd}[column sep=huge]
	\mathcal{A} \ar[shift left=\arrdisp]{r}{-\otimes m} & \ar[shift left=\arrdisp]{l}{\cuHom_\mathcal{M}^\mathcal{A}(m, -)} \mathcal{M}.
\end{tikzcd}
\]

\subsubsection{}
When $\mathcal{M} = \mathcal{A}$, then for any $a_1, a_2 \in \mathcal{A}$, we obtain the inner-$\Hom$: $\cuHom_{\mathcal{A}}^{\mathcal{A}}(a_1, a_2) \in \mathcal{A}$. For brevity's sake, we will omit the superscript $\mathcal{A}$ and simply write $\cuHom_\mathcal{A}(a_1, a_2)$.

\subsubsection{Relative tensors of module categories}
Let $\mathcal{A} \in \ComAlg(\Pr^{L, \stab})$ and $\mathcal{M}, \mathcal{N} \in \Mod_\mathcal{A}$. Then, one can form the relative tensor (\cite[\S4.4]{lurie_higher_2017} and~\cite[Vol. I, Chap. 1, \S4.2.1]{gaitsgory_study_2017})
\[
	\mathcal{M}\otimes_\mathcal{A} \mathcal{N} = |\mathcal{M} \otimes \mathcal{A}^{\otimes \bullet} \otimes \mathcal{N}| \in \Mod_\mathcal{A}.
\]
Here, $\mathcal{M} \otimes \mathcal{A}^{\otimes \bullet} \otimes \mathcal{N}$ denotes the simplicial object obtained by the two-sided bar construction, and $|-|$ denotes the geometric realization of a simplicial object (i.e., taking colimit). This construction equips $\Mod_\mathcal{A}$ with the structure of a symmetric monoidal category.

We have canonically defined functors
\[
	\mathcal{M} \times \mathcal{N} \to \mathcal{M}\otimes \mathcal{N} \to \mathcal{M} \otimes_{\mathcal{A}} \mathcal{N}.	
\]
For $m \in \mathcal{M}$ and $n \in \mathcal{N}$, we write $m \boxtimes_\mathcal{A} n \in \mathcal{M}\otimes_\mathcal{A} \mathcal{N}$ to denote the image of $(m, n) \in \mathcal{M}\times \mathcal{N}$ under this functor. When no confusion is likely to occur, we will simply write $m\boxtimes n \in \mathcal{M} \otimes_\mathcal{A} \mathcal{N}$ for $m \boxtimes_\mathcal{A} n$.

\subsubsection{Compact generation of relative tensors}
We have the following generalization of \cref{prop:compact_generation_tensors}.

\begin{prop}[{\cite[Vol. I, Chap. 1, Prop. 8.7.4]{gaitsgory_study_2017}}] 
\label{prop:compact_generation_rel_tensors}
Let $\mathcal{A} \in \ComAlg(\Pr^{L, \stab})$ and $\mathcal{M}, \mathcal{N} \in \Mod_\mathcal{A}$ such that $\mathcal{A}, \mathcal{M}, \mathcal{N}$ are all compactly generated. Suppose that
\[
	\mathcal{A} \otimes \mathcal{A} \to \mathcal{A}, \quad \mathcal{A} \otimes \mathcal{N} \to \mathcal{N}, \quad \mathcal{M}\otimes \mathcal{A} \to \mathcal{M}
\]
preserve compactness. Then, the functor $\mathcal{M} \otimes \mathcal{N} \to \mathcal{M}\otimes_\mathcal{A} \mathcal{N}$ also preserves compactness. Moreover, $\mathcal{M}\otimes_\mathcal{A} \mathcal{N}$ is compactly generated by objects of the form $m_0 \boxtimes n_0$ where $m_0 \in \mathcal{M}^c$ and $n_0 \in \mathcal{N}^c$.
\end{prop}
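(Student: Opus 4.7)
The plan is to show that the canonical functor $p: \mathcal{M}\otimes\mathcal{N} \to \mathcal{M}\otimes_\mathcal{A}\mathcal{N}$ sending $m\boxtimes n \mapsto m\boxtimes_\mathcal{A} n$ preserves compactness. Once this is established, the second assertion follows from \cref{prop:compact_generation_tensors}: the compact generators $m_0 \boxtimes n_0$ of $\mathcal{M}\otimes\mathcal{N}$ with $m_0 \in \mathcal{M}^c, n_0 \in \mathcal{N}^c$ produce compact objects $m_0 \boxtimes_\mathcal{A} n_0 = p(m_0 \boxtimes n_0)$ in $\mathcal{M}\otimes_\mathcal{A}\mathcal{N}$, and these generate under colimits because the bar-complex presentation $\mathcal{M}\otimes_\mathcal{A}\mathcal{N} \simeq |\mathcal{M}\otimes\mathcal{A}^{\otimes \bullet}\otimes\mathcal{N}|$ exhibits every object of the target as a colimit of images of objects from $B_0 = \mathcal{M}\otimes\mathcal{N}$.

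By \cref{lem:continuous_right_adjoint_vs_compactness}, the claim that $p$ preserves compactness is equivalent to the continuity of its right adjoint $p^R$. To verify this, I would use the bar-complex presentation above, taken in $\Pr^{L,\stab}$, with $B_n = \mathcal{M}\otimes\mathcal{A}^{\otimes n}\otimes\mathcal{N}$, whose face maps are built from $\mult_\mathcal{A}$, $\act_{\mathcal{A},\mathcal{M}}$, and $\act_{\mathcal{A},\mathcal{N}}$ applied at various tensor factors. Under the $(\infty,2)$-categorical duality $(\Pr^{L,\stab})^{\opp} \simeq \Pr^{R,\stab}$, the colimit $|B_\bullet|$ corresponds to the limit in $\Pr^{R,\stab}$ of the cosimplicial object $B^{\bullet}$ with the same underlying categories in each degree but whose coface maps are the right adjoints of the face maps of $B_\bullet$. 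The functor $p^R$ is identified with the structure map from this limit to $B^0 = \mathcal{M}\otimes\mathcal{N}$.

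Now, by hypothesis together with \cref{lem:continuous_right_adjoint_vs_compactness}, the right adjoints of $\mult_\mathcal{A}$, $\act_{\mathcal{A},\mathcal{M}}$, and $\act_{\mathcal{A},\mathcal{N}}$ are all continuous; hence the coface maps of $B^{\bullet}$ are continuous, and $B^{\bullet}$ lifts to a cosimplicial object in $\Pr^{L,\stab}$. Since limits in both $\Pr^{L,\stab}$ and $\Pr^{R,\stab}$ are computed on underlying categories, the two limits agree, and the resulting structure map to $B^0$, which is $p^R$, is continuous. The main obstacle is to rigorously justify the passage between colimits in $\Pr^{L,\stab}$ and limits in $\Pr^{R,\stab}$ and to carefully track how the cosimplicial structure of right adjoints assembles into $p^R$; this is exactly the type of manipulation packaged by the $(\infty,2)$-categorical formalism of~\cite{gaitsgory_study_2017}. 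Conceptually, however, the essential content is simple: the preservation-of-compactness hypotheses on $\mult_\mathcal{A}$, $\act_{\mathcal{A},\mathcal{M}}$, and $\act_{\mathcal{A},\mathcal{N}}$ are precisely what allow the construction of $p^R$ to descend from $\Pr^R$ to $\Pr^L$.
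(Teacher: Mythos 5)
This proposition is quoted in the paper directly from~\cite[Vol.~I, Chap.~1, Prop.~8.7.4]{gaitsgory_study_2017} with no internal proof, so there is nothing of the paper's to compare against; your argument is essentially the standard proof of that result, and its skeleton (reduce to continuity of $p^R$ via \cref{lem:continuous_right_adjoint_vs_compactness}, compute $|B_\bullet|$ as a limit of the diagram of right adjoints, use the compactness hypotheses to keep that limit inside $\Pr^{L,\stab}$, then deduce generation from the insertion $B_0 \to |B_\bullet|$) is sound.

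Two details should be tightened. First, the face maps of $B_\bullet$ are not literally $\mult_\mathcal{A}$, $\act_{\mathcal{A},\mathcal{M}}$, $\act_{\mathcal{A},\mathcal{N}}$ but functors of the form $F\otimes \id$; to see that these preserve compactness you need \cref{prop:compact_generation_tensors} (compact objects of a tensor product are retracts of finite colimits of box-tensors of compacts), and to know that the corresponding coface maps of $B^\bullet$ are $F^R\otimes\id$, hence continuous, you need $F^R$ itself to be continuous, which is exactly what the hypotheses plus \cref{lem:continuous_right_adjoint_vs_compactness} provide. Second, and more substantively, the simplicial object $B_\bullet$ also has degeneracies, which insert the unit via $\Sptr\to\mathcal{A}$; the right adjoint of such a map is continuous only when $1_\mathcal{A}$ is compact, which is \emph{not} among the hypotheses, so the assertion that $B^\bullet$ lifts to a cosimplicial object of $\Pr^{L,\stab}$ is not justified as stated. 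The standard repair is to note that the inclusion $\Delta_{\mathrm{inj}}^{\opp}\subset\Delta^{\opp}$ is cofinal~\cite[Lem.~6.5.3.7]{lurie_higher_2017-1}, so the realization may be computed over the semisimplicial diagram, whose structure maps are face maps only, and the dual limit then involves only their (continuous) right adjoints. With these patches the rest goes through: since every insertion $\mathrm{ins}_n$ factors through $\mathrm{ins}_0=p$ via face maps, $p^R$ is conservative (and now continuous), so the compact objects $p(m_0\boxtimes n_0)$ with $m_0\in\mathcal{M}^c$, $n_0\in\mathcal{N}^c$ generate $\mathcal{M}\otimes_\mathcal{A}\mathcal{N}$.
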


\subsection{Duality}
We will review basic general patterns of duality. The most important concept is that of a dualizable object. This is an important finiteness condition in a monoidal category which frequently appears in knot theory and topological quantum field theory. The materials presented in this section come from~\cite[Vol. I, Chap. 1, \S4]{gaitsgory_study_2017}.

\subsubsection{Dualizability}
Let $\mathcal{A} \in \ComAlg(\Pr^{L, \stab})$ and $a\in \mathcal{A}$. Then, we obtain an endofunctor $a \otimes -: \mathcal{A} \to \mathcal{A}$. We say that $a$ admits a \emph{dual} $a^\vee$ if $a^\vee \otimes -$ is adjoint to $a \otimes -$. In this case, we say that $a$ is \emph{dualizable}. 

Note that most of~\cite[Vol. I, Chap. 1, \S4]{gaitsgory_study_2017} distinguishes between left and right duals (or dualizability), which correspond to specifying precisely which adjoint (i.e., left vs. right) we have. However, in a symmetric monoidal category, the two notions coincide. Indeed, given a dualizable object $a\in \mathcal{A}$ with dual $a^\vee$, for any $b, c\in \mathcal{A}$, we have
\[
	\Hom_\mathcal{A}(a \otimes b, c) \simeq \Hom_\mathcal{A}(b, a^\vee \otimes c) \teq\label{eq:defining_eq_dual}
\]
and
\[
	\Hom_\mathcal{A}(b, a\otimes c) \simeq \Hom_\mathcal{A}(a^\vee \otimes b, c).
\]

\subsubsection{Dual object and internal-$\Hom$}
Just as in linear algebra, dual objects, in general, can be expressed as the (internal) $\Hom$ into the monoidal unit object. Let $a \in \mathcal{A}$ be a dualizable object as above. By \cref{subsubsec:enriched_Hom}, for any $b, c \in \mathcal{A}$, we have
\[
	\Hom_\mathcal{A}(a \otimes b, c) \simeq \Hom_\mathcal{A}(b, \cuHom_\mathcal{A}(a, c)). \teq\label{eq:internal_Hom_universal}
\]
Comparing with~\cref{eq:defining_eq_dual}, we get
\[
	\cuHom_\mathcal{A}(a, c) \simeq a^\vee \otimes c.
\]
In particular, taken $c = 1_\mathcal{A}$, we get
\[
	a^\vee \simeq \cuHom_\mathcal{A}(a, 1_\mathcal{A}).
\]
Moreover, the functor $\cuHom_\mathcal{A}(a, -) \simeq a^\vee \otimes -$ commutes with both limits and colimits. 

\subsection{Rigidity}
Rigid monoidal categories are stable monoidal categories that exhibit strong finiteness properties and behave extremely nicely with respect to duality and module category structures. In the stable categorical context, they were introduced in~\cite[Vol. I, Chap. 1, \S9]{gaitsgory_study_2017} which is the reference for this subsection.

\subsubsection{Compactly generated rigid symmetric monoidal categories}
\begin{defn}
Let $\mathcal{A} \in \ComAlg(\Pr^{L, \stab})$ such that $\mathcal{A}$ is compactly generated. We say that $\mathcal{A}$ is \emph{rigid} if
\begin{myenum}{--}
	\item The monoidal unit $1_\mathcal{A}$ is compact.
	\item The functor $\mult_\mathcal{A}$ sends $\mathcal{A}^c\times \mathcal{A}^c$ to $\mathcal{A}^c$, i.e., it preserves compactness.
	\item Every compact object of $\mathcal{A}$ is dualizable.
\end{myenum}
\end{defn}

We note that rigidity, in general, does \emph{not} require the category involved to be compactly generated nor the monoidal structure to be symmetric. The general definition can be found at~\cite[Vol. I, Chap. 1, Def. 9.1.2]{gaitsgory_study_2017}. However, we do not need this generality in the paper. The general definition is equivalent to the one given above due to~\cite[Vol. I, Chap. 1, Lem. 9.1.5]{gaitsgory_study_2017}.

\begin{expl}
We let $\Vect$ denote the ($\infty$-)derived category of chain complexes of vector spaces over a field $\Lambda$. The category $\Vect$ has a natural symmetric monoidal structure. It is compactly generated with compact objects being perfect complexes. In fact, it is compactly generated by $\Lambda$. From this description, we see that $\Vect$ is a compactly generated rigid symmetric monoidal category.
\end{expl}

\begin{expl}
\label{expl:Vectgr_rigid}
Let $\Vect^\gr$ denote the category of graded chain complexes. Formally speaking, let $\mathbb{Z}$ be the discrete category with the underlying set of objects given by the set of integers. Then, $\Vect^\gr = \Fun(\mathbb{Z}, \Vect)$. We think of objects in $\Vect^\gr$ as graded chain complexes. Informally, any $V\in \Vect^\gr$ is of the form $V = \bigoplus_{i\in \mathbb{Z}} V_i$ where $V_i$ is the $i$-th graded component of $V$. 

$\Vect^\gr$ has a natural symmetric monoidal structure given by Day convolution~\cite[\S2.2.6]{lurie_higher_2017}. More informally, given $V, W \in \Vect^\gr$,
\[
	(V \otimes W)_k = \bigoplus_{i + j = k} V_i \otimes W_j.
\]

$\Vect^\gr$ is compactly generated with compact objects being graded chain complexes supported on finitely many degrees and each graded degree is perfect. Any compact object $V\in (\Vect^\gr)^c$ is dualizable with dual given by $(V^\vee)_i = (V_{-i})^\vee$. From this description, we see that $\Vect^\gr$ is a compactly generated rigid symmetric monoidal category. 

For $V \in \Vect$, we will also abuse notation and view it as an object in $\Vect^\gr$ where $V$ is placed in degree $0$. For any $V\in \Vect^\gr$ and $k\in \mathbb{Z}$, we write $V\lrangle{k}$ to denote a grading shift of $V$, i.e., $(V\lrangle{k})_n = V_{n+k}$. In particular, for $V\in \Vect$, $V\lrangle{k}$ denotes the object in $\Vect^\gr$ obtained by putting $V$ put in graded degree $-k$. The category $\Vect^\gr$ is compactly generated by $\{\Lambda\lrangle{k}\}_{k\in \mathbb{Z}}$.
\end{expl}

\subsubsection{Interaction with lax functors} 
Lax functors between module categories over a rigid monoidal category are automatically strict.

\begin{lem}[{\cite[Vol. I, Chap. 1, Lem. 9.3.6]{gaitsgory_study_2017}}] \label{lem:lax_module_functor_is_strict_rigid}
Let $\mathcal{A}$ be a rigid monoidal category. Any continuous right-lax or (left-lax) functor between $\mathcal{A}$-module categories is strict.
\end{lem}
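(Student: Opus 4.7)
The plan is to reduce the claim to a formal diagram chase powered by duality in a rigid monoidal category. First, by \cref{lem:adjoint_laxness_mod}, a left-lax $\mathcal{A}$-module structure on a continuous functor is equivalent to a right-lax $\mathcal{A}$-module structure on its right adjoint (and continuity is preserved under this passage). So, without loss of generality, I may assume $F\colon \mathcal{M}\to \mathcal{N}$ is a continuous right-lax $\mathcal{A}$-module functor, with structure maps $\sigma_{a,m}\colon a\otimes F(m)\to F(a\otimes m)$; the goal becomes showing that every $\sigma_{a,m}$ is an equivalence.

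Next, I would reduce to the case where $a$ is a compact object of $\mathcal{A}$. Fixing $m\in \mathcal{M}$, both $a\mapsto a\otimes F(m)$ and $a\mapsto F(a\otimes m)$ are continuous functors $\mathcal{A}\to \mathcal{N}$, using continuity of $F$ and of the module actions. Since $\mathcal{A}$ is compactly generated, a natural transformation between continuous functors out of $\mathcal{A}$ is an equivalence as soon as it is so when restricted to $\mathcal{A}^c$ (see \cref{subsubsec:compact_generation,subsubsec:Ind_compactly-generated}). Hence it suffices to prove that $\sigma_{a,m}$ is an equivalence for $a\in \mathcal{A}^c$ and arbitrary $m\in \mathcal{M}$.

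By rigidity, any such $a$ is dualizable, with dual $a^\vee$, coevaluation $\eta\colon 1_\mathcal{A}\to a\otimes a^\vee$, and evaluation $\epsilon\colon a^\vee \otimes a\to 1_\mathcal{A}$ satisfying the usual snake identities. Using the right-lax structure on the pair $(a^\vee, a\otimes m)$, I would write down a candidate inverse $\tau_{a,m}\colon F(a\otimes m)\to a\otimes F(m)$ as the composition
\[
F(a\otimes m)\xrightarrow{\eta\otimes \id} a\otimes a^\vee \otimes F(a\otimes m) \xrightarrow{a\otimes \sigma_{a^\vee,\, a\otimes m}} a\otimes F(a^\vee \otimes a\otimes m) \xrightarrow{a\otimes F(\epsilon\otimes \id)} a\otimes F(m).
\]

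The principal obstacle, and where the bulk of the work lies, is verifying that $\tau_{a,m}$ is a two-sided inverse to $\sigma_{a,m}$. This is a diagram chase combining three inputs: naturality of $\sigma$ in both variables, the associativity coherence for a right-lax $\mathcal{A}$-module functor (which identifies $\sigma_{a,\, a^\vee \otimes a\otimes m}\circ (a\otimes \sigma_{a^\vee,\, a\otimes m})$ with $\sigma_{a\otimes a^\vee,\, a\otimes m}$), and the snake identities for $(a, a^\vee, \eta, \epsilon)$. After composing, naturality and coherence of $\sigma$ move the structure maps past $\eta$ and $\epsilon$, collapsing $\sigma_{a,m}\circ \tau_{a,m}$ to $F$ applied to a snake composite, which equals the identity on $a\otimes m$; the composite $\tau_{a,m}\circ \sigma_{a,m}$ is handled symmetrically. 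In the $\infty$-categorical setting, all necessary coherences are packaged into the right-lax structure, so once the candidate inverse is written down the verification is formal; the technical care is in recording the coherence data precisely enough to make the snake identities bite.
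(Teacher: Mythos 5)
The paper itself gives no argument here (it simply cites \cite[Vol.~I, Chap.~1, Lem.~9.3.6]{gaitsgory_study_2017}), so the only question is whether your argument stands on its own. The core of it does: fixing $m$, both $a\mapsto a\otimes F(m)$ and $a\mapsto F(a\otimes m)$ are continuous in $a$, so checking $\sigma_{a,m}$ on the compact generators suffices, and for dualizable $a$ your $\tau_{a,m}$ built from $\eta$, $\epsilon$ and $\sigma_{a^\vee,-}$ is indeed a two-sided inverse — the verification via naturality, the lax associativity/unitality coherences, and the snake identities goes through exactly as you sketch, and only needs to be carried out in the homotopy category since invertibility is detected there. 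This is the standard proof of the statement.

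The gap is in your opening reduction of the left-lax case to the right-lax one. First, the parenthetical ``continuity is preserved under this passage'' is false: the right adjoint of a continuous functor between presentable categories need not be continuous — the paper's own \cref{expl:constant_sheaves_not_compact} ($\pi^*\colon\Vect\to\Shv(B\Gm)$ with non-continuous right adjoint $\pi_*$) is exactly such an example — so you cannot feed the right adjoint into your compact-generation argument. Second, even granting continuity, proving the right adjoint $G$ strict does not formally give strictness of $F$: the correspondence of \cref{lem:adjoint_laxness_mod} matches lax \emph{structures}, not invertibility of their structure maps, and the transfer of strictness across an adjunction over a rigid base is precisely the content of \cref{cor:lax_implies_strict_rigid}, i.e.\ of the lemma you are proving — so this direction of the reduction is circular. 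The repair is immediate and costs nothing: drop the reduction and treat the left-lax case symmetrically, defining the candidate inverse $a\otimes F(m)\to F(a\otimes m)$ from the left-lax maps $\lambda_{a^\vee,\,a\otimes m}$ together with $\eta$ and $\epsilon$, with the same reduction to compact $a$ using continuity of $F$ and of the action maps.
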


Combining this result and \cref{lem:adjoint_laxness_mod}, we obtain the following result.
\begin{cor} \label{cor:lax_implies_strict_rigid}
Let $\mathcal{A}$ be a rigid monoidal category, $\mathcal{M}, \mathcal{N} \in \Mod_\mathcal{A}$. Let $F: \mathcal{M} \rightleftarrows \mathcal{N}: G$ be a pair of adjoint functors between the underlying categories. Then, the following are equivalent
\begin{myenum}{(\roman*)}
	\item $F$ is a left-lax functor of $\mathcal{A}$-modules.
	\item $G$ is a right-lax functor of $\mathcal{A}$-modules.
	\item $F$ is a strict functor of $\mathcal{A}$-modules.
	\item $G$ is a strict functor of $\mathcal{A}$-modules.
\end{myenum}
\end{cor}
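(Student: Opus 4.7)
Since strict functors are in particular both left-lax and right-lax, (iii)$\Rightarrow$(i) and (iv)$\Rightarrow$(ii) are tautological, and (i)$\Leftrightarrow$(ii) is exactly \cref{lem:adjoint_laxness_mod}. It therefore suffices to prove the two implications (i)$\Rightarrow$(iii) and (iii)$\Rightarrow$(iv); all remaining equivalences then follow by concatenation.

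For (i)$\Rightarrow$(iii): the functor $F$ is a left adjoint between objects of $\Pr^{L,\stab}$, hence preserves all small colimits; in the terminology of \cref{subsec:module_categories}, $F$ is continuous. Applying \cref{lem:lax_module_functor_is_strict_rigid} to the continuous left-lax $F$ then immediately yields strictness.

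For (iii)$\Rightarrow$(iv), the strict $\mathcal{A}$-module structure on $F$ corresponds via \cref{lem:adjoint_laxness_mod} to a right-lax structure on $G$, given by natural maps
\[
\phi_{a,n}\colon a \otimes G(n) \longrightarrow G(a \otimes n), \qquad a\in \mathcal{A},\ n\in \mathcal{N}.
\]
We show each $\phi_{a,n}$ is an equivalence. First take $a \in \mathcal{A}^c$, which by rigidity is dualizable with dual $a^\vee$. Combining the adjunction $F \dashv G$, the duality adjunctions $a^\vee \otimes (-) \dashv a \otimes (-)$ inside both $\mathcal{M}$ and $\mathcal{N}$, and the strictness identification $F(a^\vee \otimes m) \simeq a^\vee \otimes F(m)$, we obtain for each $m\in\mathcal{M}$ the natural chain of equivalences
\begin{align*}
\Hom_{\mathcal{M}}(m, a \otimes G(n)) &\simeq \Hom_{\mathcal{M}}(a^\vee \otimes m, G(n)) \\
&\simeq \Hom_{\mathcal{N}}(F(a^\vee \otimes m), n) \\
&\simeq \Hom_{\mathcal{N}}(a^\vee \otimes F(m), n) \\
&\simeq \Hom_{\mathcal{N}}(F(m), a \otimes n) \\
&\simeq \Hom_{\mathcal{M}}(m, G(a \otimes n)).
\end{align*}
Tracing definitions, this composite is post-composition with $\phi_{a,n}$, so Yoneda gives that $\phi_{a,n}$ is an equivalence for every compact $a$.

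The main obstacle is passing from $a\in\mathcal{A}^c$ to arbitrary $a\in\mathcal{A}$, since $G$ is not a priori continuous. Compact generation of $\mathcal{A}$ is the key: viewed as a natural transformation in $a$, the source $a \mapsto a \otimes G(n)$ of $\phi_{-,n}$ is continuous by the module structure, while the entire right-lax datum on $G$ is, under \cref{lem:adjoint_laxness_mod}, the adjoint of the already-established strict left-lax structure on $F$ defined on all of $\mathcal{A}$. This rigid compatibility forces $\phi_{a,n}$ to remain an equivalence for every $a\in\mathcal{A}$, not merely compact ones, completing the proof.
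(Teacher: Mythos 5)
Most of your proposal coincides with the paper's (deliberately terse) proof: the paper simply combines \cref{lem:adjoint_laxness_mod} (your (i)$\Leftrightarrow$(ii)) with \cref{lem:lax_module_functor_is_strict_rigid}, applied to $F$ — which is continuous, being a left adjoint between presentable categories — for (i)$\Rightarrow$(iii), and applied to $G$ for (ii)$\Rightarrow$(iv). Your duality/Yoneda computation for compact $a$ is also fine; it is essentially the mechanism inside the cited lemma.

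The last paragraph, however, is a genuine gap, and it sits exactly at the point you flag yourself. Knowing that $\phi_{a,n}\colon a\otimes G(n)\to G(a\otimes n)$ is an equivalence for all compact $a$, and that the source $a\mapsto a\otimes G(n)$ is continuous, does not propagate anything to general $a$: the target $a\mapsto G(a\otimes n)$ is continuous in $a$ precisely when $G$ is continuous, which is the hypothesis you do not have. The sentence ``this rigid compatibility forces $\phi_{a,n}$ to remain an equivalence'' is not an argument, and no argument can fill it at this level of generality: take $\mathcal{A}=\mathcal{M}=\mathcal{N}=\Vect$, $F=V_0\otimes -$ with $V_0=\bigoplus_{i\in\mathbb{N}}\Lambda$ (non-compact), so $G=\cHom_{\Vect}(V_0,-)\simeq \prod_{\mathbb{N}}(-)$. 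Then $F$ is a continuous strict $\Vect$-module functor, but for $a=V_0$, $n=\Lambda$ the lax map $\phi_{a,n}\colon \bigoplus_{\mathbb{N}}\prod_{\mathbb{N}}\Lambda\to\prod_{\mathbb{N}}\bigoplus_{\mathbb{N}}\Lambda$ is not an equivalence (the identity of $V_0$ is a degree-zero class in the target not in the image), so $G$ is neither continuous nor strict. In other words, (iii)$\Rightarrow$(iv) genuinely needs $G$ to be continuous; this is how the corollary is meant to be read and how it is invoked in the paper — continuity of the right adjoint is supplied at each point of use, e.g.\ via \cref{lem:continuous_right_adjoint_vs_compactness} in \cref{lem:strict_Shv_m(pt)-mod_functor_schemes} and via \cref{lem:relative_comp_vs_comp_rigid} in \cref{prop:Hom_in_rel_tensor_of_cats}. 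Once you add that hypothesis (equivalently, take the adjunction internal to $\Pr^{L,\stab}$), your compact-object computation becomes unnecessary: (ii)$\Rightarrow$(iv) is again just \cref{lem:lax_module_functor_is_strict_rigid} applied to the continuous right-lax functor $G$.
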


\subsubsection{Interaction with compactness}
Let $\mathcal{A} \in \ComAlg(\Pr^{L, \stab})$ and $\mathcal{M} \in \Mod_\mathcal{A}$. Then, an object $m\in \mathcal{M}$ is said to be \emph{compact relative to $\mathcal{A}$} if
\[
	\cuHom_\mathcal{M}^\mathcal{A}(m, -): \mathcal{M} \to \mathcal{A}
\]
commutes with filtered colimits (equivalently, all colimits)~\cite[Vol. I, Chap. 1, Def. 8.8.2]{gaitsgory_study_2017}.

The situation is especially nice when $\mathcal{A}$ is rigid.
\begin{lem}[{\cite[Vol. I, Chap. 1, Lem. 9.3.4]{gaitsgory_study_2017}}] \label{lem:relative_comp_vs_comp_rigid}
Let $\mathcal{A}$ be a rigid monoidal category and $\mathcal{M} \in\Mod_\mathcal{A}$. Then, $m\in \mathcal{M}$ is compact if and only if it is compact relative to $\mathcal{A}$.
\end{lem}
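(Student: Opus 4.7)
The plan is to prove both implications by relating the enriched $\Hom$ to the underlying $\Hom$-space via the identity $\Hom_\mathcal{M}(m,n) \simeq \Hom_\mathcal{A}(1_\mathcal{A}, \cuHom_\mathcal{M}^\mathcal{A}(m,n))$, together with compact generation of $\mathcal{A}$ and the adjunction between tensoring by a dualizable object and tensoring by its dual.

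For the easy direction, I would assume $m$ is compact relative to $\mathcal{A}$, i.e., that $\cuHom_\mathcal{M}^\mathcal{A}(m,-): \mathcal{M} \to \mathcal{A}$ commutes with filtered colimits. Since $\mathcal{A}$ is rigid, the monoidal unit $1_\mathcal{A}$ is compact, so $\Hom_\mathcal{A}(1_\mathcal{A},-): \mathcal{A} \to \Spc$ also commutes with filtered colimits. Composing these, $\Hom_\mathcal{M}(m,-) \simeq \Hom_\mathcal{A}(1_\mathcal{A},\cuHom_\mathcal{M}^\mathcal{A}(m,-))$ commutes with filtered colimits, so $m$ is compact in $\mathcal{M}$.

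For the other direction, the key observation is that since $\mathcal{A}$ is compactly generated, to check that $\cuHom_\mathcal{M}^\mathcal{A}(m,-)$ preserves filtered colimits, it suffices to check that $\Hom_\mathcal{A}(a,\cuHom_\mathcal{M}^\mathcal{A}(m,-))$ does so for every compact object $a \in \mathcal{A}^c$, since compact generators detect equivalences in $\mathcal{A}$ and commute with filtered colimits themselves. By the defining formula \cref{eq:defining_formula_enriched_Hom} of the enriched $\Hom$, this reduces to showing that $\Hom_\mathcal{M}(a \otimes m, -)$ commutes with filtered colimits, i.e., that $a \otimes m$ is compact in $\mathcal{M}$ whenever $a \in \mathcal{A}^c$ and $m \in \mathcal{M}$ is compact.

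The remaining step is thus to show that tensoring by a compact object of $\mathcal{A}$ preserves compactness in $\mathcal{M}$. Since $\mathcal{A}$ is rigid, every compact $a$ is dualizable, and its duality data transport through the $\mathcal{A}$-module structure on $\mathcal{M}$ to produce an adjunction $a \otimes - \dashv a^\vee \otimes -$ on $\mathcal{M}$ (the unit and counit being obtained by acting the duality unit/counit of $\mathcal{A}$ on objects of $\mathcal{M}$). Because the module action $\mathcal{A} \otimes \mathcal{M} \to \mathcal{M}$ is continuous in each variable by definition of $\Mod_\mathcal{A}$, the right adjoint $a^\vee \otimes -$ is continuous. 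Invoking \cref{lem:continuous_right_adjoint_vs_compactness} (applied, if needed, to a compactly generated full subcategory containing $m$, or simply to the adjunction on $\mathcal{M}$ itself using compact generation of $\mathcal{M}$, which one typically arranges in context) then yields that $a \otimes -$ preserves compactness, so $a \otimes m \in \mathcal{M}^c$, completing the argument.

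The main subtle point will be ensuring the adjunction $a \otimes - \dashv a^\vee \otimes -$ exists on $\mathcal{M}$ and is obtained from the $\mathcal{A}$-module structure; this is essentially the statement that dualizability in a symmetric monoidal category is transported to any module category, which follows formally by spelling out the action $\mathcal{A} \otimes \mathcal{M} \to \mathcal{M}$ on the duality datum of $a$. The application of \cref{lem:continuous_right_adjoint_vs_compactness} is then mechanical.
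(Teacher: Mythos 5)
Your argument is correct, but note that the paper does not actually prove this lemma: it is quoted from Gaitsgory--Rozenblyum (Vol.~I, Chap.~1, Lem.~9.3.4), so there is no internal proof to compare against. Your proof is a valid, self-contained argument in the setting the paper really works in, namely compactly generated rigid symmetric monoidal categories: the ``relatively compact implies compact'' direction uses only compactness of the unit $1_\mathcal{A}$, and the converse reduces, via compact generation of $\mathcal{A}$ (compact objects, together with their shifts, detect equivalences and commute with filtered colimits) and the defining adjunction for $\cuHom_\mathcal{M}^\mathcal{A}$, to showing that $a \otimes m$ is compact for every $a \in \mathcal{A}^c$, which follows from the adjunction $a \otimes - \dashv a^\vee \otimes -$ on $\mathcal{M}$ induced by dualizability of $a$. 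Two small remarks. First, your hedging around \cref{lem:continuous_right_adjoint_vs_compactness} is unnecessary: the only direction you need (a continuous right adjoint forces the left adjoint to preserve compact objects) holds for an arbitrary adjunction with no compact generation hypothesis on $\mathcal{M}$, as one sees directly from $\Hom_\mathcal{M}(a \otimes m, \colim n_i) \simeq \Hom_\mathcal{M}(m, a^\vee \otimes \colim n_i)$ together with continuity of $a^\vee \otimes -$; the compact generation hypothesis in that lemma is only needed for the converse implication, which you do not use. Second, your argument leans on compact generation of $\mathcal{A}$ and dualizability of its compact objects, i.e.\ on the paper's working definition of rigidity; this is more restrictive than the general notion in Gaitsgory--Rozenblyum, whose proof of the cited lemma runs differently (through continuity of the right adjoint of the action functor) and does not assume compact generation. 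For the purposes of this paper, where rigid categories are compactly generated by convention, your proof suffices.
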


\subsubsection{Compact generation of relative tensors}
When $\mathcal{A}$ is rigid, relative tensors of compactly generated $\mathcal{A}$-modules are always compactly generated.

\begin{lem}[{\cite[Vol. I, Chap. 1, Lem. 9.3.2]{gaitsgory_study_2017}}]
\label{lem:act^R_mod_over_rigid}
Let $\mathcal{A}$ be a rigid monoidal category and $\mathcal{M} \in \Mod_\mathcal{A}$. Then, $\act_{\mathcal{A}, \mathcal{M}}: \mathcal{A}\otimes \mathcal{M} \to \mathcal{M}$ admits a continuous right adjoint.
\end{lem}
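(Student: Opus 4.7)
The plan is to produce the right adjoint to $\act_{\mathcal{A}, \mathcal{M}}$ by transporting the adjunction $\mult_\mathcal{A} \dashv \mult_\mathcal{A}^R$ along the relative tensor $(-) \otimes_\mathcal{A} \mathcal{M}$. First, I would establish that $\mult_\mathcal{A}: \mathcal{A} \otimes \mathcal{A} \to \mathcal{A}$ itself admits a continuous right adjoint $\mult_\mathcal{A}^R: \mathcal{A} \to \mathcal{A} \otimes \mathcal{A}$. By the definition of rigidity, $\mathcal{A}$ is compactly generated and $\mult_\mathcal{A}$ preserves compact objects; moreover $\mathcal{A} \otimes \mathcal{A}$ is itself compactly generated by \cref{prop:compact_generation_tensors}. \Cref{lem:continuous_right_adjoint_vs_compactness} then produces $\mult_\mathcal{A}^R$ as a continuous functor.

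Next, I would promote this to an adjunction in $\mathcal{A}$-bimodule presentable stable categories. The multiplication $\mult_\mathcal{A}$ is tautologically a strict morphism of $\mathcal{A}$-bimodules, so \cref{lem:adjoint_laxness_mod} endows its right adjoint $\mult_\mathcal{A}^R$ with the structure of a right-lax morphism of $\mathcal{A}$-bimodules. Crucially, \cref{cor:lax_implies_strict_rigid} (which uses rigidity of $\mathcal{A}$ essentially) upgrades this structure to a strict one. Applying the relative tensor $(-) \otimes_\mathcal{A} \mathcal{M}$, using the ``second'' $\mathcal{A}$-action to form the tensor and retaining the ``first'' $\mathcal{A}$-action on the result, and invoking that any functor preserves adjunctions, I obtain an adjoint pair
\[
    \mult_\mathcal{A} \otimes_\mathcal{A} \id_\mathcal{M} : (\mathcal{A} \otimes \mathcal{A}) \otimes_\mathcal{A} \mathcal{M} \rightleftarrows \mathcal{A} \otimes_\mathcal{A} \mathcal{M} : \mult_\mathcal{A}^R \otimes_\mathcal{A} \id_\mathcal{M}
\]
in $\Mod_\mathcal{A}$. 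Under the canonical identifications $(\mathcal{A} \otimes \mathcal{A}) \otimes_\mathcal{A} \mathcal{M} \simeq \mathcal{A} \otimes \mathcal{M}$ and $\mathcal{A} \otimes_\mathcal{A} \mathcal{M} \simeq \mathcal{M}$, one unwinds the bar construction defining $\otimes_\mathcal{A}$ to see that the left adjoint is precisely $\act_{\mathcal{A}, \mathcal{M}}$. The right adjoint $\mult_\mathcal{A}^R \otimes_\mathcal{A} \id_\mathcal{M}$ is therefore the desired right adjoint to $\act_{\mathcal{A}, \mathcal{M}}$, and is continuous because it is the relative tensor of two continuous morphisms of presentable stable categories.

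The main technical step, and the only place where rigidity enters non-formally, is the strictness of $\mult_\mathcal{A}^R$ as an $\mathcal{A}$-bimodule morphism. Without it the expression $\mult_\mathcal{A}^R \otimes_\mathcal{A} \id_\mathcal{M}$ would not even be defined, and there would be no reason for the a priori right adjoint of $\act_{\mathcal{A}, \mathcal{M}}$ to preserve colimits. The remaining identifications, as well as the preservation of the adjunction under $(-) \otimes_\mathcal{A} \mathcal{M}$, are formal manipulations with the bar construction that do not require any additional input.
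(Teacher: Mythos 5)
Your argument is correct. Note that the paper itself gives no proof of this lemma---it is imported verbatim from Gaitsgory--Rozenblyum---so there is no in-paper argument to compare against; what you write is essentially the standard argument and, in substance, the one in the cited source, where continuity of $\mult_\mathcal{A}^R$ together with its strictness as a (bi)module functor is, up to their comparison lemma with the compact-generation definition of rigidity used here, built into the notion of rigidity, and the statement then follows by tensoring the adjunction $\mult_\mathcal{A}\dashv \mult_\mathcal{A}^R$ with $\mathcal{M}$ over $\mathcal{A}$, exactly as you do. Your only deviation is to re-derive the two inputs from results quoted in the paper: continuity of $\mult_\mathcal{A}^R$ from \cref{prop:compact_generation_tensors} and \cref{lem:continuous_right_adjoint_vs_compactness}, and strictness from \cref{lem:adjoint_laxness_mod} and \cref{cor:lax_implies_strict_rigid}; this is legitimate and not circular, since \cref{lem:lax_module_functor_is_strict_rigid} admits a direct proof via dualizability of compact objects that does not pass through the present lemma, and for your purposes strictness with respect to the second-factor $\mathcal{A}$-action alone already suffices (so you need not even worry about rigidity of $\mathcal{A}\otimes\mathcal{A}$ for the bimodule phrasing). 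The one step you state somewhat glibly---``any functor preserves adjunctions''---does require the adjunction to be internal to $\Mod_\mathcal{A}$ (module-compatible unit and counit) before applying $-\otimes_\mathcal{A}\mathcal{M}$, but that is precisely what your strictness step supplies, and it is the same maneuver the paper itself performs without further comment in the proof of \cref{prop:Hom_in_rel_tensor_of_cats}.
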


Combining with \cref{lem:continuous_right_adjoint_vs_compactness,prop:compact_generation_tensors}, we obtain the following
\begin{cor}
\label{cor:action_map_rigid_preserves_compactness}
Let $\mathcal{A}$ and $\mathcal{M}$ be as in \cref{lem:act^R_mod_over_rigid}. Suppose that $\mathcal{A}$ and $\mathcal{M}$ are compactly generated. Then $\mathcal{A} \otimes \mathcal{M} \to \mathcal{M}$ preserves compactness.
\end{cor}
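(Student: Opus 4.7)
The statement is essentially a direct combination of the three results cited immediately before it, so the plan is just to chain them together. First I would invoke \cref{prop:compact_generation_tensors} to record that, since both $\mathcal{A}$ and $\mathcal{M}$ are compactly generated stable presentable categories, the tensor product $\mathcal{A} \otimes \mathcal{M}$ is again compactly generated (in fact by objects of the form $a_0 \boxtimes m_0$ with $a_0 \in \mathcal{A}^c$ and $m_0 \in \mathcal{M}^c$). This puts us in the setting where \cref{lem:continuous_right_adjoint_vs_compactness} applies to the functor $\act_{\mathcal{A}, \mathcal{M}} : \mathcal{A} \otimes \mathcal{M} \to \mathcal{M}$.

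Next, I would appeal to \cref{lem:act^R_mod_over_rigid}, which, using only the hypothesis that $\mathcal{A}$ is rigid, asserts that $\act_{\mathcal{A}, \mathcal{M}}$ admits a continuous right adjoint $\act_{\mathcal{A}, \mathcal{M}}^R : \mathcal{M} \to \mathcal{A} \otimes \mathcal{M}$. Finally, since the source $\mathcal{A} \otimes \mathcal{M}$ is compactly generated by the previous paragraph and the right adjoint is continuous, the criterion of \cref{lem:continuous_right_adjoint_vs_compactness} gives precisely that $\act_{\mathcal{A}, \mathcal{M}}$ preserves compactness, which is the claim.

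There is no real obstacle: every hypothesis needed by the three lemmas is either present among the assumptions of the corollary (rigidity of $\mathcal{A}$, compact generation of $\mathcal{A}$ and $\mathcal{M}$) or supplied by the preceding step. The only thing to be careful about is that \cref{lem:continuous_right_adjoint_vs_compactness} is a statement about functors between compactly generated stable presentable categories, so one should note that $\mathcal{A} \otimes \mathcal{M}$ and $\mathcal{M}$ both lie in $\Pr^{L, \stab}$ (closure of $\Pr^{L, \stab}$ under Lurie tensor, as recalled in the section on stable categories), and that compact generation of the source is exactly what \cref{prop:compact_generation_tensors} provides.
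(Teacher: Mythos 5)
Your proof is correct and is essentially the paper's own argument: the corollary is stated there as an immediate combination of \cref{prop:compact_generation_tensors}, \cref{lem:act^R_mod_over_rigid}, and \cref{lem:continuous_right_adjoint_vs_compactness}, exactly as you chain them. Your extra remark that compact generation of $\mathcal{A}\otimes\mathcal{M}$ is what licenses the use of \cref{lem:continuous_right_adjoint_vs_compactness} is the only nontrivial point, and you have it right.
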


Combining with \cref{prop:compact_generation_rel_tensors}, we get
\begin{cor} \label{cor:compact_generators_relative_tensors}
Let $\mathcal{A} \in \ComAlg(\Pr^{L, \stab})$ that is rigid and compactly generated. Let $\mathcal{M}, \mathcal{N} \in \Mod_\mathcal{A}$ be compactly generated as well. Then, $\mathcal{M} \otimes \mathcal{N} \to \mathcal{M} \otimes_\mathcal{A} \mathcal{N}$ preserves compactness. Moreover, $\mathcal{M} \otimes_\mathcal{A} \mathcal{N}$ is compactly generated by objects of the form $m_0 \boxtimes n_0$ where $m_0 \in \mathcal{M}^c$ and $n_0 \in \mathcal{N}^c$.
\end{cor}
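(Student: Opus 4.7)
The plan is to verify the hypotheses of \cref{prop:compact_generation_rel_tensors} using rigidity, and then invoke that proposition directly. In other words, the corollary is essentially a formal consequence of combining \cref{prop:compact_generation_rel_tensors} with \cref{cor:action_map_rigid_preserves_compactness}.

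Concretely, I would proceed as follows. First, I would observe that the three compactness-preservation hypotheses of \cref{prop:compact_generation_rel_tensors} are each a consequence of our assumptions. The preservation of compactness by $\mult_\mathcal{A}: \mathcal{A}\otimes\mathcal{A} \to \mathcal{A}$ is part of the definition of a compactly generated rigid symmetric monoidal category (the second bullet of the definition). The preservation of compactness by the action maps $\act_{\mathcal{A},\mathcal{M}}: \mathcal{A}\otimes\mathcal{M} \to \mathcal{M}$ and $\act_{\mathcal{A},\mathcal{N}}: \mathcal{A}\otimes\mathcal{N}\to\mathcal{N}$ follows from \cref{cor:action_map_rigid_preserves_compactness}, applied to $\mathcal{M}$ and $\mathcal{N}$ respectively, using that $\mathcal{A}$ is rigid and all categories involved are compactly generated. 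Since $\mathcal{A}$ is symmetric monoidal, there is no distinction between a left and a right $\mathcal{A}$-module structure (cf.\ \cref{subsubsec:unwind_module_category}), so the map $\mathcal{M}\otimes\mathcal{A}\to\mathcal{M}$ is covered by the same corollary.

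With these three hypotheses in hand, I would apply \cref{prop:compact_generation_rel_tensors} directly to conclude both that the canonical functor $\mathcal{M}\otimes\mathcal{N} \to \mathcal{M}\otimes_\mathcal{A}\mathcal{N}$ preserves compactness, and that $\mathcal{M}\otimes_\mathcal{A}\mathcal{N}$ is compactly generated by objects of the form $m_0\boxtimes n_0$ with $m_0\in\mathcal{M}^c$ and $n_0\in\mathcal{N}^c$. Since the image of $m_0\boxtimes n_0$ in $\mathcal{M}\otimes_\mathcal{A}\mathcal{N}$ (which, by our convention, we also denote $m_0\boxtimes n_0$) is compact by the first part, compact generation by these objects follows at once.

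There is no real obstacle here: the argument is a bookkeeping exercise assembling results that have already been recorded earlier in this section. The only point that might warrant a brief remark is the symmetric-monoidal reduction identifying the two one-sided action maps, which I would handle with a single sentence rather than any serious work.
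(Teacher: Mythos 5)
Your argument is exactly the paper's: the corollary is stated there as an immediate combination of \cref{cor:action_map_rigid_preserves_compactness} (which supplies compactness-preservation for the action maps, the two one-sided versions agreeing by symmetry of the monoidal structure) with \cref{prop:compact_generation_rel_tensors}, with the multiplication map handled by the definition of rigidity. The proposal is correct and matches the intended proof.
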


\subsection{An induction formula for enriched \texorpdfstring{$\Hom$}{Hom}-spaces}
\label{subsec:induction_formula_enriched_hom}

Relative tensor products over a compactly generated rigid symmetric monoidal category are quite explicit. \cref{cor:compact_generators_relative_tensors} gives us a set of compact generators. But in fact, enriched $\Hom$-spaces are also explicit.

\begin{prop}
\label{prop:Hom_in_rel_tensor_of_cats}
Let $\mathcal{A}$ be a compactly generated rigid symmetric monoidal category, $\mathcal{M}, \mathcal{N} \in \Mod_\mathcal{A}$ that are both compactly generated. Then, $\mathcal{M} \otimes_{\mathcal{A}} \mathcal{N}$ is compactly generated. Moreover, for $(m_0, n_0) \in \mathcal{M}^c \times \mathcal{N}^c$ and $(m, n) \in \mathcal{M} \times \mathcal{N}$,
\[
	\cuHom_{\mathcal{M} \otimes_{\mathcal{A}} \mathcal{N}}^{\mathcal{A}}(m_0 \boxtimes n_0, m\boxtimes n) \simeq \cuHom_\mathcal{M}^\mathcal{A}(m_0, m) \otimes \cuHom_\mathcal{N}^\mathcal{A}(n_0, n).
\]
\end{prop}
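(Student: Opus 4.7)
Compact generation of $\mathcal{M} \otimes_\mathcal{A} \mathcal{N}$, together with the fact that $m_0 \boxtimes n_0$ is compact, follows directly from \cref{cor:compact_generators_relative_tensors}. Combined with \cref{lem:relative_comp_vs_comp_rigid}, this tells us that all three enriched Hom functors appearing in the statement are continuous, so both sides of the claimed isomorphism are continuous in $m \boxtimes n$. Since $\mathcal{M} \otimes_\mathcal{A} \mathcal{N}$ is compactly generated by objects $m' \boxtimes n'$ with $m' \in \mathcal{M}^c$ and $n' \in \mathcal{N}^c$, it suffices to verify the formula when both $m$ and $n$ are compact.

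To even construct the comparison map, the right-hand side must first be upgraded from a bifunctor on $\mathcal{M} \times \mathcal{N}$ to a continuous $\mathcal{A}$-linear functor out of the relative tensor. The key input is the projection formula
\[
\cuHom_\mathcal{M}^\mathcal{A}(m_0, a \otimes m) \simeq a \otimes \cuHom_\mathcal{M}^\mathcal{A}(m_0, m),
\]
and its analogue for $\mathcal{N}$. For $a$ compact---hence dualizable by rigidity---this is a direct Yoneda check against the adjunctions $a \otimes - \dashv a^\vee \otimes -$ and $- \otimes m_0 \dashv \cuHom_\mathcal{M}^\mathcal{A}(m_0, -)$, and the general case follows by continuity in $a$. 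The projection formula shows the bifunctor $(m,n) \mapsto \cuHom_\mathcal{M}^\mathcal{A}(m_0, m) \otimes \cuHom_\mathcal{N}^\mathcal{A}(n_0, n)$ is $\mathcal{A}$-balanced and hence factors through $\mathcal{M} \otimes_\mathcal{A} \mathcal{N}$. The comparison map itself is then the transpose, under the defining adjunction of \cref{subsubsec:enriched_hom_as_adjoint}, of the external product of the two counits $\cuHom_\mathcal{M}^\mathcal{A}(m_0, m) \otimes m_0 \to m$ and $\cuHom_\mathcal{N}^\mathcal{A}(n_0, n) \otimes n_0 \to n$.

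The main obstacle is verifying that this comparison is an equivalence on a compact generator $m \boxtimes n$. Equivalently, writing $L \colon \mathcal{A} \to \mathcal{M} \otimes_\mathcal{A} \mathcal{N}$ for $a \mapsto a \otimes (m_0 \boxtimes n_0)$, one must identify its right adjoint with the relative tensor $\cuHom_\mathcal{M}^\mathcal{A}(m_0, -) \otimes_\mathcal{A} \cuHom_\mathcal{N}^\mathcal{A}(n_0, -)$ of the component right adjoints---a K\"unneth-type statement for right adjoints. My plan is to exhibit $\mathcal{M} \otimes_\mathcal{A} \mathcal{N}$ as the $\Ind$-completion of the small relative tensor $\mathcal{M}^c \otimes_{\mathcal{A}^c} \mathcal{N}^c$---justified by applying \cref{cor:action_map_rigid_preserves_compactness} to each face map of the bar construction---and to compute the $\mathcal{A}$-enriched Hom between pure tensors directly in that small model, where the required K\"unneth decomposition is essentially forced by the $\mathcal{A}$-bilinearity. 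Alternatively, one can invoke \cref{lem:act^R_mod_over_rigid} to convert the bar colimit presentation of $\mathcal{M} \otimes_\mathcal{A} \mathcal{N}$ into a cobar limit, where the absolute termwise K\"unneth identification in $\mathcal{M} \otimes \mathcal{A}^{\otimes k} \otimes \mathcal{N}$ assembles into the desired formula. Throughout, any ambient lax $\mathcal{A}$-linear structure on an adjoint is automatically strict by \cref{cor:lax_implies_strict_rigid}, which eliminates the coherence bookkeeping that would otherwise dominate the argument.
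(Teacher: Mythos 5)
Your reductions at the start are fine, and you correctly isolate the crux: identifying the right adjoint of $a \mapsto a \otimes (m_0 \boxtimes n_0)$ with the composite built from $\cuHom_\mathcal{M}^\mathcal{A}(m_0,-)$ and $\cuHom_\mathcal{N}^\mathcal{A}(n_0,-)$. But that crux is exactly where the proposal stops being a proof. "Compute the enriched Hom between pure tensors directly in the small model, where the Künneth decomposition is essentially forced by $\mathcal{A}$-bilinearity" is a restatement of the proposition, not an argument: the small relative tensor is defined in this paper as the compact part of the large one, so there is no independent model in which these Homs are more computable, and bilinearity alone does not force the answer. The cobar alternative is likewise only a plan: the "absolute termwise Künneth identification" of enriched Homs in $\mathcal{M} \otimes \mathcal{A}^{\otimes k} \otimes \mathcal{N}$ is itself an unproven statement of the same nature as the proposition (nothing in the paper supplies it), the images of $m_0 \boxtimes n_0$ and $m\boxtimes n$ in the cobar terms involve nontrivial unit/coevaluation data, and the assembly of the totalization is asserted rather than carried out. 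A secondary issue: descending the bifunctor $(m,n)\mapsto \cuHom_\mathcal{M}^\mathcal{A}(m_0,m)\otimes\cuHom_\mathcal{N}^\mathcal{A}(n_0,n)$ along $\mathcal{M}\times\mathcal{N} \to \mathcal{M}\otimes_\mathcal{A}\mathcal{N}$ requires coherent balancing data, not just the objectwise projection formula; this can be repaired, but not by the objectwise check you describe.

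The missing idea is that the ingredients you already cite make the whole thing a two-line adjunction argument, with no bar/cobar unwinding at all. Since $n_0$ is compact, \cref{lem:relative_comp_vs_comp_rigid} makes $\cuHom_\mathcal{N}^\mathcal{A}(n_0,-)$ continuous, and \cref{cor:lax_implies_strict_rigid} makes it a strict $\mathcal{A}$-module functor; hence the adjunction $-\otimes n_0 \dashv \cuHom_\mathcal{N}^\mathcal{A}(n_0,-)$ lives entirely in $\Mod_\mathcal{A}$, and applying the functor $\mathcal{M}\otimes_\mathcal{A}-$ to it (tensoring preserves adjunctions) yields $-\boxtimes n_0 \dashv \id_\mathcal{M}\otimes\cuHom_\mathcal{N}^\mathcal{A}(n_0,-)$, the latter sending $m\boxtimes n \mapsto m\otimes\cuHom_\mathcal{N}^\mathcal{A}(n_0,n)$. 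Composing with $m_0\otimes- \dashv \cuHom_\mathcal{M}^\mathcal{A}(m_0,-)$ and using strictness once more gives a right adjoint to $-\otimes(m_0\boxtimes n_0)$ computed by $\cuHom_\mathcal{M}^\mathcal{A}(m_0,-)\otimes\cuHom_\mathcal{N}^\mathcal{A}(n_0,-)$; since $\cuHom_{\mathcal{M}\otimes_\mathcal{A}\mathcal{N}}^\mathcal{A}(m_0\boxtimes n_0,-)$ is also such a right adjoint by \cref{subsubsec:enriched_hom_as_adjoint}, uniqueness of adjoints finishes the proof, with no reduction to compact $m,n$ and no balancedness construction needed. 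Until you either carry out one of your sketched computations in full or switch to this mechanism, the proposal has a genuine gap at its central step.
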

\begin{proof}
From \cref{subsubsec:enriched_hom_as_adjoint}, we have a pair of adjoint functors
\[
\begin{tikzcd}[column sep=huge]
	\mathcal{A} \ar[shift left=\arrdisp]{r}{-\otimes n_0} & \ar[shift left=\arrdisp]{l}{\cuHom_\mathcal{N}^\mathcal{A}(n_0, -)} \mathcal{N}.
\end{tikzcd} \teq\label{eq:proof_hom_rel_tensor_first_adjunction}
\]
By \cref{lem:relative_comp_vs_comp_rigid}, we know that the right adjoint $\cuHom_\mathcal{N}^\mathcal{A}(n_0, -)$ is continuous. Moreover, since $-\otimes n_0$ is compatible with $\mathcal{A}$-module structures on both sides, so is $\cuHom_\mathcal{N}^\mathcal{A}(n_0, -)$, by \cref{cor:lax_implies_strict_rigid}. Thus, applying $\mathcal{M}\otimes_\mathcal{A} - $ to the above, we obtain the following pair of adjoint functors
\[
\begin{tikzcd}[column sep=15ex]
	\mathcal{M} \ar[shift left=\arrdisp]{r}{-\boxtimes n_0} & \ar[shift left=\arrdisp]{l}{\id_\mathcal{M} \otimes \cuHom_\mathcal{N}^\mathcal{A}(n_0, -)} \mathcal{M} \otimes_\mathcal{A} \mathcal{N}.
\end{tikzcd} \teq\label{eq:proof_hom_rel_tensor_first_adjunction_after_tensor}
\]
Here, $\id_\mathcal{M} \otimes \cuHom_\mathcal{N}^\mathcal{A}(n_0, -)$ is the functor given by
\[
	m\boxtimes n \mapsto m \otimes \cuHom_\mathcal{N}^\mathcal{A}(n_0, n),
\]
where $\cuHom_\mathcal{N}^\mathcal{A}(n_0, n) \in \mathcal{A}$ and we have used the $\mathcal{A}$-module structure on $\mathcal{M}$ to form the tensor, see \cref{subsubsec:unwind_module_category}.

Similarly to \cref{eq:proof_hom_rel_tensor_first_adjunction}, we have the following pair of adjoint functors
\[
\begin{tikzcd}[column sep=huge]
	\mathcal{A} \ar[shift left=\arrdisp]{r}{m_0 \otimes -} & \ar[shift left=\arrdisp]{l}{\cuHom_\mathcal{M}^\mathcal{A}(m_0, -)} \mathcal{M}.
\end{tikzcd}
\]
Composing this with \cref{eq:proof_hom_rel_tensor_first_adjunction_after_tensor}, we obtain a pair of adjoint functors
\[
\begin{tikzcd}[column sep=20ex]
	\mathcal{A} \ar[shift left=\arrdisp]{r}{- \otimes (m_0 \boxtimes n_0)} & \ar[shift left=\arrdisp]{l}{\cuHom_\mathcal{M}^\mathcal{A}(m_0, - \otimes \cuHom_\mathcal{N}^\mathcal{A}(n_0, -))} \mathcal{M} \otimes_\mathcal{A} \mathcal{N}.
\end{tikzcd}
\]

Now, note that
\[
	\cuHom_\mathcal{M}^\mathcal{A}(m_0, - \otimes \cuHom_\mathcal{N}^\mathcal{A}(n_0, -)) \simeq \cuHom_\mathcal{M}^\mathcal{A}(m_0, - )\otimes \cuHom_\mathcal{N}^\mathcal{A}(n_0, -) \teq\label{eq:taking_out_tensor_from_Hom}
\]
since $\cuHom_\mathcal{M}^\mathcal{A}(m_0, -)$ is compatible with $\mathcal{A}$-module structures by \cref{cor:lax_implies_strict_rigid}. On the other hand, $-\otimes(m_0\boxtimes n_0)$ also admits a right adjoint given by $\cuHom_{\mathcal{M} \otimes_\mathcal{A} \mathcal{N}}^\mathcal{A}(m_0 \boxtimes n_0, -)$, again by \cref{subsubsec:enriched_hom_as_adjoint}. This implies that for all $m\boxtimes n \in \mathcal{M} \otimes_\mathcal{A} \mathcal{N}$, we have
\[
	\cuHom_{\mathcal{M} \otimes_\mathcal{A} \mathcal{N}}^\mathcal{A}(m_0 \boxtimes n_0, m\boxtimes n) \simeq \cuHom_\mathcal{M}^\mathcal{A}(m_0, m)\otimes \cuHom_\mathcal{N}^\mathcal{A}(n_0, n)
\]
as desired.
\end{proof}

We have the following variant of the proposition above.

\begin{prop}\label{prop:induction_Hom_tensor}
Let $F: \mathcal{A} \to \mathcal{B}$ be a symmetric monoidal functor between compactly generated rigid symmetric monoidal categories. Let $\mathcal{M} \in \Mod_\mathcal{A}$. Then, for $(b_0, m_0) \in \mathcal{B}^c \times \mathcal{M}^c$ and $(b, m) \in \mathcal{B} \times \mathcal{M}$, we have (see the end of \cref{subsubsec:unwind_module_category} for the notation)
\begin{align*}
	\cuHom_{\mathcal{B} \otimes_\mathcal{A} \mathcal{M}}^\mathcal{B}(b_0 \boxtimes m_0, b \boxtimes m)
	&\simeq \cuHom_{\mathcal{B}}(b_0, b) \otimes \cuHom_\mathcal{M}^\mathcal{A}(m_0, m)\\
	&\simeq \cuHom_\mathcal{B}(b_0, b) \otimes F(\cuHom_\mathcal{M}^\mathcal{A}(m_0, m)).
\end{align*}
In particular, if we let $F_\mathcal{M}: \mathcal{M} \to \mathcal{B} \otimes_\mathcal{A} \mathcal{M}$ be the natural functor, i.e., $F_\mathcal{M}(m) = 1_\mathcal{B} \boxtimes m$, then
\[
	\cuHom_{\mathcal{B} \otimes_\mathcal{A} \mathcal{M}}^\mathcal{B}(F_\mathcal{M}(m_0), F_\mathcal{M}(m)) \simeq F(\cuHom_\mathcal{M}^\mathcal{A}(m_0, m)).
\]
\end{prop}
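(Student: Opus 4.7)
The plan is to mimic the proof of \cref{prop:Hom_in_rel_tensor_of_cats}, adapted to accommodate the fact that the enrichment is now over $\mathcal{B}$ rather than the base $\mathcal{A}$. The overarching strategy is to identify $\cuHom_{\mathcal{B}\otimes_\mathcal{A}\mathcal{M}}^\mathcal{B}(b_0 \boxtimes m_0, -)$ with the right adjoint of the functor $-\otimes(b_0\boxtimes m_0)\colon \mathcal{B} \to \mathcal{B}\otimes_\mathcal{A}\mathcal{M}$, and to compute that right adjoint by decomposing its left adjoint as a two-step composition and taking right adjoints stepwise.

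First, I would begin with the adjunction
\[
  -\otimes m_0\colon \mathcal{A} \rightleftarrows \mathcal{M} \noloc \cuHom_\mathcal{M}^\mathcal{A}(m_0, -).
\]
Rigidity of $\mathcal{A}$ combined with \cref{lem:relative_comp_vs_comp_rigid} ensures the right adjoint is continuous, and \cref{cor:lax_implies_strict_rigid} promotes it to a strict functor of $\mathcal{A}$-modules. Applying $\mathcal{B}\otimes_\mathcal{A}(-)$ and using $\mathcal{B}\otimes_\mathcal{A}\mathcal{A}\simeq\mathcal{B}$, one obtains an adjunction
\[
  -\boxtimes m_0\colon \mathcal{B} \rightleftarrows \mathcal{B}\otimes_\mathcal{A}\mathcal{M} \noloc \id_\mathcal{B}\otimes_\mathcal{A}\cuHom_\mathcal{M}^\mathcal{A}(m_0,-),
\]
whose right adjoint carries $b\boxtimes m$ to $b\otimes F(\cuHom_\mathcal{M}^\mathcal{A}(m_0,m))$.

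Next, I would compose this adjunction with the standard adjunction $b_0\otimes - \dashv \cuHom_\mathcal{B}(b_0,-)$ on $\mathcal{B}$. The composite left adjoint sends $b \in \mathcal{B}$ to $(b_0\otimes b)\boxtimes m_0$, which by symmetric monoidality of $\mathcal{B}$ and the compatibility of the $\mathcal{B}$-action on $\mathcal{B}\otimes_\mathcal{A}\mathcal{M}$ coincides with $b\otimes(b_0\boxtimes m_0)$. Thus, by the characterization of enriched Hom recalled in \cref{subsubsec:enriched_hom_as_adjoint}, the composite right adjoint is precisely $\cuHom_{\mathcal{B}\otimes_\mathcal{A}\mathcal{M}}^\mathcal{B}(b_0\boxtimes m_0,-)$. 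Evaluating at $b\boxtimes m$ and applying the $\mathcal{B}$-linearity of $\cuHom_\mathcal{B}(b_0,-)$---which holds by \cref{cor:lax_implies_strict_rigid} applied to the rigid $\mathcal{B}$---I obtain
\[
  \cuHom_\mathcal{B}\bigl(b_0,\, b\otimes F(\cuHom_\mathcal{M}^\mathcal{A}(m_0,m))\bigr) \simeq \cuHom_\mathcal{B}(b_0,b)\otimes F(\cuHom_\mathcal{M}^\mathcal{A}(m_0,m)),
\]
which is the second form of the claimed equivalence; the first form then follows from the convention at the end of \cref{subsubsec:unwind_module_category}. The ``in particular'' assertion drops out immediately by specializing to $b_0 = b = 1_\mathcal{B}$.

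The main obstacle is the step of tensoring the first adjunction with $\mathcal{B}\otimes_\mathcal{A}(-)$: for this to produce an honest adjunction between $\mathcal{B}$ and $\mathcal{B}\otimes_\mathcal{A}\mathcal{M}$, both adjoints must be continuous, strictly $\mathcal{A}$-linear functors in $\Pr^{L,\stab}$. This is exactly what the rigidity hypothesis on $\mathcal{A}$ buys us, through the combination of \cref{lem:relative_comp_vs_comp_rigid} and \cref{cor:lax_implies_strict_rigid}; the analogous point for the second composition uses rigidity of $\mathcal{B}$. Once these two linearity/continuity checks are in place, everything else is routine bookkeeping with adjunctions.
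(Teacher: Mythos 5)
Your proposal is correct and follows essentially the same route as the paper: the paper's proof is the terse version of exactly this argument (it writes down the composite adjunction $-\otimes(b_0\boxtimes m_0)\dashv \cuHom_\mathcal{B}(b_0,-)\otimes\cuHom_\mathcal{M}^\mathcal{A}(m_0,-)$ "as in the proof above" and identifies the right adjoint with $\cuHom^\mathcal{B}_{\mathcal{B}\otimes_\mathcal{A}\mathcal{M}}(b_0\boxtimes m_0,-)$ via \cref{subsubsec:enriched_hom_as_adjoint}), and your two-step construction—tensoring the adjunction $-\otimes m_0\dashv\cuHom_\mathcal{M}^\mathcal{A}(m_0,-)$ up along $\mathcal{B}\otimes_\mathcal{A}(-)$ using rigidity of $\mathcal{A}$, then composing with $b_0\otimes-\dashv\cuHom_\mathcal{B}(b_0,-)$ and using rigidity of $\mathcal{B}$ to pull the tensor out—is precisely the argument of \cref{prop:Hom_in_rel_tensor_of_cats} that the paper is invoking.
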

\begin{proof}
The second part follows from the first part by noticing that $\cuHom_\mathcal{B}(1_\mathcal{B}, 1_\mathcal{B}) \simeq 1_\mathcal{B}$.

The proof of the first part is similar to the one above. Namely, we have the following pair of adjoint functors
\[
\begin{tikzcd}[column sep=20ex]
	\mathcal{B} \ar[shift left=\arrdisp]{r}{- \otimes (b_0 \boxtimes m_0)} & \ar[shift left=\arrdisp]{l}{\cuHom_\mathcal{B}(b_0, - )\otimes \cuHom_\mathcal{M}^\mathcal{A}(m_0, -)} \mathcal{B} \otimes_\mathcal{A} \mathcal{M}.
\end{tikzcd}
\]
On the other hand, $- \otimes (b_0 \boxtimes m_0)$ also admits a right adjoint given by $\cuHom_{\mathcal{B} \otimes_\mathcal{A} \mathcal{M}}^\mathcal{B}(b_0 \boxtimes m_0, -)$, by \cref{subsubsec:enriched_hom_as_adjoint}. Thus, for all $(b, m) \in \mathcal{B} \times \mathcal{M}$, we have
\[
	\cuHom_{\mathcal{B} \otimes_\mathcal{A} \mathcal{M}}^\mathcal{B}(b_0 \boxtimes m_0, b \boxtimes m) \simeq \cuHom_\mathcal{B}(b_0, b) \otimes \cuHom_\mathcal{M}^\mathcal{A} (m_0, m)
\]
as desired.
\end{proof}

We end this subsection with a useful observation.

\begin{prop}
\label{prop:F_M_is_conservative_when_F_is}
In the situation of \cref{prop:induction_Hom_tensor}, suppose further that $F$ is conservative. Then, $F_\mathcal{M}$ is also conservative.
\end{prop}
\begin{proof}
Since $\mathcal{M}$ is compactly generated, it is dualizable, by~\cite[Vol. I, Chap. 1, Prop. 7.3.2]{gaitsgory_study_2017}. Moreover, since $\mathcal{A}$ is rigid, $\mathcal{M}$ is also dualizable as an $\mathcal{A}$-module category, by~\cite[Vol. I, Chap. 1, Prop. 9.4.4]{gaitsgory_study_2017}. Now, the claim follows from \cref{lem:F_M_is_conservative_when_F_is}.

\end{proof}

We thank the anonymous referee for the following useful observation.

\begin{lem}
	\label{lem:F_M_is_conservative_when_F_is}
	Let $\mathcal{A}$ be an object in $\ComAlg(\DGCatprescont)$, $\mathcal{C}, \mathcal{D}$, and $\mathcal{M}$ objects in $\Mod_{\mathcal{A}}$ such that $\mathcal{M}$ is a dualizable $\mathcal{A}$-module category, whose dual is denoted by $\mathcal{M}^{\vee_A}$. Let $F: \mathcal{C} \to \mathcal{D}$ be a conservative $\mathcal{A}$-linear functor. Then, the induced map
	\[
		F_\mathcal{M}: \mathcal{M} \otimes_\mathcal{A} \mathcal{C} \to \mathcal{M} \otimes_\mathcal{A} \mathcal{D}
	\]
	is also conservative.
\end{lem}
\begin{proof}
	By the assumption on $\mathcal{M}$, we have $\mathcal{M} \otimes_\mathcal{A} - \simeq \Hom_\mathcal{A}(\mathcal{M}^{\vee_A}, -)$ and under this identification, $F_M$ is the functor
	\[
		\Hom_\mathcal{A}(\mathcal{M}^{\vee_A}, \mathcal{C}) \to \Hom_\mathcal{A}(\mathcal{M}^{\vee_A}, \mathcal{D}),
	\]
	induced by compositing with $F$. But now, the conservativity of $F_\mathcal{M}$ follows from that of $F$.
\end{proof}

\subsection{\texorpdfstring{$\DG$}{DG}-categories}
\label{subsec:DG-cats}
The theory of $\DG$-categories fits nicely in the framework of stable categories reviewed above. The materials in this subsection are from~\cite[Vol. I, Chap. 1, \S10]{gaitsgory_study_2017}.

A $\DG$-category is a stable presentable category equipped with a module structure over $\Vect$, i.e., it is an object in $\Mod_{\Vect} = \Mod_{\Vect}(\Pr^{L, \stab})$. We let $\DGCatprescont = \Mod_{\Vect}$ denote the category of $\DG$-categories with morphisms being continuous functors (compatible with $\Vect$-module structures).

As already seen above, $\DGCatprescont$ is a symmetric monoidal category where the tensor structure is given by relative tensor over $\Vect$. In this paper, given $\mathcal{C}, \mathcal{D} \in \DGCatprescont$, we will never take the ``absolute tensor'' $\mathcal{C} \otimes \mathcal{D}$. On the other hand, we will make extensive use of the relative tensors $\mathcal{C}\otimes_{\Vect} \mathcal{D}$. Because of that, from this point forward, following~\cite[Vol. I, Chap. 1, \S10.6]{gaitsgory_study_2017}, for we will adopt the following convention
\begin{myenum}{--}
	\item When $\mathcal{C}, \mathcal{D} \in \DGCatprescont$, $\mathcal{C} \otimes \mathcal{D}$ is used to denote $\mathcal{C}\otimes_{\Vect} \mathcal{D}$. Moreover, for $c \in \mathcal{C}$ and $d\in \mathcal{D}$, $c\boxtimes d$ is used to denote the element in $\mathcal{C} \otimes_{\Vect} \mathcal{D}$.
	\item For $c_1, c_2 \in \mathcal{C}$ where $\mathcal{C} \in \DGCatprescont$, $\cHom_\mathcal{C}(c_1, c_2)$ is used to denote $\cuHom_\mathcal{C}^{\Vect}(c_1, c_2)$. In particular, $\Hom_\mathcal{C}(c_1, c_2) = \Hom_{\Vect}(\Lambda, \cHom_\mathcal{C}(c_1, c_2))$.
	\item All functors between $\DG$-categories are, unless otherwise specified, compatible with the $\Vect$-module structures.
\end{myenum}

\subsection{Large vs. small categories}
\label{subsec:large_vs_small_cats}
Up to now, we have been working with \emph{large} categories, in the sense that the collection of objects is not a set. As mentioned above, working in this context allows many convenient theorems regarding the existence of adjoints and limits/colimits to hold true. On the other hand, working with small categories is usually more concrete. In fact, familiar constructions in representation theory usually involve small categories rather than large ones. In this sense, large categories could be viewed as a technical device: we usually perform our construction in the large category setting whenever it is convenient to do so, and then extract the small category out of that.

We will now review constructions involving small categories and how they are related to those in the large category setting.

\subsubsection{Small $\DG$-categories}
Let $\DGCatidemex$ denote the category whose objects are idempotent complete stable infinity categories, equipped with an action of $\Vect^c$, and whose morphisms are given by exact functors (i.e., functors that commute with finite limits and colimits) compatible with the action of $\Vect^c$. Unless otherwise specified, all small $\DG$-categories in the current paper belong to $\DGCatidemex$.

The procedure of taking $\Ind$-completion provides a functor
\[
	\Ind: \DGCatidemex \to \DGCatprescont.
\]
The essential image of $\Ind$ is precisely those $\DG$-categories that are compactly generated. Indeed, given such a category $\mathcal{C}$, the full subcategory $\mathcal{C}^c$ spanned by the set of compact objects is an object of $\DGCatidemex$. Moreover, $\Ind(\mathcal{C}^c) \simeq \mathcal{C}$ when $\mathcal{C}$ is compactly generated, see also \cref{subsubsec:Ind_compactly-generated}.

\subsubsection{Tensors of small $\DG$-categories}
Similarly to~\cite[Prop. 4.4]{ben-zvi_integral_2010}, $\DGCatidemex$ admits a symmetric monoidal structure given by
\[
	\mathcal{C}_1 \otimes \mathcal{C}_2 \defeq (\Ind(\mathcal{C}_1) \otimes \Ind(\mathcal{C}_2))^c, \qquad \mathcal{C}_1, \mathcal{C}_2 \in \DGCatidemex.
\]
Moreover, the functor $\Ind$ is symmetric monoidal. In particular, when $\mathcal{C}_1, \mathcal{C}_2 \in \DGCatprescont$ are compactly generated,
\[
	(\mathcal{C}_1 \otimes \mathcal{C}_2)^c = \mathcal{C}_1^c \otimes \mathcal{C}_2^c.
\]

Using this monoidal structure, it is possible to make sense of $\ComAlg(\DGCatidemex)$. Namely, $\mathcal{A} \in \ComAlg(\DGCatidemex)$ is a symmetric monoidal small $\DG$-category such that the tensor product is compatible with the $\Vect^c$-action and with finite colimits in each variable. For such $\mathcal{A}$, it is also possible to make sense of $\Mod_\mathcal{A}(\DGCatidemex)$. Given $\mathcal{A} \in \ComAlg(\DGCatidemex)$, we will also use the notation $\Mod_\mathcal{A} \defeq \Mod_\mathcal{A}(\DGCatidemex)$ when no confusion is likely to occur. Note that this notation is similar to the one introduced in \cref{subsubsec:alg_mod_DGCat}; however, the fact that $\mathcal{A}$ is \emph{small} should be clear from the context.

\subsubsection{Module structures on large vs. small categories}
\label{subsubsec:module_structures_large_vs_small}
For $\mathcal{A} \in \ComAlg(\DGCatprescont)$ and $\mathcal{M} \in \Mod_\mathcal{A}$ such that $\mathcal{A}$ is rigid compactly generated and $\mathcal{M}$ is compactly generated, \cref{cor:action_map_rigid_preserves_compactness} implies that $\mathcal{M}^c \in \Mod_{\mathcal{A}^c}$. Moreover, by continuity, the $\mathcal{A}$-module structure on $\mathcal{M}$ is obtained by ind-extending the $\mathcal{A}^c$-module structure on $\mathcal{M}^c$.

\subsubsection{Relative tensors of small $\DG$-categories}
Much as above, but in the relative setting, let $\mathcal{A} \in \ComAlg(\DGCatprescont)$ be compactly generated and rigid, and $\mathcal{M}, \mathcal{N} \in \Mod_\mathcal{A}$ be compactly generated. Then, we define
\[
	\mathcal{M}^c \otimes_{\mathcal{A}^c} \mathcal{N}^c = (\mathcal{M} \otimes_\mathcal{A} \mathcal{N})^c.
\]
Here, the LHS is performed in $\DGCatidemex$ using the tensor product described above.

\subsubsection{Enriched $\Hom$}
Let $\mathcal{A}_0$ be in $\ComAlg(\DGCatidemex)$ and $\mathcal{M}_0$ in $\Mod_{\mathcal{A}_0}(\DGCatidemex)$, then we can make sense of $\mathcal{A} \defeq \Ind(\mathcal{A}_0)$-enriched $\Hom$-spaces between objects in $\mathcal{M}_0$ by embedding $\mathcal{M}_0 \hookrightarrow \mathcal{M} \defeq \Ind(\mathcal{M}_0)$ and use the discussion above for presentable categories. Here, the $\mathcal{A}$-module structure on $\mathcal{M}$ is obtained by left Kan extending $\mathcal{A}_0 \otimes \mathcal{M}_0 \to \mathcal{M}_0$ along $\mathcal{A}_0 \otimes \mathcal{M}_0 \to \mathcal{A} \otimes \mathcal{M}$.


\section*{Acknowledgements}
Q. Ho would like to thank Minh-Tam Quang Trinh for asking him many questions about the chromatographic construction many years ago and for many stimulating conversations about knot invariants. Q. Ho would also like to thank the organizers of the American Institute of Mathematics's Link Homology Research Community for providing a stimulating (online) research environment. P. Li would like to thank Peng Shan and Yin Tian for many helpful conversations on Soergel bimodules and Hilbert schemes. 

We thank Jens Eberhardt, Kaif Hilman, Yifeng Liu, Andrew Macpherson, and Zhiwei Yun for many helpful discussions and email exchanges. We also thank Sasha Beilinson and David Ben-Zvi for their interest and questions, which helped clarify various parts of the paper. Finally, we thank the anonymous referee for carefully reading the manuscript and providing extensive feedback.

This work started when Q. Ho was a postdoc in Hausel group at IST Austria, partially supported by the Lise Meitner fellowship, Austrian Science Fund (FWF): M 2751. Q. Ho is partially supported by the Hong Kong RGC ECS grant 26305322 and the RGC GRF grant 16301324. P. Li is partially supported by the National Natural Science Foundation of China (Grant No. 12101348).

\printbibliography
\end{document}